\def\tank#1{\protected@xdef\@thanks{\@thanks
        \protect\footnotetext[0]{#1}}}
\def\bigfoot{

    \@footnotetext}
\newcommand{\ea}{\end{array}}
\newtheorem{theorem}{Theorem}[section]
\newtheorem{hypothesis}{Hypothesis}[section]
\newtheorem{proposition}{Proposition}[section]
\newtheorem{lemma}{Lemma}[section]
\newtheorem{definition}{Definition}[section]
\newtheorem{remark}{Remark}[section]
\newtheorem{conditionA}{A\kern-1.5mm}
\newtheorem{example}{Example}[section]}
\newenvironment{proof}{Proof.}
\def\RR{\mathbb{R}}
\def\PP{\mathbb{P}}
\def\EE{\mathbb{E}}
\def\vare{{\varepsilon}}
\def \eref#1{\hbox{(\ref{#1})}}
\def\EE{\mathbb{ E}}
\newcommand{\ep}{\ensuremath{\varepsilon}}
\begin{document}
\title{{\Large \bf Central Limit Type Theorem and Large Deviation Principle for Multi-Scale McKean-Vlasov SDEs}}

\author{{Wei Hong$^{a,b}$},~~{Shihu Li$^{a}$},~~{Wei Liu$^{a}$}\footnote{Corresponding author: weiliu@jsnu.edu.cn},~~{Xiaobin Sun$^{a}$}
\\
 \small $a.$ School of Mathematics and Statistics, Jiangsu Normal University, Xuzhou 221116, China \\
 \small $b.$ Center for Applied Mathematics, Tianjin University, Tianjin 300072, China}
\date{}
\maketitle
\begin{center}
\begin{minipage}{145mm}
{\bf Abstract.} The main aim of this work is to study the asymptotic behavior for multi-scale McKean-Vlasov stochastic dynamical systems. Firstly, we obtain a central limit type theorem, i.e. the deviation between the slow component $X^{\varepsilon}$ and the solution $\bar{X}$ of the averaged equation converges weakly to a limiting process. More precisely, $\frac{X^{\varepsilon}-\bar{X}}{\sqrt{\varepsilon}}$ converges weakly in $C([0,T],\mathbb{R}^n)$ to the solution of certain distribution dependent stochastic differential equation, which involves an extra explicit stochastic integral term. Secondly, in order to estimate the probability of deviations away from the limiting process, we further investigate the Freidlin-Wentzell's large deviation principle for multi-scale McKean-Vlasov stochastic system when the small-noise regime parameter $\delta\rightarrow 0$ and the time scale parameter $\varepsilon(\delta)$ satisfies $\varepsilon(\delta)/\delta\to 0$. The main techniques are based on the Poisson equation for central limit type theorem and the weak convergence approach for large deviation principle.

\vspace{3mm} {\bf Keywords:} Central limit type theorem; Large deviation principle; McKean-Vlasov equation; Multi-scale; Poisson equation; Weak convergence approach

\noindent {\bf Mathematics Subject Classification (2010):} {60H10,~60F05,~60F10}

\end{minipage}
\end{center}

\tableofcontents
\section{Introduction}
%\subsection*{Background}
The McKean-Vlasov stochastic differential equations (SDEs) could be used to describe stochastic systems whose evolution is
 influenced by both the microscopic location and the macroscopic distribution of particles, i.e., the coefficients depend  not only on the solution pointwisely but also on its time marginal law. It is also called the distribution dependent SDEs in the literature.
Such kind of system arises naturally in the framework of particle systems where the particles interact in a mean field way. For instance, we consider the dynamics of following $N$-particles system
directed by SDE,
\begin{equation}\label{I03}
d X_t^{N,i} = b(X_t^{N,i},\mu_t^N)dt+\sigma(X_t^{N,i},\mu_t^N)d W_t^i,~\mu_t^N=N^{-1}\sum_{j=1}^N\delta_{X_t^{N,j}},
\end{equation}
where $i=1,\ldots,N$, the mean field interactions here is expressed as the dependency of coefficients on the empirical measure $\mu_t^N$ of the system. Under some regularity conditions on $b$ and $\sigma$ (e.g. globally Lipschitz) and exchangeability assumption on the initial conditions,
 the empirical laws $\mu_t^N$ of particles system (\ref{I03})
will converge weakly to the law of solution
of following McKean-Vlasov SDE as $N\to \infty$,
\begin{equation*}\label{I04}
d X_t = b(X_t, \mathscr{L}_{X_t})dt+\sigma(X_t, \mathscr{L}_{X_t})d W_t,
\end{equation*}
where $\mathscr{L}_{X_t}$ is the law of $X_t$ and $W_t$ is a standard Brownian motion. Therefore, the coefficients of the limiting SDE will naturally depend  on the law of solution.  This was called propagation
of chaos by Kac \cite{K56}, which has been widely studied in the literature, see e.g. \cite{HM14,JW,LWZ,M96,M1,S91}.

In the past decade, the McKean-Vlasov SDEs have been intensively investigated, e.g. the well-posedness (cf.~\cite{BLPR,H21b, RZ21, WFY}), the asymptotic properties such as the large deviations, ergodicity, Harnack inequality and the Bismut formula (cf. \cite{BRW, HLL1, LMW,LSZZ}). On the other hand,  multi-scale environments including slow and fast components have also attracted more and more attentions due to their widespread applications in  material sciences, chemical kinetics, climate dynamics etc. Typically, dynamics of chemical reaction networks often take place on notably different time-scales,  from the order of nanoseconds to the order of several days,  the reader can refer to \cite{An00,C1,C2,CF,EE,MTV} for more background.

Hence, it is also natural to consider the following  multi-scale interacting particle system
\begin{equation}\left\{\begin{array}{l}\label{I05}
\displaystyle
d X_t^{\varepsilon,N,i} = b(X_t^{\varepsilon,N,i},\mu_t^{\varepsilon,N},X_t^{\varepsilon,N,i}/\varepsilon,\nu_t^{\varepsilon,N})dt+\sigma(X_t^{\varepsilon,N,i},\mu_t^{\varepsilon,N},X_t^{\varepsilon,N,i}/\varepsilon,\nu_t^{\varepsilon,N})d W_t^i, \\
\displaystyle \mu_t^{\varepsilon,N}=N^{-1}\sum_{j=1}^N\delta_{X_t^{\varepsilon,N,j}},\nu_t^{\varepsilon,N}=N^{-1}\sum_{j=1}^N\delta_{X_t^{\varepsilon,N,j}/\varepsilon},\\
\end{array}\right.
\end{equation}
where $i=1,\ldots,N$, $\varepsilon$ is a small parameter describing the ratio of time scale,
$X_t^{\varepsilon,N,i}$ and $Y_t^{\varepsilon,N,i}:=X_t^{\varepsilon,N,i}/\varepsilon$ represent the slow and fast components in system (\ref{I05}) respectively. It is very interesting to study the combined mean field and homogenization limits (i.e. $N\rightarrow\infty$ and $\varepsilon\rightarrow0$ ) for the multi-scale interacting particle system. For example, Gomes and Pavliotis \cite{GP} studied the system (\ref{I05}) with $b(x,\mu,y,\nu)= b(x,\mu,y)$, $\sigma= c$ where $c$ is a constant, of weakly interacting diffusions moving in a two-scale locally periodic confining potential. The authors in \cite{GP} proved that, although the mean field limit and homogenization limit commute for finite time,  they do not commute in the long time limit in general.
Moreover, Delgadino et al. \cite{DGP} considered the system (\ref{I05}) with $b(x,\mu,y,\nu)= b(y,\nu)$ and $\sigma= c$ and proved that the mean field and homogenization limits  do not commute if the mean field system constrained to the torus undergoes a
phase transition, i.e. if multiple steady states exist.
%Another relevant subject is the multi-scale analysis for a Brownian particle  moving in a two-scale potential valid in the limit of infinite scale separation $\varepsilon\to 0$    and the effect of the multi-scale environment on noise-induced transitions was considered by Duncan and Pavliotis \cite{DP} (see also \cite{GP} for more studies on this topic).
Recently, Bezemek and Spiliopoulos \cite{BS} established the large deviation principle of the empirical distribution in system \eref{I05} with $b(x,\mu,y,\nu)= b(x,\mu,y)$ and $\sigma(x,\mu,y,\nu)= \sigma(x,\mu,y)$, as $\varepsilon\rightarrow0$ and $N\rightarrow\infty$ simultaneously.

As mentioned above,
for fixed $\varepsilon>0$ and $b(x,\mu,y,\nu)= b(x,\mu,y)$ (similarly for $\sigma$), one can pass to the mean field limit in (\ref{I05}) as $N\rightarrow\infty$ and get the following multi-scale McKean-Vlasov SDEs
\begin{equation*}\label{I06}
d X_t^{\varepsilon} = b(X_t^{\varepsilon},\mathscr{L}_{X_t^{\varepsilon}},Y_t^{\varepsilon})dt+\sigma(X_t^{\varepsilon},\mathscr{L}_{X_t^{\varepsilon}},Y_t^{\varepsilon})d W_t,~Y_t^{\varepsilon}=X_t^{\varepsilon}/\varepsilon.
\end{equation*}
In the present work, comparing with \cite{BS,DGP,GP}, we focus on the asymptotic behavior of multi-scale McKean-Vlasov SDEs as $\varepsilon\rightarrow0$. More precisely,
we consider the following  multi-scale distribution dependent/McKean-Vlasov SDEs
\begin{equation}\left\{\begin{array}{l}\label{E2}
\displaystyle
d X^{\delta,\varepsilon}_t = b(X^{\delta,\varepsilon}_t, \mathscr{L}_{X^{\delta,\varepsilon}_t}, Y^{\delta,\varepsilon}_t)dt+\sqrt{\delta}\sigma(X^{\delta,\varepsilon}_t, \mathscr{L}_{X^{\delta,\varepsilon}_t})d W^{1}_t, \\
\displaystyle d Y^{\delta,\varepsilon}_t =\frac{1}{\varepsilon}f(X^{\delta,\varepsilon}_t, \mathscr{L}_{X^{\delta,\varepsilon}_t}, Y^{\delta,\varepsilon}_t)dt+\frac{1}{\sqrt{\varepsilon}}g( X^{\delta,\varepsilon}_t, \mathscr{L}_{X^{\delta,\varepsilon}_t}, Y^{\delta,\varepsilon}_t)d W^{2}_t,\\
\displaystyle X^{\delta,\varepsilon}_0=\xi,~Y^{\delta,\varepsilon}_0=\zeta,
\end{array}\right.
\end{equation}
where $\{W^{1}_t\}_{t\geq 0}$ and $\{W^{2}_t\}_{t\geq 0}$ are mutually independent $d_1$ and $d_2$ dimensional standard Brownian motions on a complete filtration probability space $(\Omega, \mathscr{F}, \{\mathscr{F}_{t}\}_{t\geq0}, \mathbb{P})$ respectively, the diffusion coefficients $\sigma$ and $g$ are $n\times d_1$-dimensional and $m\times d_2$-dimensional matrices respectively, $\xi$ and $\zeta$ are $\mathscr{F}_0$-measurable $\RR^n$ and $\RR^m$-valued random variables respectively. The coefficients $b,\sigma,g,f$ satisfy some appropriate conditions, and $\delta,\varepsilon$ are  small positive parameters where $\delta$ describes the intensity of the noise and $\varepsilon$ describes  the ratio of the time scale between the slow component $X^{\delta,\varepsilon}_t$ and fast component $Y^{\delta,\varepsilon}_t$.

 Due to the different time scales and the cross interactions between the fast and slow components, it is very difficult to analyze such kind of stochastic system directly. Therefore, a simplified equation which governs the evolution of the system over a long time scale would be highly desirable and useful for applications. Fix $\delta=1$, R\"{o}ckner et al. \cite{RSX1} proved that  the slow component in system \eref{E2} will converge strongly to the
solution of the so-called averaged equation, i.e.
\begin{equation}\label{Averaging}
\sup_{t\in[0,T]}\EE|X^{\varepsilon}_t-\bar{X}_t|^2\leq C_T\big(1+\EE|\xi|^4+\EE|\zeta|^4\big)\varepsilon,
\end{equation}
where $X^{\varepsilon}$ is the first component of solution to system \eref{E2} with $\delta=1$. Here $\bar{X}_t$ is the solution of the corresponding averaged equation
\begin{equation}\left\{\begin{array}{l}
\displaystyle d\bar{X}_{t}=\bar{b}(\bar{X}_t,\mathscr{L}_{\bar{X}_t})dt+\sigma(\bar{X}_t,\mathscr{L}_{\bar{X}_t})d W^{1}_t,\\
\bar{X}_{0}=\xi,\end{array}\right. \label{1.3}
\end{equation}
where the new averaged drift coefficient is defined by  $$\bar{b}(x,\mu)=\int_{\RR^m}b(x,\mu, y)\nu^{x, \mu}(dy)$$
and $\nu^{x, \mu}$ denotes the unique invariant measure of the following frozen equation (for fixed $x$ and $\mu$),
\begin{equation*}\left\{\begin{array}{l}
\displaystyle
dY_{t}=f(x, \mu, Y_{t})dt+g(x, \mu, Y_{t})d\tilde{W}_{t}^{2},\\
Y_{0}=y\in\RR^m, \\
\end{array}\right.
\end{equation*}
 where $\{\tilde{W}_{t}^{2}\}_{t\geq 0}$ is an $d_2$-dimensional Brownian motion on another complete probability space $(\tilde{\Omega}, \tilde{\mathscr{F}}, \tilde{\mathbb{P}})$. This theory, known as the averaging principle, could be regarded as classical functional law of large numbers and has been intensively investigated
 in the past decades (see e.g. \cite{BM,B2,C1,C2,CF,CL,DSXZ,Gu03,HKP,HL1,K1,L1,PV1,PIX,RX2,SXX,XLLM,V0} and more references therein).

%\vspace{6mm}
\subsection*{Central limit type theorem}~
The averaged process $\bar{X}_t$ above is valid only in the limiting sense, and it is clear that by \eref{Averaging} the slow process
$X^\varepsilon$ will experience fluctuations around its averaged process $\bar{X}$  for small $\varepsilon$. Meanwhile, it is also proved that the optimal convergence rate is $1/2$. In order to capture the fluctuations, it is important to study the
 asymptotic behavior of the  deviation between $X^{\varepsilon}$ and $\bar{X}$, i.e. to analyze the limit of
$$Z_t^\varepsilon:=\frac{X^{\varepsilon}_t-\bar{X}_t}{\sqrt{\varepsilon}},\ \ \text{as}~\varepsilon\to 0.$$
 Therefore, the first aim of this paper is to identify  that $\{Z^{\varepsilon}\}_{\varepsilon>0}$ converges weakly in $C([0,T];\RR^n)$ (as $\varepsilon\to 0$) to the solution of following equation
\begin{eqnarray}\label{e10}
dZ_t= \!\!\!\!\!\!\!\!&& \partial_x\bar{b}(\bar{X}_t,\mathscr{L}_{\bar{X}_t})\cdot Z_tdt
+\EE\Big[\partial_{\mu} \bar{b}(u,\mathscr{L}_{\bar{X}_t})(\bar{X}_t)\cdot Z_t\Big]\Big|_{u=\bar{X}_t}dt+\left[\partial_x\sigma(\bar{X}_t,\mathscr{L}_{\bar{X}_t})\cdot Z_t\right]dW_t^1
 \nonumber\\
 \!\!\!\!\!\!\!\!&& + \EE\Big[\partial_{\mu} \sigma(u,\mathscr{L}_{\bar{X}_t})(\bar{X}_t)\cdot Z_t\Big]\Big|_{u=\bar{X}_t}dW_t^1+\Theta(\bar{X}_t,\mathscr{L}_{\bar{X}_t})dW_t,~Z_0=0,
\end{eqnarray}
 where $\{W_t\}_{t\geq 0}$ is a $n$ dimensional standard Brownian motion, which is defined on $(\Omega,\mathscr{F},\PP)$  and independent of $\{W^1_t\}_{t\geq 0}$ and $\Theta$ is a new diffusion coefficient (see Theorem \ref{main result 2} for the details). Here $\mathbb{E}$ is  the expectation on $(\Omega,\mathscr{F},\PP)$. We remark that the formulation of the second and fourth terms on the right hand side of (\ref{e10}) essentially follows from the chain rule for the distributions of random variables (cf.~\cite[Theorem 2.1]{BRW}).

Such kind of central limit type theorem is called the normal deviations in the classical multi-scale case. For example, Khasminskii \cite{K2} studied a class of slow-fast stochastic system with vanish noise in the slow equation, Cerrai \cite{C09}  generalized the results significantly in \cite{K2} to the infinite-dimensional case. However, some crucial techniques used in \cite{C09,K2} are not applicable for the fully coupled stochastic system (e.g. (\ref{E2}) here).  Wang and Roberts \cite{W12} study the normal deviations of the stochastic system driven by additive noise in infinite dimension, where the  martingale approach is used to characterize the limiting process.

%In the current work our strategy is based on the following three main steps:
%\begin{itemize}
%\item{  Construct an auxiliary process $\eta^{\varepsilon}$ and show that  the difference $Z^\varepsilon-\eta^{\varepsilon}$ converges to $0$ in $C([0,T],\RR^n)$ as $\varepsilon\rightarrow 0$ ;}
%
%\item{ Prove that $\eta^{\varepsilon}$ is tight in $C([0,T],\RR^n)$ ;}
%
%\item{ Identify the weak limiting process of any subsequence $\{\eta^{\varepsilon_n}\}_{n\geq 1}$ with $\varepsilon_n \downarrow 0$.}
%
%\end{itemize}
For the purpose of characterizing  the limiting process, the technique of Poisson equation depending on the parameter measure is employed in this work.
%The first difficulty lies in proving the tightness of auxiliary process $\eta^{\varepsilon}$ in $C([0,T],\RR^n)$ where the term $\varepsilon\EE\left[\sup_{t\in[0,T]}|Y^\varepsilon_t|^2\right]$ appears (see Lemma \ref{th1}), in order to obtain its convergence, we need an accurate quantitative estimate for the fast variable $Y^\varepsilon$ in $C([0,T];\RR^m)$ (see (\ref{Y1})).
Moreover,  we shall apply the martingale representation theorem by identifying the corresponding quadratic variational process to construct the desired limiting process, here the time discretization technique plays an important role in the proof (see Lemma \ref{L4.5}).
To the best of our knowledge, this is the first result concerning the central limit type theorem for multi-scale McKean-Vlasov SDEs.  Compared with the existing results in the distribution independent case (cf.~\cite{C09,RX2,RXY,W12}), two new terms involving the Lions derivative of the coefficients appear in the limiting equation \eref{e10}. It seems quite natural since the coefficients of (\ref{E2}) also depend on the distribution of the solution, and it highlights the intrinsic difference between McKean-Vlasov SDEs and classical SDEs.

 We want to mention that a very interesting result concerning the central limit type theorem for SDEs was obtain recently in \cite{RX2} by R\"{o}ckner, Xie and Sun. Shortly speaking, the authors in \cite{RX2} get the convergence of deviations  of  a nice function $\varphi$  acting on $Z^\varepsilon_t$ (e.g. $\varphi\in C_b^4(\RR^n)$), and their key technique is based on studying the regularity of the semigroup generated from a linear Kolmogorov equation essentially. However, in this work we mainly investigate  a class of McKean-Vlasov SDEs, which will generate a nonlinear Kolmogorov equation. Therefore,  the strategy of \cite{RX2} seems quite difficult to apply to our case here. Moreover, as we mentioned before, we obtain the convergence of $Z^\varepsilon$ in space $C([0,T];\RR^n)$, whereas the results of \cite{RX2} hold only for fixed time $t\in[0,T]$.  The  reader can refer to e.g. \cite{BGS,HP1,HL,RXY} and the references therein for more results in this direction.
%Thus our results here are even new for the distribution independent multi-scale SDEs.

The result of central limit type theorem indicates that typical fluctuations of the slow process around its limiting process
are of order $O(\sqrt{\varepsilon})$. In particular,
we can get the formal asymptotic expansion $$X_t^\varepsilon\approx^D \bar{X}_t+\sqrt{\varepsilon}Z_t,$$
where
$\approx^D$ means approximate equality of probability distributions. Such expansion provides better approximations
for the original system \eref{E2}, which is also called the Van Kampen's approximation in physics and
has been used in the context of stochastic climate models (see e.g. \cite{A01,BK04}).

%\vspace{6mm}
\subsection*{Large deviation principle}
It follows that the central limit type theorem above does not permit to estimate the probability of deviations of order one away from the limiting process.
On these time-scales, large
deviations are expected to be rare, and their probability could be estimated by a large deviation principle (LDP).
%Let us consider the following multi-scale McKean-Vlasov system with small Gaussian noise
%\begin{equation}\left\{\begin{array}{l}\label{e1}
%\displaystyle
%d X^{\delta,\varepsilon}_t = b(X^{\delta,\varepsilon}_t, \mathscr{L}_{X^{\delta,\varepsilon}_t}, Y^{\delta,\varepsilon}_t)dt+\sqrt{\delta}\sigma(X^{\delta,\varepsilon}_t, \mathscr{L}_{X^{\delta,\varepsilon}_t})d W_t^1, \\
%\displaystyle d Y^{\delta,\varepsilon}_t =\frac{1}{\varepsilon}f(X^{\delta,\varepsilon}_t, \mathscr{L}_{X^{\delta,\varepsilon}_t}, Y^{\delta,\varepsilon}_t)dt+\frac{1}{\sqrt{\varepsilon}}g( X^{\delta,\varepsilon}_t, \mathscr{L}_{X^{\delta,\varepsilon}_t}, Y^{\delta,\varepsilon}_t)d W_t^2,\\
%\displaystyle X^{\delta,\varepsilon}_0=x\in\RR^n,Y^{\delta,\varepsilon}_0=y\in\RR^m,
%\end{array}\right.
%\end{equation}
%where  $\delta$ and $\varepsilon$ are the scale parameters satisfying some assumptions (see Theorem \ref{t3} below), $\{W^{1}_t\}_{t\geq 0}$ and $\{W^{2}_t\}_{t\geq 0}$ are mutually independent $d_1$ and $d_2$ dimensional standard Brownian motions on a complete filtration probability space $(\Omega, \mathscr{F}, \{\mathscr{F}_{t}\}_{t\geq 0}, \mathbb{P})$ respectively.
Note that the analysis of averaging principle and central limit type theorem are only for fixed $\delta=1$ in (\ref{E2}). It is natural to ask how the multi-scales influence
the stochastic system (\ref{E2})  when $\delta$ and $\varepsilon$  converge to zero simultaneously.

The LDP theory mainly investigates the asymptotic property of remote tails of a family of probability distributions, which is one of important topics in the probability theory and has been widely applied in many fields such as statistics, information theory and engineering.
Since the well-known work of Freidlin and Wentzell \cite{FW}, the small perturbation type (also called Freidlin-Wentzell type) LDP for SDEs has been widely studied in the literature.  A very powerful tool for studying the Freidlin-Wentzell's LDP is the weak convergence approach (see e.g. \cite{BPZ,MSZ,RZ2}),
which is systematically developed by Budhiraja, Dupuis and Ellis et al. in \cite{BD,BDM,DE}. The key idea of this approach is to prove certain variational
representation formula about the Laplace transform of bounded continuous functionals, which then leads
to the verification of the equivalence between  LDP and  Laplace principle.
The LDP  has also been studied a lot for  multi-scale standard SDEs (i.e. distribution independent case), see e.g. \cite{Gu03,HLL2,KP,SWXY} and reference therein.

To the best of our knowledge, there is no LDP result for multi-scale distribution dependent stochastic system so far. Hence, the second aim of this paper is to further investigate the small noise asymptotic behavior and establish LDP for multi-scale distribution dependent SDEs.
More precisely, assuming suitable condition between $\delta$ and $\varepsilon=\varepsilon(\delta)$ (i.e.~$\varepsilon/\delta\to 0$ as $\delta\to 0$), we prove that $\{X^{\delta,\varepsilon}\}_{\delta>0}$ in \eref{E2}
satisfies the LDP in $C([0,T]; \RR^n)$. Studying the LDP for the multi-scale stochastic system is of practical significance. For instance, Spiliopoulos in \cite{S1} establish the LDP and use it to construct the asymptotically optimal importance sampling for systems of multi-scale motion. Some applications are also given in \cite{S1} such as the short time asymptotics of a process that depends on another fast mean reverting process.

The main difficulty of verifying LDP for multi-scale McKean-Vlasov stochastic system is
due to the dependence of distribution and the cross interactions between the fast and slow component in the model. Our strategy here is still based on the powerful weak convergence approach.  As an important part of the proof, we need to formulate the correct form of the skeleton equation.
Intuitively, as the parameter $\delta\rightarrow0$ in system (\ref{E2}) (hence $\varepsilon\rightarrow0$), the drift term is averaged and the noise term vanishes, then we can get the following ordinary differential equation
\begin{equation*}
\frac{d\bar{X}_t^0}{dt}=\bar{b}(\bar{X}_t^0,\mathscr{L}_{\bar{X}_t^0}),~\bar{X}_0^0=x,
\end{equation*}
where $\mathscr{L}_{\bar{X}^0_t}=\delta_{\bar{X}^0_t}$ is the Dirac measure of $\bar{X}^0_t$.  Then for any $h$ belongs to the Cameron-Martin space $\mathcal{H}_0$ (see \eref{CM1} below), we define the following skeleton equation with respect to the slow equation of (\ref{E2}),
\begin{equation}\label{ske}
\frac{d \bar{X}^{h}_t}{dt}=\bar{b}(\bar{X}^{h}_t,\mathscr{L}_{\bar{X}^{0}_t})+\sigma(\bar{X}^{h}_t,\mathscr{L}_{\bar{X}^{0}_t})P_1\dot{h}_t,~\bar{X}^{h}_0=x,
\end{equation}
where $P_1$ is a projection operator (see \eref{bm} below). Note that $\mathscr{L}_{\bar{X}^0_t}$ (instead of  $\mathscr{L}_{\bar{X}_t^h}$) shows up in the skeleton equation \eref{ske} and is used to define the rate function of LDP. Since the measure $\mathscr{L}_{X_t^{\delta,\varepsilon}}$ is not random, so that the convergence of these
measures is independent of the occurrence of a rare event for the random variable $X_t^{\delta,\varepsilon}$. This is also an important difference between McKean-Vlasov SDEs and standard SDEs.

In fact, if one formulate the correct form of the controlled process $X_t^{\delta,\varepsilon, h^\delta}$ as follows \begin{equation}\label{control1}
\left\{ \begin{aligned}
&dX^{\delta,\varepsilon,h^\delta}_t=b(X^{\delta,\varepsilon,h^\delta}_t,\mathscr{L}_{X^{\delta,\varepsilon}_t},Y^{\delta,\varepsilon,h^\delta}_t)dt
+\sigma(X^{\delta,\varepsilon,h^\delta}_t,\mathscr{L}_{X^{\delta,\varepsilon}_t})P_1\dot{h}^\delta_t dt\\
&~~~~~~~~~~~~+\sqrt{\delta}\sigma(X^{\delta,\varepsilon,h^\delta}_t,\mathscr{L}_{X^{\delta,\varepsilon}_t})dW_t^1,\\
&dY^{\delta,\varepsilon,h^\delta}_t=\frac{1}{\varepsilon}f(X^{\delta,\varepsilon,h^\delta}_t,\mathscr{L}_{X^{\delta,\varepsilon}_t},Y^{\delta,\varepsilon,h^\delta}_t)dt
+\frac{1}{\sqrt{\delta\varepsilon}}g(X^{\delta,\varepsilon,h^\delta}_t,\mathscr{L}_{X^{\delta,\varepsilon}_t},Y^{\delta,\varepsilon,h^\delta}_t)P_2\dot{h}^\delta_t dt\\
&~~~~~~~~~~~~+\frac{1}{\sqrt{\varepsilon}}g(X^{\delta,\varepsilon,h^\delta}_t,\mathscr{L}_{X^{\delta,\varepsilon}_t},Y^{\delta,\varepsilon,h^\delta}_t)d W_t^2,\\
&X^{\delta,\varepsilon,h^\delta}_0=x, Y^{\delta,\varepsilon,h^\delta}_0=y,
\end{aligned} \right.
\end{equation}
where $P_1,P_2$ are the projection operations defined in (\ref{bm}) below. Then let $\delta\rightarrow0$, it's natural to derive the skeleton equation (\ref{ske}). Furthermore, it turns out that the correct rate function for the multi-scale McKean-Vlasov SDEs is defined through the solution of skeleton equation (\ref{ske}). We mention that in the single-scale distribution dependent case, the work \cite[Section 4.1]{DST} also state that the Dirac measure $\mathscr{L}_{\bar{X}^0_t}$ is a good  approximation of $\mathscr{L}_{X_t^\delta}$, where they first replace the distribution $\mathscr{L}_{X_t^\delta}$  by $\mathscr{L}_{\bar{X}^0_t}$.
Then they use some discrete approximation and exponential equivalence arguments to show the LDP. Different from the method in \cite{DST}, in this work we employ the weak convergence method to prove the LDP for the distribution dependent SDEs in the multi-scale case.
Moreover, in order to overcome the difficulty caused by the cross interactions between the fast and slow component, the time discretization technique will be employed frequently to obtain some crucial estimates.

Note that if $\delta$ and $\varepsilon$ converge to zero in different speed, we might have three different regimes of interaction, i.e.
\begin{equation*}
\lim_{\delta\rightarrow0}\varepsilon/\delta=\left\{
  \begin{array}{ll}
    0, & \hbox{Regime 1;} \\
    \gamma, & \hbox{Regime 2;} \\
    \infty, & \hbox{Regime 3.}
  \end{array}
\right.
\end{equation*}
In the classical SDE case with periodic coefficients, Dupuis and Spiliopoulos \cite{DS12} proved LDP of multi-scale SDE for all three regimes  using the characterization of optimal controls through solutions to Hamilton-Jacobi-Bellman equations.
In Regime 1, such regime allows to decouple the invariant measure of the associated frozen equation and the control function from the limiting occupation measures. Roughly speaking, as $\delta\to 0$, we need $\varepsilon/\delta\to 0$,  thus the coefficient of the multi-scale stochastic system is averaged first, and then for the averaged equation with small noise, the large deviation principle is obtained by using the weak convergence method.
However, in the Regimes 2 and 3, such invariant measure will depend on the control function essentially and this dependence will make the analysis very challenging in the McKean-Vlasov structure.

\vspace{6mm}

The remainder of this paper is organized as follows. In Section 2, we introduce the detailed framework and state the main results of this work. In Section 3, we first establish some estimates of solution to the slow-fast system, and introduce the Poisson equation with measure dependence. Secondly, we give the detailed proof of central limit type theorem. Section 4 is devoted to proving the LDP via weak convergence approach, and  we postpone the proofs of some crucial estimates to Section 5 as Appendix.
Note that throughout this paper $C$ and $C_T$  denote positive constants which may change from line to line, where the subscript $T$ is used to emphasize that the constant depends on certain parameter $T$.

\section{Main results}\label{sec.prelim}
\setcounter{equation}{0}
 \setcounter{definition}{0}
\subsection{Main framework}
Now we introduce some notations which will be used frequently throughout this paper. Denote by $|\cdot|$ the Euclidean vector norm, $\langle\cdot, \cdot\rangle$ the Euclidean inner product and $\|\cdot\|$ the matrix norm or the operator norm if there is no confusion possible. The tensor product $\mathbb{R}^{l_1}\otimes\mathbb{R}^{l_2}$ represents the space of all ${l_1}\times {l_2}$-dimensional matrix for $l_1,l_2\in \mathbb{N}_{+}$. A matrix-valued function $u(x,y)$ defined on $\RR^n\times\RR^m$, we use $\partial_x u$ and $\partial_y u$ to denote the first order partial derivative of $u$ with respect to (w.r.t.)  $x$ and $y$ respectively, $\partial^2_{xx} u$, $\partial^2_{yy} u$ and $\partial^2_{xy} u$ to denote its second order derivatives of $u$ w.r.t. $x$ twice, $y$ twice and $x,y$ for each respectively.

\vspace{0.1cm}
Define $\mathscr{P}$ the set of all probability measures on $(\RR^n, \mathscr{B}(\RR^n))$ and $\mathscr{P}_2$ by
$$
\mathscr{P}_2:=\Big\{\mu\in \mathscr{P}: \mu(|\cdot|^2):=\int_{\RR^n}|x|^2\mu(dx)<\infty\Big\},
$$
 then $\mathscr{P}_2$ is a Polish space under the $L^2$-Wasserstein distance
$$
\mathbb{W}_2(\mu_1,\mu_2):=\inf_{\pi\in \mathscr{C}_{\mu_1,\mu_2}}\left[\int_{\RR^n\times \RR^n}|x-y|^2\pi(dx,dy)\right]^{1/2}, \quad \mu_1,\mu_2\in\mathscr{P}_2,
$$
where $\mathscr{C}_{\mu_1,\mu_2}$ is the set of all couplings for $\mu_1$ and $\mu_2$. Therefore for any $X,Y\in L^2(\Omega,\PP;\RR^n)$, by the definition of the $L^2$-Wasserstein distance, we have
\begin{eqnarray}
\mathbb{W}_2(\mathscr{L}_{X},\mathscr{L}_{Y})^2\leq\EE|X-Y|^2.\label{W2}
\end{eqnarray}

In what follows, we recall the definition of Lions derivative on $\mathscr{P}_2$. Due to the strategy in \cite[Section 6]{C}, for $u: \mathscr{P}_2\rightarrow \RR$ we denote by $U$ its ``extension" to $L^2(\Omega, \PP;\RR^n)$ defined by
$$
U(X)=u(\mathscr{L}_{X}),\quad X\in L^2(\Omega,\PP;\RR^n).
$$
Then we say that $u$ is differentiable at $\mu\in\mathscr{P}_2$ if there exists $X\in L^2(\Omega,\PP;\RR^n)$ such that $\mathscr{L}_{X}=\mu$ and $U$ is Fr\'echet differentiable at $X$. By means of the Riesz theorem, the Fr\'echet derivative $DU(X)$, viewed as an element of $L^2(\Omega,\PP;\RR^n)$, could be represented as
$$
DU(X)=\partial_{\mu}u(\mathscr{L}_{X})(X),
$$
where $\partial_{\mu}u(\mathscr{L}_{X}):\RR^n\rightarrow \RR^n$, which is called Lions derivative of $u$ at $\mu= \mathscr{L}_{X}$. It should be mentioned that the mapping $\partial_{\mu}u(\mathscr{L}_{X})$ depends only on $\mathscr{L}_X$, not on $X$.
In addition, $\partial_{\mu}u(\mu)\in L^2(\mu;\RR^n)$ for $\mu\in\mathscr{P}_2$. Furthermore, if $\partial_{\mu}u(\mu)(z):\RR^n\rightarrow \RR^n$ is differentiable at $z\in\RR^n$, we denote its derivative by $\partial_{z}\partial_{\mu}u(\mu)(z):\RR^n\rightarrow \RR^n\times\RR^n$.

\vspace{0.1cm}
We say that a matrix-valued function $u(\mu)=(u_{ij}(\mu))$ differentiable at $\mu\in\mathscr{P}_2$, if all its components are  differentiable at $\mu$, and set $$ \partial_{\mu}u(\mu)=(\partial_{\mu}u_{ij}(\mu)), ~~   \|\partial_{\mu}u(\mu)\|^2_{L^2(\mu)}=\sum_{i,j}\int_{\RR^n}|\partial_{\mu}u_{ij}(\mu)(z)|^2\mu(dz). $$
 Moreover, we say $\partial_{\mu}u(\mu)(z)$ differentiable at $z\in\RR^n$, if all its components are differentiable at $z$, and set $$\partial_{z}\partial_{\mu}u(\mu)(z)=(\partial_{z}\partial_{\mu}u_{ij}(\mu)(z)), ~~ \|\partial_{z}\partial_{\mu}u(\mu)\|^2_{L^2(\mu)}=\sum_{i,j}\int_{\RR^n}\|\partial_{z}\partial_{\mu}u_{ij}(\mu)(z)\|^2\mu(dz).$$

\vspace{0.1cm}
For the reader's convenience, we recall the following definitions.

 \begin{definition} For a  map $u(\cdot): \mathscr{P}_2 \to \RR$, we say $u\in C^{(1,1)}(\mathscr{P}_2; \RR)$, if this map is continuously differentiable at any $\mu\in\mathscr{P}_2$ and its derivative $\partial_{\mu}u(\mu)(z):\RR^n\rightarrow \RR^n$ is continuously differentiable at any $z\in\RR^n$. We say $u\in C^{(1,1)}_b(\mathscr{P}_2; \RR)$, if $u\in C^{(1,1)}(\mathscr{P}_2; \RR)$, moreover the derivatives $\partial_{\mu}u(\mu)(z)$ and $\partial_z\partial_{\mu}u(\mu)(z)$ are jointly continuous at any $(\mu,z)$, and uniformly bounded w.r.t.~$(\mu,z)$,~i.e., $\sup_{\mu\in\mathscr{P}_2,z\in\RR^n}|\partial_{\mu}u(\mu)(z)|<\infty$ and $\sup_{\mu\in\mathscr{P}_2,z\in\RR^n}\|\partial_z\partial_{\mu}u(\mu)(z)\|<\infty$. For a matrix-valued map $u(\cdot): \mathscr{P}_2 \to \RR^{l_1}\otimes \RR^{l_2}$, where $l_1,l_2\in \mathbb{N}_{+}$, we say $u\in C^{(1,1)}(\mathscr{P}_2;\RR^{l_1}\otimes \RR^{l_2})$ (resp. $C^{(1,1)}_b(\mathscr{P}_2;\RR^{l_1}\otimes \RR^{l_2})$) if all the components belong to $C^{(1,1)}(\mathscr{P}_2;\RR)$ (resp. $C^{(1,1)}_b(\mathscr{P}_2;\RR)$).
\end{definition}

\begin{definition} For a  map $u(\cdot): \RR^n \to \RR$, we say $u\in C^{2}_b(\RR^n; \RR)$, if the derivatives $\partial_{x} u(x)$,  $\partial^2_{xx} u(x)$ are bounded and continuous at any $x$. For a  map $u(\cdot,\cdot): \RR^n\times\RR^m \to \RR$, we say $u\in C^{2,2}(\RR^n\times\RR^m; \RR)$, if the partial derivatives $\partial_{x} u(x,y)$, $\partial_{y} u(x,y)$, $\partial^2_{xx} u(x,y)$, $\partial^2_{xy} u(x,y)$ and $\partial^2_{yy} u(x,y)$ exist at any $(x,y)$. We say $u\in C^{2,2}_b(\RR^n\times\RR^m; \RR)$, if $u\in C^{2,2}(\RR^n\times\RR^m; \RR)$ and the partial derivatives $\partial_{x} u(x,y)$, $\partial_{y} u(x,y)$, $\partial^2_{xx} u(x,y)$, $\partial^2_{xy} u(x,y)$ and $\partial^2_{yy} u(x,y)$ are jointly continuous at any $(x,y)$ and uniformly bounded w.r.t.  $(x,y)$. For a matrix-valued map $u(\cdot): \RR^n \to \RR^{l_1}\otimes \RR^{l_2}$, we say $u\in C^{2}_b(\RR^n;\RR^{l_1}\otimes \RR^{l_2})$ if all the components belong to $C^{2}_b(\RR^n;\RR)$. Similarly, we say $u\in C^{2,2}(\RR^n\times\RR^m;\RR^{l_1}\otimes \RR^{l_2})$ (resp. $C^{2,2}_b(\RR^n\times\RR^m;\RR^{l_1}\otimes \RR^{l_2})$) if all the components belong to $C^{2,2}(\RR^n\times\RR^m;\RR)$ (resp. $C^{2,2}_b(\RR^n\times\RR^m;\RR)$).
%The natation $C^{2,2}(\RR^n\times\RR^m;\RR^{l_1}\otimes \RR^{l_2})$ and $C^{2,2}_b(\RR^n\times\RR^m;\RR^{l_1}\otimes \RR^{l_2})$ can be defined similarly.

%For some $\gamma\in(0,1]$, we say $F\in C^{2,2+\gamma}_b(\RR^n\times\RR^m, \RR)$, if $F\in C^{2,2}_b(\RR^n\times\RR^m, \RR)$ and the all the second partial derivatives $\partial^2_{xx} F(x,y)$, $\partial^2_{xy} F(x,y)$ and $\partial^2_{yy} F(x,y)$ are $\gamma$ H\"{o}lder continuous with respective to $y$, i.e., there exists $C>0$ such that for any $u, v\in \{x,y\}$,
%\begin{eqnarray*}
%\sup_{x\in\RR^n}\|\partial^2_{uv} F(x, y_1)-\partial^2_{uv} F(x,y_2)\|\leq C|y_1-y_2|^{\gamma_2}.
%\end{eqnarray*}
\end{definition}

\begin{definition}
For a matrix-valued map $u(\cdot,\cdot): \RR^n\times\mathscr{P}_2  \to \RR^{l_1}\otimes \RR^{l_2}$, we say $u\in C^{2,(1,1)}_b(\RR^n\times\mathscr{P}_2;\RR^{l_1}\otimes \RR^{l_2})$ if $u(x,\cdot)\in C^{(1,1)}_b(\mathscr{P}_2;\RR^{l_1}\otimes \RR^{l_2})$ for any $x\in\RR^n$ and $u(\cdot,\mu)\in C^{2}_b(\RR^n;\RR^{l_1}\otimes \RR^{l_2})$ for any $\mu\in\mathscr{P}_2$, moreover, the derivatives $\partial_{x} u(x,\mu)$, $\partial^2_{xx} u(x,\mu)$,  $\partial_{\mu} u(x,\mu)(z)$, $\partial_{z}\partial_{\mu} u(x,\mu)(z)$ are jointly continuous at any $(x,\mu,z)$ and uniformly bounded w.r.t.~$(x,\mu,z)$.
\end{definition}

\begin{definition} For a matrix-valued map $u(\cdot,\cdot,\cdot): \RR^n\times\mathscr{P}_2 \times\RR^m \to \RR^{l_1}\otimes \RR^{l_2}$, we say $u\in C^{2,(1,1),2}(\RR^n\times\mathscr{P}_2\times\RR^m;\RR^{l_1}\otimes \RR^{l_2})$ if $u(x,\cdot,y)\in C^{(1,1)}(\mathscr{P}_2;\RR^{l_1}\otimes \RR^{l_2})$ for any $(x,y)\in\RR^n\times \RR^m$ and $u(\cdot,\mu,\cdot)\in C^{2,2}(\RR^n\times\RR^m;\RR^{l_1}\otimes \RR^{l_2})$ for any $\mu\in\mathscr{P}_2$. We say $u\in C^{2,(1,1),2}_b(\RR^n\times\mathscr{P}_2\times\RR^m;\RR^{l_1}\otimes \RR^{l_2})$, if $u\in C^{2,(1,1),2}(\RR^n\times\mathscr{P}_2\times\RR^m;\RR^{l_1}\otimes \RR^{l_2})$, moreover the partial derivatives $\partial_{x} u(x,\mu,y)$, $\partial_{y} u(x,\mu,y)$, $\partial^2_{xx} u(x,\mu,y)$, $\partial^2_{xy} u(x,\mu,y)$, $\partial^2_{yy} u(x,\mu,y)$, $\partial_{\mu}u(x,\mu,y)(z)$ and $\partial_z\partial_{\mu}u(x,\mu,y)(z)$ are uniformly bounded w.r.t.  $(x,\mu,y,z)$ and uniformly continuous on $\RR^n\times\mathscr{P}_2 \times\RR^m\times\RR^n$.
\end{definition}

\vspace{0.1cm}
We assume that the maps
\begin{eqnarray*}
&&b: \RR^n\times\mathscr{P}_2\times\RR^m \rightarrow \RR^{n};\\
&& \sigma: \RR^n\times \mathscr{P}_2\rightarrow \RR^{n}\otimes\RR^{d_1};\\
&&f:\RR^n\times\mathscr{P}_2\times\RR^m\rightarrow \RR^{m};\\
&&g:\RR^n\times\mathscr{P}_2\times\RR^m\rightarrow \RR^{m}\otimes\RR^{d_2}
\end{eqnarray*}
satisfy the following conditions.

\smallskip
\noindent
\begin{conditionA}\label{A1} Suppose that there exist constants $C,\gamma>0$ such that for all $x,x_1,x_2\in\RR^n, \mu,\mu_1,\mu_2\in \mathscr{P}_2, y,y_1,y_2\in\RR^m$,
\begin{eqnarray}
&&|b(x_1, \mu_1, y_1)-b(x_2, \mu_2, y_2)|+\|\sigma(x_1,\mu_1)-\sigma(x_2,\mu_2)\|\nonumber\\
\leq \!\!\!\!\!\!\!\!&&C\big[|x_1-x_2|+|y_1-y_2|+\mathbb{W}_2(\mu_1, \mu_2)\big] , \label{A11}
\end{eqnarray}
\begin{eqnarray}
&&|f(x_1,\mu_1, y_1)-f(x_2, \mu_2,y_2)|+\|g(x_1, \mu_1,y_1)-g(x_2, \mu_2, y_2)\|\nonumber\\
\leq \!\!\!\!\!\!&&C\big[|x_1-x_2|+|y_1-y_2|+\mathbb{W}_2(\mu_1, \mu_2)\big].\label{A21}
\end{eqnarray}
Moreover,
\begin{equation}
2\langle f(x,\mu, y_1)-f(x, \mu,y_2), y_1-y_2\rangle\!+5\|g(x, \mu,y_1)-g(x,\mu, y_2)\|^2\!\leq -\gamma|y_1-y_2|^2.\label{sm}
\end{equation}

\end{conditionA}

\smallskip
\noindent
\begin{conditionA}\label{A2}
Suppose that $b\in C_b^{2,(1,1),2}(\RR^n\times \mathscr{P}_2\times\RR^m;\RR^n)$, $\sigma\in C_b^{2,(1,1)}(\RR^n\times \mathscr{P}_2;\RR^{n}\otimes\RR^{d_1})$, $f\in C^{2,(1,1),2}_b(\RR^n\times \mathscr{P}_2\times\RR^m;\RR^m)$ and $g\in C^{2,(1,1),2}_b(\RR^n\times \mathscr{P}_2\times\RR^m;\RR^{m}\otimes\RR^{d_2})$. Moreover, there exist constants $C>0$  and $\gamma_1\in (0,1]$ such that for all $y_1,y_2\in\RR^m$,
\begin{eqnarray}
\sup_{x\in\RR^n,\mu\in\mathscr{P}_2,z\in\RR^m}\|\partial_{\mu} F(x, \mu, y_1)(z)-\partial_{\mu} F(x, \mu, y_2)(z)\|\leq C|y_1-y_2|^{\gamma_1} , \label{A40}
\end{eqnarray}
\begin{eqnarray}
\sup_{x\in\RR^n,\mu\in\mathscr{P}_2}\|\partial^2_{xx} F(x, \mu, y_1)-\partial^2_{xx} F(x, \mu, y_2)\|\leq C|y_1-y_2|^{\gamma_1} , \label{A41}
\end{eqnarray}
\begin{eqnarray}
\sup_{x\in\RR^n,\mu\in\mathscr{P}_2}\|\partial^2_{xy} F(x, \mu, y_1)-\partial^2_{xy} F(x, \mu, y_2)\|\leq C|y_1-y_2|^{\gamma_1} , \label{A42}
\end{eqnarray}
\begin{eqnarray}
\sup_{x\in\RR^n,\mu\in\mathscr{P}_2}\|\partial^2_{yy} F(x, \mu, y_1)-\partial^2_{yy} F(x, \mu, y_2)\|\leq C|y_1-y_2|^{\gamma_1} , \label{A431}
\end{eqnarray}
\begin{eqnarray}
\sup_{x\in\RR^n,\mu\in\mathscr{P}_2}\|\partial_{z}\partial_{\mu} F(x, \mu, y_1)-\partial_{z}\partial_{\mu} F(x, \mu, y_2)\|_{L^2(\mu)}\leq C|y_1-y_2|^{\gamma_1} , \label{A44}
\end{eqnarray}
\begin{eqnarray}
\sup_{x\in\RR^n,\mu\in\mathscr{P}_2,y\in\RR^m}\|\partial_{\mu}\partial_y F(x,\mu,y)\|_{L^2(\mu)}\leq C , \label{A45}
\end{eqnarray}
where $F$ represents $b, f$ and $g$ respectively.
\end{conditionA}

\smallskip
\noindent
\begin{conditionA}\label{A3}
Suppose that $f$ and $g$ satisfy
\begin{equation}\label{32}
~\sup_{x\in\RR^n,\mu\in\mathscr{P}_2}|f(x,\mu,0)|<\infty, ~
\sup_{x\in\RR^n,\mu\in\mathscr{P}_2}\|g(x,\mu,0)\|<\infty.
\end{equation}
\end{conditionA}

\begin{conditionA}\label{A4}
There exists a constant $C>0$ such that for any $x\in \RR^n,\mu\in\mathscr{P}_2$,
\begin{equation}\label{h6}
\sup_{y\in \RR^m}\|g(x,\mu,y)\|\leq C\left(1+|x|+[\mu(|\cdot|^2)]^{\frac{1}{2}}\right).
\end{equation}

\end{conditionA}

\begin{conditionA}\label{A5}
The scale parameters $\delta$ and $\varepsilon=\varepsilon(\delta)$ satisfy
\begin{equation}\label{h5}
\lim_{\delta\to 0}  \varepsilon/\delta = 0 .
\end{equation}
\end{conditionA}

In the following, we give some comments on the conditions above.
\begin{remark}\label{R2.1}
(i) By the same argument as in the proof of \cite[Theorem 2.2]{RSX1}, for any fixed $\varepsilon, \delta>0$ and $\mathscr{F}_0$-measurable initial values $\xi\in L^2(\Omega;\RR^n), \zeta\in L^2(\Omega;\RR^m)$, \eref{A11} and (\ref{A21}) ensure the existence and uniqueness of strong solutions $\{(X^{\varepsilon,\delta}_t,Y^{\varepsilon,\delta}_t)\}_{t\geq 0}$ to system \eref{E2}.

(ii) Conditions \eref{A21} and \eref{sm} imply that for any $\beta\in (0,\gamma)$, there exists $C_{\beta}>0$  such that for any $x\in\RR^n, y\in\RR^m$, $\mu\in\mathscr{P}_2$,
\begin{eqnarray}
 2\langle f(x,\mu,y), y\rangle+5\|g(x,\mu, y)\|^2\leq -\beta|y|^2+C_{\beta}\left[1+|x|^2+\mu(|\cdot|^2)\right].\label{RE3}
\end{eqnarray}
Moreover, condition \eref{sm} can guarantee the existence and uniqueness of invariant measures for the corresponding frozen equation and the solution of \eref{E2} has finite sixth moment. If $g$ satisfies the sublinear growth w.r.t.~variable $y$, i.e. there is $\kappa\in[0,1)$ such that
\begin{eqnarray*}
\|g(x, \mu,y)\|\!\leq C(1+|x|+(\mu(|\cdot|^2))^{\frac{1}{2}}+|y|^\kappa),
\end{eqnarray*}
then (\ref{sm}) could  be modified into a more natural form as follows
\begin{equation*}
2\langle f(x,\mu, y_1)-f(x, \mu,y_2), y_1-y_2\rangle\!+\|g(x, \mu,y_1)-g(x,\mu, y_2)\|^2\!\leq -\gamma|y_1-y_2|^2.
\end{equation*}

(iii) Condition ${\mathbf{A\ref{A2}}}$ is used to study the regularity of solutions of Poisson equation (cf.~\cite[Proposition 4.1]{RSX1}), which plays an important role in the proof of central limit type theorem. Motivated from \cite[Example 2.16]{Lacker} and \cite[Example 5.1]{RSX1}, we here present a simple example of distribution dependent coefficients such that \eref{A40}-\eref{A45} hold.

Let $F_0:\RR^n\times\RR^m\rightarrow \RR^n$ satisfy that all its first and second order partial derivatives $\partial_{x}F_0(x,y)$, $\partial_{y}F_0(x,y)$, $\partial^2_{xx}F_0(x,y)$, $\partial^2_{xy}F_0(x,y)$ and $\partial^2_{yy}F_0(x,y)$ are uniformly bounded and Lipschitz continuous w.r.t. $(x,y)\in \RR^n\times\RR^m$.
Now we define the following map
$$F(x,\mu,y):=\int_{\RR^n}F_0(x+z,y)\mu(dz),$$
then we have $\partial_{\mu}F(x,\mu,y)(\cdot)=\partial_x F_0(x+\cdot,y)$, $\partial_{z}\partial_{\mu}F(x,\mu,y)(z)=\partial^2_{xx} F_0(x+z,y)$ and $\partial_{\mu}\partial_y F(x,\mu,y)=\partial^2_{xy} F_0(x+\cdot,y)$.
Hence, we can easily check that the map $F$ satisfies \eref{A40}-\eref{A45} with $\gamma_1=1$.

(iv) Condition ${\mathbf{A\ref{A3}}}$ together with the uniform boundedness of $\partial_{y}f(x,\mu,y)$ and $\partial_{y}g(x,\mu,y)$ ensure that $f(x,\mu,y)$ and $g(x,\mu,y)$ are uniformly bounded and linear growth w.r.t $(x,\mu)$ and $y$ respectively, i.e. there exists $C>0$ such that for any $x\in\RR^n,\mu\in \mathscr{P}_2,y\in\RR^m$,
$$
|f(x,\mu,y)|+\|g(x,\mu,y)\|\leq C(1+|y|),
$$
which is used to estimate $\mathbb{E}\Big[\sup_{t\in [0, T]}|Y_{t}^{\varepsilon}|^{4}\Big]$ (see details in Lemma \ref{PMY} below).

(v) Conditions ${\mathbf{A\ref{A4}}}$-${\mathbf{A\ref{A5}}}$ are used to investigate the LDP for the McKean-Vlasov system (\ref{E2}). More precisely, ${\mathbf{A\ref{A4}}}$ is used to derive some a priori estimates for the control equation (\ref{control1}). For the scale assumption ${\mathbf{A\ref{A5}}}$, we have already mentioned in the introduction.

%\end{itemize}
\end{remark}

In the following, we give some comments on the applications of the slow-fast system (\ref{E2}) with $\delta=1$, in particular, we give some examples on the confining or interaction potentials from the perspective of the multi-scale interacting particle systems.
\begin{example}

The multi-scale interacting particle systems have received more and more attention in the literature. For example,
Gomes and Pavliotis \cite{GP} studied the following multi-scale interacting diffusions
\begin{equation}\label{ree1}
d X_t^{\varepsilon,i} =-\nabla V^\varepsilon(X_t^{\varepsilon,i})dt-\frac{\theta}{N}\sum_{j=1}^N\nabla F(X_t^{\varepsilon,i}-X_t^{\varepsilon,j})dt+\sqrt{2\beta^{-1}}d W^i_t,
\end{equation}
where $V^\varepsilon(x):=V(x,x/\sqrt{\varepsilon}):\mathbb{R}\rightarrow\mathbb{R}$ is the confining potential with a fast fluctuating,
$F:\mathbb{R}\rightarrow\mathbb{R}$ is the interaction potential, $\theta,\beta$ are some constants.

The McKean-Vlasov SDE associated with (\ref{ree1}) is
\begin{equation}\label{re05}
d X_t^{\varepsilon} =\big(-\nabla V^\varepsilon(X_t^{\varepsilon})-\theta \mathbb{E}\big[\nabla F(x-X_t^{\varepsilon})\big]|_{x=X_t^{\varepsilon}}\big)dt+\sqrt{2\beta^{-1}}d W_t.
\end{equation}
Take the auxiliary variable $Y_t^\varepsilon:=X_t^\varepsilon/\sqrt{\varepsilon}$,  we can rewrite (\ref{re05}) as the following multi-scale McKean-Vlasov stochastic system
\begin{equation}\left\{\begin{array}{l}\label{res2}
\displaystyle
d X^{\varepsilon}_t = \Big[-\partial_x V(X_t^{\varepsilon},Y_t^{\varepsilon})- \partial_y V(X_t^{\varepsilon},Y_t^{\varepsilon})-\theta \mathbb{E}\big[\nabla F(x-X_t^{\varepsilon})\big]|_{x=X_t^{\varepsilon}}\Big]dt+\sqrt{2\beta^{-1}}d W_t, \\
d Y^{\varepsilon}_t = \Big[-\frac{1}{\varepsilon}\partial_y V(X_t^{\varepsilon},Y_t^{\varepsilon})-\frac{1}{\sqrt{\varepsilon}}\partial_x V(X_t^{\varepsilon},Y_t^{\varepsilon})-\frac{\theta}{\sqrt{\varepsilon}} \mathbb{E}\big[\nabla F(x-X_t^{\varepsilon})\big]|_{x=X_t^{\varepsilon}}\Big]dt+\sqrt{\frac{2\beta^{-1}}{\varepsilon}}d W_t.
\end{array}\right.
\end{equation}

In \cite{GP}, some typical examples of potentials are also given. For the multi-scale potential in (\ref{re05}), the authors considered the confining potential perturbed by a fast periodic fluctuations such as
$$V^\varepsilon(x):=\frac{x^4}{4}-\frac{x^2}{2}+c_0\cos(x/\sqrt{\varepsilon})~~\text{or}~~V^\varepsilon(x):=\frac{x^2}{2}+c_0\cos(x/\sqrt{\varepsilon}).$$
For the interaction potential in (\ref{re05}), the authors considered the Curie-Weiss quadratic interaction potential $$F(x):=\frac{x^2}{2}.$$

Moreover, Delgadino at al.~in \cite{DGP} studied the multi-scale interacting particle system corresponding to the following McKean-Vlasov equation
\begin{equation*}
d X_t^{\varepsilon} =\big(-\nabla V(X_t^{\varepsilon}/\sqrt{\varepsilon})-\theta \mathbb{E}\big[\nabla F\big((x-X_t^{\varepsilon})/\sqrt{\varepsilon}\big)\big]|_{x=X_t^{\varepsilon}}\big)dt+\sqrt{2\beta^{-1}}d W_t,
\end{equation*}
where $F:\mathbb{R}^{d}\rightarrow\mathbb{R}$ and $V:\mathbb{R}^{d}\rightarrow\mathbb{R}$ are smooth and 1-periodic interaction and confining potentials. In particular, the authors considered the following
potentials
$$V(x):=-\eta\cos(2\pi x),~~~F(x):=-\cos(2\pi x),~~0<\eta<1.$$
Motivated by the aforementioned models, we can investigate the following multi-scale McKean-Vlasov stochastic system
\begin{equation}\left\{\begin{array}{l}\label{res3}
\displaystyle
d X^{\varepsilon}_t = \Big[-\nabla V_1(X_t^{\varepsilon})- \nabla V_2(Y_t^{\varepsilon})-\theta \mathbb{E}\big[\nabla F_1(x-X_t^{\varepsilon})\big]|_{x=X_t^{\varepsilon}}\Big]dt+\sqrt{2\beta^{-1}}d W_t^1, \\
d Y^{\varepsilon}_t = \Big[-\frac{1}{\varepsilon}\nabla V_3(X_t^{\varepsilon})-\frac{1}{\varepsilon}\nabla V_4(Y_t^{\varepsilon})-\frac{\theta}{\varepsilon} \mathbb{E}\big[\nabla F_2(x-X_t^{\varepsilon})\big]|_{x=X_t^{\varepsilon}}\Big]dt+\sqrt{\frac{2\beta^{-1}}{\varepsilon}}d W_t^2,
\end{array}\right.
\end{equation}
where $V_i:\mathbb{R}^d\rightarrow\mathbb{R},i=1,\cdots4$, $F_j:\mathbb{R}^d\rightarrow\mathbb{R},j=1,2$, $\{W^{1}_t\}_{t\geq 0}$ and $\{W^{2}_t\}_{t\geq 0}$ are independent $d$-dimensional standard Brownian motions.

One concrete example satisfying the assumptions ${\mathbf{A\ref{A1}}}$-${\mathbf{A\ref{A4}}}$  for the confining potentials is
$$V_i(x):=\frac{x^2}{2}+c_0\cos(x),i=1,2,~V_3(x):=c_0\cos(x),~V_4(x):=\frac{x^2}{2}.$$
For the interaction potential, we can take the Curie-Weiss quadratic interaction potential as in \cite{GP} and the smooth, 1-periodic potential as in \cite{DGP}, i.e.
$$F_1(x):=\frac{x^2}{2},~~F_2(x):=\cos(2\pi x).$$

\end{example}

\begin{remark}\label{re11}
 In the system \eref{E2},
 when the diffusion coefficient $\sigma$ also depends on the fast component $Y$, the strong convergence ($L^2$-sense) of the averaging principle in general does not hold for the slow-fast system (see Section 4.1 in \cite{Liud10} for a counterexample), and in this work we will use the strong convergence result (Theorem 2.5 in \cite{RSX1}) in our proof, thus the diffusion coefficient $\sigma$ considered in this work does not depend on $Y$.
 Here, the coefficients do not depend on the law of $Y$ mainly because of the technical difficulties in the proof. In fact, if the coefficients $f,g$ of the fast component depend on the law of $Y$, then the corresponding frozen equation has the following form
 $$dY_t=f(x,\mu,Y_t,\mathscr{L}_{Y_t})dt+g(x,\mu,Y_t,\mathscr{L}_{Y_t})d\tilde{W}^2_t,$$
thus the solution is a  time-inhomogeneous Markov process. However, the techniques used in this paper strongly rely on the time-homogeneous property.
%This is the main reason why the fast equation does not depend on  the law of $Y$.
\end{remark}

\subsection{Central limit type theorem}
In this subsection, we investigate the central limit type theorem for slow-fast McKean-Vlasov stochastic system. Recall that $(X^{\varepsilon},Y^{\varepsilon})$ is the solution of system \eref{E2} with $\delta=1$.
The authors \cite{RSX1} have proved the following error estimate of the averaging principle for stochastic system \eref{E2},
\begin{eqnarray}\label{33}
\sup_{t\in[0,T]}\EE|X^{\varepsilon}_t-\bar{X}_t|^2\leq C_T\big(1+\EE|\xi|^4+\EE|\zeta|^4\big)\varepsilon.
\end{eqnarray}
In this paper, we want to further analyze the deviation between the slow component $X^{\varepsilon}$ and the solution $\bar{X}$ of the averaged equation, more precisely, we show that the process
$$Z_t^\varepsilon:=\frac{X^{\varepsilon}_t-\bar{X}_t}{\sqrt{\varepsilon}}$$
converges weakly to a limiting process $Z$ as $\varepsilon\to 0$ in $C([0,T];\RR^n)$.
%As the continuation of we are interested in the normalized difference of $X^{\varepsilon}$ from the averaged process $\bar{X}$ of (\ref{1.3}), i.e.,

In order to state the main result, we introduce the following Poisson equation which depends on parameters $(x,\mu)\in \RR^n\times \mathscr{P}_2$,
\begin{equation}
-\mathcal{L}_{2}(x,\mu)\Phi(x,\mu,y)=b(x,\mu,y)-\bar{b}(x,\mu),\label{PE}
\end{equation}
where $\Phi(x,\mu,y):=(\Phi_1(x,\mu,y),\ldots, \Phi_n(x,\mu,y))$,
$$\mathcal{L}_{2}(x,\mu)\Phi(x,\mu,y):=(\mathfrak{L}_{2}(x,\mu)\Phi_1(x,\mu,y),\ldots, \mathfrak{L}_{2}(x,\mu)\Phi_n(x,\mu,y))$$
and for any $k=1,\ldots,n,$
\begin{eqnarray}\label{inf1}
\mathfrak{L}_{2}(x,\mu)\Phi_k(x,\mu,y):=\langle f(x,\mu,y), \partial_y \Phi_k(x,\mu,y)\rangle+\frac{1}{2}\text{Tr}[g g^{*}(x,\mu,y)\partial^2_{yy} \Phi_k(x,\mu,y)].
\end{eqnarray}
According to \cite[Proposition 4.1]{RSX1}, under the conditions ${\mathbf{A\ref{A1}}}$ and ${\mathbf{A\ref{A2}}}$, (\ref{PE}) admits a unique solution $\Phi(x,\mu,y)$ satisfying $\Phi(\cdot,\mu,\cdot)\in C^{2,2}(\RR^n\times\RR^m;\RR^n)$ and $\Phi(x,\cdot,y)\in C^{(1,1)}(\mathscr{P}_2; \RR^n)$. We denote
\begin{eqnarray}
\partial_y\Phi_g(x,\mu,y)u:=\partial_y\Phi(x,\mu,y)\cdot g(x,\mu,y)u,\quad u\in\RR^{d_2}, \label{partial Phig}
\end{eqnarray}
then $\partial_y\Phi_g(x,\mu,y)\in \RR^{n}\otimes\RR^{d_2}$ and
\begin{equation}\label{14}
\overline{(\partial_y\Phi_g)(\partial_y\Phi_g)^*}(x,\mu):=\int_{\RR^m}(\partial_y\Phi_g(x,\mu,y))(\partial_y\Phi_g(x,\mu,y))^*\nu^{x,\mu}(dy).
\end{equation}

\vskip 0.2cm
Now we state our first main result, which could be viewed as a central limit type theorem.

\begin{theorem}\label{main result 2}
Suppose that conditions  ${\mathbf{A\ref{A1}}}$-${\mathbf{A\ref{A3}}}$ hold.
%and the coefficients $f,g$ satisfy
%\begin{equation}\label{32}
%~\sup_{x\in\RR^n,\mu\in\mathscr{P}_2}|f(x,\mu,0)|<\infty, ~
%\sup_{x\in\RR^n,\mu\in\mathscr{P}_2}\|g(x,\mu,0)\|<\infty.
%\end{equation}
Then for any initial values $\xi\in L^6(\Omega;\RR^n)$, $\zeta\in L^6(\Omega;\RR^m)$,   $\{Z^{\varepsilon}\}_{\varepsilon>0}$ converges weakly in $C([0,T];\RR^n)$  to the solution of following equation as $\varepsilon\to 0$,
\begin{eqnarray}\label{e5}
dZ_t= \!\!\!\!\!\!\!\!&& \partial_x\bar{b}(\bar{X}_t,\mathscr{L}_{\bar{X}_t})\cdot Z_tdt
+\EE\Big[\partial_{\mu} \bar{b}(u,\mathscr{L}_{\bar{X}_t})(\bar{X}_t)\cdot Z_t\Big]\Big|_{u=\bar{X}_t}dt+\left[\partial_x\sigma(\bar{X}_t,\mathscr{L}_{\bar{X}_t})\cdot Z_t\right]dW_t^1
 \nonumber\\
 \!\!\!\!\!\!\!\!&& + \EE\Big[\partial_{\mu} \sigma(u,\mathscr{L}_{\bar{X}_t})(\bar{X}_t)\cdot Z_t\Big]\Big|_{u=\bar{X}_t}dW_t^1+\Theta(\bar{X}_t,\mathscr{L}_{\bar{X}_t})dW_t,~Z_0=0,
\end{eqnarray}
where $\Theta(x,\mu):=\Big(\overline{(\partial_y\Phi_g)(\partial_y\Phi_g)^*}\Big)^{\frac{1}{2}}(x,\mu)$, $\bar{X}$ is the solution of equation \eref{1.3} and $\{W_t\}_{t\geq 0}$ is a $n$-dimensional standard Brownian motion which is defined on probability space $(\Omega, \mathscr{F},\mathbb{P})$ and independent of $\{W^1_t\}_{t\geq 0}$.
\end{theorem}

\begin{remark}
In contrast to the existing works \cite{C09,K2},  where the authors considered the slow-fast system with vanishing  noise in the slow equation and the fast equation does not depend on slow variable. In the present work, we consider more general case, i.e. both slow and fast components are fully coupled and driven by multiplicative noise, and meanwhile the coefficients also depend on the distribution. Moreover, we want to point out that in \cite{C09, K2} the extra term in the limiting process is a Gaussian process characterized by its mean and covariance, here  we provide explicit representation of this extra term in (\ref{e5}) by using the solution to Poisson equation (\ref{PE}).
\end{remark}

\subsection{Large deviation principle}

For reader's convenience, we recall some necessary notations. Let us define a standard Brownian motion $W_t$ on
$\RR^{d_1+ d_2}$,
$$W_t:=\sum_{k=1}^{d_1+d_2}\beta_k(t)e_k,$$
where $\{e_k\}_{1\leq k\leq d_1+d_2}$ is the canonical basis of $\RR^{d_1+ d_2}$, $\{\beta_k\}_{1\leq k\leq d_1+d_2}$ is a sequence of independent one-dimensional standard Brownian motions. Then we can choose the projection operators $P_1:\RR^{d_1+ d_2}\to \RR^{d_1}$, $P_2:\RR^{d_1+ d_2}\to \RR^{d_2}$ such that
\begin{equation}\label{bm}
W_t^1:=P_1W_t,~W_t^2:=P_2W_t.
\end{equation}
The Cameron-Martin space $\mathcal{H}_0$ associated with $\{W_t,t\in[0,T]\}$ is given as follows
\begin{eqnarray}\label{CM1}
\mathcal{H}_0:=
\!\!\!\!\!\!\!\!&&\Bigg\{h:[0,T]\to\RR^{d_1+ d_2}\Big|~\text{there exists}~\dot{h}\in L^2([0,T];\RR^{d_1+d_2})
\nonumber \\
\!\!\!\!\!\!\!\!&&~~~~~\text{such that}~ h_t=\int_0^t\dot{h}_s d s,t\in[0,T] \Bigg\},
\end{eqnarray}
where the $\dot{h}$ denotes the weak derivative of $h$, then $\mathcal{H}_0$ is a Hilbert space with the scalar product
$$\langle h^1, h^2\rangle_0 :=\int_0^T\langle\dot{ h}^1_t,\dot{ h}^2_t\rangle d t .$$

The main result of LDP is formulated as follows.
\begin{theorem}\label{t3}
Suppose that the conditions ${\mathbf{A\ref{A1}}}$, ${\mathbf{A\ref{A4}}}$ and ${\mathbf{A\ref{A5}}}$ hold.
Then for any initial values $x\in\RR^n,y\in\RR^m$, $\{X^{\delta,\varepsilon}\}_{\delta>0}$
satisfies the LDP in $C([0,T]; \RR^n)$ with the
good rate function $I$ given by
\begin{equation}\label{rf1}
I(f)=\inf_{\left\{h\in \mathcal{H}_0:\  f=\mathcal{G}^0(h)\right\}}\left\lbrace\frac{1}{2}
\int_0^T|\dot{h}_t|^2 d t \right\rbrace,
\end{equation}
where infimum over an empty set is taken as $+\infty$, and map $\mathcal{G}^0: C([0,T]; \RR^{d_1+d_2})\rightarrow C([0,T]; \RR^n)$ is defined by
\begin{equation*}\label{g1}
\mathcal{G}^0(h):=\left\{ \begin{aligned}
&\bar{X}^{h},~~h\in\mathcal{H}_0,\\
&0,~~~~\text{otherwise}.
\end{aligned} \right.
\end{equation*}
Here $\bar{X}^{h}$ is the solution of (\ref{ske}).

\end{theorem}

\begin{remark}
In order to prove the LDP result for the multi-scale McKean-Vlasov SDEs (\ref{E2}), we want to employ the weak convergence approach developed by Budhiraja and Dupuis (cf.~\cite{BD,BDM}). However, once we consider the McKean-Vlasov SDEs, the classical Yamada-Watanabe theorem  is not directly applicable (cf.~\cite{WFY}). Thus a key step is to find a measurable map such that the solution could be represented by a functional of Brownian motions (see subsection \ref{sec5.1} for the details).
\end{remark}

%\section{Key lemmas}
%\setcounter{equation}{0}
% \setcounter{definition}{0}
\section{Proof of central limit type theorem}
\setcounter{equation}{0}
 \setcounter{definition}{0}
In this section, we give a detailed proof of Theorem \ref{main result 2}.  To do this, we first derive some crucial a priori estimates for process  $(X^{\varepsilon}, Y^{\varepsilon})$, and establish some regularity result of the solution to the Poisson equation (\ref{PE})(see subsection \ref{S4.0}). Secondly, we construct an auxiliary $\eta^{\varepsilon}$ and show that  the difference $Z^\varepsilon-\eta^{\varepsilon}$ converges to $0$ in $C([0,T],\RR^n)$, as $\varepsilon\rightarrow 0$ (see subsection \ref{S4.1}). Thirdly, we show that $\eta^{\varepsilon}$ is tight in $C([0,T],\RR^n)$ (see subsection \ref{S4.2}). Finally, we identify the weak limiting process of any subsequence $\{\eta^{\varepsilon_n}\}_{n\geq 1}$ with $\varepsilon_n \downarrow 0$ (see subsection \ref{S4.3}).

 \subsection{A priori estimates}\label{S4.0}
%In the following, we show some uniform bounds $w.r.t.$ $\varepsilon \in (0,1)$ for the solution $(X_{t}^{\varepsilon}, Y_{t}^{\varepsilon})$
%to system (\ref{E2}).
\begin{lemma} \label{PMY}
Suppose that the condition ${\mathbf{A\ref{A1}}}$ holds. For any $T>0$ with $0\leq t\leq t+h\leq T$, there exists  a constant $ C_T>0$ such that
\begin{eqnarray}
&&\sup_{\varepsilon\in(0,1)}\mathbb{E}\left[\sup_{t\in [0, T]}|X_{t}^{\varepsilon}|^{6}\right]\leq C_{T}(1+\EE|\xi|^{6}+\EE|\zeta|^{6});\label{X}\\
&&\sup_{\varepsilon\in(0,1)}\sup_{t\in [0, T]}\mathbb{E}|Y_{t}^{\varepsilon}|^{6}\leq C_{T}(1+\EE|\xi|^{6}+\EE|\zeta|^{6});\label{Y}\\
&&\sup_{\varepsilon\in(0,1)}\mathbb{E}|X_{t+h}^{\varepsilon}-X_{t}^{\varepsilon}|^{6}\leq C_{T}(1+\EE|\xi|^6+\EE|\zeta|^6)h^3.\label{COX}
\end{eqnarray}
Furthermore, if the condition (\ref{32}) holds, then there exists $C>0$ such that for small enough $\ep$ we have
\begin{eqnarray}\label{Y1}
\mathbb{E}\left[\sup_{t\in [0, T]}|Y_{t}^{\varepsilon}|^{4}\right]\leq \frac{C(1+\EE|\zeta|^4)T}{\ep}.
\end{eqnarray}
\end{lemma}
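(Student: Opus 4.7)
The plan is to apply Itô's formula to $|X_t^\varepsilon|^p$ and $|Y_t^\varepsilon|^p$ for appropriate $p$, combine the strong dissipativity of the fast drift coming from (\ref{sm})/(\ref{RE3}) with the Burkholder--Davis--Gundy (BDG) and Grönwall inequalities, treat (\ref{X}), (\ref{Y}), (\ref{COX}) jointly, and handle (\ref{Y1}) separately via an absorption argument that exploits the extra assumption (\ref{32}).

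\textbf{Joint proof of (\ref{X}), (\ref{Y}), (\ref{COX}).} First, applying Itô to $|Y_t^\varepsilon|^6$ and using (\ref{RE3}) gives an infinitesimal drift of $\mathbb{E}|Y_t^\varepsilon|^6$ of the form $\varepsilon^{-1}\bigl[-c_1\mathbb{E}|Y_t^\varepsilon|^6 + c_2(1+\mathbb{E}|X_t^\varepsilon|^6+(\mathbb{E}|X_t^\varepsilon|^2)^3)\bigr]$, whose $-c_1/\varepsilon$ coefficient provides exponential damping; variation of constants yields
\begin{equation*}
\sup_{t\leq T}\mathbb{E}|Y_t^\varepsilon|^6 \leq C\mathbb{E}|\zeta|^6 + C\sup_{t\leq T}\bigl(1+\mathbb{E}|X_t^\varepsilon|^6\bigr).
\end{equation*}
Next, Jensen's inequality on the drift of (\ref{mild solution}), BDG on the stochastic integral, and the linear growth of $b,\sigma$ inherited from (\ref{A11}) give
\begin{equation*}
\mathbb{E}\sup_{t\leq T}|X_t^\varepsilon|^6 \leq C\mathbb{E}|\xi|^6 + C_T + C_T\int_0^T\Bigl(\mathbb{E}\sup_{s'\leq s}|X_{s'}^\varepsilon|^6 + \mathbb{E}|Y_s^\varepsilon|^6\Bigr) ds;
\end{equation*}
inserting the previous line and applying Grönwall produces (\ref{X}), and substituting back gives (\ref{Y}). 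The increment bound (\ref{COX}) is then immediate: split $X_{t+h}^\varepsilon - X_t^\varepsilon$ into drift and stochastic integral; Hölder controls the former by $C_T h^6 \leq C_T T^3 h^3$, BDG controls the latter by $Ch^2 \int_t^{t+h}\mathbb{E}\|\sigma\|^6 ds \leq C_T h^3$, and (\ref{X})--(\ref{Y}) bound the sixth moments of $b$ and $\sigma$.

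\textbf{The delicate bound (\ref{Y1}).} Under (\ref{32}) the Lipschitz condition (\ref{A21}) forces $|f(x,\mu,y)|+\|g(x,\mu,y)\| \leq C(1+|y|)$ with no $(x,\mu)$ dependence. Applying Itô to $|Y_t^\varepsilon|^4$ and using (\ref{sm}) together with (\ref{32}), the drift is bounded pointwise by $\varepsilon^{-1}(C-c_1|Y_t^\varepsilon|^4)$; dropping the non-positive integrated drift and taking the supremum in time leaves
\begin{equation*}
\sup_{t\leq T}|Y_t^\varepsilon|^4 \leq |\zeta|^4 + \frac{CT}{\varepsilon} + \sup_{t\leq T}|M_t|,
\end{equation*}
where $M_t = 4\varepsilon^{-1/2}\int_0^t |Y_s^\varepsilon|^2\langle Y_s^\varepsilon, g\, dW_s^2\rangle$. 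BDG combined with $\|g\|^2 \leq C(1+|Y^\varepsilon|^2)$ gives
\begin{equation*}
\mathbb{E}\sup_{t\leq T}|M_t| \leq C\,\mathbb{E}\Bigl(\sup_{s\leq T}|Y_s^\varepsilon|^4 \cdot \frac{1}{\varepsilon}\int_0^T\bigl(|Y_s^\varepsilon|^2+|Y_s^\varepsilon|^4\bigr) ds\Bigr)^{1/2},
\end{equation*}
and Young's inequality absorbs $\tfrac{1}{2}\mathbb{E}\sup_t|Y_t^\varepsilon|^4$ into the left-hand side, leaving a residue of order $C\varepsilon^{-1}T(1+\mathbb{E}|\zeta|^4)$ by the pointwise estimate (\ref{Y}). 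For $\varepsilon$ sufficiently small the additive $\mathbb{E}|\zeta|^4$ term is dominated by $CT(1+\mathbb{E}|\zeta|^4)/\varepsilon$, yielding (\ref{Y1}).

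\textbf{Main obstacle.} The pivotal step is (\ref{Y1}): without the extra assumption (\ref{32}), the bound on $\|g\|^2$ would carry a factor $1+|X^\varepsilon|^2+\mathbb{W}_2(\mathscr{L}_{X_t^\varepsilon},\delta_0)^2$, so the BDG inequality would re-introduce $\mathbb{E}\sup_t|X_t^\varepsilon|^2$ on the right-hand side and the $1/\varepsilon$ growth could not be closed in terms of $\mathbb{E}|\zeta|^4$ alone; (\ref{32}) is precisely what decouples the fast equation from the slow variables at the level of a priori sup-bounds.
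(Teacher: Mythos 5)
Your proposal is correct and follows essentially the same route as the paper: the paper delegates \eref{X}--\eref{COX} to the standard It\^o/BDG/Gronwall argument of \cite[Lemmas 3.1, 3.2]{RSX1} (which is what you reproduce), and its proof of \eref{Y1} is your computation after the change of variables $\tilde{Y}^\varepsilon_t=Y^\varepsilon_{\varepsilon t}$, which turns your explicit $1/\varepsilon$ bookkeeping into the statement that the sup-bound for the rescaled process grows linearly in the horizon $T/\varepsilon$. One small fix: in the last step of \eref{Y1} you should not invoke \eref{Y} (which is a sixth-moment bound carrying $\EE|\xi|^6$) to control the residue, but rather the pointwise fourth-moment bound $\sup_{t}\EE|Y^\varepsilon_t|^4\leq C(1+\EE|\zeta|^4)$, which follows directly from the same dissipativity estimate $\frac{d}{dt}\EE|Y^\varepsilon_t|^4\leq \varepsilon^{-1}(C-c_1\EE|Y^\varepsilon_t|^4)$ you already derived under \eref{32}; this is exactly how the paper obtains a constant depending only on $\EE|\zeta|^4$.
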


\begin{proof}
By a small modification in \cite[Lemmas 3.1, 3.2]{RSX1}, we can easily prove \eref{X}, \eref{Y} and \eref{COX}. Here we only need to prove (\ref{Y1}).

We consider process $\tilde{Y}^\varepsilon_t:=Y^{\varepsilon}_{\varepsilon t}$ that solves the following equation
$$d \tilde{Y}^\varepsilon_t=f(X_{t\varepsilon}^\varepsilon,\mathscr{L}_{X_{t\varepsilon}^\varepsilon},\tilde{Y}^\varepsilon_t)dt+g(X_{t\varepsilon}^\varepsilon,\mathscr{L}_{X_{t\varepsilon}^\varepsilon},\tilde{Y}^\varepsilon_t)d\tilde{W}^2_t,~\tilde{Y}^\varepsilon_0=\zeta,$$
where $\tilde{W}^2_t:=\frac{1}{\sqrt{\varepsilon}}W^2_{t\varepsilon}$ that coincides in law with $W^2_t$.

By It\^{o}'s formula, we have
\begin{eqnarray}
|\tilde{Y}_{t}^{\varepsilon}|^{4}=\!\!\!\!\!\!\!\!&&|\zeta|^{4}+4\int_{0} ^{t}|\tilde{Y}_{s}^{\varepsilon}|^{2}\langle f(X_{s\ep}^{\ep},\mathscr{L}_{X_{s\ep}^{\ep}},\tilde{Y}_{s}^{\ep}),\tilde{Y}_{s}^{\ep}\rangle ds+4\int_{0} ^{t}|\tilde{Y}_{s}^{\varepsilon}|^{2}\langle \tilde{Y}_{s}^{\ep}, g(X_{s\ep}^{\ep},\mathscr{L}_{X_{s\ep}^{\ep}},\tilde{Y}_{s}^{\ep})d\tilde{W}^2_s\rangle  \nonumber\\
&&+2\int_{0} ^{t}|\tilde{Y}_{s}^{\ep}|^{2}\|g(X_{s\ep}^{\ep}, \mathscr{L}_{X_{s\ep}^{\ep}}, \tilde{Y}_{s}^{\ep})\|^2ds\!+4\int_{0} ^{t}|g^{*}(X^{\varepsilon}_{s\varepsilon},\mathscr{L}_{X^{\varepsilon}_{s\varepsilon}}, \tilde{Y}^{\varepsilon}_{s})\tilde{Y}^{\varepsilon}_{s}|^2 ds.\label{ITO}
\end{eqnarray}
Taking expectation on both sides of (\ref{ITO}), we get
\begin{eqnarray*}
\frac{d}{dt}\mathbb{E}|\tilde{Y}_{t}^{\varepsilon}|^{4}= \!\!\!\!\!\!\!\!&&   4\mathbb{E}\Big[|\tilde{Y}_{t}^{\varepsilon}|^{2}\langle f(X_{t\ep}^{\ep},\mathscr{L}_{X_{t\ep}^{\ep}},\tilde{Y}_{t}^{\ep}),\tilde{Y}_{t}^{\ep}\rangle\Big]
+2\mathbb{E}\Big[|\tilde{Y}_{t}^{\ep}|^{2}\|g(X_{t\ep}^{\ep}, \mathscr{L}_{X_{t\ep}^{\ep}}, \tilde{Y}_{t}^{\ep})\|^2\Big]\nonumber\\
\!\!\!\!\!\!\!\!&&+4\mathbb{E}|g^{*}(X^{\varepsilon}_{t\varepsilon},\mathscr{L}_{X^{\varepsilon}_{t\varepsilon}}, \tilde{Y}^{\varepsilon}_{t})\tilde{Y}^{\varepsilon}_{t}|^2.
\end{eqnarray*}

In view of the conditions \eref{sm} and \eref{32}, we can deduce that for any $\beta\in(0,\gamma)$, there exists $C_{\beta}>0$,
\begin{eqnarray}\label{RE3}
 2\langle f(x,\mu,y), y\rangle+3\|g(x,\mu, y)\|^2\leq -\beta |y|^2+C_{\beta}.
\end{eqnarray}
Note that
$$|g^{*}(X^{\varepsilon}_{t\varepsilon},\mathscr{L}_{X^{\varepsilon}_{t\varepsilon}}, \tilde{Y}^{\varepsilon}_{t})\tilde{Y}^{\varepsilon}_{t}|^2 \leq|\tilde{Y}_{t}^{{\varepsilon}}|^{2}\|g(X_{t{\varepsilon}}^{{\varepsilon}}, \mathscr{L}_{X_{t{\varepsilon}}^{{\varepsilon}}}, \tilde{Y}_{t}^{{\varepsilon}})\|^2.$$
Then by (\ref{RE3}) and Young's inequality, we have
\begin{eqnarray}
\frac{d}{dt}\mathbb{E}|\tilde{Y}_{t}^{\varepsilon}|^{4}\leq\!\!\!\!\!\!\!\!&&
2\mathbb{E}\Big[|\tilde{Y}_{t}^{{\varepsilon}}|^{2}\Big(\langle 2f(X_{t{\varepsilon}}^{{\varepsilon}},\mathscr{L}_{X_{t{\varepsilon}}^{{\varepsilon}}},\tilde{Y}_{t}^{{\varepsilon}}),\tilde{Y}_{t}^{{\varepsilon}}\rangle +3\|g(X_{t{\varepsilon}}^{{\varepsilon}}, \mathscr{L}_{X_{t{\varepsilon}}^{{\varepsilon}}}, \tilde{Y}_{t}^{{\varepsilon}})\|^2\Big)\Big]
\nonumber\\\leq\!\!\!\!\!\!\!\!&&-2\beta\mathbb{E}|\tilde{Y}_{t}^{{\varepsilon}}|^{4}+C_{\beta}\mathbb{E}|\tilde{Y}_{t}^{{\varepsilon}}|^{2} \nonumber\\\leq\!\!\!\!\!\!\!\!&&-\beta\mathbb{E}|\tilde{Y}_{t}^{\varepsilon}|^{4}+C_{\beta}. \nonumber
\end{eqnarray}
The Gronwall's lemma implies that
\begin{eqnarray*}
\mathbb{E}|\tilde{Y}_{t}^{\varepsilon}|^{4}\leq \EE|\zeta|^{4}e^{-\beta t}+C\int^t_0 e^{-\beta(t-s)}ds\leq C(1+\EE|\zeta|^{4}).
\end{eqnarray*}
Hence we get
\begin{eqnarray}
\sup_{t\geq 0}\EE|\tilde{Y}^{\varepsilon}_{t}|^{4}\leq C(1+\EE|\zeta|^4).\label{Prior E}
\end{eqnarray}

Note that \eref{ITO} and \eref{RE3} imply that
\begin{eqnarray}
|\tilde{Y}_{t}^{\varepsilon}|^{4}\leq |\zeta|^{4}+Ct+C\left|\int_{0} ^{t}|\tilde{Y}_{s}^{\varepsilon}|^{2}\langle \tilde{Y}_{s}^{\ep}, g(X_{s\ep}^{\ep},\mathscr{L}_{X_{s\ep}^{\ep}},\tilde{Y}_{s}^{\ep})d\tilde{W}^2_s\rangle\right|.
\end{eqnarray}
By Burkholder-Davis-Gundy's inequality and Young's inequality we get
\begin{eqnarray*}
\mathbb{E}\Big[\sup_{t\in[0,T]}|\tilde{Y}_{t}^{\varepsilon}|^{4}\Big]\leq\!\!\!\!\!\!\!\!&&\EE|\zeta|^{4}+CT +C\EE\left[\int_0^T|\tilde{Y}^\varepsilon_s|^{6}(|\tilde{Y}^\varepsilon_s|^{2}+1)ds\right]^{\frac{1}{2}}
\nonumber\\
\leq\!\!\!\!\!\!\!\!&&\EE|\zeta|^{4}+CT +C\EE\left[\sup_{s\in [0,T]}|\tilde{Y}^\varepsilon_s|^{4}\cdot\int_0^T|\tilde{Y}^\varepsilon_s|^{2}(|\tilde{Y}^\varepsilon_s|^{2}+1)ds\right]^{\frac{1}{2}}
\nonumber\\
\leq\!\!\!\!\!\!\!\!&&\EE|\zeta|^{4}+CT +\frac{1}{2}\mathbb{E}\left[\sup_{t\in[0,T]}|\tilde{Y}_{t}^{\varepsilon}|^{4}\right]+\frac{C}{2}\EE\int_0^T|\tilde{Y}^\varepsilon_s|^{2}(|\tilde{Y}^\varepsilon_s|^{2}+1)ds
\nonumber\\
\leq\!\!\!\!\!\!\!\!&&\EE|\zeta|^{4}+CT+\frac{1}{2}\mathbb{E}\left[\sup_{t\in[0,T]}|\tilde{Y}_{t}^{\varepsilon}|^{4}\right]+C\int_0^T\left(\EE|\tilde{Y}^\varepsilon_s|^{4}+1\right)ds.
\end{eqnarray*}
For the last term above, by \eref{Prior E}, we have
$$
C\int_0^T\left(\EE|\tilde{Y}^\varepsilon_s|^{4}+1\right)ds\leq C(1+\EE|\zeta|^4)T.
$$
Hence, we obtain that for any $T\geq 1$,
\begin{eqnarray*}
\mathbb{E}\Big[\sup_{t\in[0,T]}|\tilde{Y}_{t}^{\varepsilon}|^{4}\Big]\leq C(1+\EE|\zeta|^4)T.
\end{eqnarray*}
Hence, it follows that for any $T>0$ and $\varepsilon$ small enough,
\begin{eqnarray*}
\mathbb{E}\Big[\sup_{t\in[0,T]}|Y_{t}^{\varepsilon}|^{4}\Big]=\mathbb{E}\Big[\sup_{t\in\left[0,\frac{T}{\varepsilon}\right]}|\tilde{Y}_{t}^{\varepsilon}|^{4}\Big]\leq\frac{C(1+\EE|\zeta|^4)T}{\ep},
\end{eqnarray*}
which yields (\ref{Y1}).  Hence the proof is complete.       \hspace{\fill}$\Box$
\end{proof}

\begin{remark}
Instead of estimating the fourth moment of solutions in \cite{RSX1}, we here estimate the sixth moment of solutions, since it will be used in the proof of Lemma \ref{L4.5} below. The two estimates \eref{Y} and \eref{Y1} are an exchange of $\sup_{t\in [0,T]}$ and expectation essentially, while their boundedness and proofs are quite different, such as the latter needs additional condition (\ref{32}).
%, where the technique based on the classical Khasminskii's time discretization is applied.
Moreover, we point out that the estimate (\ref{Y1}) is essential in studying the weak limit of  $Z^\varepsilon$ in $C([0,T];\mathbb{R}^n)$ (see Lemma \ref{th1} and Proposition \ref{th2}).
\end{remark}

%\subsection{Regularity of the solution of Poisson equation}
Recall the frozen equation for any fixed $x\in\RR^n$ and $\mu\in\mathscr{P}_2$,
\begin{equation}\left\{\begin{array}{l}\label{FEQ2}
\displaystyle
dY_{t}=f(x, \mu, Y_{t})dt+g(x, \mu, Y_{t})d\tilde{W}_{t}^{2},\\
Y_{0}=y\in\RR^m \\
\end{array}\right.
\end{equation}
where $\{\tilde{W}_{t}^{2}\}_{t\geq 0}$ is a $d_2$-dimensional Brownian motion on another complete probability space $(\tilde{\Omega}, \tilde{\mathscr{F}}, \tilde{\mathbb{P}})$. Under the conditions \eref{A21} and \eref{sm}, we obtain that
$(\ref{FEQ2})$ has a unique strong solution $\{Y_{t}^{x,\mu,y}\}_{t\geq 0}$ and admits a unique invariant measure $\nu^{x,\mu}$ (cf.~\cite[Theorem 4.3.9]{LR1}).

In order to prove our first main result, we need to study the regularity of the solution of Poisson equation (\ref{PE}) w.r.t.~parameters $(x,\mu)$.

\begin{proposition}\label{P3.6}
Suppose that ${\mathbf{A\ref{A1}}}$ and ${\mathbf{A\ref{A2}}}$ hold. Define
\begin{eqnarray}
\Phi(x,\mu,y)=\int^{\infty}_{0} \left[\tilde\EE b(x,\mu,Y^{x,\mu,y}_s)-\bar{b}(x,\mu)\right]ds,\label{SPE}
\end{eqnarray}
%where $Y^{x,\mu,y}$ is the solution of frozen equation (\ref{FEQ2}).
where $\tilde{\EE}$ is the expectation on $(\tilde{\Omega}, \tilde{\mathscr{F}}, \tilde{\mathbb{P}})$. Then $\Phi(x,\mu,y)$ is the unique solution of \eref{PE},  $\Phi\in C^{2,(1,1),2}(\RR^n\times\mathscr{P}_2 \times\RR^m; \RR^n)$ and $\partial_{\mu}\partial_y\Phi(x,\mu,y)$ exists for any $(x,\mu,y)\in \RR^n\times\mathscr{P}_2\times\RR^m $. Moreover, there exists $C>0$ such that
\begin{eqnarray}\label{E1}
&&\!\!\!\!\!\!\!\!\max\left\{|\Phi(x,\mu,y)|,\|\partial_x \Phi(x,\mu,y)\|, \|\partial_{\mu}\Phi(x,\mu,y)\|_{L^2(\mu)},\|\partial^2_{xx} \Phi(x,\mu,y)\|\right.\nonumber\\
&& \quad\left. \|\partial_{z}\partial_{\mu}\Phi(x,\mu,y)(\cdot)\|_{L^2(\mu)}\right \}\leq C\left \{1+|x|+|y|+[\mu(|\cdot|^2)]^{1/2}\right\}.~
\end{eqnarray}
%\begin{eqnarray}\label{E2}
%&&\max\left \{\|\partial^2_{xx} \Phi(x,\mu,y)\| ,\|\partial_{z}\partial_{\mu}\Phi(x,\mu,y)(\cdot)\|_{L^2(\mu)}\right \}\nonumber\\
%\leq\!\!\!\!\!\!\!\!&& C\left \{1+|x|+|y|+[\mu(|\cdot|^2)]^{1/2}\right\}.
%\end{eqnarray}
and
\begin{eqnarray}\label{E3}
&&\!\!\!\!\!\!\!\!\sup_{x\in\RR^n,\mu\in\mathscr{P}_2, y\in \RR^m}\max\left\{\|\partial_y \Phi(x,\mu,y)\|,\|\partial^2_{yy} \Phi(x,\mu,y)\|,\|\partial^2_{xy} \Phi(x,\mu,y)\|, \right.\nonumber\\
&&\quad\quad\quad\quad\quad\quad \quad\quad \left.\|\partial_{\mu}\partial_y\Phi(x,\mu,y)\|_{L^2(\mu)}\right\}\leq C.
\end{eqnarray}

\end{proposition}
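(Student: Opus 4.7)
The plan is the standard Pardoux--Veretennikov ergodic construction of the Poisson solution, extended to the Lions--derivative setting, as carried out in \cite{RSX1}.

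First I would establish the exponential mixing of the frozen equation (\ref{FEQ2}). Applying It\^o's formula to $|Y^{x,\mu,y_1}_s - Y^{x,\mu,y_2}_s|^2$ and using the strong dissipativity (\ref{sm}) yields the synchronous--coupling contraction $\tilde\EE|Y^{x,\mu,y_1}_s - Y^{x,\mu,y_2}_s|^2 \le e^{-\gamma s}|y_1-y_2|^2$. Coupled with the moment bound $\int_{\RR^m} |y|^2 \nu^{x,\mu}(dy) \le C(1+|x|^2+\mu(|\cdot|^2))$ (obtained from (\ref{RE3}) and the invariance identity) and the Lipschitz property of $b$ in $y$ from $\mathbf{A\ref{A1}}$, this gives the quantitative ergodic bound
\begin{equation*}
\bigl|\tilde\EE b(x,\mu,Y^{x,\mu,y}_s) - \bar b(x,\mu)\bigr| \le C e^{-\gamma s/2}\bigl(1+|x|+|y|+[\mu(|\cdot|^2)]^{1/2}\bigr),
\end{equation*}
so that the integral (\ref{SPE}) is absolutely convergent with the linear growth bound on $|\Phi|$ claimed in (\ref{E1}). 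That $\Phi$ solves (\ref{PE}) then follows by applying It\^o to $\Phi(x,\mu,Y^{x,\mu,y}_t)$, taking expectation, and sending $t\to\infty$ using the mixing; uniqueness in the centered class $\int\Phi(x,\mu,\cdot)\,d\nu^{x,\mu} = 0$ is standard.

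Next I would derive the Euclidean derivative bounds by differentiating (\ref{SPE}) under the integral sign. The tangent flow $\partial_y Y^{x,\mu,y}_s$ is a linear SDE driven by $\partial_y f$ and $\partial_y g$, and (\ref{sm}) again yields $\tilde\EE\|\partial_y Y^{x,\mu,y}_s\|^2 \le e^{-\gamma s}$; together with boundedness of $\partial_y b$ from $\mathbf{A\ref{A2}}$ this produces the uniform estimate on $\|\partial_y\Phi\|$ in (\ref{E3}). For $\partial_x\Phi$, the tangent $\partial_x Y^{x,\mu,y}_s$ satisfies an inhomogeneous linear SDE with uniformly bounded source; writing the differentiated integrand as the centered expression $\tilde\EE[\partial_x b + \partial_y b\cdot\partial_x Y^{x,\mu,y}_s] - \partial_x\bar b$ and applying the mixing to each centered piece delivers the linear-growth bound on $\|\partial_x\Phi\|$ in (\ref{E1}). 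The second-order bounds on $\partial^2_{xx}\Phi,\partial^2_{xy}\Phi,\partial^2_{yy}\Phi$ follow by iterating the tangent-flow analysis, this time using the H\"older-in-$y$ control of the second-order coefficient derivatives provided by $\mathbf{A\ref{A2}}$.

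The main technical step is the Lions derivatives. Lifting $\mu = \mathscr{L}_X$ and differentiating $X \mapsto \tilde\EE b(x,\mathscr{L}_X,Y^{x,\mathscr{L}_X,y}_s)$ in the Fr\'echet sense on $L^2(\Omega,\PP;\RR^n)$, the chain rule produces a direct contribution from $\partial_\mu b$ plus an indirect contribution through a Lions tangent flow $\partial_\mu Y^{x,\mu,y}_s(\cdot) \in L^2(\mu;\RR^n\otimes\RR^n)$, which itself solves a linear SDE sourced by $\partial_\mu f$ and $\partial_\mu g$. Applying the dissipativity (\ref{sm}) in the lifted $L^2(\mu)$ norm yields the control of $\|\partial_\mu Y^{x,\mu,y}_s\|_{L^2(\mu)}$ with an exponential factor inherited from mixing, from which the bound on $\|\partial_\mu \Phi\|_{L^2(\mu)}$ in (\ref{E1}) follows; a further $z$-differentiation gives the bound on $\partial_z\partial_\mu\Phi$, and the mixed $(\partial_\mu,\partial_y)$-differentiation, which benefits from the exponential decay of $\partial_y Y^{x,\mu,y}_s$, produces the \emph{uniform} bound on $\|\partial_\mu\partial_y\Phi\|_{L^2(\mu)}$ in (\ref{E3}), where (\ref{A45}) plays a decisive role. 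The hard part is disentangling the direct measure-dependence of the coefficients from the indirect measure-dependence of the frozen flow; the cleanest route is to carry out the entire chain rule on the lifted $L^2$-space and convert back to Lions norms only at the end.
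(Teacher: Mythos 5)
Your proposal is correct and follows essentially the same route as the paper (which in fact delegates \eref{E1} and the $\|\partial_y\Phi\|$ bound to \cite[Proposition 4.1]{RSX1} and only proves the second-order and mixed estimates itself): differentiate the ergodic representation \eref{SPE} and control the resulting tangent flows via the dissipativity \eref{sm}. The only minor divergence is the last step, where the paper obtains $\sup\|\partial_\mu\partial_y\Phi\|_{L^2(\mu)}\leq C$ indirectly, by proving $\|\partial_y\Phi(x,\mu_1,y)-\partial_y\Phi(x,\mu_2,y)\|\leq C\,\mathbb{W}_2(\mu_1,\mu_2)$ from the stability estimates for $Y^{x,\mu,y}_t$ and $\partial_yY^{x,\mu,y}_t$ in $\mu$, rather than running the Lions chain rule on the lifted space as you do; both arguments are valid.
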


\begin{proof}
Note that $\mathcal{L}_{2}(x,\mu)$ is the infinitesimal generator of the frozen process $\{Y^{x,\mu}_t\}_{t\geq 0}$, we can easily check  that \eref{SPE} is the unique solution of Poisson equation \eref{PE} under the assumptions ${\mathbf{A\ref{A1}}}$ and ${\mathbf{A\ref{A2}}}$. Moreover, by a straightforward computation and following a similar argument as used in \cite[section 6]{BLPR}, we can prove that $\Phi\in C^{2,(1,1),2}(\RR^n\times\mathscr{P}_2\times\RR^m,\RR^n)$ and $\partial_{\mu}\partial_y\Phi(x,\mu,y)$ exists for any $(x,\mu,y)\in \RR^n\times\mathscr{P}_2\times\RR^m $. We omit the detailed proofs for its lengthy.

The proof of  \eref{E1} and the first estimate in \eref{E3} could be founded in \cite[Proposition 4.1]{RSX1}. We only show the remaining estimates in (\ref{E3}),
which will be proved by the following three steps.

(i) In view of (\ref{SPE}), we deduce that for any $h,k\in \RR^m$,
$$\partial_y\Phi(x,\mu,y)\cdot h=\int_0^{\infty}\tilde \EE\left[\partial_yb(x,\mu,Y^{x,\mu,y}_t)\cdot\big(\partial_yY^{x,\mu,y}_t\cdot h\big)\right]dt,$$
which implies that
\begin{eqnarray*}
\partial^2_{yy}\Phi(x,\mu,y)\cdot(h,k)=\!\!\!\!\!\!\!\!&&\int_0^{\infty}\tilde \EE\big[\partial^2_{yy}b(x,\mu,Y^{x,\mu,y}_t)\cdot\big(\partial_yY^{x,\mu,y}_t\cdot h, \partial_yY^{x,\mu,y}_t\cdot k\big)\nonumber\\
&&+\partial_yb(x,\mu,Y^{x,\mu,y}_t)\cdot(\partial^2_{yy}Y^{x,\mu,y}_t\cdot(h,k))\big]dt,
\end{eqnarray*}
where $\partial_yY^{x,\mu,y}_t\cdot h$ and $\partial^2_{yy}Y^{x,\mu,y}_t\cdot(h,k)$ fulfill the following two equations respectively,
\begin{equation*}
\left\{ \begin{aligned}
d\partial_{y}Y^{x,\mu,y}_t\cdot h=&~\big[\partial_{y}f(x,\mu,Y^{x,\mu,y}_t)\cdot \left(\partial_yY^{x,\mu,y}_t\cdot h\right)\big]dt\\
&+\left[\partial_yg(x,\mu,Y^{x,\mu,y}_t)\cdot\left(\partial_{y}Y^{x,\mu,y}_t \cdot h\right) \right]d\tilde{W}^2_t,\\
\partial_{y}Y^{x,\mu,y}_0\cdot h=&~h,
\end{aligned} \right.
\end{equation*}
and
\begin{equation*}
\left\{ \begin{aligned}
d\partial^2_{yy}Y^{x,\mu,y}_t\cdot(h,k)=&~\big[\partial^2_{yy}f(x,\mu,Y^{x,\mu,y}_t)\cdot (\partial_yY^{x,\mu,y}_t\cdot h, \partial_yY^{x,\mu,y}_t\cdot k)\\
&+ \partial_yf(x,\mu,Y^{x,\mu,y}_t)\cdot\left(\partial^2_{yy}Y^{x,\mu,y}_t \cdot(h,k)\right) \big]dt\\
&+\left[\partial^2_{yy}g(x,\mu,Y^{x,\mu,y}_t)\cdot\big(\partial_yY^{x,\mu,y}_t\cdot h, \partial_yY^{x,\mu,y}_t\cdot k\big)\right.\\
&+\left.\partial_yg(x,\mu,Y^{x,\mu,y}_t)\cdot\left(\partial^2_{yy}Y^{x,\mu,y}_t \cdot(h,k)\right) \right]d\tilde{W}^2_t,\\
\partial^2_{yy}Y^{x,\mu,y}_0\cdot(h,k)=&~0.
\end{aligned} \right.
\end{equation*}
Under the conditions ${\mathbf{A\ref{A1}}}$ and ${\mathbf{A\ref{A2}}}$, by a straightforward computation, we can prove that
\begin{eqnarray*}
&&\sup_{x\in\RR^n,\mu\in\mathscr{P}_2,y\in\RR^m}\tilde\EE|\partial_yY^{x,\mu,y}_t\cdot h|^2\leq Ce^{-\beta t}|h|^2,\label{9}\\
&&\sup_{x\in\RR^n,\mu\in\mathscr{P}_2,y\in\RR^m}\tilde\EE|\partial^2_{yy}Y^{x,\mu,y}_t\cdot(h,k)|^2\leq Ce^{-\beta t}|h|^2|k|^2,\label{10}
\end{eqnarray*}
which together with the boundedness of $\|\partial_yb\|$ and $\|\partial^2_{yy}b\|$, it follows that
$$\sup_{x\in\RR^n,\mu\in\mathscr{P}_2,y\in\RR^m}|\partial_{yy}^2 \Phi(x,\mu,y)\cdot(h,k)|\leq C|h||k|.$$

(ii) Similarly, for any $h,k\in \RR^m$, $\partial^2_{xy}\Phi(x,\mu,y)\cdot(h,k)$ could be represented by
\begin{eqnarray*}
\partial^2_{xy}\Phi(x,\mu,y)\cdot(h,k)=\!\!\!\!\!\!\!\!&&\int_0^{\infty}\tilde \EE\big[\partial_{xy}b(x,\mu,Y^{x,\mu,y}_t)\cdot\big(\partial_yY^{x,\mu,y}_t\cdot h, k\big)\\
&&\quad+\partial^2_{yy}b(x,\mu,Y^{x,\mu,y}_t)\cdot\big(\partial_yY^{x,\mu,y}_t\cdot h, \partial_yY^{x,\mu,y}_t\cdot k\big)\nonumber\\
&&\quad+\partial_yb(x,\mu,Y^{x,\mu,y}_t)\cdot(\partial^2_{xy}Y^{x,\mu,y}_t\cdot(h,k))\big]dt,
\end{eqnarray*}
where $\partial^2_{xy}Y^{x,\mu,y}_t\cdot(h,k)$ fulfills the following equation
\begin{equation*}
\left\{ \begin{aligned}
d\partial^2_{xy}Y^{x,\mu,y}_t\cdot(h,k)=&~\big[\partial^2_{xy}f(x,\mu,Y^{x,\mu,y}_t)\cdot (\partial_yY^{x,\mu,y}_t\cdot h, k)\\
&+\partial^2_{yy}f(x,\mu,Y^{x,\mu,y}_t)\cdot (\partial_yY^{x,\mu,y}_t\cdot h, \partial_xY^{x,\mu,y}_t\cdot k)\\
&+ \partial_yf(x,\mu,Y^{x,\mu,y}_t)\cdot\left(\partial^2_{xy}Y^{x,\mu,y}_t \cdot(h,k)\right) \big]dt\\
&+\left[\partial^2_{xy}g(x,\mu,Y^{x,\mu,y}_t)\cdot\big(\partial_yY^{x,\mu,y}_t\cdot h, \partial_yY^{x,\mu,y}_t\cdot k\big)\right.\\
&+\partial^2_{yy}g(x,\mu,Y^{x,\mu,y}_t)\cdot\big(\partial_yY^{x,\mu,y}_t\cdot h, \partial_xY^{x,\mu,y}_t\cdot k\big)\\
&+\left.\partial_yg(x,\mu,Y^{x,\mu,y}_t)\cdot\left(\partial^2_{xy}Y^{x,\mu,y}_t \cdot(h,k)\right) \right]d\tilde{W}^2_t,\\
\partial^2_{xy}Y^{x,\mu,y}_0\cdot(h,k)=0&.
\end{aligned} \right.
\end{equation*}
Under the conditions ${\mathbf{A\ref{A1}}}$ and ${\mathbf{A\ref{A2}}}$, by a straightforward computation (see \cite[Proposition 4.1]{RSX1}), we can see that
\begin{eqnarray*}
&&\sup_{x\in\RR^n,\mu\in\mathscr{P}_2,y\in\RR^m}\tilde\EE|\partial_xY^{x,\mu,y}_t\cdot h|^2\leq C|h|^2,\label{11}\\
&&\sup_{x\in\RR^n,\mu\in\mathscr{P}_2,y\in\RR^m}\tilde\EE|\partial^2_{xy}Y^{x,\mu,y}_t\cdot(h,k)|^2\leq Ce^{-\beta t}|h|^2|k|^2,\label{12}
\end{eqnarray*}
then by the boundedness of  $\|\partial_yb\|$, $\|\partial^2_{xy}b\|$ and $\|\partial^2_{yy}b\|$, we deduce that
$$\sup_{x\in\RR^n,\mu\in\mathscr{P}_2,y\in\RR^m}|\partial^2_{xy}\Phi(x,\mu,y)\cdot(h,k)|\leq C|h||k|.$$

(iii) For the term $\partial_{\mu}\partial_y\Phi(x,\mu,y)$, under the conditions ${\mathbf{A\ref{A1}}}$ and ${\mathbf{A\ref{A2}}}$, one can easily obtain that for any $\mu_1,\mu_2\in \mathscr{P}_2$,
\begin{eqnarray}
&&\sup_{t\geq 0, x\in\RR^n,y\in\RR^m}\tilde\EE|Y^{x,\mu_1,y}_t-Y^{x,\mu_2,y}_t|^2\leq C\mathbb{W}_2(\mu_1,\mu_2)^2,\label{13}\\
&&\sup_{x\in\RR^n,y\in\RR^m}\tilde\EE\|\partial_{y}Y^{x,\mu_1,y}_t-\partial_{y}Y^{x,\mu_2,y}_t\|^2\leq Ce^{-\beta t}\mathbb{W}_2(\mu_1,\mu_2)^2,\nonumber
\end{eqnarray}
which combine with the boundedness of  $\|\partial_{y}b\|$, $\|\partial_{\mu}\partial_y b\|_{L^2(\mu)}$ and $\|\partial^2_{yy}b\|$, it follows that
\begin{eqnarray*}
\|\partial_y\Phi(x,\mu_1,y)-\partial_y\Phi(x,\mu_2,y)\|\leq \!\!\!\!\!\!\!\!&&\int_0^{\infty}\tilde \EE\big[\|\partial_yb(x,\mu_1,Y^{x,\mu_1,y}_t)-\partial_yb(x,\mu_2,Y^{x,\mu_2,y}_t)\|\|\partial_yY^{x,\mu_1,y}_t\|\\
%&&+\partial^2_{yy}b(x,\mu,Y^{x,\mu,y}_t)\cdot(\partial_yY^{x,\mu,y}_t\cdot h, \partial_{\mu}Y^{x,\mu,y}_t(z))
%\nonumber\\
\!\!\!\!\!\!\!\!&&+\|\partial_yb(x,\mu_2,Y^{x,\mu_2,y}_t)\|\|\partial_yY^{x,\mu_1,y}_t-\partial_yY^{x,\mu_2,y}_t\|\big]dt\\
\leq \!\!\!\!\!\!\!\!&& C\mathbb{W}_2(\mu_1,\mu_2).
\end{eqnarray*}
%where $\partial_{\mu}\partial_yY^{x,\mu,y}_t(z)$ satisfies the equation
%\begin{eqnarray*}
%d\partial_{\mu}\partial_yY^{x,\mu,y}_t(z)=\!\!\!\!\!\!\!\!&&\big[\partial_{\mu}\partial_yf(x,\mu,Y^{x,\mu,y}_t)(z)\cdot\partial_yY^{x,\mu,y}_t+\partial_y^2f(x,\mu,Y^{x,\mu,y}_t)\cdot(\partial_{\mu}Y^{x,\mu,y}_t(z),\partial_yY^{x,\mu,y}_t) \nonumber\\
%\!\!\!\!\!\!\!\!&&+\partial_yf(x,\mu,Y^{x,\mu,y}_t)\cdot\partial_{\mu}\partial_yY^{x,\mu,y}_t(z)\big]dt+\big[\partial_{\mu}\partial_yg(x,\mu,Y^{x,\mu,y}_t)(z)\cdot\partial_yY^{x,\mu,y}_t
%\nonumber\\
%\!\!\!\!\!\!\!\!&&
%+ \partial_y^2g(x,\mu,Y^{x,\mu,y}_t)\cdot(\partial_{\mu}Y^{x,\mu,y}_t(z),\partial_yY^{x,\mu,y}_t)
%\nonumber\\
%\!\!\!\!\!\!\!\!&&
%+\partial_yg(x,\mu,Y^{x,\mu,y}_t)\cdot\partial_{\mu}\partial_yY^{x,\mu,y}_t(z)\big]d\tilde{W}^2_t.
%\end{eqnarray*}
%Recall $\sup_{x\in\RR^n,\mu\in\mathscr{P}_2,y\in\RR^m}\tilde\EE\|\partial_{\mu}Y^{x,\mu,y}_t\|_{L^2(\mu)}^2\leq C$ (cf. \cite[(A.9)]{RSX1}), according to the boundedness of $\partial_{\mu}\partial_yf,\partial_yf,\partial_y^2f,\partial_{\mu}\partial_yg,\partial_yg,\partial_y^2g$, we conclude
%\begin{equation}\label{12}
%\sup_{x\in\RR^n,\mu\in\mathscr{P}_2,y\in\RR^m}\tilde\EE\|\partial_{\mu}\partial_yY^{x,\mu,y}_t\|_{L^2(\mu)}^2\leq Ce^{-\beta t},
%\end{equation}
%thus by the boundedness of $\partial_yb,\partial_y^2b,\partial_{\mu}\partial_yb$ we obtain
As a consequence, we have
$$\sup_{x\in\RR^n,\mu\in\mathscr{P}_2,y\in\RR^m}\|\partial_{\mu}\partial_y \Phi(x,\mu,y)\|_{L^2(\mu)}\leq C.$$
The proof is complete.    \hspace{\fill}$\Box$
\end{proof}

\vspace{0.1cm}
\subsection{Construction of the auxiliary process} \label{S4.1}
We recall that, by \eref{E2} and \eref{1.3},  the deviation process $\{Z_t^\varepsilon=\frac{X^{\varepsilon}_t-\bar{X}_t}{\sqrt{\varepsilon}}\}_{t\geq 0}$ satisfies that
\begin{equation*}
\left\{ \begin{aligned}
dZ_t^\varepsilon=&\frac{1}{\sqrt{\varepsilon}}\Big[b(X^{\varepsilon}_t,\mathscr{L}_{X^{\varepsilon}_t},Y^{\varepsilon}_t)-\bar{b}(X^{\varepsilon}_t,\mathscr{L}_{X^{\varepsilon}_t})\Big]dt
+\frac{1}{\sqrt{\varepsilon}}\Big[\bar{b}(X^{\varepsilon}_t,\mathscr{L}_{X^{\varepsilon}_t})-\bar{b}(\bar{X}_t,\mathscr{L}_{X^{\varepsilon}_t})\Big]dt\\
&+\frac{1}{\sqrt{\varepsilon}}\Big[\bar{b}(\bar{X}_t,\mathscr{L}_{X^{\varepsilon}_t})-\bar{b}(\bar{X}_t,\mathscr{L}_{\bar{X}_t})\Big]dt
+\frac{1}{\sqrt{\varepsilon}}\Big[\sigma(X^{\varepsilon}_t,\mathscr{L}_{X^{\varepsilon}_t})-\sigma(\bar{X}_t,\mathscr{L}_{X^{\varepsilon}_t})\Big]dW_t^1\\
&+\frac{1}{\sqrt{\varepsilon}}\Big[\sigma(\bar{X}_t,\mathscr{L}_{X^{\varepsilon}_t})-\sigma(\bar{X}_t,\mathscr{L}_{\bar{X}_t})\Big]dW_t^1,\\
Z_0^\varepsilon=&~0.
\end{aligned} \right.
\end{equation*}
Due to (\ref{33}), there exists a constant $C_T>0$ independent of $\varepsilon$ such that
\begin{eqnarray}
\sup_{t\in [0,T]}\EE|Z_t^\varepsilon|^2\leq C_T\big(1+\EE|\xi|^4+\EE|\zeta|^4\big).\label{Zvare}
\end{eqnarray}
Furthermore, under the condition \eref{32}, we can easily obtain a stronger result
\begin{eqnarray}
\EE\left[\sup_{t\in [0,T]}|Z_t^\varepsilon|^2\right]\leq C_T\big(1+\EE|\xi|^4+\EE|\zeta|^4\big),\label{supZvare}
\end{eqnarray}
whose proof will be given in Section \ref{app2} below.

Now for any $\varepsilon>0$ we introduce an auxiliary process $\eta_t^\varepsilon$,
\begin{equation}\label{e6}
\left\{ \begin{aligned}
d\eta_t^\varepsilon=&~\frac{1}{\sqrt{\varepsilon}}\Big[b(X^{\varepsilon}_t,\mathscr{L}_{X^{\varepsilon}_t},Y^{\varepsilon}_t)-\bar{b}(X^{\varepsilon}_t,\mathscr{L}_{X^{\varepsilon}_t})\Big]dt\\
&+\partial_x\bar{b}(\bar{X}_t,\mathscr{L}_{\bar{X}_t})\cdot\eta_t^\varepsilon dt+\EE\Big[\partial_{\mu} \bar{b}(u,\mathscr{L}_{\bar{X}_t})(\bar{X}_t)\cdot \eta_t^\varepsilon\Big]\Big|_{u=\bar{X}_t}dt\\
&+\left[\partial_x\sigma(\bar{X}_t,\mathscr{L}_{\bar{X}_t})\cdot\eta_t^\varepsilon\right] dW_t^1
 + \EE\Big[\partial_{\mu} \sigma(u,\mathscr{L}_{\bar{X}_t})(\bar{X}_t)\cdot \eta_t^\varepsilon\Big]\Big|_{u=\bar{X}_t}dW_t^1,\\
\eta_0^\varepsilon=&~0.
\end{aligned} \right.
\end{equation}
Then the difference $Z_t^\varepsilon-\eta_t^\varepsilon$ converges in probability to zero in $C([0,T];\RR^n)$, as $\varepsilon\to0 $, which is a consequence of the following Proposition.

\begin{proposition}\label{th3}
Suppose that conditions ${\mathbf{A\ref{A1}}}$ and ${\mathbf{A\ref{A2}}}$ hold, then  we have
\begin{eqnarray}
\lim_{\varepsilon\rightarrow 0}\EE\left[\sup_{t\in [0,T]}|Z_t^\varepsilon-\eta_t^\varepsilon|\right]=0.
\end{eqnarray}
\end{proposition}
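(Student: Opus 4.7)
The plan is to set $D^\varepsilon_t:=Z^\varepsilon_t-\eta^\varepsilon_t$ and write down its SDE by subtracting \eqref{e6} from the SDE for $Z^\varepsilon_t$ displayed at the beginning of Section~4. The Poisson-type term $\varepsilon^{-1/2}[b(X^\varepsilon_t,\mathscr{L}_{X^\varepsilon_t},Y^\varepsilon_t)-\bar b(X^\varepsilon_t,\mathscr{L}_{X^\varepsilon_t})]$ cancels exactly between the two equations, so $D^\varepsilon$ satisfies an SDE whose coefficients consist solely of four ``mismatch'' increments: two Taylor-type differences in the spatial variable (one for $\bar b$, one for $\sigma$) and two in the measure variable (one for $\bar b$, one for $\sigma$). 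I would expand each of them about $(\bar X_t,\mathscr{L}_{\bar X_t})$ so as to recover the linear-in-$D^\varepsilon$ structure of \eqref{e6} modulo negligible remainders.

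For the spatial mismatches, $\bar b(\cdot,\mu)\in C^2_b$ gives
\[
\tfrac{1}{\sqrt{\varepsilon}}\bigl[\bar b(X^\varepsilon_t,\mathscr{L}_{X^\varepsilon_t})-\bar b(\bar X_t,\mathscr{L}_{X^\varepsilon_t})\bigr]=\partial_x\bar b(\bar X_t,\mathscr{L}_{X^\varepsilon_t})\cdot Z^\varepsilon_t+\sqrt{\varepsilon}\,R^{(1)}_t,\qquad |R^{(1)}_t|\le C|Z^\varepsilon_t|^2,
\]
and after swapping $\mathscr{L}_{X^\varepsilon_t}$ for $\mathscr{L}_{\bar X_t}$ inside the leading coefficient (the swap costs $O(\sqrt\varepsilon)$ in operator norm, by the Lipschitz continuity of $\partial_x\bar b$ in $\mu$ combined with $\mathbb{W}_2(\mathscr{L}_{X^\varepsilon_t},\mathscr{L}_{\bar X_t})\le C\sqrt{\varepsilon}$ from \eqref{33}) this reproduces $\partial_x\bar b(\bar X_t,\mathscr{L}_{\bar X_t})\cdot\eta^\varepsilon_t$ plus a linear-in-$D^\varepsilon$ term. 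For the measure mismatches, using an independent copy $(\tilde X^\varepsilon_t,\tilde{\bar X}_t)$ of $(X^\varepsilon_t,\bar X_t)$ and the $C^{(1,1)}_b$-regularity of $\bar b(\bar X_t,\cdot)$, the Lions Taylor formula yields
\[
\tfrac{1}{\sqrt{\varepsilon}}\bigl[\bar b(\bar X_t,\mathscr{L}_{X^\varepsilon_t})-\bar b(\bar X_t,\mathscr{L}_{\bar X_t})\bigr]=\tilde\EE\bigl[\partial_\mu\bar b(\bar X_t,\mathscr{L}_{\bar X_t})(\tilde{\bar X}_t)\cdot\tilde Z^\varepsilon_t\bigr]+\sqrt{\varepsilon}\,R^{(2)}_t,
\]
with $|R^{(2)}_t|\le C\,\tilde\EE|\tilde Z^\varepsilon_t|^2$. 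By Fubini this matches the $\EE[\partial_\mu\bar b(u,\mathscr{L}_{\bar X_t})(\bar X_t)\cdot\eta^\varepsilon_t]|_{u=\bar X_t}$ term of \eqref{e6} modulo a linear-in-$\tilde D^\varepsilon$ contribution, and the two $\sigma$-mismatches are treated identically.

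After these expansions the SDE for $D^\varepsilon$ has a linear-in-$D^\varepsilon$ drift and diffusion with coefficients bounded by $\mathbf{A\ref{A2}}$, plus error processes whose $L^1(\Omega\times[0,T])$ norms are $O(\sqrt\varepsilon)$ once the moment bounds \eqref{Zvare}--\eqref{supZvare} are inserted. Applying the Burkholder--Davis--Gundy inequality to the martingale parts, Young's inequality to absorb the resulting $\tfrac12\,\EE[\sup_{s\le t}|D^\varepsilon_s|^2]$, and Gronwall's lemma to
\[
\phi(t):=\EE\Bigl[\sup_{s\in[0,t]}|D^\varepsilon_s|^2\Bigr]
\]
then yields $\phi(T)\to 0$ as $\varepsilon\to 0$, from which the proposition follows by Cauchy--Schwarz.

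The main obstacle will be the Lions-derivative contributions: the linear-in-$\tilde D^\varepsilon$ parts sit inside $\tilde\EE[\,\cdot\,]$, so they feed into $\phi(t)$ only through the one-dimensional marginal $\int_0^t\EE|D^\varepsilon_s|^2\,ds$ rather than $\int_0^t\phi(s)\,ds$; this is in fact \emph{better} than needed for Gronwall, but care is required after the swap of $\EE$ and $\tilde\EE$ to ensure that no pathwise supremum appears spuriously. A secondary subtlety is that the quadratic remainders $R^{(1)},R^{(2)}$ produce $\sqrt\varepsilon\,|Z^\varepsilon_t|^2$-type error terms, whose uniform-in-$t$ convergence to $0$ in $L^1(\Omega)$ relies on the strong bound \eqref{supZvare} (whose proof is deferred to the appendix), not merely on the marginal estimate \eqref{Zvare}.
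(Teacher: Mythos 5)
Your overall skeleton (subtract \eref{e6} from the equation for $Z^\varepsilon$, note the exact cancellation of the Poisson-type term, Taylor-expand the four mismatch increments, absorb the linear-in-$D^\varepsilon$ part by Gronwall) is exactly the paper's decomposition, and you correctly spot that \eref{supZvare} rather than \eref{Zvare} is needed for the stochastic-integral mismatches. However, there are two genuine gaps. First, your remainder bounds $|R^{(1)}_t|\le C|Z^\varepsilon_t|^2$ and the ``swap costs $O(\sqrt{\varepsilon})$'' step presuppose that $\bar b(\cdot,\mu)\in C^2_b$ and that $\partial_x\bar b$, $\partial_\mu\bar b$ are Lipschitz in the measure variable. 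None of this is available: the averaged drift $\bar b(x,\mu)=\int b(x,\mu,y)\nu^{x,\mu}(dy)$ inherits regularity only through the invariant measure $\nu^{x,\mu}$, and the paper establishes merely that $\bar b$ is globally Lipschitz (so its first derivatives are bounded) and that $\partial_x\bar b$ and $\partial_\mu\bar b$ are \emph{jointly continuous} (Lemma \ref{C1}); no second-order or Lipschitz bounds on these derivatives are proved, and they would require differentiating $\nu^{x,\mu}$ a second time. The paper's proof is deliberately qualitative here: it writes the remainders as $\int_0^1[\EE|\partial_x\bar b(\bar X_s+r\sqrt{\varepsilon}Z^\varepsilon_s,\cdot)-\partial_x\bar b(\bar X_s,\cdot)|^2]^{1/2}dr\,(\EE|Z^\varepsilon_s|^2)^{1/2}$ and sends them to $0$ by continuity, boundedness and dominated convergence, with no rate.

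Second, your Gronwall argument is run on $\phi(t)=\EE[\sup_{s\le t}|D^\varepsilon_s|^2]$ while your error estimates are only in $L^1(\Omega\times[0,T])$. To feed a drift error $e_s$ into $\phi$ you need $\EE[(\int_0^T|e_s|\,ds)^2]$, and to feed a diffusion error into the BDG bound you need $\EE\int_0^T\|E_s\|^2ds$; with remainders of size $\sqrt{\varepsilon}|Z^\varepsilon_s|^2$ both of these require fourth moments of $Z^\varepsilon$, which are not established (the averaging estimate \eref{33} and hence \eref{Zvare}--\eref{supZvare} are second-moment bounds only). This is precisely why the paper closes the argument at the level of $\EE[\sup_{t\le T}|\rho^\varepsilon_t|]$, using Young's inequality in the form $\EE[(\int_0^T|\rho^\varepsilon_s|^2ds)^{1/2}]\le\frac12\EE[\sup_s|\rho^\varepsilon_s|]+\frac12\int_0^T\EE|\rho^\varepsilon_s|\,ds$ to handle the martingale part, so that $L^1$-type control of the remainders suffices. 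As written, your proposal either needs the unproven fourth-moment and $C^2$-regularity inputs, or it needs to be downgraded to the first-moment Gronwall of the paper.
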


\begin{proof}
Let $\rho^{\vare}_t=Z_t^\varepsilon-\eta_t^\varepsilon$, then we have
\begin{eqnarray*}
\rho^{\vare}_t=\!\!\!\!\!\!\!\!&&\frac{1}{\sqrt{\varepsilon}}\int_0^t\Big[\bar{b}(X^{\varepsilon}_s,\mathscr{L}_{X^{\varepsilon}_s})-\bar{b}(\bar{X}_s,\mathscr{L}_{X^{\varepsilon}_s})-\partial_x \bar{b}(\bar{X}_s,\mathscr{L}_{X^{\varepsilon}_s})\cdot\sqrt{\varepsilon}Z_s^\varepsilon\Big]ds\\
&&+\int_0^t\left[\partial_x\bar{b}(\bar{X}_s,\mathscr{L}_{X^{\varepsilon}_s})-\partial_x\bar{b}(\bar{X}_s,\mathscr{L}_{\bar{X}_s})\right]\cdot Z_s^\varepsilon ds\\
&&+\frac{1}{\sqrt{\varepsilon}}\int_0^t\Big[\bar{b}(\bar{X}_s,\mathscr{L}_{X^{\varepsilon}_s})-\bar{b}(\bar{X}_s,\mathscr{L}_{\bar{X}_s})-\EE\left[\partial_{\mu} \bar{b}(u,\mathscr{L}_{\bar{X}_s})(\bar{X}_s)\cdot\sqrt{\varepsilon}Z_s^\varepsilon\right]\mid_{u=\bar{X}_s}\Big]ds\\
&&+\int_0^t\partial_x\bar{b}(\bar{X}_s,\mathscr{L}_{\bar{X}_s})\cdot \rho^{\vare}_s ds+\int_0^t\EE\left[\partial_{\mu}\bar{b}(u,\mathscr{L}_{\bar{X}_s})(\bar{X}_s)\cdot \rho^{\vare}_s\right]|_{u=\bar{X}_s} ds\\
&&+\frac{1}{\sqrt{\varepsilon}}\int_0^t\Big[\sigma(X^{\varepsilon}_s,\mathscr{L}_{X^{\varepsilon}_s})-\sigma(\bar{X}_s,\mathscr{L}_{X^{\varepsilon}_s})-\partial_x \sigma(\bar{X}_s,\mathscr{L}_{X^{\varepsilon}_s})\cdot\sqrt{\varepsilon}Z_s^\varepsilon\Big]dW_s^1\\
&&+\int_0^t\left[\left(\partial_x\sigma(\bar{X}_s,\mathscr{L}_{X^{\varepsilon}_s})-\partial_x\sigma(\bar{X}_s,\mathscr{L}_{\bar{X}_s})\right)\cdot Z_s^\varepsilon\right] dW_s^1\\
&&+\frac{1}{\sqrt{\varepsilon}}\int_0^t\Big[\sigma(\bar{X}_s,\mathscr{L}_{X^{\varepsilon}_s})-\sigma(\bar{X}_s,\mathscr{L}_{\bar{X}_s})-\EE\left[\partial_{\mu} \sigma(u,\mathscr{L}_{\bar{X}_s})(\bar{X}_s)\cdot\sqrt{\varepsilon}Z_s^\varepsilon\right]\mid_{u=\bar{X}_s}\Big]dW_s^1\\
&&+\int_0^t\left[\partial_x\sigma(\bar{X}_s,\mathscr{L}_{\bar{X}_s})\cdot \rho^{\vare}_s\right] dW_s^1+\int_0^t\EE\left[\partial_{\mu}\sigma(u,\mathscr{L}_{\bar{X}_s})(\bar{X}_s)\cdot \rho^{\vare}_s\right]\mid_{u=\bar{X}_s} dW_s^1.
\end{eqnarray*}
Note that by \cite[(A.2)]{RSX1}, the averaged coefficients $\bar{b}$ and $\sigma$ are globally Lipschitz, i.e.,
\begin{eqnarray*}
|\bar{b}(x_1, \mu_1)-\bar{b}(x_2, \mu_2)|+\|\sigma(x_1,\mu_1)-\sigma(x_2,\mu_2)\|\leq C\big[|x_1-x_2|+\mathbb{W}_2(\mu_1, \mu_2)\big],
\end{eqnarray*}
which implies that
$$\sup_{x\in\RR^n, \mu\in\mathscr{P}_2}\max\{\|\partial_x\bar{b}(x,\mu)\|,\|\partial_{\mu}\bar{b}(x,\mu)\|_{L^2(\mu)},\|\partial_x\sigma(x,\mu)\|,\|\partial_{\mu}\sigma(x,\mu)\|_{L^2(\mu)}\}<\infty.$$
Then by Burkholder-Davis-Gundy's inequality and Young's inequality, we have
\begin{eqnarray*}
\!\!\!\!\!\!\!\!&&\EE\left[\sup_{t\in[0,T]}|\rho^{\vare}_t|\right]\\
\leq\!\!\!\!\!\!\!\!&&\frac{1}{\sqrt{\varepsilon}}\int_0^T\EE\Big|\bar{b}(X^{\varepsilon}_s,\mathscr{L}_{X^{\varepsilon}_s})-\bar{b}(\bar{X}_s,\mathscr{L}_{X^{\varepsilon}_s})-\partial_x \bar{b}(\bar{X}_s,\mathscr{L}_{X^{\varepsilon}_s})\cdot\sqrt{\varepsilon}Z_s^\varepsilon\Big|ds\\
&&+\int_0^T\EE\left[\left\|\partial_x\bar{b}(\bar{X}_s,\mathscr{L}_{X^{\varepsilon}_s})-\partial_x\bar{b}(\bar{X}_s,\mathscr{L}_{\bar{X}_s})\right\||Z_s^\varepsilon|\right]ds\\
&&+\frac{1}{\sqrt{\varepsilon}}\int_0^T\EE\Big|\bar{b}(\bar{X}_s,\mathscr{L}_{X^{\varepsilon}_s})-\bar{b}(\bar{X}_s,\mathscr{L}_{\bar{X}_s})-\EE\left[\partial_{\mu} \bar{b}(u,\mathscr{L}_{\bar{X}_s})(\bar{X}_s)\cdot\sqrt{\varepsilon}Z_s^\varepsilon\right]\mid_{u=\bar{X}_s}\Big|ds\\
&&+\frac{C}{\sqrt{\varepsilon}}\EE\Big[\int_0^T\Big\|\sigma(X^{\varepsilon}_s,\mathscr{L}_{X^{\varepsilon}_s})-\sigma(\bar{X}_s,\mathscr{L}_{X^{\varepsilon}_s})-\partial_x \sigma(\bar{X}_s,\mathscr{L}_{X^{\varepsilon}_s})\cdot\sqrt{\varepsilon}Z_s^\varepsilon\Big\|^2ds\Big]^{\frac{1}{2}}\\
&&+C\EE\Big[\int_0^T\left\|\partial_x\sigma(\bar{X}_s,\mathscr{L}_{X^{\varepsilon}_s})-\partial_x\sigma(\bar{X}_s,\mathscr{L}_{\bar{X}_s})\right\|^2|Z_s^\varepsilon|^2ds\Big]^{\frac{1}{2}}\\
&&+\frac{C}{\sqrt{\varepsilon}}\EE\Big[\int_0^T\left\|\sigma(\bar{X}_s,\mathscr{L}_{X^{\varepsilon}_s})-\sigma(\bar{X}_s,\mathscr{L}_{\bar{X}_s})-\EE\left[\partial_{\mu} \sigma(u,\mathscr{L}_{\bar{X}_s})(\bar{X}_s)\cdot\sqrt{\varepsilon}Z_s^\varepsilon\right]\mid_{u=\bar{X}_s}\right\|^2ds\Big]^{\frac{1}{2}}\\
&&+C\EE\left[\int_0^T|\rho^{\vare}_s|^2ds\right]^{\frac{1}{2}}+C\left[\int_0^T\big(\EE|\rho^{\vare}_s|\big)^2ds\right]^{\frac{1}{2}}\\
=:\!\!\!\!\!\!\!\!&&\sum^6_{k=1}J^{\varepsilon}_k(T)+C\EE\left[\int_0^T|\rho^{\vare}_s|^2ds\right]^{\frac{1}{2}}+C\left[\int_0^T\big(\EE|\rho^{\vare}_s|\big)^2ds\right]^{\frac{1}{2}}\\
\leq\!\!\!\!\!\!\!\!&&\sum^6_{k=1}J^{\varepsilon}_k(T)+C\EE\left[\left(\sup_{s\in[0,T]}|\rho^{\vare}_s|\right)\int_0^T|\rho^{\vare}_s|ds\right]^{\frac{1}{2}}
+C\left[\left(\sup_{s\in[0,T]}\EE|\rho^{\vare}_s|\right)\int_0^T\EE|\rho^{\vare}_s|ds\right]^{\frac{1}{2}}\\
\leq\!\!\!\!\!\!\!\!&&\sum^6_{k=1}J^{\varepsilon}_k(T)+\frac{1}{2}\EE\left[\sup_{s\in[0,T]}|\rho^{\vare}_s|\right]+C\int_0^T\EE|\rho^{\vare}_s|ds.
\end{eqnarray*}
Then by Gronwall's inequality, we obtain
\begin{eqnarray}
\EE\left[\sup_{t\in[0,T]}|\rho^{\vare}_t|\right]\leq\!\!\!\!\!\!\!\!&&C_T\sum^6_{k=1}J^{\varepsilon}_k(T).\label{J0}
\end{eqnarray}

For the term $J^{\varepsilon}_1(T)$, note that
\begin{eqnarray*}
&&\EE\Big|\bar{b}(X^{\varepsilon}_s,\mathscr{L}_{X^{\varepsilon}_s})-\bar{b}(\bar{X}_s,\mathscr{L}_{X^{\varepsilon}_s})-\partial_x \bar{b}(\bar{X}_s,\mathscr{L}_{X^{\varepsilon}_s})\cdot\sqrt{\varepsilon}Z_s^\varepsilon\Big|\\
=\!\!\!\!\!\!\!\!&&\EE\Big|\bar{b}(\bar{X}_s+\sqrt{\varepsilon}Z_s^\varepsilon,\mathscr{L}_{X^{\varepsilon}_s})-\bar{b}(\bar{X}_s,\mathscr{L}_{X^{\varepsilon}_s})-\partial_x \bar{b}(\bar{X}_s,\mathscr{L}_{X^{\varepsilon}_s})\cdot\sqrt{\varepsilon}Z_s^{\varepsilon}\Big|\\
\leq\!\!\!\!\!\!\!\!&&\sqrt{\varepsilon}\EE\Big[\int^1_0\big|\partial_{x} \bar{b}(\bar{X}_s+r\sqrt{\varepsilon}Z_s^\varepsilon,\mathscr{L}_{X^{\varepsilon}_s})-\partial_x \bar{b}(\bar{X}_s,\mathscr{L}_{X^{\varepsilon}_s})\big||Z_s^\varepsilon|dr\Big]\\
%\leq\!\!\!\!\!\!\!\!&&\varepsilon\EE\Big(\big(\int_0^1\big|\partial_{x} \bar{b}(\bar{X}_s+r\sqrt{\varepsilon}Z_s^\varepsilon,\mathscr{L}_{X^{\varepsilon}_s})-\partial_x \bar{b}(\bar{X}_s,\mathscr{L}_{X^{\varepsilon}_s})\big|^2dr\big)|Z_s^\varepsilon|^2\Big)\\
\leq\!\!\!\!\!\!\!\!&&\sqrt{\varepsilon}\int_0^1\left[\EE\big|\partial_{x} \bar{b}(\bar{X}_s+r\sqrt{\varepsilon}Z_s^\varepsilon,\mathscr{L}_{X^{\varepsilon}_s})-\partial_x \bar{b}(\bar{X}_s,\mathscr{L}_{X^{\varepsilon}_s})\big|^2\right]^{\frac{1}{2}}dr\left(\EE|Z_s^\varepsilon|^2\right)^{\frac{1}{2}}.
\end{eqnarray*}
Note that $\partial_x \bar{b}(\cdot,\cdot)$ is jointly continuous (see Lemma \ref{C1} in Appendix for the detailed proof).  Then by combining the boundedness of $\|\partial_{x} \bar{b}\|$ and \eref{Zvare}, it follows that
\begin{eqnarray}
\lim_{\ep\rightarrow 0}J^{\varepsilon}_1(T)=0 \label{J1}
\end{eqnarray}
and by \eref{Averaging}, it follows
\begin{eqnarray}
\lim_{\ep\rightarrow 0}J^{\varepsilon}_2(T)=0. \label{J2}
\end{eqnarray}

For the term $J^{\varepsilon}_3(T)$,  for any fixed $u\in\RR^n$, we have
%$\bar{b}(x,\cdot):  \mathscr{P}_2\rightarrow \RR^n$, we denote by $G(x,\cdot)$ its "extension" to $L^2(\Omega, \PP;\RR^n)$ defined by
%$$
%G(u,X):=\bar{b}(u,\mathscr{L}_{X}),\quad X\in L^2(\Omega,\PP;\RR^n).
%$$
%Then it follows
%$$
%DG(u,X)=\partial_{\mu}\bar{b}(u,\mathscr{L}_{X})(X).
%$$
%which implies that
%$$
%D^2G(u,X)=\partial^2_{\mu}\bar{b}(u,\mathscr{L}_{X})(X)+\partial_z\partial_{\mu}\bar{b}(u,\mathscr{L}_{X})(X).
%$$
\begin{eqnarray*}
&&|\bar{b}(u,\mathscr{L}_{X^{\varepsilon}_s})-\bar{b}(u,\mathscr{L}_{\bar{X}_s})-\EE\left[\partial_{\mu} \bar{b}(u,\mathscr{L}_{\bar{X}_s})(\bar{X}_s)\cdot\sqrt{\varepsilon}Z_s^\varepsilon\right]|\\
=\!\!\!\!\!\!\!\!&&\left|\int^1_0 \EE\left\{\left[\partial_{\mu}\bar{b}(u,\mathscr{L}_{\bar{X}_s+r\sqrt{\ep}Z^{\varepsilon}_s})(\bar{X}_s+r\sqrt{\ep}Z^{\varepsilon}_s)
-\partial_{\mu}\bar{b}(u,\mathscr{L}_{\bar{X}_s})(\bar{X}_s)\right]\cdot\sqrt{\varepsilon}Z_s^\varepsilon\right\}dr\right|\\
\leq\!\!\!\!\!\!\!\!&&\sqrt{\varepsilon}\int_0^1\left[\EE\big|\partial_{\mu} \bar{b}(u,\mathscr{L}_{\bar{X}_s+r\sqrt{\ep}Z^{\varepsilon}_s})(\bar{X}_s+r\sqrt{\ep}Z^{\varepsilon}_s)-\partial_{\mu} \bar{b}(u,\mathscr{L}_{\bar{X}_s})(\bar{X}_s)\big|^2\right]^{\frac{1}{2}}dr\left(\EE|Z_s^\varepsilon|^2\right)^{\frac{1}{2}}.
\end{eqnarray*}
Then according to  the joint continuity of $\partial_\mu \bar{b}(x,\cdot)(\cdot)$ (see also Lemma \ref{C1} in Appendix), the boundedness of $\|\partial_{\mu} \bar{b}(x,\mu)(z)\|$ and \eref{Zvare}, it immediately leads to
\begin{eqnarray}
\lim_{\ep\rightarrow 0}J^{\varepsilon}_3(T)=0 . \label{J3}
\end{eqnarray}

For the term $J^{\varepsilon}_4(T)$, using H\"{o}lder inequality, we have
\begin{eqnarray*}
J^{\varepsilon}_4(T)\leq\!\!\!\!\!\!\!\!&&C\EE\Big[\int_0^T\int_0^1\big\|\partial_x \sigma(\bar{X}_s+r\sqrt{\varepsilon}Z^{\varepsilon}_s,\mathscr{L}_{X^{\varepsilon}_s})-\partial_x \sigma(\bar{X}_s,\mathscr{L}_{X^{\varepsilon}_s})\big\|^2dr |Z_s^\varepsilon|^2ds\Big]^{\frac{1}{2}}
\nonumber\\
\leq\!\!\!\!\!\!\!\!&&C\Big[\EE\sup_{s\in[0,T]}|Z_s^\varepsilon|^2\Big]^{\frac{1}{2}}\Big[\EE\int_0^T\int_0^1\big\|\partial_x \sigma(\bar{X}_s+r\sqrt{\varepsilon}Z^{\varepsilon}_s,\mathscr{L}_{X^{\varepsilon}_s})-\partial_x \sigma(\bar{X}_s,\mathscr{L}_{X^{\varepsilon}_s})\big\|^2drds\Big]^{\frac{1}{2}}.
\end{eqnarray*}
Due to the jointly continuous property of  $\partial_x \sigma(\cdot,\cdot)$, the boundedness of $\|\partial_{x}\sigma(x,\mu)\|$ and \eref{supZvare}, we have
\begin{eqnarray}
\lim_{\ep\rightarrow 0}J^{\varepsilon}_4(T)=0. \label{J7}
\end{eqnarray}
By similar arguments above, we can deal with the terms $J^{\varepsilon}_5(T)$ and $J^{\varepsilon}_6(T)$, i.e.,
\begin{eqnarray}
\lim_{\ep\rightarrow 0}\left[J^{\varepsilon}_5(T)+J^{\varepsilon}_6(T)\right]=0.\label{J4-J6}
\end{eqnarray}

Combining \eref{J0}-\eref{J4-J6}, we have
$$
\lim_{\varepsilon\rightarrow 0}\EE\left[\sup_{t\in[0,T]}|\rho^{\vare}_t|\right]=0.
$$

The proof is complete. \hspace{\fill}$\Box$
\end{proof}

\subsection{Tightness}\label{S4.2}

In this subsection, we intend to prove the solution $\eta^\varepsilon$ of equation (\ref{e6}) is tight in $C([0,T],\RR^n)$. We introduce the following criterion of tightness (cf.~\cite[Theorem 7.3]{B0}).
\begin{lemma}
For any $T>0$, the family $\{\Pi^\varepsilon\}_{\varepsilon>0}$ is tight in $C([0,T];\RR^n)$ if and only if the following two conditions hold.

(i) For any positive $\epsilon$, there exist $K$ and $\varepsilon_0$ such that
\begin{equation}\label{t1}
\sup_{0<\varepsilon<\varepsilon_0}\mathbb{P}\Big(|\Pi^\varepsilon_0|\geq K\Big)\leq \epsilon.
\end{equation}

(ii) For any positive $\theta,\epsilon$, there exist constants $\varepsilon_0,\delta$ such that
\begin{equation}\label{t2}
\sup_{0<\varepsilon<\varepsilon_0}\mathbb{P}\Big(\sup_{t_1,t_2\in[0,T], |t_1-t_2|<\delta}|\Pi^\varepsilon_{t_1}-\Pi^\varepsilon_{t_2}|\geq \theta\Big)\leq \epsilon.
\end{equation}

\end{lemma}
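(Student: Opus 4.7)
The plan is to reduce the statement to two classical tools: Prokhorov's theorem (identifying tightness with relative compactness in the weak topology on the Polish space $C([0,T];\RR^n)$), and the Arzelà-Ascoli theorem (identifying compact subsets of $C([0,T];\RR^n)$ as those that are uniformly bounded and equicontinuous). Conditions (i) and (ii) are precisely the probabilistic versions of these two geometric conditions, so the result should be a direct translation.

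First I would set up notation: for $f\in C([0,T];\RR^n)$ and $\delta>0$, write the modulus of continuity $w(f,\delta):=\sup_{|t_1-t_2|<\delta}|f(t_1)-f(t_2)|$. Then (ii) reads $\sup_{0<\varepsilon<\varepsilon_0}\PP(w(\Pi^\varepsilon,\delta)\geq\theta)\leq\eta$ for suitable $\delta$. Since $C([0,T];\RR^n)$ endowed with the sup norm is Polish, Prokhorov's theorem says that tightness of $\{\mathscr{L}_{\Pi^\varepsilon}\}$ is equivalent to the following: for each $\eta>0$ there exists a relatively compact $K_\eta\subset C([0,T];\RR^n)$ with $\inf_{\varepsilon}\PP(\Pi^\varepsilon\in K_\eta)\geq 1-\eta$.

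For the \emph{necessity} direction, given tightness, pick such a $K_\eta$. By Arzelà-Ascoli, $K_\eta$ is uniformly bounded (there exists $K$ with $\|f\|_\infty\leq K$ on $K_\eta$) and equicontinuous (for every $\theta>0$ there exists $\delta>0$ with $w(f,\delta)<\theta$ on $K_\eta$). The inclusions $\{\sup_t|\Pi^\varepsilon_t|\geq K\}\subset\{\Pi^\varepsilon\notin K_\eta\}$ and $\{w(\Pi^\varepsilon,\delta)\geq\theta\}\subset\{\Pi^\varepsilon\notin K_\eta\}$ immediately yield \eref{t1} and \eref{t2}. For the \emph{sufficiency} direction, assume (i) and (ii). Given $\eta>0$, use (i) to choose $K$ with $\sup_\varepsilon\PP(\sup_t|\Pi^\varepsilon_t|\geq K)\leq\eta/2$, and for each $n\geq 1$ apply (ii) with $\theta=1/n$ and tolerance $\eta/2^{n+1}$ to obtain $\delta_n\in(0,1)$ such that $\sup_\varepsilon\PP(w(\Pi^\varepsilon,\delta_n)\geq 1/n)\leq\eta/2^{n+1}$. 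Define
\[
K_\eta:=\Big\{f\in C([0,T];\RR^n):\,\sup_{t\in[0,T]}|f(t)|\leq K\text{ and }w(f,\delta_n)\leq 1/n\text{ for all }n\geq 1\Big\}.
\]
By Arzelà-Ascoli $K_\eta$ is relatively compact, and a union bound gives $\sup_\varepsilon\PP(\Pi^\varepsilon\notin K_\eta)\leq\eta/2+\sum_{n\geq 1}\eta/2^{n+1}=\eta$.

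The only delicate point is the construction of $K_\eta$ in the sufficiency direction, where the summable choice $\eta/2^{n+1}$ is needed so that the intersection of countably many equicontinuity constraints still has probability at least $1-\eta$; this is the standard ``diagonal'' trick and is the only place where anything beyond direct quotation of Prokhorov and Arzelà-Ascoli is required. Since the statement is Theorem 7.3 of Billingsley, one could also simply invoke that reference as the authors do.
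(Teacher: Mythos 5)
The paper gives no proof of this lemma at all --- it is stated with a bare citation to Theorem 7.3 of Billingsley \cite{B0} --- so there is nothing to compare against except the textbook argument, which is exactly what you reproduce: Prokhorov's theorem to identify tightness with the existence of high-probability compact sets, and Arzel\`a--Ascoli to characterize those compacts via uniform boundedness and equicontinuity. Both directions of your argument are sound, and the summable allocation $\eta/2^{n+1}$ in the sufficiency direction is the right device. The one point you gloss over is the role of the thresholds $\varepsilon_0$ in (i) and (ii): each of your countably many applications of (ii) (one for each $\theta=1/n$) produces its own $\varepsilon_0^{(n)}$, and you then write $\sup_\varepsilon$ over the whole family as if the bounds held uniformly in $\varepsilon$. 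Since $\inf_n\varepsilon_0^{(n)}$ may be zero, the set $K_\eta$ you build is not guaranteed to capture mass $1-\eta$ for \emph{any} fixed $\varepsilon$. For a sequence $\varepsilon_n\downarrow 0$ (which is how the lemma is used in the paper, and how Billingsley states it) this is repaired by the standard observation that each single law on a Polish space is tight, so the finitely many indices violating a given threshold can be absorbed by enlarging $K$ and shrinking $\delta$; for the uncountable family $\{\Pi^\varepsilon\}_{\varepsilon>0}$ as literally written one should either make that reduction explicit or interpret tightness as tightness of every sequence with $\varepsilon\to 0$. This is a routine fix, not a flaw in the approach.
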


\vspace{0.3cm}
Recall (\ref{e6}),  for any $0\leq t\leq T$, we denote
$$\eta^\varepsilon(t)=I_1^\varepsilon(t)+I_2^\varepsilon(t)+I_3^\varepsilon(t)+I_4^\varepsilon(t)+I_5^\varepsilon(t),$$
where
\begin{eqnarray*}
I_1^\varepsilon(t):=\!\!\!\!\!\!\!\!&&\frac{1}{\sqrt{\varepsilon}}\int_{0}^t\Big[b(X^{\varepsilon}_s,\mathscr{L}_{X^{\varepsilon}_s},Y^{\varepsilon}_s)-\bar{b}(X^{\varepsilon}_s,\mathscr{L}_{X^{\varepsilon}_s})\Big]ds,
\nonumber\\
I_2^\varepsilon(t):=\!\!\!\!\!\!\!\!&&\int_{0}^t\partial_x\bar{b}(\bar{X}_s,\mathscr{L}_{\bar{X}_s})\cdot \eta_s^\varepsilon ds,
\nonumber\\
I_3^\varepsilon(t):=\!\!\!\!\!\!\!\!&&\int_{0}^t\EE\Big[\partial_{\mu} \bar{b}(u,\mathscr{L}_{\bar{X}_s})(\bar{X}_s)\cdot \eta_s^\varepsilon\Big]\Big|_{u=\bar{X}_s}ds,
\nonumber\\
I_4^\varepsilon(t):=\!\!\!\!\!\!\!\!&&\int_{0}^t\partial_x\sigma(\bar{X}_s,\mathscr{L}_{\bar{X}_s})\cdot\eta_s^\varepsilon dW_s^1,
\nonumber\\
I_5^\varepsilon(t):=\!\!\!\!\!\!\!\!&&\int_{0}^t\EE\Big[\partial_{\mu} \sigma(u,\mathscr{L}_{\bar{X}_s})(\bar{X}_s)\cdot \eta_s^\varepsilon\Big]\Big|_{u=\bar{X}_s}dW_s^1.
\end{eqnarray*}

\begin{lemma}\label{th1}
 Suppose that the assumptions in Theorem \ref{main result 2} hold. The distribution of process $\{\Pi^\varepsilon\}$, where $\Pi^\varepsilon:=(\eta^\varepsilon,I_1^\varepsilon,I_2^\varepsilon,I_3^\varepsilon,I_4^\varepsilon,I_5^\varepsilon)$, is tight in $C([0,T];\RR^{6n})$.
\end{lemma}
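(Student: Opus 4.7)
The plan is to verify the two-part tightness criterion (\ref{t1})--(\ref{t2}) component-by-component for $\Pi^\varepsilon$, noting that joint tightness in $C([0,T];\RR^{6n})$ follows from tightness of each one of the six marginals. Since $\partial_x\bar b, \partial_\mu\bar b, \partial_x\sigma, \partial_\mu\sigma$ are uniformly bounded (as shown in the proof of Proposition \ref{th3}), the components $I_2^\varepsilon, I_3^\varepsilon, I_4^\varepsilon, I_5^\varepsilon$ are pathwise controlled by $\eta^\varepsilon$: $I_2^\varepsilon$ and $I_3^\varepsilon$ are Lipschitz in $t$ with constant $C\sup_r|\eta^\varepsilon_r|$, while $I_4^\varepsilon, I_5^\varepsilon$ are martingales for which BDG yields $\EE|I_j^\varepsilon(t)-I_j^\varepsilon(s)|^2\leq C(t-s)\,\EE[\sup_r|\eta^\varepsilon_r|^2]$. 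Hence everything reduces to (a) controlling $I_1^\varepsilon$ uniformly, both in size and in modulus of continuity, and then (b) deducing a uniform moment bound and an increment estimate for $\eta^\varepsilon$ itself.

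For (a) I will apply Itô's formula to $\Phi(X^\varepsilon_t,\mathscr{L}_{X^\varepsilon_t},Y^\varepsilon_t)$, where $\Phi$ is the solution of the Poisson equation (\ref{PE}) supplied by Proposition \ref{P3.6}. The crucial cancellation is that the terms of order $1/\varepsilon$ from the fast dynamics assemble into $\frac{1}{\varepsilon}\mathscr{L}_2(X^\varepsilon_t,\mathscr{L}_{X^\varepsilon_t})\Phi = -\frac{1}{\varepsilon}[b-\bar b]$, which precisely cancels the singular coefficient in $I_1^\varepsilon$ after multiplying through by $\sqrt\varepsilon$. This produces the representation
\begin{equation*}
I_1^\varepsilon(t) \;=\; -\sqrt\varepsilon\bigl[\Phi(X^\varepsilon_t,\mathscr{L}_{X^\varepsilon_t},Y^\varepsilon_t)-\Phi(\xi,\mathscr{L}_\xi,\zeta)\bigr] \;+\; \sqrt\varepsilon\,R^\varepsilon(t) \;+\; M^\varepsilon(t),
\end{equation*}
where $R^\varepsilon(t)$ is a bounded-variation plus local-martingale term built from $\partial_x\Phi$, $\partial_{xx}^2\Phi$, $\partial_\mu\Phi$, $\partial_z\partial_\mu\Phi$ against the dynamics of $X^\varepsilon$ and $\mathscr{L}_{X^\varepsilon}$, and $M^\varepsilon(t):=\int_0^t\partial_y\Phi\cdot g(X^\varepsilon_s,\mathscr{L}_{X^\varepsilon_s},Y^\varepsilon_s)\,dW^2_s$. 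By the uniform bound on $\partial_y\Phi\cdot g$ from (\ref{E3}), BDG yields both $\sup_\varepsilon\EE[\sup_t|M^\varepsilon(t)|^2]<\infty$ and the Kolmogorov-style increment estimate $\EE|M^\varepsilon(t)-M^\varepsilon(s)|^2\leq C(t-s)$, which together will handle this part.

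Given (a), step (b) is almost automatic: equation (\ref{e6}) reads $\eta^\varepsilon_t = I_1^\varepsilon(t) + \int_0^t A_s\eta^\varepsilon_s\,ds + \int_0^t B_s\eta^\varepsilon_s\,dW^1_s$ with operator-valued coefficients $A_s,B_s$ uniformly bounded in $(s,\omega,\varepsilon)$, so BDG followed by Gronwall gives $\EE[\sup_{t\in[0,T]}|\eta^\varepsilon_t|^2] \leq C_T(1+\EE[\sup_t|I_1^\varepsilon(t)|^2])$; increments of $\eta^\varepsilon$ on $[s,t]$ are then bounded by the corresponding $I_1^\varepsilon$-increment plus $C(t-s)^{1/2}$ times $\EE[\sup_r|\eta^\varepsilon_r|^2]^{1/2}$. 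From these moment and modulus-of-continuity bounds, Markov's inequality delivers (\ref{t1}) and Kolmogorov's criterion (or a direct verification) delivers (\ref{t2}) for each component.

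The principal obstacle is the boundary term $\sqrt\varepsilon\,\Phi(X^\varepsilon_t,\mathscr{L}_{X^\varepsilon_t},Y^\varepsilon_t)$ in the representation of $I_1^\varepsilon$: since $\Phi$ grows linearly in $y$ by (\ref{E1}), one must control $\sqrt\varepsilon\,\EE[\sup_{t\in[0,T]}|Y^\varepsilon_t|^2]^{1/2}$, and the pointwise-in-$t$ estimate (\ref{Y}) is insufficient because here the supremum sits inside the expectation. This is exactly where the refined bound (\ref{Y1}) enters: it gives $\EE[\sup_t|Y^\varepsilon_t|^4]\leq C_T/\varepsilon$, whence
\begin{equation*}
\sqrt\varepsilon\,\EE\Bigl[\sup_{t\in[0,T]}|Y^\varepsilon_t|^2\Bigr]^{1/2} \;\leq\; \sqrt\varepsilon\,\Bigl(\EE\bigl[\sup_t|Y^\varepsilon_t|^4\bigr]\Bigr)^{1/4} \;\leq\; C_T^{1/4}\,\varepsilon^{1/4} \longrightarrow 0,
\end{equation*}
uniformly in $t$. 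The same bound, combined with (\ref{X}), (\ref{COX}) and the linear growth of $\partial_x\Phi, \partial_\mu\Phi, \partial_z\partial_\mu\Phi$ in $(x,y,\mu)$, controls the $\sqrt\varepsilon R^\varepsilon(t)$ terms as $\varepsilon\to 0$, closing the verification.
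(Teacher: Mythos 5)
Your proposal follows essentially the same route as the paper: apply It\^{o}'s formula to the Poisson-equation solution $\Phi$ to get the representation $I_1^\varepsilon = I_{11}^\varepsilon + M^{2,\varepsilon}$, absorb the singular $1/\sqrt{\varepsilon}$ coefficient via the generator cancellation, recognize that the $\sqrt\varepsilon\,\Phi(X^\varepsilon_t,\mathscr{L}_{X^\varepsilon_t},Y^\varepsilon_t)$ boundary term is the place where the refined bound \eref{Y1} is indispensable, prove \eref{5} for $\eta^\varepsilon$ by Gronwall, and then hand the tightness of $I_2^\varepsilon,\dots,I_5^\varepsilon$ down from that of $\eta^\varepsilon$. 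The identification of the key difficulty and how \eref{Y1} resolves it matches the paper exactly.

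One factual slip: you invoke ``the uniform bound on $\partial_y\Phi\cdot g$ from \eref{E3}'', but \eref{E3} bounds only $\partial_y\Phi$. Under $\mathbf{A\ref{A1}}$ and \eref{32} the diffusion coefficient $g$ has linear growth in $y$, so $\partial_y\Phi_g=\partial_y\Phi\cdot g$ is \emph{not} uniformly bounded. The two estimates you attribute to uniform boundedness (the second-moment bound on $\sup_t|M^\varepsilon(t)|$ and the increment estimate $\EE|M^\varepsilon(t)-M^\varepsilon(s)|^2\leq C(t-s)$) are still true, but they require passing through $\sup_{t\in[0,T]}\EE|Y^\varepsilon_t|^2<\infty$, i.e.\ \eref{Y}, as the paper does in \eref{35}. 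This is a defect in the justification rather than in the conclusion, so the overall argument survives, but the reasoning as written does not.
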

\begin{proof}
It is sufficient to prove that  $\{I_1^\varepsilon\},\{I_2^\varepsilon\},\{I_3^\varepsilon\},\{I_4^\varepsilon\}$ and $\{I_5^\varepsilon\}$ satisfy (\ref{t1}) and (\ref{t2}), respectively. We separate the proof into the following two steps.

\textbf{Step 1:} In this step, we shall prove the following uniform estimates of $\{I_i^\varepsilon\}$, $i=1,2,\ldots,5$,
\begin{equation}\label{es1}
\sup_{\varepsilon\in(0,1)}\EE\big[\sup_{t\in[0,T]}|I_i^{\varepsilon}|^2\big]<\infty.
\end{equation}
 Recall the Poisson equation (\ref{PE}), by It\^{o}'s formula (cf.~\cite[Theorem 7.1]{BLPR}) for the function $\Phi(x,\mu,y)$, we have
\begin{eqnarray*}
&&\Phi(X_{t}^{\varepsilon},\mathscr{L}_{X^{\varepsilon}_{t}},Y^{\varepsilon}_{t})\\
=\!\!\!\!\!\!\!\!&&\Phi(\xi,\mathscr{L}_{\xi},\zeta)
+\int^t_0 \EE\left[b(X^{\varepsilon}_s,\mathscr{L}_{ X^{\varepsilon}_{s}}, Y^{\varepsilon}_s)\partial_{\mu}\Phi(x,\mathscr{L}_{X^{\varepsilon}_{s}},y)(X^{\varepsilon}_s)\right]\mid_{x=X_{s}^{\varepsilon},y=Y^{\varepsilon}_{s}}ds\\
&&+\int^t_0 \frac{1}{2}\EE \text{Tr}\left[\sigma\sigma^{*}(X^{\varepsilon}_s,\mathscr{L}_{ X^{\varepsilon}_{s}})\partial_z\partial_{\mu}\Phi(x,\mathscr{L}_{X^{\varepsilon}_{s}},y)(X^{\varepsilon}_s)\right]\mid_{x=X_{s}^{\varepsilon},y=Y^{\varepsilon}_{s}}ds\\
&&+\int^t_0 \mathcal{L}_{1}(\mathscr{L}_{X^{\varepsilon}_{s}},Y^{\varepsilon}_{s})\Phi(X_{s}^{\varepsilon},\mathscr{L}_{X^{\varepsilon}_{s}},Y^{\varepsilon}_{s})ds\\
&&+\frac{1}{\varepsilon}\int^t_0 \mathcal{L}_{2}(X_{s}^{\varepsilon},\mathscr{L}_{X^{\varepsilon}_{s}})\Phi(X_{s}^{\varepsilon},\mathscr{L}_{X^{\varepsilon}_{s}},Y^{\varepsilon}_{s})ds+M^{1,\varepsilon}_t+\frac{1}{\sqrt{\varepsilon}}M^{2,\varepsilon}_t,
\end{eqnarray*}
where $\mathcal{L}_{1}(\mu,y)\Phi(x,\mu,y):=(\mathfrak{L}_{1}(\mu,y)\Phi_1(x,\mu,y),\ldots, \mathfrak{L}_{1}(\mu,y)\Phi_n(x,\mu,y))$ with
\begin{eqnarray}\label{inf2}
 \mathfrak{L}_{1}(\mu,y)\Phi_k(x,\mu,y):=\!\!\!\!\!\!\!\!&&\left\langle b(x,\mu,y), \partial_x \Phi_k(x,\mu,y)\right \rangle\nonumber\\
 &&+\frac{1}{2}\text{Tr}\left[\sigma\sigma^{*}(x,\mu)\partial^2_{xx} \Phi_k(x,\mu,y)\right ],\quad k=1,\ldots, n,
\end{eqnarray}
and $M^{1,\varepsilon}_t, M^{2,\varepsilon}_t$ are two local martingales, which are defined by
\begin{eqnarray*}
&&M^{1,\varepsilon}_t=\int^t_0 \partial_x \Phi(X_{s}^{\varepsilon},\mathscr{L}_{X^{\varepsilon}_{s}},Y_{s}^{\varepsilon})\cdot \sigma(X^{\varepsilon}_s,\mathscr{L}_{ X^{\varepsilon}_{s}}) dW^1_s,\\
&&M^{2,\varepsilon}_t=\int^t_0 \partial_y \Phi(X_{s}^{\varepsilon},\mathscr{L}_{X^{\varepsilon}_{s}},Y_{s}^{\varepsilon})\cdot g(X^{\varepsilon}_s,\mathscr{L}_{ X^{\varepsilon}_{s}}, Y^{\varepsilon}_s) dW^2_s.
\end{eqnarray*}
Then it follows that
\begin{eqnarray}\label{4}
I_1^\varepsilon(t)=\!\!\!\!\!\!\!\!&&\sqrt{\varepsilon}\Big\{-\Phi(X_{t}^{\varepsilon},\mathscr{L}_{X^{\varepsilon}_{t}},Y^{\varepsilon}_{t})+\Phi(\xi,\mathscr{L}_{\xi},\zeta)
\nonumber\\
\!\!\!\!\!\!\!\!&&+\int^t_0 \EE\left[b(X^{\varepsilon}_s,\mathscr{L}_{ X^{\varepsilon}_{s}}, Y^{\varepsilon}_s)\partial_{\mu}\Phi(x,\mathscr{L}_{X^{\varepsilon}_{s}},y)(X^{\varepsilon}_s)\right]\mid_{x=X_{s}^{\varepsilon},y=Y^{\varepsilon}_{s}}ds\nonumber\\
&&+\int^t_0 \frac{1}{2}\EE \text{Tr}\left[\sigma\sigma^{*}(X^{\varepsilon}_s,\mathscr{L}_{ X^{\varepsilon}_{s}})\partial_z\partial_{\mu}\Phi(x,\mathscr{L}_{X^{\varepsilon}_{s}},y)(X^{\varepsilon}_s)\right]\mid_{x=X_{s}^{\varepsilon},y=Y^{\varepsilon}_{s}}ds\nonumber\\
&&+\int^t_0 \mathcal{L}_{1}(\mathscr{L}_{X^{\varepsilon}_{s}},Y^{\varepsilon}_{s})\Phi(X_{s}^{\varepsilon},\mathscr{L}_{X^{\varepsilon}_{s}},Y^{\varepsilon}_{s})ds+M^{1,\varepsilon}_t\Big\}+M^{2,\varepsilon}_t\nonumber\\
=:\!\!\!\!\!\!\!\!&&I_{11}^\varepsilon(t)+M^{2,\varepsilon}_t.
\end{eqnarray}
By Burkholder-Davis-Gundy's inequality, (\ref{E1})  and (\ref{X})-(\ref{Y1}), we have
\begin{eqnarray}
\EE\left[\sup_{t\in[0,T]}|I_{11}^\varepsilon(t)|^2\right]\leq\!\!\!\!\!\!\!\!&&C_T\ep\left\{1+\EE\left[\sup_{t\in[0,T]}|X^{\varepsilon}_t|^4\right]+\left[\EE\Big(\sup_{t\in[0,T]}|Y^{\varepsilon}_t|^4\Big)\right]^{1/2}+\sup_{t\in[0,T]}\EE|Y^{\varepsilon}_t|^4\right\}\nonumber\\
\leq\!\!\!\!\!\!\!\!&&C_T\sqrt{\ep}(1+\EE|\xi|^4+\EE|\zeta|^4)\label{Vanish}
\end{eqnarray}
and
\begin{eqnarray}\label{35}
\EE\left[\sup_{t\in[0,T]}|M^{2,\varepsilon}_t|^2\right]\leq\!\!\!\!\!\!\!\!&&C_T\left(1+\sup_{t\in[0,T]}\EE|Y^{\varepsilon}_t|^2\right)\leq C_T(1+\EE|\xi|^2+\EE|\zeta|^2).
\end{eqnarray}
Thus we can get
\begin{eqnarray}\label{6}
\sup_{\varepsilon\in(0,1)}\EE\left[\sup_{t\in[0,T]}|I_1^\varepsilon(t)|^2\right]\leq\!\!\!\!\!\!\!\!&&C_T\left(1+\EE|\xi|^4+\EE|\zeta|^4\right),
\end{eqnarray}
which easily implies that $I_1^\varepsilon$ satisfies (\ref{es1}).

Secondly, in order to show the uniform estimates of $I_i^\varepsilon, i=2,3,4,5$, we need to prove that
\begin{eqnarray}\label{5}
\sup_{\varepsilon\in(0,1)}\EE\big[\sup_{t\in[0,T]}|\eta^\varepsilon_t|^2\big]\leq C_T\big(1+\EE|\xi|^4+\EE|\zeta|^4\big).
\end{eqnarray}
 In fact, note that the uniform boundedness of $\|\partial_x\bar{b}(x,\mu)\|$, $\|\partial_{\mu}\bar{b}(x,\mu)(z)\|$, $\|\partial_x\sigma(x,\mu)\|$ and $\|\partial_{\mu}\sigma(x,\mu)(z)\|$, we infer that
\begin{eqnarray*}
\EE\big[\sup_{t\in[0,T]}|\eta^\varepsilon_t|^2\big]\leq\!\!\!\!\!\!\!\!&&\EE\Big[\sup_{t\in[0,T]}|I_1^\varepsilon(t)|^2\Big]+C\EE\int_0^T|\eta^\varepsilon_t|dt+
\EE\Big[\sup_{t\in[0,T]}\Big|\int_{0}^t\left[\partial_x\sigma(\bar{X}_s,\mathscr{L}_{\bar{X}_s})\cdot\eta_s^\varepsilon \right] dW_s^1\Big|^2\Big]
\nonumber\\
\!\!\!\!\!\!\!\!&&+\EE\Big[\sup_{t\in[0,T]}\Big|\int_{0}^t\EE\Big[\partial_{\mu} \sigma(u,\mathscr{L}_{\bar{X}_s})(\bar{X}_s)\cdot \eta_t^\varepsilon\Big]\Big|_{u=\bar{X}_s}dW_s^1\Big|^2\Big]
\nonumber\\
\leq\!\!\!\!\!\!\!\!&&C_T\big(1+\EE|\xi|^4+\EE|\zeta|^4\big)+C_T\EE\int_0^T|\eta^\varepsilon_t|^2dt.
\end{eqnarray*}
Then the Gronwall's lemma yields \eref{5} holds. As a consequence, we can see that $I_i^\varepsilon, i=2,3,4,5$ also satisfy  (\ref{es1}).

%\begin{eqnarray}\label{5}
%\sup_{0<\varepsilon<\varepsilon_0}\EE\big[\sup_{t\in[0,T]}|\eta^\varepsilon_t|^2\big]\leq C_T\big(1+\EE|\xi|^4+\EE|\zeta|^4\big),
%\end{eqnarray}
%which implies that $\eta^\varepsilon_t$ satisfy the criteria (\ref{t1}).
\vspace{0.1cm}
\textbf{Step 2:} In this step, we shall prove that $\{I_i^\varepsilon\}$, $i=1,2,\ldots,5$, satisfy (\ref{t2}). Owing to the uniform boundedness of $\|\partial_x\bar{b}(x,\mu)\|$, $\|\partial_{\mu}\bar{b}(x,\mu)(z)\|$, $\|\partial_x\sigma(x,\mu)\|$ and $\|\partial_{\mu}\sigma(x,\mu)(z)\|$, we can prove that the terms $I_i^\varepsilon$,$i=2,3,4$, fulfill (\ref{t2}) by making use of (\ref{5}).

By Burkholder-Davis-Gundy's inequality, we have
 \begin{eqnarray}
\!\!\!\!\!\!\!\!&&\EE\Big|\int_{t_1}^{t_2}\EE\big[\partial_{\mu} \sigma(u,\mathscr{L}_{\bar{X}_s})(\bar{X}_s)\cdot \eta_s^\varepsilon\big]\big|_{u=\bar{X}_s}dW_s^1\Big|^4
\nonumber\\
\leq\!\!\!\!\!\!\!\!&&\EE\Big[\int_{t_1}^{t_2}\Big|\EE\big[\partial_{\mu} \sigma(u,\mathscr{L}_{\bar{X}_s})(\bar{X}_s)\cdot \eta_s^\varepsilon\big]\big|_{u=\bar{X}_s}\Big|^2ds\Big]^{2}
\nonumber\\
\leq\!\!\!\!\!\!\!\!&&\Big(\int_{t_1}^{t_2}\EE|\eta_s^\varepsilon|^2ds\Big)^{2}
\nonumber\\
\leq\!\!\!\!\!\!\!\!&&C_T\big(1+\EE|\xi|^4+\EE|\zeta|^4\big)|t_2-t_1|^2,\label{F4.14}
\end{eqnarray}
which implies (\ref{t2}) holds for $I_5^\varepsilon$ by the Kolmogorov's criterion (cf.~\cite{Daz1}). %Therefore, combining the results in Step 1, we deduce that $I_2^\varepsilon,I_3^\varepsilon,I_4^\varepsilon,I_5^\varepsilon$ are all tight.

Next, we shall prove $I_{1}^\varepsilon$ satisfies (\ref{t2}). On the one hand, following from \eref{Vanish}, $I_{11}^\varepsilon$ vanishes in probability in $C([0,T];\RR^n)$. On the other hand, by a similar argument in \eref{F4.14}, we can see that $M^{2,\varepsilon}$ also satisfies (\ref{t2}). Hence, $I_{1}^\varepsilon$ satisfies (\ref{t2}). The proof is complete.

\hspace{\fill}$\Box$
\end{proof}

\subsection{Proof of Theorem \ref{main result 2}} \label{S4.3}

%\subsection{Characterizing the limiting process} \label{S4.3}
In order to prove the first main result, the key point is to identify the weak limit of $\eta_t^\varepsilon$ in $C([0,T];\RR^n)$.%we first characterize the weak limit of $I_1^\varepsilon$ in $C([0,T];\RR^n)$, note that (\ref{33}) implies that $I_{11}^\varepsilon$ converges to $0$ in probability in $C([0,T];\RR^{n})$, thus it is sufficient to characterize the weak limit of $M^{\ep,2}$ on $C([0,T];\RR^{n})$. Since $M^{2,\varepsilon}_t$ is martingale taking values in $\RR^n$, we will characterize the weak limit by its quadratic variation and using Stroock-Varadhan result (\cite{SV}). After that we can identify the weak limit of $\eta_t^\varepsilon$.
%\begin{lemma}\label{l3}
%Suppose that the conditions ${\mathbf{A\ref{A1}}}$-\ref{A3} holds. Then for any $s,t\in[0,T]$,
%\begin{equation}\label{13}
%\EE \Upsilon_t=0.
%\end{equation}
%\end{lemma}
%\begin{proof}
%(\ref{13}) is obvious since $\EE M^{2,\varepsilon}_t=0$ for any $t\in[0,T]$.
%\end{proof}
\begin{proposition}\label{th2}
Under the assumptions in Theorem \ref{main result 2}, the solution $\eta^\varepsilon$ of equation (\ref{e6}) weakly converges  to the solution $Z$ of equation (\ref{e5}) in $C([0,T];\RR^n)$, as $\varepsilon\to0 $.
\end{proposition}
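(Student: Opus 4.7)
The plan is to combine the tightness of $\{\Pi^{\varepsilon}\}$ established in Lemma~\ref{th1} with a careful identification of the weak limit via the martingale representation theorem. First, by Prokhorov's theorem extract a subsequence (still indexed by $\varepsilon$) such that $\Pi^{\varepsilon} = (\eta^{\varepsilon},I_{1}^{\varepsilon},\ldots,I_{5}^{\varepsilon})$ converges weakly to some limit $(\eta,\mathcal{I}_{1},\ldots,\mathcal{I}_{5})$ in $C([0,T];\RR^{6n})$, and invoke Skorokhod's representation theorem to work on a common probability space where the convergence is almost sure. The additive structure $\eta = \sum_{i=1}^{5}\mathcal{I}_{i}$ then passes to the limit by continuous mapping, and it remains to identify each $\mathcal{I}_{i}$.

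The terms $I_{2}^{\varepsilon}$ and $I_{3}^{\varepsilon}$ are continuous functionals of the pair $(\bar{X},\eta^{\varepsilon})$ (thanks to the uniform boundedness and joint continuity of $\partial_{x}\bar{b}$ and $\partial_{\mu}\bar{b}$ established in Lemma~\ref{C1} below), so they pass to the limit to produce the two drift terms of (\ref{e5}) with $\eta$ in place of $Z$. For the stochastic integrals $I_{4}^{\varepsilon}$ and $I_{5}^{\varepsilon}$ the integrands depend only on the $W^{1}$-adapted deterministic functions $\partial_{x}\sigma$ and $\partial_{\mu}\sigma$ evaluated at $(\bar{X}_{s},\mathscr{L}_{\bar{X}_{s}})$, so a standard Kurtz--Protter type convergence of stochastic integrals (using the uniform $L^{2}$-bound \eref{5} on $\eta^{\varepsilon}$) yields the corresponding $W^{1}$-integrals of $\eta$.

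The crux is to identify $\mathcal{I}_{1}$. From the It\^{o} expansion already carried out in \eref{4}, write $I_{1}^{\varepsilon}(t)=I_{11}^{\varepsilon}(t)+M_{t}^{2,\varepsilon}$. The residual $I_{11}^{\varepsilon}$ is of order $\sqrt{\varepsilon}$ in $L^{2}$ by \eref{Vanish} and therefore disappears in the limit, leaving the martingale $M^{2,\varepsilon}$. By the bounds in Proposition~\ref{P3.6} the family $\{M^{2,\varepsilon}\}$ is tight, its cross-variation with $W^{1}$ vanishes identically (since $W^{1}\perp W^{2}$), and its quadratic variation is
\begin{equation*}
\langle M^{2,\varepsilon}\rangle_{t}=\int_{0}^{t}(\partial_{y}\Phi_{g})(\partial_{y}\Phi_{g})^{*}(X_{s}^{\varepsilon},\mathscr{L}_{X_{s}^{\varepsilon}},Y_{s}^{\varepsilon})\,ds.
\end{equation*}
The hard step, which I expect to be the main obstacle and which should be the content of Lemma~\ref{L4.5}, is to prove the averaging-type convergence
\begin{equation*}
\int_{0}^{t}(\partial_{y}\Phi_{g})(\partial_{y}\Phi_{g})^{*}(X_{s}^{\varepsilon},\mathscr{L}_{X_{s}^{\varepsilon}},Y_{s}^{\varepsilon})\,ds\;\longrightarrow\;\int_{0}^{t}\Theta\Theta^{*}(\bar{X}_{s},\mathscr{L}_{\bar{X}_{s}})\,ds
\end{equation*}
in probability, uniformly in $t\in[0,T]$. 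To do this I would partition $[0,T]$ by a mesh $\Delta=\Delta(\varepsilon)$ that shrinks slower than $\varepsilon$, freeze the slow variables $(X^{\varepsilon},\mathscr{L}_{X^{\varepsilon}})$ on each subinterval, and exploit the exponential ergodicity of the frozen fast equation (\ref{FEQ2}) together with the quantitative estimates \eref{X}--\eref{Y1} to replace the time average of $(\partial_{y}\Phi_{g})(\partial_{y}\Phi_{g})^{*}$ by its spatial average \eref{14} against $\nu^{x,\mu}$; the H\"older regularity \eref{A40} in $y$ controls the frozen-slow approximation error.

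Granted this convergence together with the vanishing cross-variation with $W^{1}$, the martingale central limit theorem (or equivalently the martingale representation theorem after enlarging the filtration with an auxiliary Brownian motion) identifies the weak limit of $M^{2,\varepsilon}$ as $\int_{0}^{t}\Theta(\bar{X}_{s},\mathscr{L}_{\bar{X}_{s}})\,d\hat{W}_{s}$ for some $n$-dimensional Brownian motion $\hat{W}$ independent of $W^{1}$. Collecting all five limits shows that $\eta$ satisfies exactly (\ref{e5}) with $\eta$ in place of $Z$; since the coefficients of (\ref{e5}) are globally Lipschitz and $\Theta$ is bounded, pathwise uniqueness holds for this linear distribution-dependent SDE, so $\eta\stackrel{d}{=}Z$ and the whole family $\{\eta^{\varepsilon}\}$ converges weakly to $Z$ in $C([0,T];\RR^{n})$.
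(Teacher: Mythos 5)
Your proposal is correct and follows essentially the same route as the paper: tightness from Lemma \ref{th1}, Skorokhod representation, direct identification of $I_2^\varepsilon,\dots,I_5^\varepsilon$, the decomposition $I_1^\varepsilon=I_{11}^\varepsilon+M^{2,\varepsilon}$ with $I_{11}^\varepsilon$ vanishing by \eref{Vanish}, identification of $\langle M^{2}\rangle$ by time discretization plus ergodicity of the frozen equation (this is exactly the content of Lemma \ref{L4.5}), the martingale representation theorem to produce $\hat{W}$ independent of $W^1$, and weak uniqueness of \eref{e5} to conclude. The only cosmetic differences are your invocation of Kurtz--Protter for $I_4^\varepsilon,I_5^\varepsilon$ (the paper simply includes these integrals in the tight tuple) and your claim of uniform-in-$t$ convergence of the quadratic variation where the paper proves pointwise $L^2$ convergence and then passes to the limit in the martingale property.
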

\begin{proof}
It is sufficient to prove that there is a subsequence $\{\varepsilon_{n_k}\}_{k\geq1}$ of any sequence $\{\varepsilon_n\}_{n\geq1}$ tending to $0$ such that $\eta^{\varepsilon_{n_k}}$ converges weakly to $Z$ in $C([0,T];\RR^n)$.
 According to Lemma \ref{th1}, for a subsequence $\{\varepsilon_{n_k}\}_{k\geq1}$, one may assume that
  $\{(\eta^{\varepsilon_{n_k}},I_1^{\varepsilon_{n_k}}, I_2^{\varepsilon_{n_k}},I_3^{\varepsilon_{n_k}},I_4^{\varepsilon_{n_k}},I_5^{\varepsilon_{n_k}},W^1,W^2)\}_{k\geq1}$ converges weakly, where $W^1,W^2$ are two independent Brownian motions defined in (\ref{E2}).

Then using the Skorohod representation theorem (cf.~\cite[Theorem C.1]{BHR} or \cite[Corollary C.1]{M2}), there exists a new probability space denoted by $(\hat{\Omega},\hat{\mathscr{F}},\hat{\PP})$ and a sequence $$\{(\hat{\eta}^{\varepsilon_{n_k}},\hat{I_1}^{\varepsilon_{n_k}},\hat{I_2}^{\varepsilon_{n_k}},\hat{I_3}^{\varepsilon_{n_k}},\hat{I_4}^{\varepsilon_{n_k}},\hat{I_5}^{\varepsilon_{n_k}},\hat{W}^{1,\varepsilon_{n_k}},\hat{W}^{2,\varepsilon_{n_k}})\}_{k\geq1},$$
 which has the same law with
$\{(\eta^{\varepsilon_{n_k}},I_1^{\varepsilon_{n_k}},W^1, W^2,I_2^{\varepsilon_{n_k}},I_3^{\varepsilon_{n_k}},I_4^{\varepsilon_{n_k}},I_5^{\varepsilon_{n_k}})\}_{k\geq1}$ and the following properties hold

\vspace{0.1cm}
(i) $(\hat{\eta}^{\varepsilon_{n_k}},\hat{I_1}^{\varepsilon_{n_k}},\hat{I_2}^{\varepsilon_{n_k}},\hat{I_3}^{\varepsilon_{n_k}},\hat{I_4}^{\varepsilon_{n_k}},\hat{I_5}^{\varepsilon_{n_k}},\hat{W}^{1,\varepsilon_{n_k}},\hat{W}^{2,\varepsilon_{n_k}})$
converges to  $(\hat{\eta},\hat{I_1},\hat{I_2},\hat{I_3},\hat{I_4},\hat{I_5},\hat{W}^{1},\hat{W}^{2})$ in $C([0,T];\RR^{6n+d_1+d_2})$, $\hat{\PP}$-a.s., as $k\to \infty$;

\vspace{0.1cm}
(ii) $(\hat{W}^{1,\varepsilon_{n_k}}(\hat{\omega}), \hat{W}^{2,\varepsilon_{n_k}}(\hat{\omega}))=(\hat{W}^{1}(\hat{\omega}), \hat{W}^{2}(\hat{\omega}))$ for all $\hat{\omega}\in \hat{\Omega}$.

From the equation \eref{e6} satisfied by  $\{(\eta^{\varepsilon_{n_k}},I_1^{\varepsilon_{n_k}}, I_2^{\varepsilon_{n_k}},I_3^{\varepsilon_{n_k}},I_4^{\varepsilon_{n_k}},I_5^{\varepsilon_{n_k}},W^1,W^2)\}_{k\geq1}$, it follows that
$$\hat{\eta}^{\varepsilon_{n_k}}(t)=\hat{I}_1^{\varepsilon_{n_k}}(t)+\hat{I}_2^{\varepsilon_{n_k}}(t)+\hat{I}_3^{\varepsilon_{n_k}}(t)+\hat{I}_4^{\varepsilon_{n_k}}(t)+\hat{I}_5^{\varepsilon_{n_k}}(t),$$
where
\begin{eqnarray*}
\hat{I}_1^{\varepsilon_{n_k}}(t):=\!\!\!\!\!\!\!\!&&\frac{1}{\sqrt{\varepsilon_{n_k}}}\int_{0}^t\Big[b(\hat{X}^{\varepsilon_{n_k}}_s,\mathscr{L}_{\hat{X}^{\varepsilon_{n_k}}_s},\hat{Y}^{\varepsilon_{n_k}}_s)-\bar{b}(\hat{X}^{\varepsilon_{n_k}}_s,\mathscr{L}_{\hat{X}^{\varepsilon_{n_k}}_s})\Big]ds,
\nonumber\\
\hat{I}_2^{\varepsilon_{n_k}}(t):=\!\!\!\!\!\!\!\!&&\int_{0}^t\partial_x\bar{b}(\hat{\bar{X}}_s,\mathscr{L}_{\hat{\bar{X}}_s})\cdot \hat{\eta}_s^{\varepsilon_{n_k}} ds,
\nonumber\\
\hat{I}_3^{\varepsilon_{n_k}}(t):=\!\!\!\!\!\!\!\!&&\int_{0}^t\hat{\EE}\Big[\partial_{\mu} \bar{b}(u,\mathscr{L}_{\hat{\bar{X}}_s})(\hat{\bar{X}}_s)\cdot \hat{\eta}_s^{\varepsilon_{n_k}}\Big]\Big|_{u=\hat{\bar{X}}_s}ds,
\nonumber\\
\hat{I}_4^{\varepsilon_{n_k}}(t):=\!\!\!\!\!\!\!\!&&\int_{0}^t\partial_x\sigma(\hat{\bar{X}}_s,\mathscr{L}_{\hat{\bar{X}}_s})\cdot \hat{\eta}_s^{\varepsilon_{n_k}} d \hat{W}_s^1,
\nonumber\\
\hat{I}_5^{\varepsilon_{n_k}}(t):=\!\!\!\!\!\!\!\!&&\int_{0}^t\hat{\EE}\Big[\partial_{\mu} \sigma(u,\mathscr{L}_{\hat{\bar{X}}_s})(\hat{\bar{X}}_s)\cdot \hat{\eta}_s^{\varepsilon_{n_k}}\Big]\Big|_{u=\hat{\bar{X}}_s}d \hat{W}_s^1.
\end{eqnarray*}
Here $(\hat{X}^{\varepsilon_{n_k}},\hat{Y}^{\varepsilon_{n_k}})$ is the solutions of (\ref{E2}) with $\delta=1$, $\varepsilon_{n_k}$ replacing $\varepsilon$, $(\hat{W}^{1},\hat{W}^{2})$ replacing $(W^1,W^2)$ and $(\hat{\xi},\hat{\zeta})$ replacing $(\xi,\zeta)$,
and $\hat{\bar{X}}$ is the solutions of (\ref{1.3}) with $\hat{W}^{1}$ replacing $W^1$ and $\hat{\xi}$ replacing $\xi$, where $\hat{\xi}$ and $\hat{\zeta}$ are two random variables on $(\hat{\Omega},\hat{\mathscr{F}}, \hat{\PP})$, which coincide in laws with $\xi$ and $\zeta$ respectively, and $\hat{\EE}$ is the expectation on $(\hat{\Omega},\hat{\mathscr{F}},\{\hat{\mathscr{F}_t}\}_{t\geq0},\hat{\PP})$. Here $\hat{\mathscr{F}}_t$ denote
the natural filtration generated by $\{\hat{\xi},\hat{\zeta},\hat{W}^1_s,\hat{W}^2_s, s\leq t\}$.

We first identify the limiting processes $\hat{I}_i$ of $\hat{I}^{\varepsilon_{n_k}}_i$, $i=2,3,4,5$, have the following form,  i.e. $\hat{\PP}$-a.s.,
\begin{eqnarray*}
\hat{I}_2(t)=\!\!\!\!\!\!\!\!&&\int_{0}^t\partial_x\bar{b}(\hat{\bar{X}}_s,\mathscr{L}_{\hat{\bar{X}}_s})\cdot\hat{\eta}_s ds,
\nonumber\\
\hat{I}_3(t)=\!\!\!\!\!\!\!\!&&\int_{0}^t\hat{\EE}\Big[\partial_{\mu} \bar{b}(u,\mathscr{L}_{\hat{\bar{X}}_s})(\hat{\bar{X}}_s)\cdot \hat{\eta}_s\Big]\Big|_{u=\hat{\bar{X}}_s}ds,
\nonumber\\
\hat{I}_4(t)=\!\!\!\!\!\!\!\!&&\int_{0}^t\left[\partial_x\sigma(\hat{\bar{X}}_s,\mathscr{L}_{\hat{\bar{X}}_s})\cdot\hat{\eta}_s\right] d\hat{W}_s^1,
\nonumber\\
\hat{I}_5(t)=\!\!\!\!\!\!\!\!&&\int_{0}^t\hat{\EE}\Big[\partial_{\mu} \sigma(u,\mathscr{L}_{\hat{\bar{X}}_s})(\hat{\bar{X}}_s)\cdot \hat{\eta}_s\Big]\Big|_{u=\hat{\bar{X}}_s}d\hat{W}_s^1.
\end{eqnarray*}
In fact, by \eref{5} and $\eta^{\varepsilon_{n_k}}$ coincides in law with $\hat{\eta}^{\varepsilon_{n_k}}$, we have
$$
\sup_{k\geq 1}\hat{\EE}\big[\sup_{t\in[0,T]}|\hat{\eta}^{\varepsilon_{n_k}}_t|^2\big]\leq C_T\big(1+\hat{\EE}|\hat{\xi}|^4+\EE|\hat{\zeta}|^4\big),
$$
then by $\hat{\eta}^{\varepsilon_{n_k}}\rightarrow \hat{\eta} $ in $C([0,T];\RR^n)$, $\hat{\PP}$-a.s. and applying the Vitali's convergence theorem,
we deduce that
\begin{eqnarray}
\hat{\EE}\left[\sup_{t\in[0,T]}|\hat{\eta}^{\varepsilon_{n_k}}_t-\hat{\eta}_t|\right]=0.\label{F4.20}
\end{eqnarray}
By the uniform boundedness of $\partial_{\mu} \bar{b}(u,\mu)(z)$ (see \eref{Boundbarb} in the Appendix) and \eref{F4.20}, it follows
\begin{eqnarray*}
\lim_{k\rightarrow \infty}\sup_{t\in [0,T]}|\hat{I}_3^{\varepsilon_{n_k}}(t)-\hat{I}_3(t)|\leq \lim_{k\rightarrow \infty}C_T\hat{\EE}\left[\sup_{t\in[0,T]}|\hat{\eta}^{\varepsilon_{n_k}}_t-\hat{\eta}_t|\right]=0.
\end{eqnarray*}
Hence  the limiting process of $\hat{I}_3^{\varepsilon_{n_k}}$ is $\hat{I}_3$.

Similarly, by the uniform boundedness of $\partial_{\mu} \sigma(u,\mu)(z)$ and Burkholder-Davis-Gundy's inequality, we have
\begin{eqnarray*}
\hat{\EE}\left[\sup_{t\in [0,T]}|\hat{I}_5^{\varepsilon_{n_k}}(t)-\hat{I}_5(t)|^2\right]\leq C\int^T_0 \left(\hat{\EE}|\hat{\eta}^{\varepsilon_{n_k}}_t-\hat{\eta}_t|\right)^2dt.
\end{eqnarray*}
Then by \eref{F4.20} again we get
\begin{eqnarray*}
\lim_{k\rightarrow \infty}\hat{\EE}\left[\sup_{t\in [0,T]}|\hat{I}_5^{\varepsilon_{n_k}}(t)-\hat{I}_5(t)|^2\right]=0,
\end{eqnarray*}
which implies that there exists a subsequence (still denote by ${n_k}$) such that $\hat{I}_5^{\varepsilon_{n_k}}$ converges to $\hat{I}_5$ in $C([0,T];\RR^n)$, $\hat{\PP}$-a.s., thus the limiting process of $\hat{I}_5^{\varepsilon_{n_k}}$ should be $\hat{I}_5$. By the same argument, we can check that the limiting processes of $\hat{I}_2^{\varepsilon_{n_k}}$ and $\hat{I}_4^{\varepsilon_{n_k}}$ are $\hat{I}_2$ and $\hat{I}_4$ respectively.

Next, we intend to identify the limiting process of $\hat{I}_1^{\varepsilon_{n_k}}$ in $C([0,T];\RR^n)$. Note that (\ref{Vanish}) implies that $I_{11}^\varepsilon$ converges to $0$ in probability in $C([0,T];\RR^{n})$, so $\hat{I_1}$ should be the same as the limiting process $\hat{M}^2$ of $\hat{M}^{2,\varepsilon_{n_k}}$ in $C([0,T];\RR^{n})$, where
$$\hat{M}^{2,\varepsilon_{n_k}}_t:=\int_0^t \partial_y \Phi_g(\hat{X}_{s}^{\varepsilon_{n_k}},\mathscr{L}_{\hat{X}^{\varepsilon_{n_k}}_{s}},\hat{Y}_{s}^{\varepsilon_{n_k}}) d \hat{W}_s^{2,\varepsilon_{n_k}},$$
is a continuous local martingale on $(\hat{\Omega},\hat{\mathscr{F}},\{\hat{\mathscr{F}_t}\}_{t\geq0},\hat{\PP})$ with quadratic variational process
$$\langle \hat{M}^{2,\varepsilon_{n_k}}\rangle_t=\int_0^{t}(\partial_y\Phi_g)(\partial_y\Phi_g)^*(\hat{X}_{s}^{\varepsilon_{n_k}},\mathscr{L}_{\hat{X}^{\varepsilon_{n_k}}_{s}},\hat{Y}_{s}^{\varepsilon_{n_k}})  ds ,\quad t\in[0,T].$$

Note that $\hat{M}^{2,\varepsilon_{n_k}}_t$ converges to $\hat{M}^2_t$, $\hat{\PP}$-a.s., we can see that $\hat{M}^2_t$ is a continuous local martingale on $(\hat{\Omega},\hat{\mathscr{F}},\{\hat{\mathscr{F}_t}\}_{t\geq0},\hat{\PP})$. If the quadratic variational process of $\hat{M}^2_t$  is
\begin{eqnarray}
\langle \hat{M}^{2}\rangle_t=\int_0^{t}\overline{(\partial_y\Phi_g)(\partial_y\Phi_g)^*}(\hat{\bar{X}}_s,\mathscr{L}_{\hat{\bar{X}}_s})  ds,\quad t\in[0,T], \label{quadratic1}
\end{eqnarray}
(the proof is presented in Lemma \ref{L4.5} below)
 then according to the  martingale representation theorem (cf.~\cite[Theorem 4.5.1]{SV} or \cite[Theorem 8.2]{Daz1}), there exists a probability
space $(\tilde{\Omega},\tilde{\mathscr{F}},\tilde{\PP})$, a filtration $\{\tilde{\mathscr{F}_t}\}_{t\geq0}$ and a standard Brownian motion $\hat{W}$ defined on $(\hat{\Omega}\times \tilde{\Omega},\hat{\mathscr{F}}\times \tilde{\mathscr{F}},\hat{\PP}\times \tilde{\PP})$ adapted to $\hat{\mathscr{F}_t}\times \tilde{\mathscr{F}_t}$ such that
\begin{eqnarray*}
\hat M^{2}_t=\int_0^{t}\big(\overline{(\partial_y\Phi_g)(\partial_y\Phi_g)^*}\big)^{\frac{1}{2}}(\hat{\bar{X}}_s,\mathscr{L}_{\hat{\bar{X}}_s})  d\hat{W}_s ,~~t\in[0,T],
\end{eqnarray*}
where
$$
\hat M^{2}_t(\hat{\omega},\tilde{\omega})=\hat M^{2}_t(\hat{\omega}),\quad \hat{\bar{X}}_s(\hat{\omega},\tilde{\omega})=\hat{\bar{X}}_s(\hat{\omega}),\quad (\hat{\omega},\tilde{\omega})\in (\hat{\Omega}\times \tilde{\Omega}).
$$
Note that $\langle \hat{W}^1,\hat{M}^{2,\varepsilon_{n_k}}\rangle_t=0$ and $\hat{M}^{2,\varepsilon_{n_k}}_t$ converges to $\hat{M}^2_t$, $\hat{\PP}\times \tilde{\PP}$-a.s., thus $\langle \hat{W}^1,\hat{M}^{2}\rangle_t=0$, which implies that $\hat{W}$ is independent of $\hat{W}^1$.

Finally, by the discussion above,  one can immediately conclude that $\hat{\eta}$ satisfies
\begin{eqnarray*}
d\hat{\eta}_t= \!\!\!\!\!\!\!\!&& \partial_x\bar{b}(\hat{\bar{X}}_t,\mathscr{L}_{\hat{\bar{X}}_t})\cdot \hat{\eta}_tdt
+\bar{\EE}\Big[\partial_{\mu} \bar{b}(u,\mathscr{L}_{\hat{\bar{X}}_t})(\hat{\bar{X}}_t)\cdot \hat{\eta}_t \Big]\Big|_{u=\hat{\bar{X}}_t}dt+\partial_x\sigma(\hat{\bar{X}}_t,\mathscr{L}_{\hat{\bar{X}}_t})\cdot \hat{\eta}_t d\hat{W}_t^1
 \nonumber\\
 \!\!\!\!\!\!\!\!&& + \bar{\EE}\Big[\partial_{\mu} \sigma(u,\mathscr{L}_{\hat{\bar{X}}_t})(\hat{\bar{X}}_t)\cdot \hat{\eta}_t\Big]\Big|_{u=\hat{\bar{X}}_t}d\hat{W}_t^1+\big(\overline{(\partial_y\Phi_g)(\partial_y\Phi_g)^*}\big)^{\frac{1}{2}}(\hat{\bar{X}}_t,\mathscr{L}_{\hat{\bar{X}}_t}) d\hat{W}_t,~\hat{\eta}_0=0,
\end{eqnarray*}
 where $\bar{\EE}$ is the expectation on $(\hat{\Omega}\times \tilde{\Omega},\hat{\mathscr{F}}\times \tilde{\mathscr{F}},\hat{\PP}\times \tilde{\PP})$ and
 $$
\hat{\eta}_t(\hat{\omega},\tilde{\omega})=\hat{\eta}_t(\hat{\omega}),\quad \hat{W}_t^1(\hat{\omega},\tilde{\omega})=\hat{W}_t^1(\hat{\omega}) .
$$
Then by the weak uniqueness of equations \eref{1.3} and (\ref{e5}) (see subsection \ref{subsection 6.3} in the Appendix), $\hat{\eta}$ equals to the solution $Z$ of equation (\ref{e5}) in the sense of distribution. The proof is complete. \hspace{\fill}$\Box$
%in view of the the boundedness of $\partial_x\bar{b},\partial_{\mu}\bar{b},\partial_x\sigma,\partial_{\mu}\sigma$ and limit process $(\hat{\eta},\hat{I_1},\hat{I_2},\hat{I_3},\hat{I_4},\hat{I_5})$ identified above,
\end{proof}

\vspace{0.2cm}
Now, we are in the position to finish the  proof of our first main result.

\vspace{2mm}
\textbf{Proof of Theorem \ref{main result 2}}: Combining Propositions  \ref{th3} and \ref{th2}, we immediate  obtain that Theorem \ref{main result 2} holds. The proof is complete. \hspace{\fill}$\Box$

\vspace{0.2cm}
In order to prove \eref{quadratic1}, we will use the time discretization scheme, which is an effective method in studying the averaging principle for multi-scale stochastic system.
\begin{lemma}\label{L4.5}
Under the assumptions in Theorem \ref{main result 2}, the quadratic variational process of $\hat{M}^2_t$ is given by
\begin{eqnarray}
\langle \hat{M}^{2}\rangle_t=\int_0^{t}\overline{(\partial_y\Phi_g)(\partial_y\Phi_g)^*}(\hat{\bar{X}}_s,\mathscr{L}_{\hat{\bar{X}}_s})  ds,\quad t\in[0,T]. \label{quadratic}
\end{eqnarray}
\end{lemma}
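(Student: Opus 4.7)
The plan is to characterize $\langle \hat{M}^{2}\rangle$ through the martingale property of quadratic variation together with an averaging argument on the fast component. For each $k$, the matrix-valued process
\[
N^{\varepsilon_{n_k}}_t := \hat{M}^{2,\varepsilon_{n_k}}_t (\hat{M}^{2,\varepsilon_{n_k}}_t)^{*} - \int_0^t (\partial_y\Phi_g)(\partial_y\Phi_g)^{*}(\hat{X}^{\varepsilon_{n_k}}_s, \mathscr{L}_{\hat{X}^{\varepsilon_{n_k}}_s}, \hat{Y}^{\varepsilon_{n_k}}_s)\,ds
\]
is an $\{\hat{\mathscr{F}}_t\}$-martingale. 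The uniform moment bound $\sup_k\EE[\sup_{t\in[0,T]}|\hat{M}^{2,\varepsilon_{n_k}}_t|^2] \le C_T$, which follows from the boundedness of $\partial_y\Phi_g$ in \eqref{E3} and the moment estimates of Lemma \ref{PMY}, combined with the $\hat{\PP}$-a.s.\ convergence $\hat{M}^{2,\varepsilon_{n_k}}\to \hat{M}^2$ in $C([0,T];\RR^n)$ supplied by the Skorohod representation, guarantees that the martingale property is preserved under any $L^1$-convergent limit of the compensators. Consequently, the whole lemma reduces to establishing
\[
\EE\Bigl[\sup_{t\in[0,T]}\Bigl|\int_0^t \Psi(\hat{X}^{\varepsilon_{n_k}}_s, \mathscr{L}_{\hat{X}^{\varepsilon_{n_k}}_s}, \hat{Y}^{\varepsilon_{n_k}}_s)\,ds - \int_0^t \bar\Psi(\hat{\bar{X}}_s, \mathscr{L}_{\hat{\bar{X}}_s})\,ds\Bigr|\Bigr] \longrightarrow 0,
\]
where $\Psi := (\partial_y\Phi_g)(\partial_y\Phi_g)^{*}$ and $\bar\Psi(x,\mu) := \int_{\RR^m}\Psi(x,\mu,y)\,\nu^{x,\mu}(dy)$.

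I would prove this via the Khasminskii-type time discretization hinted at in the introduction. Partition $[0,T]$ by $t_i = i\Delta$ with $\Delta = \Delta(\varepsilon_{n_k})\to 0$ but $\varepsilon_{n_k}/\Delta\to 0$, and proceed in three layers. First, on each block $[t_i,t_{i+1}]$ substitute $(\hat{X}^{\varepsilon_{n_k}}_s,\mathscr{L}_{\hat{X}^{\varepsilon_{n_k}}_s})$ by its frozen value $(\hat{X}^{\varepsilon_{n_k}}_{t_i},\mathscr{L}_{\hat{X}^{\varepsilon_{n_k}}_{t_i}})$ inside $\Psi$; the substitution error is controlled by the Lipschitz continuity of $\Psi$ in $(x,\mu)$, inherited from the regularity $\Phi\in C^{2,(1,1),2}$ supplied by Proposition \ref{P3.6} together with the smoothness of $g$, combined with the time-increment bound \eqref{COX}. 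Second, on each frozen block replace $\hat{Y}^{\varepsilon_{n_k}}$ by an auxiliary process $\tilde Y^{\varepsilon_{n_k},i}$ driven by the frozen coefficients $f(\hat{X}^{\varepsilon_{n_k}}_{t_i},\mathscr{L}_{\hat{X}^{\varepsilon_{n_k}}_{t_i}},\cdot)$ and $g(\hat{X}^{\varepsilon_{n_k}}_{t_i},\mathscr{L}_{\hat{X}^{\varepsilon_{n_k}}_{t_i}},\cdot)$; the dissipativity \eqref{sm} forces exponential contraction, so the mean-square discrepancy between $\hat{Y}^{\varepsilon_{n_k}}$ and $\tilde Y^{\varepsilon_{n_k},i}$ is small on each block. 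Third, after the time change $s\mapsto (s-t_i)/\varepsilon_{n_k}$, the block length becomes $\Delta/\varepsilon_{n_k}\to\infty$, so the exponential mixing of the frozen SDE drives the block time-average of $\Psi$ along $\tilde Y^{\varepsilon_{n_k},i}$ to the space average $\bar\Psi(\hat{X}^{\varepsilon_{n_k}}_{t_i},\mathscr{L}_{\hat{X}^{\varepsilon_{n_k}}_{t_i}})$.

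Summing these block-wise averages produces a Riemann sum for $\int_0^t \bar\Psi(\hat{X}^{\varepsilon_{n_k}}_s,\mathscr{L}_{\hat{X}^{\varepsilon_{n_k}}_s})\,ds$; a final passage to the limit via the averaging convergence $\sup_{s\in[0,T]}\EE|\hat{X}^{\varepsilon_{n_k}}_s-\hat{\bar X}_s|^2\to 0$ and the joint continuity of $\bar\Psi$ in $(x,\mu)\in\RR^n\times\mathscr{P}_2$ (proved by an argument analogous to that in Lemma \ref{C1} of the Appendix) yields the desired limit $\int_0^t \bar\Psi(\hat{\bar X}_s,\mathscr{L}_{\hat{\bar X}_s})\,ds$ and thereby \eqref{quadratic}. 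The hardest step is the ergodic layer: one needs exponential mixing to hold uniformly in the frozen pair $(x,\mu)$ with $\mu$ ranging in a set of probability measures of uniformly bounded second moment, and consistency between this uniform mixing and the $\mathbb{W}_2$-continuity of $\bar\Psi$. Thanks to the uniform bounds on $\Phi$ of Proposition \ref{P3.6} and assumption $\mathbf{A\ref{A2}}$, the required uniform control is available, and a careful choice $\Delta = \Delta(\varepsilon_{n_k})$ closes the three-layer estimate.
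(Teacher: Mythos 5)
Your proposal is correct and follows essentially the same route as the paper: reduce to the averaging convergence of the compensator via the martingale property of $M^{2,\varepsilon}\otimes M^{2,\varepsilon}-\int_0^\cdot F\,ds$ and dominated convergence, then establish that convergence by the Khasminskii time discretization with a frozen auxiliary fast process and the exponential ergodicity of the frozen equation (the paper's Steps 1--3, including the local Lipschitz bound on $F$ controlled by the sixth-moment estimates and the correlation-function estimate $\Psi_k(s,r)\le Ce^{-(s-r)\beta/2}$).
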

\begin{proof}
Without loss of generality, we still use $(\Omega,\mathscr{F},\{\mathscr{F}_t\}_{t\geq0},\PP)$ instead of $(\hat{\Omega},\hat{\mathscr{F}},\{\hat{\mathscr{F}}_t\}_{t\geq0},\hat{\PP})$, and $(M^{2,\varepsilon},M^{2},X^{\varepsilon},Y^{\varepsilon},\bar{X})$ instead of $(\hat{M}^{2,\varepsilon_{n_k}},\hat{M}^{2},\hat{X}^{\varepsilon_{n_k}},\hat{Y}^{\varepsilon_{n_k}},\hat{\bar{X}})$ below.

We denote
$$F(x,\mu,y)=\big[(\partial_y\Phi_g)(\partial_y\Phi_g)^*\big](x,\mu,y),$$
and the corresponding averaged coefficient is denoted by
$$\bar{F}(x,\mu)=\int_{\RR^m}F(x,\mu,y)\nu^{x,\mu}(dy)=\overline{(\partial_y\Phi_g)(\partial_y\Phi_g)^*}(x,\mu).$$
where $\nu^{x,\mu}$ is the unique invariant measure of the transition semigroup of the frozen equation (\ref{FEQ2}) for any $x\in\RR^n,\mu\in\mathscr{P}_2$. Note that $M^{2,\varepsilon}_t$ converges to $M^2_t$, $\PP$-a.s., we shall prove
\begin{eqnarray}
\langle M^{2}\rangle_t=\int_0^{t}\overline{(\partial_y\Phi_g)(\partial_y\Phi_g)^*}(\bar{X}_s,\mathscr{L}_{\bar{X}_s})  ds,\quad t\in[0,T]. \label{quadratic2}
\end{eqnarray}
It is sufficient to prove that for any $t\in[0,T]$,
\begin{eqnarray}
\lim_{\varepsilon\to 0}\EE\left\|\int_0^t F(X_{s}^{\varepsilon},\mathscr{L}_{X^{\varepsilon}_{s}},Y_{s}^{\varepsilon})ds-\int_0^{t}\bar{F}(\bar{X}_s,\mathscr{L}_{\bar{X}_s})  ds\right\|^2=0.\label{Key}
\end{eqnarray}
In fact, if \eref{Key} holds, then there exists a sequence $\{\varepsilon_k\}_{k\geq 1}\downarrow 0$ such that
$$
\int_0^t F(X_{s}^{\varepsilon_k},\mathscr{L}_{X^{\varepsilon_k}_{s}},Y_{s}^{\varepsilon_k})ds\rightarrow\int_0^{t}\bar{F}(\bar{X}_s,\mathscr{L}_{\bar{X}_s})  ds,~\PP\text{-a.s.},~\text{as}~k\to\infty.
$$
Note that $M^{2,\varepsilon_k}_t\otimes M^{2,\varepsilon_k}_t-\int_0^{t}F(X_{r}^{\varepsilon_k},\mathscr{L}_{X^{\varepsilon_k}_{r}},Y_{r}^{\varepsilon_k})dr$ is a matrix-valued martingale, and by the Vitali's convergence theorem, we have for any $0\leq s\leq t\leq T$,
\begin{eqnarray*}
\!\!\!\!\!\!\!\!&&\EE\Big[\big(M^{2}_t\otimes M^{2}_t-\int_0^{t}\bar{F}(\bar{X}_r,\mathscr{L}_{\bar{X}_r})dr\big)-\big(M^{2}_s\otimes M^{2}_s-\int_0^{s}\bar{F}(\bar{X}_r,\mathscr{L}_{\bar{X}_r})dr\big) \big|\mathscr{F}_s\Big]
\nonumber \\
%=\!\!\!\!\!\!\!\!&&\EE\Big[\lim_{k\to\infty}\big[\big(|M^{2,\varepsilon_k}_t|^2-\int_0^{t}F(X_{r}^{\varepsilon_k},\mathscr{L}_{X^{\varepsilon}_{r}},Y_{r}^{\varepsilon_k})dr\big)
%-\big(|M^{2,\varepsilon_k}_s|^2-\int_0^{s}F(X_{r}^{\varepsilon_k},\mathscr{L}_{X^{\varepsilon_k}_{r}},Y_{r}^{\varepsilon_k})dr\big)\big]\big|\mathscr{F}_s\Big]
%\nonumber \\
=\!\!\!\!\!\!\!\!&&\lim_{k\to\infty}\EE\Big[\big(M^{2,\varepsilon_k}_t\otimes M^{2,\varepsilon_k}_t-\int_0^{t}F(X_{r}^{\varepsilon_k},\mathscr{L}_{X^{\varepsilon_k}_{r}},Y_{r}^{\varepsilon_k})dr\big)\\
&&\quad\quad\quad-\big(M^{2,\varepsilon_k}_s\otimes  M^{2,\varepsilon_k}_s-\int_0^{s}F(X_{r}^{\varepsilon_k},\mathscr{L}_{X^{\varepsilon_k}_{r}},Y_{r}^{\varepsilon_k})dr\big)\big|\mathscr{F}_s\Big]
\nonumber \\
=\!\!\!\!\!\!\!\!&&0,~\PP\text{-a.s.}.
\end{eqnarray*}
Therefore, we conclude
\begin{eqnarray*}
\langle M^{2}\rangle_t=\int_0^{t}\overline{(\partial_y\Phi_g)(\partial_y\Phi_g)^*}(\bar{X}_s,\mathscr{L}_{\bar{X}_s})  ds ,\quad t\in[0,T].
\end{eqnarray*}

The remaining proofs are divided into three steps.

\vspace{3mm}
\textbf{Step 1:} In this step, we claim that $F$ is locally Lipschitz  with respect to $y$ and $\bar{F}$ is globally Lipschitz.

Indeed, (\ref{E3}) it implies that $\partial_y\Phi(x,\mu,y)$ is bounded and Lipschitz continuous w.r.t.  $(x,\mu,y)$, thus by condition ${\mathbf{A\ref{A1}}}$ and  (\ref{32}), we have that $\partial_y\Phi_g$ is locally Lipschitz continuous. Thus by a straightforward computation, we can prove $(\partial_y\Phi_g)(\partial_y\Phi_g)^*$ is also locally Lipschitz continuous. More precisely, there is a constant $C>0$ such that for any $x_1,x_2\in\RR^n,\mu_1,\mu_2\in\mathscr{P}_2,y_1,y_2\in\RR^m$,
\begin{eqnarray*}
\|F(x_1,\mu_1,y_1)-F(x_2,\mu_2,y_2)\|\leq C(1+|y_1|^2+|y_2|^2)\big(|x_1-x_2|+\mathbb{W}_2(\mu_1,\mu_2)+|y_1-y_2|\big).
\end{eqnarray*}

Under the conditions \eref{sm} and \eref{32}, we have for any $y\in \RR^m$,
\begin{eqnarray}
\sup_{x\in \RR^n, \mu\in\mathscr{P}_2}\tilde \EE|Y_{t}^{x,\mu,y}|^6\leq e^{-\beta t }|y|^6+C,\quad \sup_{x\in \RR^n, \mu\in\mathscr{P}_2}\int_{\RR^m}|y|^6\nu^{x,\mu}(dy)<\infty,\label{F4.18}
\end{eqnarray}
where $\beta\in (0,\gamma)$ and for any $x_1,x_2\in\RR^n,\mu_1,\mu_2\in\mathscr{P}_2$,
\begin{eqnarray}
\tilde{\mathbb{E}}\left |Y^{ x_1,\mu_1,y_1}_t-Y^{x_2,\mu_2,y_2}_t\right |^2\leq e^{-\beta t}|y_1-y_2|^2+C_T\left[|x_1-x_2|^2+\mathbb{W}_2(\mu_1,\mu_2)^2\right].\label{F4.19}
\end{eqnarray}
Then by \eref{F4.18} and \eref{F4.19} and the definition of an invariant measure,  we have for any $t\geq 0$,
\begin{eqnarray}
\!\!\!\!\!\!\!\!&&\left\|\tilde{\EE}F(x,\mu,Y_t^{x,\mu,y})-\bar{F}(x,\mu)\right\|\nonumber\\
=\!\!\!\!\!\!\!\!&&\left\|\tilde \EE F( x, \mu, Y^{x,\mu,y}_t)-\int_{\RR^m}F(x,\mu, z)\nu^{x,\mu}(dz)\right\|\nonumber\\
=\!\!\!\!\!\!\!\!&& \left\|\int_{\RR^m}\left[\tilde \EE F(x, \mu,Y^{x,\mu,y}_t)-\tilde \EE F(x, \mu,Y^{x,\mu,z}_t)\right]\nu^{x,\mu}(dz)\right\|\nonumber\\
\leq\!\!\!\!\!\!\!\!&& C\int_{\RR^m}\!\!\!\!\tilde \EE\left[(1+| Y^{x,\mu,y}_t|^2+|Y^{x,\mu,z}_t|^2)\left| Y^{x,\mu,y}_t-Y^{x,\mu,z}_t\right|\right]\nu^{x,\mu}(dz)\nonumber\\
\leq\!\!\!\!\!\!\!\!&& C e^{-\frac{\beta t}{2}}\int_{\RR^m}(1+|y|^2+|z|^2)|y-z|\nu^{x,\mu}(dz)\nonumber\\
\leq\!\!\!\!\!\!\!\!&&C e^{-\frac{\beta t}{2}}\left\{1+|y|^3\right\}.\label{Ergodicity}
\end{eqnarray}
%$\sup_{x\in \RR^n, \mu\in\mathscr{P}_2}\int_{\RR^m}|y|^8\nu^{x,\mu}<\infty$.,
%\begin{eqnarray}\label{34}
%|\tilde{\EE}F(x,\mu,Y_t^{x,\mu,y})-\bar{F}(x,\mu)|
%\leq Ce^{-\frac{\beta t}{2} }\big(1+|y|^3\big).
%\end{eqnarray}
By \eref{Ergodicity}, it follows that for any $t>0$,
\begin{eqnarray*}
\|\bar{F}(x_1,\mu_1)-\bar{F}(x_2,\mu_2)\|=\!\!\!\!\!\!\!\!&&\left\|\bar{F}(x_1,\mu_1)-\tilde{\EE}F(x_1,\mu_1,Y_t^{x_1,\mu_1,y})\right\|\\
\!\!\!\!\!\!\!\!&&+\left\|\tilde{\EE}F(x_2,\mu_2,Y_t^{x_2,\mu_2,y})-\bar{F}(x_2,\mu_2)\right\|\\
\!\!\!\!\!\!\!\!&&+\left\|\tilde{\EE}F(x_1,\mu_1,Y_t^{x_1,\mu_1,y})-\tilde{\EE}F(x_2,\mu_2,Y_t^{x_2,\mu_2,y})\right\|\\
\leq\!\!\!\!\!\!\!\!&& Ce^{-\frac{\beta t}{2} }\big(1+|y|^3\big)+C\left[|x_1-x_2|+\mathbb{W}_2(\mu_1,\mu_2)\right].
\end{eqnarray*}
Letting $t\rightarrow \infty$, we obtain
\begin{eqnarray}
\|\bar{F}(x_1,\mu_1)-\bar{F}(x_2,\mu_2)\|\leq C\Big[|x_1-x_2|+\mathbb{W}_2(\mu_1,\mu_2)\Big].\label{LF}
\end{eqnarray}

\textbf{Step 2:} In this step we aim to prove \eref{Key}.
To this end, we construct an auxiliary process $\bar{Y}^{\varepsilon}_t$, which is defined by
\begin{eqnarray}\label{30}
d\bar{Y}_{t}^{\varepsilon}=\frac{1}{\varepsilon} f( X_{t(\Delta)}^{\varepsilon},\mathscr{L}_{X_{t(\Delta)}^{\varepsilon}},\bar{Y}_{t}^{\varepsilon})dt+\frac{1}{\sqrt{\varepsilon}}g( X_{t(\Delta)}^{\varepsilon},\mathscr{L}_{X_{t(\Delta)}^{\varepsilon}},\bar{Y}_{t}^{\varepsilon})dW^{2}_t,\quad \bar{Y}_{0}^{\varepsilon}=\zeta,
\end{eqnarray}
where $t(\Delta):=[\frac{t}{\Delta}]\Delta$ and $[s]$ denotes the integer part of $s$.

Following from \cite[Lemmas 3.3, 3.4]{RSX1} with slight modification, we easily obtain that for any $T>0$, there exists  a constant $C_{T}>0$ such that
\begin{eqnarray}
&&\sup_{\varepsilon\in(0,1)}\sup_{t\in [0, T]}\mathbb{E}|\bar Y_{t}^{\varepsilon}|^{6}\leq C_{T}(1+\EE|\xi|^{6}+\EE|\zeta|^{6}),\label{F4.21}\\
&&\sup_{\varepsilon\in(0,1)}\sup_{t\in[0,T]}\mathbb{E}|Y_{t}^{\varepsilon}-\bar{Y}_{t}^{\varepsilon}|^{6}\leq C_{T}(1+\EE|\xi|^6+\EE|\zeta|^6)\Delta^{3}.\label{F4.22}
\end{eqnarray}
Note that
\begin{eqnarray}\label{15}
\!\!\!\!\!\!\!\!&&\int_0^t F\left(X_{s}^{\varepsilon},\mathscr{L}_{X^{\varepsilon}_{s}},Y_{s}^{\varepsilon}\right)ds-\int_0^{t}\bar{F}\left(\bar{X}_s,\mathscr{L}_{\bar{X}_s}\right)  ds
\nonumber \\
=\!\!\!\!\!\!\!\!&&\int_{0} ^{t}F\left(X^{\varepsilon}_s,\mathscr{L}_{X^{\varepsilon}_s},Y^{\varepsilon}_s\right)
-F\left(X^{\varepsilon}_{s(\Delta)},\mathscr{L}_{X^{\varepsilon}_{s(\Delta)}},\bar{Y}^{\varepsilon}_s\right) ds
\nonumber \\
 \!\!\!\!\!\!\!\!&&+ \int_{0} ^{t} F\left(X^{\varepsilon}_{s(\Delta)},\mathscr{L}_{X^{\varepsilon}_{s(\Delta)}},\bar{Y}^{\varepsilon}_s\right)-\bar{F}\left(X^{\varepsilon}_{s(\Delta)},\mathscr{L}_{X^{\varepsilon}_{s(\Delta)}}\right) ds\nonumber \\
 \!\!\!\!\!\!\!\!&& + \int_{0} ^{t} \bar{F}\left(X^{\varepsilon}_{s(\Delta)},\mathscr{L}_{X^{\varepsilon}_{s(\Delta)}}\right)-
 \bar{F}\left(X^{\varepsilon}_{s},\mathscr{L}_{X^{\varepsilon}_s}\right) ds\nonumber \\
 \!\!\!\!\!\!\!\!&& + \int_{0} ^{t}{}
 \bar{F}\left(X^{\varepsilon}_{s},\mathscr{L}_{X_s^{\varepsilon}}\right)
 -\bar{F}\left(\bar{X}_{s},\mathscr{L}_{\bar{X}_{s}}\right) ds\nonumber \\
=:\!\!\!\!\!\!\!\!&&\mathscr{I}_1(t)+\mathscr{I}_2(t)+\mathscr{I}_3(t)+\mathscr{I}_4(t).
\end{eqnarray}
Next we shall estimate the terms $\mathscr{I}_i(t)$, $i=1,2,3,4$, respectively.

Using Lemma \ref{PMY}, \eref{F4.21} and \eref{F4.22}, we get
\begin{eqnarray}
\!\!\!\!\!\!\!\!&&\EE\|\mathscr{I}_{1}(t)+\mathscr{I}_{3}(t)\|^2\nonumber \\
\leq \!\!\!\!\!\!\!\!&&
C_T\EE\big|\int_0^T\!\!\!\!\left(1+|Y_{s}^{\varepsilon}|^2+|\bar{Y}_{s}^{\varepsilon}|^2\right)\left(|X_{s}^{\varepsilon}-X_{s(\Delta)} ^{\varepsilon}|+\mathbb{W}_{2}(\mathscr{L}_{X^{\vare}_{s}},\mathscr{L}_{X_{s(\Delta)} ^{\varepsilon}})+|Y_{s}^{\varepsilon}-\bar{Y}_{s} ^{\varepsilon}|\right)ds\big|^2
\nonumber\\\leq\!\!\!\!\!\!\!\!&&
C_T\left[\EE\int_0^T\!\!\!\!\left(1+|Y_{s}^{\varepsilon}|^6+|\bar{Y}_{s}^{\varepsilon}|^6\right)ds\right]^{\frac{2}{3}}\left[\EE\int_0^T|X_{s}^{\varepsilon}-X^{\varepsilon}_{s(\Delta)}|^6ds
+C_T\EE\int_0^T|Y_{s}^{\varepsilon}-\bar{Y}_{s} ^{\varepsilon}|^6 ds\right]^{\frac{1}{3}}
\nonumber\\\leq\!\!\!\!\!\!\!\!&&
C_T\Delta(1+{\EE|\xi|^{6}}+{\EE|\zeta|^{6}}).
\label{p6}
\end{eqnarray}
By \eref{LF} and \eref{Averaging}, we obtain
\begin{eqnarray}
\EE\|\mathscr{I}_{4}(t)\|^2\leq \!\!\!\!\!\!\!\!&&C_T\EE\int_0^T|X_{s}^{\varepsilon}-\bar{X}_s|^2ds\nonumber \\
\leq\!\!\!\!\!\!\!\!&&C_T\varepsilon(1+{\EE|\xi|^{2}}+{\EE|\zeta|^{2}}).\label{p7}
\end{eqnarray}
We now focus on the term $\mathscr{I}_2(t)$. Notice that
\begin{eqnarray}
 \|\mathscr{I}_2(t)\|^2=\!\!\!\!\!\!\!\!&&\left\|\sum_{k=0}^{[t/\Delta]-1}\int_{k\Delta} ^{(k+1)\Delta} F\left(X^{\varepsilon}_{s(\Delta)},\mathscr{L}_{X^{\varepsilon}_{s(\Delta)}},\bar{Y}^{\varepsilon}_s\right)-\bar{F}\left(X^{\varepsilon}_{s(\Delta)},\mathscr{L}_{X^{\varepsilon}_{s(\Delta)}}\right) ds\right.\nonumber \\
 \!\!\!\!\!\!\!\!&& +\left.\int_{t(\Delta)} ^{t} F\left(X^{\varepsilon}_{s(\Delta)},\mathscr{L}_{X^{\varepsilon}_{s(\Delta)}},\bar{Y}^{\varepsilon}_s\right)-\bar{F}\left(X^{\varepsilon}_{s(\Delta)},\mathscr{L}_{X^{\varepsilon}_{s(\Delta)}}\right) ds\right\|^2\nonumber \\
 \leq\!\!\!\!\!\!\!\!&&\frac{C_T}{\Delta}\sum_{k=0}^{[t/\Delta]-1}\left\|\int_{k\Delta} ^{(k+1)\Delta} F\left(X^{\varepsilon}_{s(\Delta)},\mathscr{L}_{X^{\varepsilon}_{s(\Delta)}},\bar{Y}^{\varepsilon}_s\right)-\bar{F}\left(X^{\varepsilon}_{s(\Delta)},\mathscr{L}_{X^{\varepsilon}_{s(\Delta)}}\right)ds\right\|^2\nonumber \\
 \!\!\!\!\!\!\!\!&& +2\left\|\int_{t(\Delta)} ^{t} F\left(X^{\varepsilon}_{s(\Delta)},\mathscr{L}_{X^{\varepsilon}_{s(\Delta)}},\bar{Y}^{\varepsilon}_s\right)-\bar{F}\left(X^{\varepsilon}_{s(\Delta)},\mathscr{L}_{X^{\varepsilon}_{s(\Delta)}}\right) ds\right\|^2\nonumber \\
=:\!\!\!\!\!\!\!\!&&\mathscr{O}_{1}(t)+\mathscr{O}_{2}(t).  \label{p12}
\end{eqnarray}
In view of term $\mathscr{O}_{2}(t)$, it follows that
\begin{eqnarray}
\EE\mathscr{O}_{2}(t)\leq \!\!\!\!\!\!\!\!&&
C_T\EE\int_{t(\Delta)} ^{t} \big[1+|X^{\varepsilon}_{s(\Delta)}|^6+\mathscr{L}_{X^{\varepsilon}_{s(\Delta)}}(|\cdot|^6)+|\bar{Y}^{\varepsilon}_s|^6\big]ds.
\nonumber \\
 \leq\!\!\!\!\!\!\!\!&& C_T\Delta(1+{\EE|\xi|^{6}}+{\EE|\zeta|^{6}}).\label{p8}
\end{eqnarray}

If the following estimate holds
\begin{eqnarray}
\EE\mathscr{O}_{1}(t)
%\leq\!\!\!\!\!\!\!\!&&
%C_T(1+{\EE|\xi|^{8}}+{\EE|\zeta|^{8}})\frac{\varepsilon^2}{\Delta}\left[\int_{0} ^{\frac{\Delta}{\varepsilon}} \int_{r} ^{\frac{\Delta}{\varepsilon}}e^{-\frac{(s-r)\beta}{2}}dsdr \right]
%\nonumber \\
\leq C_T(1+{\EE|\xi|^{6}}+{\EE|\zeta|^{6}})\frac{\varepsilon}{\Delta},\label{w3}
\end{eqnarray}
then collecting estimates (\ref{15})-(\ref{w3}),  recalling (\ref{Averaging}), yields that
\begin{eqnarray*}
&&\EE\Big\|\int_0^{t}F\left(X_{s}^{\varepsilon},\mathscr{L}_{X^{\varepsilon}_{s}},Y_{s}^{\varepsilon}\right)ds-\int_0^{t}\bar{F}\left(\bar{X}_s,\mathscr{L}_{\bar{X}_s}\right)  ds\Big\|^2
\nonumber \\
\leq\!\!\!\!\!\!\!\!&&
 C_T(1+{\EE|\xi|^{6}}+{\EE|\zeta|^{6}})(\frac{\varepsilon}{\Delta}+\Delta).
\end{eqnarray*}
Taking $\Delta=\varepsilon^{1/2}$, we infer that \eref{Key} holds.

\vspace{3mm}
\textbf{Step 3:} In this step, we intend to prove \eref{w3}. Note that
\begin{eqnarray}
\EE\mathscr{O}_{1}(t)\leq\!\!\!\!\!\!\!\!&&\frac{C_T}{\Delta}\EE\sum_{k=0}^{[T/\Delta]-1}\left\|\int_{k\Delta} ^{(k+1)\Delta} F\left(X^{\varepsilon}_{k\Delta},\mathscr{L}_{X^{\varepsilon}_{k\Delta}},\bar{Y}^{\varepsilon}_s\right)-\bar{F}\left(X^{\varepsilon}_{k\Delta},\mathscr{L}_{X^{\varepsilon}_{k\Delta}}\right) ds\right\|^2\nonumber \\
\leq\!\!\!\!\!\!\!\!&&\frac{C_T}{\Delta^2}\max_{0\leq k\leq[T/\Delta]-1}\EE\left\|\int_{k\Delta} ^{(k+1)\Delta} F\left(X^{\varepsilon}_{k\Delta},\mathscr{L}_{X^{\varepsilon}_{k\Delta}},\bar{Y}^{\varepsilon}_s\right)-\bar{F}\left(X^{\varepsilon}_{k\Delta},\mathscr{L}_{X^{\varepsilon}_{k\Delta}}\right)ds\right\|^2\nonumber \\
\leq\!\!\!\!\!\!\!\!&&\frac{C_T\varepsilon^2}{\Delta^2}\max_{0\leq k\leq[T/\Delta]-1}\EE\left\|\int_{0} ^{\frac{\Delta}{\varepsilon}} F\left(X^{\varepsilon}_{k\Delta},\mathscr{L}_{X^{\varepsilon}_{k\Delta}},\bar{Y}^{\varepsilon}_{s\varepsilon+k\Delta}\right)-\bar{F}\left(X^{\varepsilon}_{k\Delta},\mathscr{L}_{X^{\varepsilon}_{k\Delta}}\right)ds\right\|^2
\nonumber \\
\leq\!\!\!\!\!\!\!\!&&
\frac{C_T\varepsilon^2}{\Delta^2}\max_{0\leq k\leq[T/\Delta]-1}\left[\int_{0} ^{\frac{\Delta}{\varepsilon}} \int_{r} ^{\frac{\Delta}{\varepsilon}}\Psi_k(s,r)dsdr \right],\label{w1}
\end{eqnarray}
where for any $0\leq r\leq s\leq \frac{\Delta}{\varepsilon}$,
\begin{eqnarray*}
\Psi_k(s,r):=\!\!\!\!\!\!\!\!&&\EE\left[\left\langle F\left(X^{\varepsilon}_{k\Delta},\mathscr{L}_{X^{\varepsilon}_{k\Delta}},\bar{Y}^{\varepsilon}_{s\varepsilon+k\Delta}\right)-\bar{F}\left(X^{\varepsilon}_{k\Delta},\mathscr{L}_{X^{\varepsilon}_{k\Delta}}\right),
\right.\right.\nonumber \\
\!\!\!\!\!\!\!\!&&\left.\left.~~~~~~F\left(X^{\varepsilon}_{k\Delta},\mathscr{L}_{X^{\varepsilon}_{k\Delta}},\bar{Y}^{\varepsilon}_{r\varepsilon+k\Delta}\right)-\bar{F}\left(X^{\varepsilon}_{k\Delta},\mathscr{L}_{X^{\varepsilon}_{k\Delta}}\right)\right\rangle\right].
\end{eqnarray*}

For any $s\geq 0$, $\mu\in\mathscr{P}_2$, and any $\mathscr{F}_s$-measurable $\RR^n$-valued random variable $X$ and $\RR^m$-valued random variable $Y$, we consider the following equation
\begin{eqnarray}\label{p14}
\left\{ \begin{aligned}
&d\tilde{Y}_{t}=\frac{1}{\varepsilon}f(X,\mu,\tilde{Y}_{t})dt+\frac{1}{\sqrt{\varepsilon}}g(X,\mu,\tilde{Y}_t)d{{W}}_{t}^{2},\quad t\geq s,\\
&\tilde{Y}_s=Y.
\end{aligned} \right.
\end{eqnarray}
We can get that \eref{p14} has a unique solution denoted by $\tilde{Y}_t^{\varepsilon,s,X,\mu,Y}$.
By the construction of $\bar{Y}_t^\varepsilon$, for any $k\in \mathbb{N}$, we have
 $$\bar{Y}_t^\varepsilon=\tilde{Y}_t^{\varepsilon,k\Delta,X^\varepsilon_{k\Delta},
 \mathscr{L}_{X^{\varepsilon}_{k\Delta}},\bar{Y}^\varepsilon_{k\Delta}},~~
 t\in[k\Delta,(k+1)\Delta].$$
Then one can obtain that
\begin{eqnarray*}
\Psi_k(s,r)=\!\!\!\!\!\!\!\!&&\EE\left[\left\langle F\left(X^{\varepsilon}_{k\Delta},\mathscr{L}_{X^{\varepsilon}_{k\Delta}},\tilde{Y}^{\varepsilon,k\Delta,X_{k\Delta}^\varepsilon,\mathscr{L}_{X^{\varepsilon}_{k\Delta}},\bar{Y}^\varepsilon_{k\Delta}}_{s\varepsilon+k\Delta}\right)
-\bar{F}\left(X^{\varepsilon}_{k\Delta},\mathscr{L}_{X^{\varepsilon}_{k\Delta}}\right),
\right.\right.\nonumber \\
\!\!\!\!\!\!\!\!&&\left.\left.~~~~~~F\left(X^{\varepsilon}_{k\Delta},\mathscr{L}_{X^{\varepsilon}_{k\Delta}},\tilde{Y}^{\varepsilon,k\Delta,X_{k\Delta}^\varepsilon,\mathscr{L}_{X^{\varepsilon}_{k\Delta}},\bar{Y}^\varepsilon_{k\Delta}}_{r\varepsilon+k\Delta}\right)-\bar{F}\left(X^{\varepsilon}_{k\Delta},\mathscr{L}_{X^{\varepsilon}_{k\Delta}}\right)\right\rangle\right].
\end{eqnarray*}
By the fact that $X_{k\Delta}^\varepsilon$, $\bar{Y}_{k\Delta}^\varepsilon$ are $\mathscr{F}_{r\varepsilon +k\Delta}$-measurable and the Markov property, we have
\begin{eqnarray*}
\Psi_k(s,r)=\!\!\!\!\!\!\!\!&&\EE\Bigg\{\EE\Big[\big\langle F\big(X^{\varepsilon}_{k\Delta},\mathscr{L}_{X^{\varepsilon}_{k\Delta}},\tilde{Y}^{\varepsilon,k\Delta,X_{k\Delta}^\varepsilon,\mathscr{L}_{X^{\varepsilon}_{k\Delta}},\bar{Y}^\varepsilon_{k\Delta}}_{s\varepsilon+k\Delta}\big)-\bar{F}\big(X^{\varepsilon}_{k\Delta},\mathscr{L}_{X^{\varepsilon}_{k\Delta}}\big),
\nonumber\\
\!\!\!\!\!\!\!\!&&~~~~~~F\big(X^{\varepsilon}_{k\Delta},\mathscr{L}_{X^{\varepsilon}_{k\Delta}},\tilde{Y}^{\varepsilon,k\Delta,X_{k\Delta}^\varepsilon,\mathscr{L}_{X^{\varepsilon}_{k\Delta}},\bar{Y}^\varepsilon_{k\Delta}}_{r\varepsilon+k\Delta}\big)-\bar{F}\big(X^{\varepsilon}_{k\Delta},
\mathscr{L}_{X^{\varepsilon}_{k\Delta}}\big)\big\rangle\big|\mathscr{F}_{r\varepsilon+k\Delta}\Big] \Bigg\}
\nonumber\\
\leq\!\!\!\!\!\!\!\!&&
\EE\Bigg\{\left|F\big(X^{\varepsilon}_{k\Delta},\mathscr{L}_{X^{\varepsilon}_{k\Delta}},\tilde{Y}^{\varepsilon,k\Delta,X^{\varepsilon}_{k\Delta},\mathscr{L}_{X^{\varepsilon}_{k\Delta}},\bar{Y}^\varepsilon_{k\Delta}}_{r\varepsilon+k\Delta}\big)-\bar{F}\big(X^{\varepsilon}_{k\Delta},
\mathscr{L}_{X^{\varepsilon}_{k\Delta}}\big)\right|\cdot\nonumber\\
\!\!\!\!\!\!\!\!&&~~~~~~\Big|\EE F\big(x,\mathscr{L}_{X^{\varepsilon}_{k\Delta}},\tilde{Y}^{\varepsilon,r\varepsilon+k\Delta,x,\mathscr{L}_{X^{\varepsilon}_{k\Delta}},y}_{s\varepsilon+k\Delta}\big)-\bar{F}\big(x,\mathscr{L}_{X^{\varepsilon}_{k\Delta}}\big)
\Big|\Big|_{(x,y)=(X^{\varepsilon}_{k\Delta},\bar{Y}^\varepsilon_{r\varepsilon+k\Delta})}\Bigg\}.
\end{eqnarray*}
Recall the definition of $\{\tilde{Y}_{r\varepsilon+k\Delta+u\varepsilon}^{\varepsilon,r\varepsilon+k\Delta,x,\mu,y}\}_{u\geq0}$, by a time shift transformation, it follows that
\begin{eqnarray}
\tilde{Y}_{r\varepsilon+k\Delta+u\varepsilon}^{\varepsilon,r\varepsilon+k\Delta,x,\mu,y}
=\!\!\!\!\!\!\!\!&&y+\frac{1}{\varepsilon}\int_{r\varepsilon+k\Delta}^{r\varepsilon+k\Delta+u\varepsilon}f(x,\mu,\tilde{Y}_{\tau}^{\varepsilon,r\varepsilon+k\Delta,x,\mu,y})d\tau\nonumber\\
\!\!\!\!\!\!\!\!&&~~~~~~+\frac{1}{\sqrt{\varepsilon}}\int_{r\varepsilon+k\Delta}^{r\varepsilon+k\Delta+u\varepsilon}g(x,\mu,\tilde{Y}_{\tau}^{\varepsilon,r\varepsilon+k\Delta,x,\mu,y})d{{W}}_{\tau}^{2}
\nonumber \\
=\!\!\!\!\!\!\!\!&&
y+\frac{1}{\varepsilon}\int_{0}^{u\varepsilon}f(x,\mu,\tilde{Y}_{r\varepsilon+k\Delta+\tau}^{\varepsilon,r\varepsilon+k\Delta,x,\mu,y})d\tau\nonumber\\
\!\!\!\!\!\!\!\!&&~~~~~~+\frac{1}{\sqrt{\varepsilon}}\int_{0}^{u\varepsilon}g(x,\mu,\tilde{Y}_{r\varepsilon+k\Delta+\tau}^{\varepsilon,r\varepsilon+k\Delta,x,\mu,y})d{{W}}_{\tau}^{2,k\Delta}
\nonumber \\
=\!\!\!\!\!\!\!\!&&
y+\int_{0}^{u}f(x,\mu,\tilde{Y}_{r\varepsilon+k\Delta+\tau\varepsilon}^{\varepsilon,r\varepsilon+k\Delta,x,\mu,y})d\tau\nonumber\\
\!\!\!\!\!\!\!\!&&~~~~~~+\int_{0}^{u}g(x,\mu,\tilde{Y}_{r\varepsilon+k\Delta+\tau\varepsilon}^{\varepsilon,r\varepsilon+k\Delta,x,\mu,y})d{\hat{W}}_{\tau}^{2,k\Delta},\label{p20}
\end{eqnarray}
where $$\Big\{W_{\tau}^{2,k\Delta}:=W_{\tau+r\varepsilon+k\Delta}^{2}-W_{r\varepsilon+k\Delta}^{2}\Big\}_{\tau\geq0}~~\text{and}~~\Big\{\hat{W}_{\tau}^{2,k\Delta}:=\frac{1}{\sqrt{\varepsilon}}W_{\tau\varepsilon}^{2,k\Delta}\Big\}_{\tau\geq0}.$$

We remark that the solution of the frozen equation satisfies
\begin{eqnarray}
{Y}_{u}^{x,\mu,y}
=
y+\int_{0}^{u}f(x,\mu,{Y}_{\tau}^{x,\mu,y})d\tau
+\int_{0}^{u}g(x,\mu,{Y}_{\tau}^{x,\mu,y})d\tilde{W}_{\tau}^{2}.\label{p21}
\end{eqnarray}
Therefore the weak uniqueness of the solution of \eref{p20} and \eref{p21} implies that the distribution of $\{\tilde{Y}_{r\varepsilon+k\Delta+u\varepsilon}^{\varepsilon,r\varepsilon+k\Delta,x,\mu,y}\}_{u\geq 0} $ coincides in law with  $\{{Y}_{u}^{x,\mu,y}\}_{u\geq 0}$.

By \eref{F4.18} and \eref{Ergodicity}, we get that
\begin{eqnarray}
\Psi_k(s,r)\leq\!\!\!\!\!\!\!\!&&
\EE\Bigg\{\left|F\big(X^{\varepsilon}_{k\Delta},\mathscr{L}_{X^{\varepsilon}_{k\Delta}},\tilde{Y}^{\varepsilon,k\Delta,X^{\varepsilon}_{k\Delta},\mathscr{L}_{X^{\varepsilon}_{k\Delta}},\bar{Y}^\varepsilon_{k\Delta}}_{r\varepsilon+k\Delta}\big)-\bar{F}\big(X^{\varepsilon}_{k\Delta},
\mathscr{L}_{X^{\varepsilon}_{k\Delta}}\big)\right|\cdot\nonumber\\
\!\!\!\!\!\!\!\!&&~~~~~~\Big|\tilde{\EE} F\Big(x,\mathscr{L}_{X^{\varepsilon}_{k\Delta}},{Y}^{x,\mathscr{L}_{X^{\varepsilon}_{k\Delta}},y}_{s-r}\Big)-\bar{F}\Big(x,\mathscr{L}_{X^{\varepsilon}_{k\Delta}}\Big)\Big|\Big|_{(x,y)=(X^{\varepsilon}_{k\Delta},\bar{Y}^\varepsilon_{r\varepsilon+k\Delta})}\Bigg\}
\nonumber \\
\leq\!\!\!\!\!\!\!\!&&
C_T\EE\left\{\left[1+|X^{\varepsilon}_{k\Delta}|^6+\EE|X^{\varepsilon}_{k\Delta}|^6+|\bar{Y}^\varepsilon_{r\varepsilon+k\Delta}|^6\right]e^{-\frac{(s-r)\beta}{2}}\right\}
\nonumber \\
\leq\!\!\!\!\!\!\!\!&& C_T(1+{\EE|\xi|^{6}}+{\EE|\zeta|^{6}})e^{-\frac{(s-r)\beta}{2}}.\label{w2}
\end{eqnarray}
By \eref{w1} and \eref{w2}, we deduce that \eref{w3} holds. The proof is complete.   \hspace{\fill}$\Box$
\end{proof}

\section{Proof of large deviation principle}\label{sec5}
\setcounter{equation}{0}
 \setcounter{definition}{0}
In this section, we establish the LDP for the multi-scale McKean-Vlasov stochastic system (\ref{E2}). To this end, we first introduce a general criterion for the LDP, and then apply it to the case of distribution dependence.

 \subsection{Weak convergence approach}\label{sec5.1}
%In this part, we recall the equivalence between LDP and Laplace principle (cf.~e.g.~\cite{DE}), and then introduce the powerful weak convergence approach.
%\begin{definition}\label{d2}$($Laplace principle$)$ The sequence $\{X^\delta\}_{\delta>0}$ is
%said to satisfy the Laplace principle on $\mathscr{E}$ with a rate function
%$I$ if for each bounded continuous real-valued function $\phi$ defined
%on $\mathscr{E}$, we have
%$$\lim_{\delta\to 0} \delta \log \mathbb{E}\left\lbrace
% \exp\left[-\frac{1}{\delta} \phi(X^{\delta})\right]\right\rbrace
%= -\inf_{x\in \mathscr{E}}\left\{\phi(x)+I(x)\right\}.$$
%\end{definition}
%
%It is known that if $\mathscr{E}$ is a Polish space and $I$ is a good rate function, then the
%LDP and Laplace principle are equivalent from the Varadhan's lemma~\cite{V1} and Bryc's converse~\cite{DZ}.

Let us define
$$\mathcal{A}=\left\lbrace  h:  h\  \text{is  $\RR^{d_1+ d_2}$-valued
 $\mathscr{F}_t$-predictable process and}\
  \int_0^T|\dot{ h}_s|^2 d s<\infty, \  \mathbb{P}\text{-a.s.}\right\rbrace, $$
  and
$$S_M=\left\lbrace  h\in \mathcal{H}_0:
\int_0^T|\dot{ h}_s|^2  ds\leq M
 \right\rbrace,$$
where $\mathcal{H}_0$ is defined by (\ref{CM1}). The set $S_M$ endowed with the weak topology is a compact Polish space, see \cite{BD,BDM} (throughout the paper we
 always consider the weak topology on $S_M$). We now define
$$\mathcal{A}_M=\Big\{ h\in\mathcal{A}:  h_{\cdot}(\omega)\in S_M, ~\mathbb{P}\text{-}a.s.\Big\}.$$

The following sufficient condition for the Laplace principle is formulated recently in \cite{MSZ}, which is a generalized version of powerful weak convergence approach established by Budhiraja et al. (cf.~\cite{BD,BDM}) and is more convenient to use in the current setting.

Let $\mathscr{E}$ be a Polish space, for any $\delta>0$, suppose that $\mathcal{G}^\delta: C([0,T]; \RR^{d_1+d_2})\rightarrow
\mathscr{E}$ is a measurable map.

\begin{hypothesis}\label{h2}
 There exists a measurable map $\mathcal{G}^0: C([0,T];
\mathbb{R}^{d_1+d_2})\rightarrow \mathscr{E}$ for which the following two conditions hold.

(i) Let $\{h^\delta\}_{\delta>0}\subset S_M$ for some $M<\infty$ such that $h^\delta$ converges weakly to element $h$ in $S_M$ as $\delta\to0$, then
$\mathcal{G}^0\big(\int_0^{\cdot}\dot{h}^\delta_s ds\big)$ converges to $\mathcal{G}^0\big(\int_0^{\cdot}\dot{h}_s ds\big)$ in $\mathscr{E}$.

(ii) Let $\{h^\delta\}_{\delta>0}\subset \mathcal{A}_M$ for
some $M<\infty$. For any $\varepsilon_0>0$, we have
$$\lim_{\delta\to 0}\mathbb{P}\Big(d\Big(\mathcal{G}^\delta\big(W_{\cdot}
+\frac{1}{\sqrt{\delta}}\int_0^{\cdot}\dot{h}^\delta_sd s\big),\mathcal{G}^0\big(\int_0^{\cdot}\dot{h}^\delta_s ds\big)\Big)>\varepsilon_0\Big)=0, $$
where $d(\cdot,\cdot)$ denotes the metric in $\mathscr{E}$.
\end{hypothesis}

Let us recall the following general result for the LDP.
\begin{lemma}\label{app1}$($\cite[Theorem 3.2]{MSZ}$)$  If
$X^\delta=\mathcal{G}^\delta(W_{\cdot})$ and Hypothesis \ref{h2}
holds, then $\{X^\delta\}_{\delta>0}$ satisfies the Large deviation
principle in $\mathscr{E}$ with the good
rate function $I$ given by
\begin{equation}\label{rf}
I(f)=\inf_{\left\{ h \in \mathcal{H}_0:\  f=\mathcal{G}^0(\int_0^\cdot
\dot{h}_t dt)\right\}}\left\lbrace\frac{1}{2}
\int_0^T|\dot{h}_t|^2 d t \right\rbrace,
\end{equation}
where infimum over an empty set is taken as $+\infty$.
\end{lemma}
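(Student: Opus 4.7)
The plan is to reduce the LDP to the Laplace principle (since $\mathscr{E}$ is Polish, these are equivalent once $I$ is shown to be a good rate function by Varadhan's lemma and Bryc's inverse), and then prove the Laplace principle using the Bou\'{e}--Dupuis variational representation for exponential functionals of Brownian motion. Concretely, for any bounded continuous $\phi: \mathscr{E} \to \RR$ and any $\delta>0$, this representation yields
\begin{equation*}
-\delta \log \EE\exp\Big(-\frac{\phi(\mathcal{G}^\delta(\sqrt{\delta}W))}{\delta}\Big)
= \inf_{h \in \mathcal{A}} \EE\Big[\frac{1}{2}\int_0^T|\dot h_s|^2\, ds
+ \phi\Big(\mathcal{G}^\delta\big(\sqrt{\delta}W + \textstyle\int_0^\cdot \dot h_s\, ds\big)\Big)\Big],
\end{equation*}
so the entire argument amounts to matching $\liminf$ and $\limsup$ of the right-hand side with $\inf_{x\in\mathscr{E}}\{\phi(x)+I(x)\}$.

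For the $\liminf$ bound, I would first observe that since $\phi$ is bounded, one can restrict attention to controls $h^\delta \in \mathcal{A}_M$ for a sufficiently large $M$ depending only on $\|\phi\|_\infty$. I would choose $\varepsilon$-near optimizers $h^\delta \in \mathcal{A}_M$ of the variational problem and use the weak compactness of $S_M$ to extract a subsequence, then apply the Jakubowski--Skorokhod representation on the product space carrying $(W, h^\delta)$ to obtain a.s. convergence of $h^{\delta_k}$ to some limit $h \in S_M$. Hypothesis \ref{h2}(ii) then forces $\mathcal{G}^{\delta_k}(\sqrt{\delta_k}W + \int_0^\cdot \dot h^{\delta_k}_s\,ds)$ to converge in probability to $\mathcal{G}^0(\int_0^\cdot \dot h_s\,ds)$; combined with continuity of $\phi$, Fatou's lemma, and weak lower semicontinuity of $h \mapsto \frac{1}{2}\int_0^T |\dot h_s|^2 ds$, this delivers
\begin{equation*}
\liminf_{\delta\to 0}\big(\text{RHS}\big) \geq \inf_{h\in\mathcal{H}_0}\Big\{\frac{1}{2}\int_0^T|\dot h_s|^2\, ds + \phi\big(\mathcal{G}^0(\textstyle\int_0^\cdot \dot h_s\, ds)\big)\Big\} = \inf_{x\in\mathscr{E}}\{\phi(x)+I(x)\}.
\end{equation*}

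For the matching $\limsup$ bound, given $\varepsilon>0$ I would pick $x_\varepsilon\in\mathscr{E}$ with $\phi(x_\varepsilon)+I(x_\varepsilon) \leq \inf\{\phi+I\}+\varepsilon$, and then a deterministic $h\in\mathcal{H}_0$ with $\mathcal{G}^0(\int_0^\cdot \dot h_s\,ds)=x_\varepsilon$ and $\frac{1}{2}\int_0^T|\dot h_s|^2\,ds\leq I(x_\varepsilon)+\varepsilon$. This $h$ lies in $\mathcal{A}_M$ for $M$ large enough and Hypothesis \ref{h2}(ii) ensures $\mathcal{G}^\delta(\sqrt{\delta}W+\int_0^\cdot\dot h_s\,ds)\to x_\varepsilon$ in probability; using this $h$ as a test control in the variational formula and invoking bounded continuity of $\phi$ gives $\limsup_{\delta\to 0}(\text{RHS}) \leq \inf\{\phi+I\}+2\varepsilon$.

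Finally, to verify that $I$ is a good rate function I would show compactness of level sets $\{I\leq K\}$ by taking a sequence $x_n$ with $I(x_n)\leq K$, choosing $h_n\in\mathcal{H}_0$ with $x_n=\mathcal{G}^0(\int_0^\cdot \dot h_n\,ds)$ and $\frac{1}{2}\int_0^T|\dot h_n|^2\,ds \leq K+\frac{1}{n}$, extracting a weak limit in $S_{2K+1}$, and applying Hypothesis \ref{h2}(i) to obtain convergence of $x_n$ in $\mathscr{E}$; lower semicontinuity is a similar weak-limit argument. The main obstacle, and where the subtlety of this framework sits, is handling the random controls $\{h^\delta\}$ in the $\liminf$ step: the maps $\mathcal{G}^\delta$ are only measurable (not continuous), so convergence must be extracted through the Skorokhod coupling and then transported through Hypothesis \ref{h2}(ii), rather than being a direct continuity argument. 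Verifying that one may restrict to $\mathcal{A}_M$ without loss (a standard truncation estimate using boundedness of $\phi$) is technical but routine.
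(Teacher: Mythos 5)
The paper does not prove this lemma at all---it is quoted directly from \cite[Theorem 3.2]{MSZ}---and your sketch is precisely the standard Budhiraja--Dupuis/Matoussi--Sabbagh--Zhang weak-convergence argument behind that result, so it is essentially the same (and correct) approach. One small bookkeeping point: in your $\liminf$ step, Hypothesis \ref{h2}(ii) only yields $d\big(\mathcal{G}^{\delta_k}(\sqrt{\delta_k}W+\int_0^\cdot\dot h^{\delta_k}_s ds),\,\mathcal{G}^0(\int_0^\cdot\dot h^{\delta_k}_s ds)\big)\to 0$ with the \emph{same} random control on both sides, and you additionally need Hypothesis \ref{h2}(i) (not only in the goodness-of-rate-function step) to pass from $\mathcal{G}^0(\int_0^\cdot\dot h^{\delta_k}_s ds)$ to $\mathcal{G}^0(\int_0^\cdot\dot h_s ds)$ along the a.s.-convergent Skorokhod realizations.
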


Now we would like to sketch the idea of the proof of Theorem \ref{t3}.  Intuitively, as the parameter $\delta$ tends to $0$ in stochastic system (\ref{E2}), the noise term in slow equation of (\ref{E2}) vanishes, and in view of  the theory of averaging principle we can get the following differential equation
\begin{equation}\label{e7}
\frac{d\bar{X}_t^0}{dt}=\bar{b}(\bar{X}_t^0,\mathscr{L}_{\bar{X}_t^0}),~\bar{X}_0^0=x\in\RR^n,
\end{equation}
where the coefficient $\bar{b}$ is defined by (\ref{1.3}).

We can show that (\ref{e7}) admits a unique solution by the condition ${\mathbf{A\ref{A1}}}$, always denoted by $\bar{X}^0$ in this section, which satisfies $\bar{X}^0\in C([0,T];\RR^n)$. In addition, we mention that the solution $\bar{X}^0$ of (\ref{e7}) is a deterministic path and its distribution $\mathscr{L}_{\bar{X}^0_t}=\delta_{\bar{X}^0_t}$, where $\delta_{\bar{X}^0_t}$ is the Dirac measure of $\bar{X}^0_t$.

Recall (\ref{E2}) and for any $\mu\in C([0,T];\mathscr{P}_2)$ (which could be viewed as a deterministic measure flow), we consider the following reference SDE system
\begin{equation}\left\{\begin{array}{l}\label{refSDE}
\displaystyle
d \tilde{X}^{\delta,\varepsilon}_t=b^{\mu_t}(\tilde{X}^{\delta,\varepsilon}_t, \tilde{Y}^{\delta,\varepsilon}_t)dt+\sqrt{\delta}\sigma^{\mu_t}(\tilde{X}^{\delta,\varepsilon}_t)d W_t^1, \\
\displaystyle d \tilde{Y}^{\delta,\varepsilon}_t=\frac{1}{\varepsilon}f^{\mu_t}(\tilde{X}^{\delta,\varepsilon}_t, \tilde{Y}^{\delta,\varepsilon}_t)dt+\frac{1}{\sqrt{\varepsilon}}g^{\mu_t}( \tilde{X}^{\delta,\varepsilon}_t, \tilde{Y}^{\delta,\varepsilon}_t)d W_t^2,\\
\displaystyle \tilde{X}^{\delta,\varepsilon}_0=x,~\tilde{Y}^{\delta,\varepsilon}_0=y,
\end{array}\right.
\end{equation}
where $W_t^1,W_t^2$ are defined by (\ref{bm}), and  we denote $b^{\mu}(x,y)=b(x,\mu,y)$, the other terms are similar. Note that (\ref{refSDE}) is a  classical SDE, one can apply the classical Yamada-Watanabe theorem so that there exists a measurable map
$\mathcal{G}_{\mu}: C([0,T]; \RR^{d_1+d_2})\rightarrow C([0,T]; \RR^n)$ such that we have the representation
\begin{equation}\label{Yawa}
\tilde{X}^{\delta,\varepsilon}=\mathcal{G}_{\mu}\big(W_{\cdot}\big).
\end{equation}
Now we fix $\mu=\mu^{\delta,\varepsilon}:=\mathscr{L}_{X^{\delta,\varepsilon}}$, then (\ref{E2}) reduces to
\begin{equation}\left\{\begin{array}{l}\label{e8}
\displaystyle
d X^{\delta,\varepsilon}_t=b^{\mu_t}(X^{\delta,\varepsilon}_t, Y^{\delta,\varepsilon}_t)dt+\sqrt{\delta}\sigma^{\mu_t}(X^{\delta,\varepsilon}_t)d W_t^1, \\
\displaystyle d Y^{\delta,\varepsilon}_t=\frac{1}{\varepsilon}f^{\mu_t}(X^{\delta,\varepsilon}_t, Y^{\delta,\varepsilon}_t)dt+\frac{1}{\sqrt{\varepsilon}}g^{\mu_t}( X^{\delta,\varepsilon}_t, Y^{\delta,\varepsilon}_t)d W_t^2,\\
\displaystyle X^{\delta,\varepsilon}_0=x,~Y^{\delta,\varepsilon}_0=y.
\end{array}\right.
\end{equation}
We observe that $X^{\delta,\varepsilon}_t$ is also a solution of system (\ref{refSDE}) with $\mu=\mu^{\delta,\varepsilon}$, then
by the strong uniqueness of system (\ref{refSDE}), in this case
$$X^{\delta,\varepsilon}_t=\tilde{X}^{\delta,\varepsilon}_t,~t\in[0,T].$$
Therefore, by the representation (\ref{Yawa}), we obtain
$$X^{\delta,\varepsilon}=\mathcal{G}_{\mu}\big(W_{\cdot}\big)=\mathcal{G}_{\mu^{\delta,\varepsilon}}\big(W_{\cdot}\big).$$

\begin{remark}
Note that the classical Yamada-Watanabe theorem  is not directly applicable to McKean-Vlasov SDEs. However, under the transformation above,  one can freeze the distribution and apply the known results of
classical SDEs  to the distribution dependent setting. This is often called the ``decoupled method'' in the investigation of the McKean-Vlasov SDEs.
\end{remark}

For simplicity of notation, we denote $\mathcal{G}^{\delta}:=\mathcal{G}_{\mu^{\delta,\varepsilon}}$. Then for any $ h^\delta\in \mathcal{A}_M$, let us define
$$X^{\delta,\varepsilon, h^\delta}:=\mathcal{G}^\delta\Big(W_{\cdot}
+\frac{1}{\sqrt{\delta}}\int_0^{\cdot}\dot{ h}^\delta_sd s\Big),$$
then process $X^{\delta,\varepsilon, h^\delta}$ is the the first component of the following stochastic control problem
\begin{equation}\label{e9}
\left\{ \begin{aligned}
&dX^{\delta,\varepsilon,h^\delta}_t=b(X^{\delta,\varepsilon,h^\delta}_t,\mathscr{L}_{X^{\delta,\varepsilon}_t},Y^{\delta,\varepsilon,h^\delta}_t)dt
+\sigma(X^{\delta,\varepsilon,h^\delta}_t,\mathscr{L}_{X^{\delta,\varepsilon}_t})P_1\dot{h}^\delta_t dt+\sqrt{\delta}\sigma(X^{\delta,\varepsilon,h^\delta}_t,\mathscr{L}_{X^{\delta,\varepsilon}_t})dW_t^1,\\
&dY^{\delta,\varepsilon,h^\delta}_t=\frac{1}{\varepsilon}f(X^{\delta,\varepsilon,h^\delta}_t,\mathscr{L}_{X^{\delta,\varepsilon}_t},Y^{\delta,\varepsilon,h^\delta}_t)dt
+\frac{1}{\sqrt{\delta\varepsilon}}g(X^{\delta,\varepsilon,h^\delta}_t,\mathscr{L}_{X^{\delta,\varepsilon}_t},Y^{\delta,\varepsilon,h^\delta}_t)P_2\dot{h}^\delta_t dt\\
&~~~~~~~~~~~~~+\frac{1}{\sqrt{\varepsilon}}g(X^{\delta,\varepsilon,h^\delta}_t,\mathscr{L}_{X^{\delta,\varepsilon}_t},Y^{\delta,\varepsilon,h^\delta}_t)d W_t^2,\\
&X^{\delta,\varepsilon,h^\delta}_0=x, Y^{\delta,\varepsilon,h^\delta}_0=y.
\end{aligned} \right.
\end{equation}

We are now in the position to define the following skeleton equation w.r.t.~the slow component of stochastic system (\ref{E2}),
\begin{equation}\label{skeleton}
\frac{d \bar{X}^{h}_t}{dt}=\bar{b}(\bar{X}^{h}_t,\mathscr{L}_{\bar{X}^{0}_t})+\sigma(\bar{X}^{h}_t,\mathscr{L}_{\bar{X}^{0}_t})P_1\dot{h}_t,~\bar{X}^{h}_0=x,
\end{equation}
where $h\in\mathcal{H}_0$, $\bar{X}^{0}$ is the solution of (\ref{e7}) and $\bar{b}$ is define by (\ref{1.3}).
The existence and uniqueness of solutions to (\ref{skeleton}) for any $h\in \mathcal{H}_0$ will be proved later (see Lemma \ref{existence of skeleton} below), which allows us to define
a map $\mathcal{G}^0: C([0,T]; \RR^{d_1+d_2})\rightarrow C([0,T]; \RR^n)$ by
\begin{equation}\label{g1}
\mathcal{G}^0(h):=\left\{ \begin{aligned}
&\bar{X}^{h},~~h\in\mathcal{H}_0,\\
&0,~~~~\text{otherwise}.
\end{aligned} \right.
\end{equation}

In what follows, we intend to verify the weak convergence criterion (i.e.~Hypothesis \ref{h2}) for the above mentioned maps $\mathcal{G}^\delta$ and $\mathcal{G}^0$ with $\mathscr{E}:=C([0,T];\RR^n)$.

\subsection{A priori estimates}

In this subsection, we show the existence and uniqueness of solutions with some uniform estimates to the skeleton equation (\ref{skeleton}).
\begin{lemma}\label{existence of skeleton}
Under the assumptions ${\mathbf{A\ref{A1}}}$ and (\ref{h5}), for any $x\in\RR^n$ and $h\in\mathcal{H}_0$, there exists a unique solution to (\ref{skeleton}) satisfying
\begin{equation}\label{19}
\sup_{h\in S_M}\left\{\sup_{t\in[0,T]}|\bar{X}^h_t|^2\right\}\leq C_{T,M}(1+|x|^2+\sup_{t\in[0,T]}|\bar{X}^0_t|^2).
\end{equation}
\end{lemma}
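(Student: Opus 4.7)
The plan is to exploit the fact that $\bar{X}^0$ is a deterministic solution of the ODE \eqref{e7}, so $\mathscr{L}_{\bar{X}^0_t}=\delta_{\bar{X}^0_t}$ is a fixed, known $\mathscr{P}_2$-valued curve in $t$. Consequently the skeleton equation \eqref{skeleton} is not genuinely distribution-dependent but reduces to a classical controlled ODE in $\RR^n$,
$$\dot{\bar{X}}^h_t=B_t(\bar{X}^h_t)+\Sigma_t(\bar{X}^h_t)P_1\dot{h}_t,\qquad \bar{X}^h_0=x,$$
with $B_t(\cdot):=\bar{b}(\cdot,\delta_{\bar{X}^0_t})$ and $\Sigma_t(\cdot):=\sigma(\cdot,\delta_{\bar{X}^0_t})$. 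By the global Lipschitz continuity of $\bar b$ and $\sigma$ in $(x,\mu)$ (recalled from \cite[(A.2)]{RSX1} in the proof of Proposition \ref{th3}), both $B_t$ and $\Sigma_t$ are Lipschitz in $x$ with constants independent of $t$.

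First I would handle uniqueness by a direct Gronwall argument: for two solutions $X,\tilde X$,
$$|X_t-\tilde X_t|\leq C\int_0^t|X_s-\tilde X_s|\bigl(1+|P_1\dot h_s|\bigr)ds,$$
and the weight $s\mapsto 1+|\dot h_s|$ belongs to $L^1([0,T])$ by Cauchy--Schwarz since $h\in\mathcal{H}_0$, so Gronwall forces $X\equiv\tilde X$. For existence I would run a Picard iteration in $C([0,T];\RR^n)$ endowed with the weighted norm $\|X\|_\lambda:=\sup_{t\in[0,T]}e^{-\lambda\int_0^t(1+|\dot h_s|^2)ds}|X_t|$; choosing $\lambda$ sufficiently large (depending on the Lipschitz constant of $B,\Sigma$ and on $T,M$) turns the natural Picard map into a strict contraction, yielding a unique fixed point that is the desired solution.

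For the a priori bound \eqref{19}, I would first upgrade the Lipschitz property together with the finiteness of $|\bar b(0,\delta_0)|+\|\sigma(0,\delta_0)\|$ into the linear growth estimate
$$|\bar b(x,\mu)|+\|\sigma(x,\mu)\|\leq C\bigl(1+|x|+\mu(|\cdot|^2)^{1/2}\bigr).$$
Specializing to $\mu=\delta_{\bar{X}^0_t}$ substitutes $\mu(|\cdot|^2)^{1/2}=|\bar X^0_t|$. Squaring \eqref{skeleton} in integral form and applying Young's inequality gives
$$|\bar X^h_t|^2\leq 2|x|^2+C\int_0^t\bigl(1+|\bar X^h_s|^2+|\bar X^0_s|^2\bigr)\bigl(1+|\dot h_s|^2\bigr)ds,$$
and Gronwall's lemma together with $\int_0^T|\dot h_s|^2ds\leq M$ yields the bound \eqref{19} with constant $C_{T,M}\sim e^{C(T+M)}$ depending only on $T,M$ and the Lipschitz constants of $\bar b,\sigma$, hence uniform over $h\in S_M$.

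I do not expect serious obstacles. The crucial simplification is that the measure argument in \eqref{skeleton} is a fixed deterministic $\delta$-measure rather than the genuine law of $\bar X^h$, which removes the usual McKean--Vlasov fixed-point difficulty; all remaining steps are standard controlled-ODE arguments. The only mild subtlety is that $\dot h$ is merely $L^2$ rather than bounded, but this is absorbed by the exponential weight in the Picard norm and by Young's inequality $2|\dot h_s|\leq 1+|\dot h_s|^2$ in the Gronwall step.
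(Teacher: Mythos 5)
Your proposal is correct and follows essentially the same route as the paper: reduce \eqref{skeleton} to a controlled ODE with a fixed deterministic measure curve $\mu^0_t=\mathscr{L}_{\bar X^0_t}$, then invoke classical fixed-point/Gronwall arguments using the global Lipschitz continuity of $\bar b$ and $\sigma$ and the fact that $h\in S_M$. The paper states these steps tersely (``classical fixed point arguments'', ``the uniform estimate is straightforward''); you have simply filled in the standard details, including the correct handling of the merely $L^2$ control $\dot h$ via an $L^1$ Gronwall weight.
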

\begin{proof}
Set $\bar{b}^{\mu}(x)=\bar{b}(x,\mu)$ and $\sigma^{\mu}_{h}(x)=\sigma(x,\mu)P_1\dot{h}$. Note that $P_1$ is a projection operator, thus $\|P_1\|\leq 1$. Then for any $x,y\in\RR^n$,
$$|\bar{b}^{\mu}(x)-\bar{b}^{\mu}(y)|\leq C|x-y|,$$
$$\|\sigma^{\mu}_h(x)-\sigma^{\mu}_h(y)\|\leq C|\dot{h}||x-y|.$$
Then equation (\ref{skeleton}) is equivalent to
\begin{equation}\label{20}
\frac{d \bar{X}^{h}_t}{dt}=\bar{b}^{\mu^0_t}(\bar{X}^{h}_t)+\sigma^{\mu^0_t}_h(\bar{X}^{h}_t),~\bar{X}^{h}_0=x,
\end{equation}
where we denote $\mu^0_t=\mathscr{L}_{\bar{X}^{0}_t}$ for simplicity.

The existence and uniqueness of solutions to (\ref{20}) follows directly from the classical fixed point arguments due to $h\in\mathcal{H}_0$, which implies the existence and uniqueness of solution to (\ref{19}). By the definition of $S_M$ and the fact that $\bar{b}(x,\mu),\sigma(x,\mu)$ are Lipschitz continuous w.r.t.  $(x,\mu)$ and $\bar{X}^0\in C([0,T];\RR^n)$, the uniform estimate (\ref{19}) is straightforward. \hspace{\fill}$\Box$
\end{proof}

%Following from Lemma \ref{PMY}, it is straightforward to  derive the following uniform estimates of the solutions $(X^{\delta,\varepsilon}_t,Y^{\delta,\varepsilon}_t)$ to equation (\ref{E2}).
%\begin{lemma}
%Under the assumption ${\mathbf{A\ref{A1}}}$, there exists a constant $C_{T}>0$ such that
%\begin{align}\label{21}
%\sup_{\varepsilon,\delta\in(0,1)}\mathbb{E}\left[\sup_{t\in[0,T]}|X^{\delta,\varepsilon}_t|^{2}\right] \leq C_{T}\left(1+|x|^{2}+|y|^{2}\right)
%\end{align}
%and
%\begin{align}\label{22}
%\sup_{\varepsilon,\delta\in(0,1)}\sup_{t\in[0, T]}\mathbb{E}|Y^{\delta,\varepsilon}_t|^{2}\leq C_{T}\left(1+|x|^{2}+|y|^{2}\right).
%\end{align}
%\end{lemma}

Furthermore, the estimates of solution $(X^{\delta,\varepsilon,h^\delta}_t,Y^{\delta,\varepsilon,h^\delta}_t)$ to the stochastic control problem (\ref{e9}) could be derived as follows.
\begin{lemma}
Under the assumptions in Theorem \ref{t3}, for any $\{h^{\delta}\}_{\delta>0}\subset\mathcal{A}_M$, there exists a constant $C_{T,M}>0$ such that
\begin{equation}\label{23}
\mathbb{E}\Big[\sup_{t\in[0,T]}|X^{\delta,\varepsilon,h^\delta}_t|^2\Big]\leq C_{T,M}(1+|x|^2+|y|^2)
\end{equation}
and
\begin{equation}\label{24}
\mathbb{E}\int_0^T|Y^{\delta,\varepsilon,h^\delta}_t|^2dt\leq C_{T,M}(1+|x|^2+|y|^2).
\end{equation}
\end{lemma}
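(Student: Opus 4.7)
The plan is to derive (\ref{23}) and (\ref{24}) via coupled a priori estimates closed by a Gronwall argument, with the scale separation hypothesis (\ref{h5}) as the key ingredient that tames the singular drift $\tfrac{1}{\sqrt{\delta\varepsilon}}gP_2\dot{h}^\delta$ in the fast equation. I write $U(t):=\mathbb{E}\sup_{s\leq t}|X^{\delta,\varepsilon,h^\delta}_s|^2$ and $\psi(t):=\int_0^t\mathbb{E}|Y^{\delta,\varepsilon,h^\delta}_s|^2\,ds$, and use throughout the bound $\sup_{s\leq T}\mathscr{L}_{X^{\delta,\varepsilon}_s}(|\cdot|^2)\leq C_T(1+|x|^2+|y|^2)$ coming from (\ref{21}).

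To bound $U$, I apply It\^o's formula to $|X^{\delta,\varepsilon,h^\delta}_t|^2$ and use the linear growth of $b$ and $\sigma$ from $\mathbf{A\ref{A1}}$. The only term requiring care is the control drift $2\langle X,\sigma P_1\dot{h}^\delta\rangle$, which I handle by Cauchy--Schwarz in time together with $\int_0^T|\dot{h}^\delta_s|^2\,ds\leq M$,
$$\Big|\int_0^t 2\langle X_s,\sigma P_1\dot{h}^\delta_s\rangle\,ds\Big|\leq \tfrac{1}{2}\sup_{s\leq t}|X_s|^2+2M\int_0^t\|\sigma\|^2\,ds,$$
so as to keep $|\dot{h}^\delta|^2$ outside any Gronwall exponent. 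The stochastic integral $\sqrt{\delta}\int\langle X,\sigma\,dW^1\rangle$ is absorbed using Burkholder--Davis--Gundy and Young. Taking $\mathbb{E}\sup_{s\leq t}$ and applying Gronwall's inequality with deterministic coefficients yields the slow bound
$$U(t)\leq C_{T,M}\bigl(1+|x|^2+|y|^2+\psi(t)\bigr).$$

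To bound $\psi$, I apply It\^o to $|Y^{\delta,\varepsilon,h^\delta}_t|^2$. By Remark \ref{R2.1}(i), $\tfrac{2}{\varepsilon}\langle Y,f\rangle+\tfrac{1}{\varepsilon}\|g\|^2\leq -\tfrac{\beta}{\varepsilon}|Y|^2+\tfrac{C}{\varepsilon}(1+|X|^2+\mu_s(|\cdot|^2))$. For the singular Girsanov-type drift, Young's inequality combined with the uniform-in-$y$ bound (\ref{h6}) on $g$ gives
$$\tfrac{2}{\sqrt{\delta\varepsilon}}|Y|\,\|g\|\,|\dot{h}^\delta|\leq \tfrac{\beta}{2\varepsilon}|Y|^2+\tfrac{C}{\delta}\bigl(1+|X|^2+\mu_s(|\cdot|^2)\bigr)|\dot{h}^\delta|^2,$$
so that the $|Y|^2$ contribution is absorbed into half of the dissipation. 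Taking expectations, using $\int_0^T|\dot{h}^\delta|^2\,ds\leq M$ almost surely and (\ref{21}), and multiplying by $2\varepsilon/\beta$ produces the fast bound
$$\psi(T)\leq C_T(1+|x|^2+|y|^2)+C\int_0^T U(s)\,ds+\tfrac{CM\varepsilon}{\delta}\,U(T).$$

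To close the loop I substitute the slow bound into the fast bound to obtain
$$\psi(T)\leq C_{T,M}(1+|x|^2+|y|^2)+C_{T,M}\int_0^T\psi(s)\,ds+\tfrac{C_{T,M}M\varepsilon}{\delta}\bigl(1+|x|^2+|y|^2+\psi(T)\bigr).$$
By (\ref{h5}), for $\delta$ sufficiently small the coefficient $C_{T,M}M\varepsilon/\delta\leq \tfrac{1}{2}$; after absorbing the last term into the left side, Gronwall's lemma yields $\psi(T)\leq C_{T,M}(1+|x|^2+|y|^2)$, which is (\ref{24}), and feeding this back into the slow bound at $t=T$ delivers (\ref{23}). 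The main obstacle is the singular factor $1/\sqrt{\delta\varepsilon}$ created by the Girsanov-type shift in the fast equation: its absorption into the dissipativity $\beta/\varepsilon$ unavoidably leaves an $\varepsilon/\delta$ residual in front of $U(T)$, and hypothesis (\ref{h5}) is needed precisely to make this residual small enough for the Gronwall closure. A secondary subtlety is that placing the random quantity $|\dot{h}^\delta|^2$ inside a pathwise Gronwall exponent would produce the uncontrollable factor $\exp(\int_0^T|\dot h^\delta_s|^2\,ds)$, which the Cauchy--Schwarz-in-time trick in the slow equation avoids at the cost of an additional $M$-dependent multiplicative constant.
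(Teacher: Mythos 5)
Your proof is correct and follows essentially the same route as the paper: It\^o's formula on both components, Young's inequality against the dissipativity $\beta/\varepsilon$ to tame the $\tfrac{1}{\sqrt{\delta\varepsilon}}\,gP_2\dot h^\delta$ drift, the Cauchy--Schwarz-in-time trick to keep $|\dot h^\delta|^2$ out of the Gronwall exponent, and the hypothesis $\varepsilon/\delta\to 0$ to absorb the residual $\sup$-norm term. The only (immaterial) difference is bookkeeping: the paper first expresses $\EE\int_0^T|Y^{\delta,\varepsilon,h^\delta}_t|^2dt$ in terms of $X$-quantities via the comparison theorem and Fubini, substitutes into the slow estimate, and absorbs the $(\varepsilon/\delta)\,\EE\sup_t|X^{\delta,\varepsilon,h^\delta}_t|^2$ term there before applying Gronwall to $X$, whereas you close the loop on $\psi$ instead.
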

\begin{proof}
Using It\^{o}'s formula for $|Y^{\delta,\varepsilon,h^\delta}_t|^2$ and then taking expectation, we derive that
\begin{eqnarray*}
\frac{d}{dt}\mathbb{E}|Y^{\delta,\varepsilon,h^\delta}_t|^2
=\!\!\!\!\!\!\!\!&&\frac{2}{\varepsilon}\mathbb{E}\Big[\langle f(X^{\delta,\varepsilon,h^\delta}_t,\mathscr{L}_{X^{\delta,\varepsilon}_t},Y^{\delta,\varepsilon,h^\delta}_t),Y^{\delta,\varepsilon,h^\delta}_t\rangle\Big]+\frac{1}{\varepsilon}\mathbb{E}\|g(X^{\delta,\varepsilon,h^\delta}_t,\mathscr{L}_{X^{\delta,\varepsilon}_t},Y^{\delta,\varepsilon,h^\delta}_t)\|^2
\nonumber\\
\!\!\!\!\!\!\!\!&&+\frac{2}{\sqrt{\delta\varepsilon}}\mathbb{E}\Big[\langle g(X^{\delta,\varepsilon,h^\delta}_t,\mathscr{L}_{X^{\delta,\varepsilon}_t},Y^{\delta,\varepsilon,h^\delta}_t)P_2\dot{h}^\delta_t,Y^{\delta,\varepsilon,h^\delta}_t\rangle\Big].
\end{eqnarray*}
By the assumptions of Theorem \ref{t3}, it follows that
\begin{eqnarray*}
\!\!\!\!\!\!\!\!&&\frac{2}{\sqrt{\delta\varepsilon}}\mathbb{E}\Big[\langle g(X^{\delta,\varepsilon,h^\delta}_t,\mathscr{L}_{X^{\delta,\varepsilon}_t},Y^{\delta,\varepsilon,h^\delta}_t)P_2\dot{h}^\delta_t,Y^{\delta,\varepsilon,h^\delta}_t\rangle\Big]
\nonumber\\
~\leq\!\!\!\!\!\!\!\!&&\frac{C}{\sqrt{\delta\varepsilon}}\mathbb{E}\Big[\big(1+|X^{\delta,\varepsilon,h^\delta}_t|+\sqrt{\mathscr{L}_{X^{\delta,\varepsilon}_t}(|\cdot|^2)}\big)\|P_2\||\dot{h}^\delta_t||Y^{\delta,\varepsilon,h^\delta}_t|\Big]
\nonumber\\
~\leq\!\!\!\!\!\!\!\!&&\frac{C}{\delta}\mathbb{E}\Big[\big(1+|X^{\delta,\varepsilon,h^\delta}_t|^2+\mathscr{L}_{X^{\delta,\varepsilon}_t}(|\cdot|^2)\big)|\dot{h}^\delta_t|^2\Big]+\frac{\tilde{\beta}}{\varepsilon}\mathbb{E}|Y^{\delta,\varepsilon,h^\delta}_t|^2,
\end{eqnarray*}
where we used Young's inequality in the last step for a small constant $\tilde{\beta}\in(0,\beta)$, $\beta$ is defined in (\ref{RE3}).

Then we have
\begin{eqnarray*}
\frac{d}{dt}\mathbb{E}|Y^{\delta,\varepsilon,h^\delta}_t|^2
\leq\!\!\!\!\!\!\!\!&&-\frac{\kappa}{\varepsilon}\mathbb{E}|Y^{\delta,\varepsilon,h^\delta}_t|^2+\frac{C}{\varepsilon}\left(1+\mathbb{E}|X^{\delta,\varepsilon,h^\delta}_t|^2+\mathscr{L}_{X^{\delta,\varepsilon}_t}(|\cdot|^2)\right)
\nonumber\\
\!\!\!\!\!\!\!\!&&+\frac{C}{\delta}\mathbb{E}\Big[\big(1+|X^{\delta,\varepsilon,h^\delta}_t|^2+\mathscr{L}_{X^{\delta,\varepsilon}_t}(|\cdot|^2)\big)|\dot{h}^\delta_t|^2\Big],
\end{eqnarray*}
where $\kappa:=\beta-\tilde{\beta}>0$. The Gronwall's lemma leads to
\begin{eqnarray}\label{25}
\mathbb{E}|Y^{\delta,\varepsilon,h^\delta}_t|^2\leq\!\!\!\!\!\!\!\!&&e^{-\frac{\kappa}{\varepsilon}t}|y|^2
+\frac{C}{\varepsilon}\int_0^te^{-\frac{\kappa}{\varepsilon}(t-s)}\left(1+\mathbb{E}|X^{\delta,\varepsilon,h^\delta}_s|^2+\mathscr{L}_{X^{\delta,\varepsilon}_s}(|\cdot|^2)\right)ds
\nonumber\\
\!\!\!\!\!\!\!\!&&+\frac{C}{\delta}\int_0^te^{-\frac{\kappa}{\varepsilon}(t-s)}\mathbb{E}\Big[\big(1+|X^{\delta,\varepsilon,h^\delta}_s|^2+\mathscr{L}_{X^{\delta,\varepsilon}_s}(|\cdot|^2)\big)|\dot{h}^\delta_s|^2\Big]ds.
\end{eqnarray}
Integrating (\ref{25}) w.r.t.~$t$ from $0$ to $T$ and by Fubini's theorem,
\begin{eqnarray}\label{26}
\!\!\!\!\!\!\!\!&&\mathbb{E}\Big[\int_0^T|Y^{\delta,\varepsilon,h^\delta}_t|^2dt\Big]
\nonumber\\
~\leq\!\!\!\!\!\!\!\!&&|y|^2\int_0^Te^{-\frac{\kappa}{\varepsilon}t}dt+\frac{C}{\varepsilon}\int_0^T\int_0^te^{-\frac{\kappa}{\varepsilon}(t-s)}(1+\mathbb{E}|X^{\delta,\varepsilon,h^\delta}_s|^2+\mathbb{E}|X^{\delta,\varepsilon}_s|^2)dsdt
\nonumber\\
\!\!\!\!\!\!\!\!&&+\frac{C}{\delta}\mathbb{E}\Big[\sup_{t\in[0,T]}(1+|X^{\delta,\varepsilon,h^\delta}_t|^2+\mathbb{E}|X^{\delta,\varepsilon}_t|^2)\int_0^T\int_0^te^{-\frac{\kappa}{\varepsilon}(t-s)}|\dot{h}^\delta_s|^2dsdt\Big]
\nonumber\\
~\leq\!\!\!\!\!\!\!\!&&\frac{\varepsilon}{\kappa}|y|^2+C\mathbb{E}\Big[\int_0^T(1+|X^{\delta,\varepsilon,h^\delta}_t|^2+|X^{\delta,\varepsilon}_t|^2)dt\Big]
\nonumber\\
\!\!\!\!\!\!\!\!&&+C\Big(\frac{\varepsilon}{\delta}\Big)\mathbb{E}\Big[\sup_{t\in[0,T]}(1+|X^{\delta,\varepsilon,h^\delta}_t|^2+\mathbb{E}|X^{\delta,\varepsilon}_t|^2)\int_0^T|\dot{h}^\delta_t|^2dt\Big]
\nonumber\\
~\leq\!\!\!\!\!\!\!\!&&C_{T}(1+|y|^2)+C\mathbb{E}\int_0^T|X^{\delta,\varepsilon,h^\delta}_t|^2dt+C\mathbb{E}\int_0^T|X^{\delta,\varepsilon}_t|^2dt
\nonumber\\
\!\!\!\!\!\!\!\!&&+C_{M}\Big(\frac{\varepsilon}{\delta}\Big)\Big[1+\mathbb{E}\Big(\sup_{t\in[0,T]}|X^{\delta,\varepsilon,h^\delta}_t|^2\Big)+\sup_{t\in[0,T]}\mathbb{E}|X^{\delta,\varepsilon}_t|^2\Big],
\end{eqnarray}
where we used the fact that $h^{\delta}\in\mathcal{A}_M$.

Applying It\^{o}'s formula to $|X^{\delta,\varepsilon,h^\delta}_t|^2$, we have
\begin{eqnarray*}
\!\!\!\!\!\!\!\!&&|X^{\delta,\varepsilon,h^\delta}_t|^2
\nonumber\\
=\!\!\!\!\!\!\!\!&&|x|^2+2\int_0^t\langle b(X^{\delta,\varepsilon,h^\delta}_s,\mathscr{L}_{X^{\delta,\varepsilon}_s},Y^{\delta,\varepsilon,h^\delta}_s),X^{\delta,\varepsilon,h^\delta}_s\rangle ds
+2\int_0^t\langle \sigma(X^{\delta,\varepsilon,h^\delta}_s,\mathscr{L}_{X^{\delta,\varepsilon}_s})P_1\dot{h}^\delta_s,X^{\delta,\varepsilon,h^\delta}_s\rangle ds
\nonumber\\
\!\!\!\!\!\!\!\!&&
+\delta\int_0^t\|\sigma(X^{\delta,\varepsilon,h^\delta}_s,\mathscr{L}_{X^{\delta,\varepsilon}_s})\|^2ds
+2\sqrt{\delta}\int_0^t\langle \sigma(X^{\delta,\varepsilon,h^\delta}_s,\mathscr{L}_{X^{\delta,\varepsilon}_s})dW_s^1,X^{\delta,\varepsilon,h^\delta}_s\rangle.
\end{eqnarray*}
Due to the  condition ${\mathbf{A\ref{A1}}}$,
\begin{eqnarray}\label{27}
\mathbb{E}\Big[\sup_{t\in[0,T]}|X^{\delta,\varepsilon,h^\delta}_t|^2\Big]\leq \!\!\!\!\!\!\!\!&& |x|^2+C\mathbb{E}\int_0^T\Big(1+|X^{\delta,\varepsilon,h^\delta}_t|^2+\mathscr{L}_{X^{\delta,\varepsilon}_t}(|\cdot|^2)\Big)dt+C\mathbb{E}\int_0^T|Y^{\delta,\varepsilon,h^\delta}_t|^2dt
\nonumber\\
\!\!\!\!\!\!\!\!&&+2\mathbb{E}\int_0^T\Big|\langle \sigma(X^{\delta,\varepsilon,h^\delta}_t,\mathscr{L}_{X^{\delta,\varepsilon}_t})P_1\dot{h}^\delta_t,X^{\delta,\varepsilon,h^\delta}_t\rangle\Big|dt
\nonumber\\
\!\!\!\!\!\!\!\!&&
+2\sqrt{\delta}\mathbb{E}\Big[\sup_{t\in[0,T]}\Big|\int_0^t\langle \sigma(X^{\delta,\varepsilon,h^\delta}_s,\mathscr{L}_{X^{\delta,\varepsilon}_s})dW_s^1,X^{\delta,\varepsilon,h^\delta}_s\rangle\Big|\Big].
\end{eqnarray}
The fourth term on the right hand side of (\ref{27}) can be estimated as follows,
\begin{eqnarray}\label{28}
\!\!\!\!\!\!\!\!&&2\mathbb{E}\int_0^T\Big|\langle \sigma(X^{\delta,\varepsilon,h^\delta}_t,\mathscr{L}_{X^{\delta,\varepsilon}_t})P_1\dot{h}^\delta_t,X^{\delta,\varepsilon,h^\delta}_t\rangle\Big|dt
\nonumber\\
~\leq\!\!\!\!\!\!\!\!&&\frac{1}{4}\mathbb{E}\Big[\sup_{t\in[0,T]}|X^{\delta,\varepsilon,h^\delta}_t|^2\Big]+C\mathbb{E}\Big(\int_0^T\|\sigma(X^{\delta,\varepsilon,h^\delta}_t,\mathscr{L}_{X^{\delta,\varepsilon}_t})\|\|P_1\||\dot{h}^{\delta}_t|dt\Big)^2
\nonumber\\
~\leq\!\!\!\!\!\!\!\!&&\frac{1}{4}\mathbb{E}\Big[\sup_{t\in[0,T]}|X^{\delta,\varepsilon,h^\delta}_t|^2\Big]+C\mathbb{E}\Big[\Big(\int_0^T\|\sigma(X^{\delta,\varepsilon,h^\delta}_t,\mathscr{L}_{X^{\delta,\varepsilon}_t})\|^2dt\Big)\Big(\int_0^T|\dot{h}^{\delta}_t|^2dt\Big)\Big]
\nonumber\\
~\leq\!\!\!\!\!\!\!\!&&\frac{1}{4}\mathbb{E}\Big[\sup_{t\in[0,T]}|X^{\delta,\varepsilon,h^\delta}_t|^2\Big]+C_{M,T}+C_M\mathbb{E}\int_0^T|X^{\delta,\varepsilon,h^\delta}_t|^2dt+C_M\mathbb{E}\int_0^T|X^{\delta,\varepsilon}_t|^2dt.
\end{eqnarray}
Moreover, by Burkholder-Davis-Gundy's inequality, we obtain
\begin{eqnarray}\label{29}
\!\!\!\!\!\!\!\!&&2\sqrt{\delta}\mathbb{E}\Big[\sup_{t\in[0,T]}\Big|\int_0^t\langle \sigma(X^{\delta,\varepsilon,h^\delta}_s,\mathscr{L}_{X^{\delta,\varepsilon}_s})dW_s^1,X^{\delta,\varepsilon,h^\delta}_s\rangle\Big|\Big]
\nonumber\\
~\leq\!\!\!\!\!\!\!\!&&8\sqrt{\delta}\mathbb{E}\Big[\int_0^T\|\sigma(X^{\delta,\varepsilon,h^\delta}_t,\mathscr{L}_{X^{\delta,\varepsilon}_t})\|^2|X^{\delta,\varepsilon,h^\delta}_t|^2dt\Big]^{\frac{1}{2}}
\nonumber\\
~\leq\!\!\!\!\!\!\!\!&&\frac{1}{4}\mathbb{E}\Big[\sup_{t\in[0,T]}|X^{\delta,\varepsilon,h^\delta}_t|^2\Big]+C\mathbb{E}\int_0^T|X^{\delta,\varepsilon,h^\delta}_t|^2dt+C\mathbb{E}\int_0^T|X^{\delta,\varepsilon}_t|^2dt+C_T,
\end{eqnarray}
where we used Young's inequality in the last step.

Combining (\ref{26})-(\ref{29}) yields that
\begin{eqnarray*}
\!\!\!\!\!\!\!\!&&\mathbb{E}\Big[\sup_{t\in[0,T]}|X^{\delta,\varepsilon,h^\delta}_t|^2\Big]
\nonumber\\
~\leq \!\!\!\!\!\!\!\!&& 2|x|^2+C_{M,T}+C_{M,T}\mathbb{E}\int_0^T|X^{\delta,\varepsilon,h^\delta}_t|^2dt+C_{M,T}\mathbb{E}\int_0^T|X^{\delta,\varepsilon}_t|^2dt+C_{M,T}\mathbb{E}\int_0^T|Y^{\delta,\varepsilon,h^\delta}_t|^2dt
\nonumber\\
~\leq \!\!\!\!\!\!\!\!&& C_{M,T}(1+|y|^2+|x|^2)+C_{M,T}\mathbb{E}\int_0^T|X^{\delta,\varepsilon,h^\delta}_t|^2dt+C_{M,T}\mathbb{E}\int_0^T|X^{\delta,\varepsilon}_t|^2dt
\nonumber\\
\!\!\!\!\!\!\!\!&&+C_{M,T}\Big(\frac{\varepsilon}{\delta}\Big)\Big[1+\mathbb{E}\Big(\sup_{t\in[0,T]}|X^{\delta,\varepsilon,h^\delta}_t|^2\Big)+\sup_{t\in[0,T]}\mathbb{E}|X^{\delta,\varepsilon}_t|^2\Big].
\end{eqnarray*}
Owing to the condition $\lim_{\delta\to0}\frac{\varepsilon}{\delta}=0$, we can choose $\frac{\varepsilon}{\delta}\in(0,\frac{1}{2C_{M,T}})$ such that
\begin{eqnarray*}
\!\!\!\!\!\!\!\!&&\mathbb{E}\Big[\sup_{t\in[0,T]}|X^{\delta,\varepsilon,h^\delta}_t|^2\Big]
\nonumber\\
~\leq \!\!\!\!\!\!\!\!&&C_{M,T}(1+|y|^2+|x|^2)+C_{M,T}\mathbb{E}\int_0^T|X^{\delta,\varepsilon,h^\delta}_t|^2dt+C_{M,T}\Big[\sup_{t\in[0,T]}\mathbb{E}|X^{\delta,\varepsilon}_t|^2\Big]
\nonumber\\
~\leq \!\!\!\!\!\!\!\!&&C_{M,T}(1+|y|^2+|x|^2)+C_{M,T}\mathbb{E}\int_0^T|X^{\delta,\varepsilon,h^\delta}_t|^2dt,
\end{eqnarray*}
where in the last step we used the uniform estimate of $X^{\delta,\varepsilon}_t$ which follows directly from (\ref{X}). Then  Gronwall's lemma implies the estimate (\ref{23}).

Substituting (\ref{X}) and (\ref{23}) into (\ref{26}), one can immediately get (\ref{24}). \hspace{\fill}$\Box$
\end{proof}

%The following lemma follows directly from Lemma \ref{PMY}.
%\begin{lemma} \label{130}
%%For any $T>0$, $\vare\in(0,1)$ and $\delta>0$ small enough, there exists a constant $C_{T}>0$ such that for any $x,y\in H$
%Under the assumption ${\mathbf{A\ref{A1}}}$,  there exists a constant $C_{T}>0$ such that for any  $\varepsilon,\delta,\Delta\in(0,1)$,
%\begin{align}
%\mathbb{E}|X_{t}^{\delta,\varepsilon}-X_{t(\Delta)}^{\delta,\varepsilon}|^2 \leq C_{T}\Delta(1+|y|^2+|x|^2).\label{F3.7}
%\end{align}
%\end{lemma}

The following integral estimate of time increment for $X^{\delta,\varepsilon,h^\delta}_t$ is essential in proving the criterion (ii) in Hypothesis \ref{h2}.
\begin{lemma} \label{COX1}
Suppose that the assumptions in Theorem \ref{t3} hold. For all $M > 0$, there exists
$C_{M,T} > 0$ such that for any $\{h^{
\delta}\}_{\delta>0}\subset \mathcal{A}_M$,
\begin{align}
\mathbb{E}\left[\int^{T}_0|X^{\delta,\varepsilon,h^\delta}_t-X_{t(\Delta)}^{\delta,\varepsilon,h^\delta}|^2 dt\right]\leq C_{M,T}\Delta(1+|y|^2+|x|^2).\label{F3.10}
\end{align}
\end{lemma}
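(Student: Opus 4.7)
The plan is to mimic the standard argument for time-increment estimates of Itô processes, being careful about how the deterministic control drift $\sigma(\cdot,\cdot)P_1\dot h^\delta$ is handled. Starting from the defining equation of $X^{\delta,\varepsilon,h^\delta}$ in \eref{e9}, I will write
\begin{equation*}
X^{\delta,\varepsilon,h^\delta}_t-X^{\delta,\varepsilon,h^\delta}_{t(\Delta)}=\int_{t(\Delta)}^t b(X^{\delta,\varepsilon,h^\delta}_s,\mathscr{L}_{X^{\delta,\varepsilon}_s},Y^{\delta,\varepsilon,h^\delta}_s)\,ds+\int_{t(\Delta)}^t\sigma(X^{\delta,\varepsilon,h^\delta}_s,\mathscr{L}_{X^{\delta,\varepsilon}_s})P_1\dot h^\delta_s\,ds+\sqrt{\delta}\int_{t(\Delta)}^t\sigma(X^{\delta,\varepsilon,h^\delta}_s,\mathscr{L}_{X^{\delta,\varepsilon}_s})\,dW^1_s
\end{equation*}
and estimate the three pieces separately after squaring and taking expectations. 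The key inputs will be the linear growth of $b$ and $\sigma$ coming from $\mathbf{A\ref{A1}}$ together with the uniform moment bounds \eref{21}, \eref{23} for the slow processes and \eref{24} for the fast controlled process.

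For the drift piece, Cauchy--Schwarz gives $|\int_{t(\Delta)}^t b\,ds|^2\le \Delta\int_{t(\Delta)}^t|b|^2ds$; after integrating in $t$ and swapping the order of integration (the inner time window has length at most $\Delta$), this contributes $C\Delta^2\mathbb{E}\int_0^T(1+|X^{\delta,\varepsilon,h^\delta}_s|^2+\mathscr{L}_{X^{\delta,\varepsilon}_s}(|\cdot|^2)+|Y^{\delta,\varepsilon,h^\delta}_s|^2)\,ds$, which by \eref{21}, \eref{23}, \eref{24} is $\le C_{M,T}\Delta^2(1+|x|^2+|y|^2)$. For the stochastic piece, Itô's isometry combined with Fubini and the same linear growth bound yields $C\delta\Delta(1+|x|^2+|y|^2)$. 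The drift and noise contributions are therefore harmless.

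The step I expect to require the most care is the control drift term, since $\dot h^\delta$ is only in $L^2$ in time and enters without any small factor. Here I will exploit Cauchy--Schwarz in a slightly different way, namely
\begin{equation*}
\Big|\int_{t(\Delta)}^t\sigma\,P_1\dot h^\delta_s\,ds\Big|^2\le\Big(\int_{t(\Delta)}^t\|\sigma\|^2\,ds\Big)\Big(\int_{t(\Delta)}^t|\dot h^\delta_s|^2\,ds\Big)\le M\int_{t(\Delta)}^t\|\sigma(X^{\delta,\varepsilon,h^\delta}_s,\mathscr{L}_{X^{\delta,\varepsilon}_s})\|^2\,ds,
\end{equation*}
using the $\mathcal{A}_M$-constraint $\int_0^T|\dot h^\delta_s|^2ds\le M$ almost surely. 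Integrating in $t$, applying Fubini and linear growth of $\sigma$, the control term is dominated by $C_M\Delta\,\mathbb{E}\int_0^T(1+|X^{\delta,\varepsilon,h^\delta}_s|^2+\mathbb{E}|X^{\delta,\varepsilon}_s|^2)\,ds$, which by \eref{21} and \eref{23} is at most $C_{M,T}\Delta(1+|x|^2+|y|^2)$.

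Summing the three contributions yields exactly \eref{F3.10}. Notice that, in contrast to Lemma \ref{130} where one estimates pointwise in $t$, the integrated-in-$t$ formulation of \eref{F3.10} is precisely what is needed so that the $\int_0^T|\dot h^\delta_s|^2ds\le M$ bound absorbs the control drift without having to invoke the assumption $\varepsilon/\delta\to 0$; the latter will only be used later when passing to the limit in the skeleton-equation identification.
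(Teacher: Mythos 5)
Your argument is correct, and it takes a genuinely different and more elementary route than the paper. The paper first reduces \eref{F3.10} to estimating $\mathbb{E}\int_\Delta^T|X^{\delta,\varepsilon,h^\delta}_t-X^{\delta,\varepsilon,h^\delta}_{t-\Delta}|^2dt$ and $\mathbb{E}\int_\Delta^T|X^{\delta,\varepsilon,h^\delta}_{t(\Delta)}-X^{\delta,\varepsilon,h^\delta}_{t-\Delta}|^2dt$, and then applies It\^o's formula to $|X^{\delta,\varepsilon,h^\delta}_t-X^{\delta,\varepsilon,h^\delta}_{t-\Delta}|^2$, exploiting that the resulting martingale term has zero expectation and handling the cross terms by Cauchy--Schwarz; you instead square the increment over the window $[t(\Delta),t]$ directly and treat the three integrals separately via Cauchy--Schwarz, the It\^o isometry, and Fubini. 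Both routes lean on the same ingredients — linear growth from $\mathbf{A\ref{A1}}$, the a priori bounds \eref{21}, \eref{23}, \eref{24}, and the pathwise bound $\int_0^T|\dot h^\delta_s|^2ds\le M$ to absorb the control drift — and your version avoids It\^o's formula and the auxiliary $t-\Delta$ shift altogether, which is a mild simplification. Two small caveats: $\dot h^\delta$ is a predictable process rather than deterministic (harmless, since your Cauchy--Schwarz step is pathwise and the $\mathcal{A}_M$ bound holds almost surely); and your closing remark that the assumption \eref{h5} is not invoked is an overstatement, because the moment bounds \eref{23} and \eref{24} that you cite are themselves established in the paper by choosing $\varepsilon/\delta$ small, so the hypothesis enters implicitly through them.
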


\begin{proof}
By (\ref{23}), it follows that
\begin{eqnarray}\label{F3.11}
&&\mathbb{E}\left[\int^{T}_0|X^{\delta,\varepsilon,h^\delta}_t-X_{t(\Delta)}^{\delta,\varepsilon,h^\delta}|^2dt\right]\nonumber\\
=\!\!\!\!\!\!\!\!&& \mathbb{E}\left(\int^{\Delta}_0|X^{\delta,\varepsilon,h^\delta}_t-x|^2dt\right)+\mathbb{E}\left[\int^{T}_{\Delta}|X^{\delta,\varepsilon,h^\delta}_t-X_{t(\Delta)}^{\delta,\varepsilon,h^\delta}|^2dt\right]\nonumber\\
\leq\!\!\!\!\!\!\!\!&& C\left(1+|y|^2+|x|^2\right)\Delta \nonumber \\
 \!\!\!\!\!\!\!\!&& +2\mathbb{E}\left(\int^{T}_{\Delta}|X^{\delta,\varepsilon,h^\delta}_t-X_{t-\Delta}^{\delta,\varepsilon,h^\delta}|^2dt\right)+2\mathbb{E}\left(\int^{T}_{\Delta}|X_{t(\Delta)}^{\delta,\varepsilon,h^\delta}-X_{t-\Delta}^{\delta,\varepsilon,h^\delta}|^2dt\right).\label{F3.8}
\end{eqnarray}
We now focus on the second term on the right-hand side of \eref{F3.11}. Using It\^{o}'s formula, we have
\begin{eqnarray}
\!\!\!\!\!\!\!\!&&|X^{\delta,\varepsilon,h^\delta}_t-X_{t-\Delta}^{\delta,\varepsilon,h^\delta}|^{2}
\nonumber \\
~=\!\!\!\!\!\!\!\!&&2\int_{t-\Delta} ^{t}\langle b(X^{\delta,\varepsilon,h^\delta}_s,\mathscr{L}_{X^{\delta,\varepsilon}_s},Y^{\delta,\varepsilon,h^\delta}_s), X^{\delta,\varepsilon,h^\delta}_s-X_{t-\Delta}^{\delta,\varepsilon,h^\delta}\rangle ds
\nonumber \\
 \!\!\!\!\!\!\!\!&&+ 2\int_{t-\Delta} ^{t}\langle \sigma(X^{\delta,\varepsilon,h^\delta}_s,\mathscr{L}_{X^{\delta,\varepsilon}_s})P_1\dot{h}^\delta_s,X^{\delta,\varepsilon,h^\delta}_s-X_{t-\Delta}^{\delta,\varepsilon,h^\delta}\rangle ds+\delta\int_{t-\Delta} ^{t}\|\sigma(X^{\delta,\varepsilon,h^\delta}_s,\mathscr{L}_{X^{\delta,\varepsilon}_s})\|^2ds
 \nonumber \\
 \!\!\!\!\!\!\!\!&&
 +2\sqrt{\delta}\int_{t-\Delta} ^{t}\langle \sigma(X^{\delta,\varepsilon,h^\delta}_s,\mathscr{L}_{X^{\delta,\varepsilon}_s})dW_s^1, X^{\delta,\varepsilon,h^\delta}_s-X_{t-\Delta}^{\delta,\varepsilon,h^\delta}\rangle \nonumber\\
=:\!\!\!\!\!\!\!\!&&\mathscr{I}_{1}(t)+\mathscr{I}_{2}(t)+\mathscr{I}_{3}(t)+\mathscr{I}_{4}(t).  \label{F3.9}
\end{eqnarray}
In what follows, we will estimate the terms $\mathscr{I}_i(t)$, $i=1,...,4$, one by one.

For terms $\mathscr{I}_{1}(t)$ and $\mathscr{I}_3(t)$, by condition (${\mathbf{A\ref{A1}}}$), (\ref{23}) and (\ref{24}), we infer that
\begin{eqnarray}\label{REGX2}
&&\mathbb{E}\Big(\int^{T}_{\Delta}\mathscr{I}_{1}(t)dt\Big)
\nonumber\\
\leq\!\!\!\!\!\!\!\!&&C\mathbb{E}\Big[\int^{T}_{\Delta}\int_{t-\Delta} ^{t}\Big(1+|X^{\delta,\varepsilon,h^\delta}_s|+|Y_{s}^{\delta,\varepsilon,h^\delta}|+\big(\mathscr{L}_{X^{\delta,\varepsilon}_s}(|\cdot|^2)\big)^{\frac{1}{2}}\Big)
\nonumber\\
\!\!\!\!\!\!\!\!&&\cdot\Big(|X^{\delta,\varepsilon,h^\delta}_s|+|X_{t-\Delta}^{\delta,\varepsilon,h^\delta}|\Big)dsdt\Big]\nonumber\\
\leq\!\!\!\!\!\!\!\!&&C_T\Delta\mathbb{E}\Big[\sup_{s\in[0,T]}(1+|X^{\delta,\varepsilon,h^\delta}_s|^2)\Big]+C_T\Delta\Big(\int_0^T\mathbb{E}|X^{\delta,\varepsilon}_s|^2ds\Big)^{\frac{1}{2}}\Big[\mathbb{E}\Big(\sup_{s\in[0,T]}|X^{\delta,\varepsilon,h^\delta}_s|^2\Big)\Big]^{\frac{1}{2}}
\nonumber\\\!\!\!\!\!\!\!\!&&+C\mathbb{E}\Big[\sup_{s\in[0,T]}|X^{\delta,\varepsilon,h^\delta}_s|\int^T_{\Delta}\int^t_{t-\Delta}|Y^{\delta,\varepsilon,h^\delta}_s|dsdt\Big]\nonumber\\
\leq\!\!\!\!\!\!\!\!&&C_T\Delta\mathbb{E}\Big[\sup_{s\in[0,T]}(1+|X^{\delta,\varepsilon,h^\delta}_s|^2)\Big]+C_T\Delta\Big[\sup_{s\in[0,T]}\mathbb{E}|X^{\delta,\varepsilon}_s|^2\Big]
\nonumber\\\!\!\!\!\!\!\!\!&&
+C_T\Delta^{\frac{1}{2}}\Big[\mathbb{E}\Big(\sup_{s\in[0,T]}|X^{\delta,\varepsilon,h^\delta}_s|^2\Big)\Big]^{\frac{1}{2}} \Big[\mathbb{E}\Big(\int^T_{\Delta}\int^t_{t-\Delta}|Y_{s}^{\delta,\varepsilon,h^\delta}|^2dsdt\Big)\Big]^{\frac{1}{2}}\nonumber\\
\leq\!\!\!\!\!\!\!\!&&C_{T}\Delta\Big(1+|y|^2+|x|^2\Big)
\end{eqnarray}
and
\begin{eqnarray}\label{REGX2a}
\mathbb{E}\left(\int^{T}_{\Delta}\mathscr{I}_{3}(t)dt\right)\leq\!\!\!\!\!\!\!\!&&C\delta\mathbb{E}\left[\int^{T}_{\Delta}\int_{t-\Delta} ^{t}\left(1+|X^{\delta,\varepsilon,h^\delta}_s|^2+\mathscr{L}_{X^{\delta,\varepsilon}_s}(|\cdot|^2)\right)ds dt\right]\nonumber\\
\leq\!\!\!\!\!\!\!\!&&C_T\Delta\mathbb{E}\left[\sup_{s\in[0,T]}\left(1+|X^{\delta,\varepsilon,h^\delta}_s|^2+\EE|X^{\delta,\varepsilon}_s|^2\right)\right]\nonumber\\
\leq\!\!\!\!\!\!\!\!&&C_{T}\Delta\left(1+|y|^2+|x|^2\right).
\end{eqnarray}
For the term $\mathscr{I}_{2}(t)$, we deduce
\begin{eqnarray}\label{131}
\!\!\!\!\!\!\!\!&&\mathbb{E}\left(\int^{T}_{\Delta}\mathscr{I}_{2}(t)dt\right)
\nonumber\\
\leq\!\!\!\!\!\!\!\!&&C\Big[\mathbb{E}\Big(\int^{T}_{\Delta}\int_{t-\Delta} ^{t}\|\sigma(X^{\delta,\varepsilon,h^\delta}_s,\mathscr{L}_{X^{\delta,\varepsilon}_s})\|^2\|P_1\|^2|\dot{h}^\delta_s|^2dsdt\Big)\Big]^{\frac{1}{2}}
\nonumber\\
\!\!\!\!\!\!\!\!&&\cdot\Big[\mathbb{E}\Big(\int^{T}_{\Delta}\int_{t-\Delta}^{t}|X^{\delta,\varepsilon,h^\delta}_s-X_{t-\Delta}^{\delta,\varepsilon,h^\delta}|^2\Big)\Big]^{\frac{1}{2}}
\nonumber\\
\leq\!\!\!\!\!\!\!\!&&C\Big[\Delta\mathbb{E}\Big(\sup_{s\in[0,T]}\big(1+|X^{\delta,\varepsilon,h^\delta}_s|^2+\mathscr{L}_{X^{\delta,\varepsilon}_s}(|\cdot|^2)\big)\int_0^T|\dot{h}^\delta_s|^2ds\Big)\Big]^{\frac{1}{2}}
\nonumber\\
\!\!\!\!\!\!\!\!&&\cdot\Big[\Delta\mathbb{E}\Big(\int_0^T|X^{\delta,\varepsilon,h^\delta}_s|^2ds\Big)\Big]^{\frac{1}{2}}
\nonumber\\
\leq\!\!\!\!\!\!\!\!&&C_{M,T}\Delta(1+|y|^2+|x|^2).
\end{eqnarray}
For the term $\mathscr{I}_{4}(t)$, by martingale property, it follows that
\begin{eqnarray}  \label{REGX3}
\mathbb{E}\left(\int^{T}_{\Delta}\mathscr{I}_{4}(t)dt\right)=\!\!\!\!\!\!\!\!&&2\sqrt{\delta}\int^{T}_{\Delta}\mathbb{E}\left[\int_{t-\Delta} ^{t}\langle \sigma(X^{\delta,\varepsilon,h^\delta}_s,\mathscr{L}_{X^{\delta,\varepsilon}_s})dW_s^1,X^{\delta,\varepsilon,h^\delta}_s-X_{t-\Delta}^{\delta,\varepsilon,h^\delta}\rangle\right]dt \nonumber\\
=\!\!\!\!\!\!\!\!&&0.
\end{eqnarray}
Combining the estimates \eref{F3.9}-\eref{REGX3}, it follows that
\begin{eqnarray}
\mathbb{E}\left(\int^{T}_{\Delta}|X^{\delta,\varepsilon,h^\delta}_t-X_{t-\Delta}^{\delta,\varepsilon,h^\delta}|^2dt\right)\leq\!\!\!\!\!\!\!\!&&C_{M,T}\Delta(1+|y|^2+|x|^2). \label{F3.13}
\end{eqnarray}

 Next, we intend to estimate the last term on the right hand side of (\ref{F3.11}).
Using It\^{o}'s formula, we know that
\begin{eqnarray}\label{erro1}
\!\!\!\!\!\!\!\!&&|X^{\delta,\varepsilon,h^\delta}_{t(\Delta)}-X_{t-\Delta}^{\delta,\varepsilon,h^\delta}|^{2}
\nonumber \\
~=\!\!\!\!\!\!\!\!&&2\int_{t-\Delta} ^{t(\Delta)}\langle b(X^{\delta,\varepsilon,h^\delta}_s,\mathscr{L}_{X^{\delta,\varepsilon}_s},Y^{\delta,\varepsilon,h^\delta}_s), X^{\delta,\varepsilon,h^\delta}_s-X_{t-\Delta}^{\delta,\varepsilon,h^\delta}\rangle ds
\nonumber \\
 \!\!\!\!\!\!\!\!&&+ 2\int_{t-\Delta} ^{t(\Delta)}\langle \sigma(X^{\delta,\varepsilon,h^\delta}_s,\mathscr{L}_{X^{\delta,\varepsilon}_s})P_1\dot{h}^\delta_s,X^{\delta,\varepsilon,h^\delta}_s-X_{t-\Delta}^{\delta,\varepsilon,h^\delta}\rangle ds+\delta\int_{t-\Delta} ^{t(\Delta)}\|\sigma(X^{\delta,\varepsilon,h^\delta}_s,\mathscr{L}_{X^{\delta,\varepsilon}_s})\|^2ds
 \nonumber \\
 \!\!\!\!\!\!\!\!&&
 +2\sqrt{\delta}\int_{t-\Delta} ^{t(\Delta)}\langle \sigma(X^{\delta,\varepsilon,h^\delta}_s,\mathscr{L}_{X^{\delta,\varepsilon}_s})dW_s^1, X^{\delta,\varepsilon,h^\delta}_s-X_{t-\Delta}^{\delta,\varepsilon,h^\delta}\rangle \nonumber\\
=:\!\!\!\!\!\!\!\!&&\mathscr{J}_{1}(t)+\mathscr{J}_{2}(t)+\mathscr{J}_{3}(t)+\mathscr{J}_{4}(t).  \label{F3.9}
\end{eqnarray}

For terms $\mathscr{J}_{1}(t)$ and $\mathscr{J}_3(t)$, as the proof of (\ref{REGX2}) and (\ref{REGX2a}), we can deduce that
\begin{eqnarray}\label{erro2}
&&\mathbb{E}\Big(\int^{T}_{\Delta}\mathscr{J}_{1}(t)dt\Big)
\nonumber\\
\leq\!\!\!\!\!\!\!\!&&C\mathbb{E}\Big[\int^{T}_{\Delta}\int_{t-\Delta} ^{t(\Delta)}\Big(1+|X^{\delta,\varepsilon,h^\delta}_s|+|Y_{s}^{\delta,\varepsilon,h^\delta}|+\big(\mathscr{L}_{X^{\delta,\varepsilon}_s}(|\cdot|^2)\big)^{\frac{1}{2}}\Big)
\nonumber\\
\!\!\!\!\!\!\!\!&&\cdot\Big(|X^{\delta,\varepsilon,h^\delta}_s|+|X_{t-\Delta}^{\delta,\varepsilon,h^\delta}|\Big)dsdt\Big]\nonumber\\
\leq\!\!\!\!\!\!\!\!&&C\mathbb{E}\Big[\int^{T}_{\Delta}\int_{t-\Delta} ^{t}\Big(1+|X^{\delta,\varepsilon,h^\delta}_s|+|Y_{s}^{\delta,\varepsilon,h^\delta}|+\big(\mathscr{L}_{X^{\delta,\varepsilon}_s}(|\cdot|^2)\big)^{\frac{1}{2}}\Big)
\nonumber\\
\!\!\!\!\!\!\!\!&&\cdot\Big(|X^{\delta,\varepsilon,h^\delta}_s|+|X_{t-\Delta}^{\delta,\varepsilon,h^\delta}|\Big)dsdt\Big]\nonumber\\
\leq\!\!\!\!\!\!\!\!&&C_{T}\Delta\Big(1+|y|^2+|x|^2\Big)
\end{eqnarray}
and
\begin{eqnarray}\label{erro3}
\mathbb{E}\left(\int^{T}_{\Delta}\mathscr{J}_{3}(t)dt\right)\leq\!\!\!\!\!\!\!\!&&C\delta\mathbb{E}\left[\int^{T}_{\Delta}\int_{t-\Delta} ^{t(\Delta)}\left(1+|X^{\delta,\varepsilon,h^\delta}_s|^2+\mathscr{L}_{X^{\delta,\varepsilon}_s}(|\cdot|^2)\right)ds dt\right]\nonumber\\
\leq\!\!\!\!\!\!\!\!&&C\delta\mathbb{E}\left[\int^{T}_{\Delta}\int_{t-\Delta} ^{t}\left(1+|X^{\delta,\varepsilon,h^\delta}_s|^2+\mathscr{L}_{X^{\delta,\varepsilon}_s}(|\cdot|^2)\right)ds dt\right]\nonumber\\
\leq\!\!\!\!\!\!\!\!&&C_{T}\Delta\left(1+|y|^2+|x|^2\right).
\end{eqnarray}
Similarly, for the term $\mathscr{J}_{2}(t)$, we easily get
\begin{eqnarray}\label{erro4}
\mathbb{E}\left(\int^{T}_{\Delta}\mathscr{J}_{2}(t)dt\right)
\leq C_{M,T}\Delta(1+|y|^2+|x|^2).
\end{eqnarray}
For the term $\mathscr{J}_{4}(t)$, by martingale property, it follows that
\begin{eqnarray}  \label{erro5}
\mathbb{E}\left(\int^{T}_{\Delta}\mathscr{J}_{4}(t)dt\right)=\!\!\!\!\!\!\!\!&&2\sqrt{\delta}\int^{T}_{\Delta}\mathbb{E}\left[\int_{t-\Delta} ^{t(\Delta)}\langle \sigma(X^{\delta,\varepsilon,h^\delta}_s,\mathscr{L}_{X^{\delta,\varepsilon}_s})dW_s^1,X^{\delta,\varepsilon,h^\delta}_s-X_{t-\Delta}^{\delta,\varepsilon,h^\delta}\rangle\right]dt \nonumber\\
=\!\!\!\!\!\!\!\!&&0.
\end{eqnarray}
Combining the estimates (\ref{erro1})-(\ref{erro5}) implies
\begin{eqnarray}
\mathbb{E}\left(\int^{T}_{\Delta}|X_{t(\Delta)}^{\delta,\varepsilon,h^\delta}-X_{t-\Delta}^{\delta,\varepsilon,h^\delta}|^2dt\right)\leq\!\!\!\!\!\!\!\!&&C_{M,T}\Delta(1+|y|^2+|x|^2). \label{F3.14}
\end{eqnarray}

Consequently, the estimate \eref{F3.10} holds by combining \eref{F3.11}, \eref{F3.13} and \eref{F3.14}. The proof is completed. \hspace{\fill}$\Box$
\end{proof}
Similarly, it is straightforward to deduce the integral estimate of time increment for $\bar{X}^{h}$.
\begin{lemma} \label{l5}
%For any $T>0$, $\vare\in(0,1)$ and $\Lambda>0$ small enough, there exists a constant $C_{T}>0$ such that for any $x,y\in H$
Under the assumptions ${\mathbf{A\ref{A1}}}$ and (\ref{h5}), for any $h\in S_M$, there exists a constant $C_{M,T}>0$ such that for any  $\Delta\in(0,1)$,
\begin{align*}
\mathbb{E}\left[\int^{T}_0|\bar{X}^{h}_t-\bar{X}_{t(\Delta)}^{h}|^2 dt\right]\leq C_{M,T}\Delta(1+{|x|^{2}}).
\end{align*}
\end{lemma}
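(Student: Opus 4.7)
\noindent\textbf{Proof proposal for Lemma \ref{l5}.} The plan is to exploit the fact that the skeleton equation \eref{skeleton} is pathwise deterministic for any fixed $h \in S_M$, and to follow the same time-splitting strategy that was used for $X^{\delta,\varepsilon,h^\delta}$ in Lemma \ref{COX1}, but in a much simpler form since there are no martingale or coupling terms to handle. First I would split the time integral as
\begin{equation*}
\mathbb{E}\int_0^T |\bar X^h_t - \bar X^h_{t(\Delta)}|^2\,dt = \int_0^\Delta |\bar X^h_t - x|^2\,dt + \int_\Delta^T |\bar X^h_t - \bar X^h_{t(\Delta)}|^2\,dt,
\end{equation*}
and handle the two ranges separately, using throughout the uniform bound \eref{19} which gives $\sup_{t\in[0,T]}|\bar X^h_t|^2 \le C_{T,M}(1+|x|^2+\sup_{s\in[0,T]}|\bar X^0_s|^2)$, together with the linear growth of $\bar b$ and $\sigma$ from condition ${\mathbf{A\ref{A1}}}$.

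For $t \in [\Delta, T]$, integrating \eref{skeleton} between $t(\Delta)$ and $t$ and using the elementary inequality $|a+b|^2 \le 2|a|^2 + 2|b|^2$ yields
\begin{equation*}
|\bar X^h_t - \bar X^h_{t(\Delta)}|^2 \le 2\Big|\int_{t(\Delta)}^t \bar b(\bar X^h_s,\mathscr{L}_{\bar X^0_s})\,ds\Big|^2 + 2\Big|\int_{t(\Delta)}^t \sigma(\bar X^h_s,\mathscr{L}_{\bar X^0_s}) P_1 \dot h_s\,ds\Big|^2.
\end{equation*}
The drift term is bounded by Cauchy--Schwarz by $\Delta \int_{t(\Delta)}^t |\bar b|^2\,ds \le C_{T,M}(1+|x|^2)\Delta^2$, and the controlled term is bounded, again by Cauchy--Schwarz, by
\begin{equation*}
\Big(\int_{t(\Delta)}^t \|\sigma\|^2 \|P_1\|^2\,ds\Big)\Big(\int_{t(\Delta)}^t |\dot h_s|^2\,ds\Big) \le C_{T,M}(1+|x|^2)\Delta \cdot \int_{t(\Delta)}^t |\dot h_s|^2\,ds.
\end{equation*}
Integrating in $t$ and using either the uniform bound $\int_{t(\Delta)}^t |\dot h_s|^2\,ds \le M$ (followed by integration in $t$ over $[\Delta,T]$) or, slightly more carefully, a Fubini argument showing $\int_\Delta^T \int_{t(\Delta)}^t |\dot h_s|^2\,ds\,dt \le \Delta \int_0^T |\dot h_s|^2\,ds \le M\Delta$, gives the $O(\Delta)$ contribution.

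For the boundary range $t \in [0,\Delta]$ I would use the same integral representation with $t(\Delta)=0$; here each $|\bar X^h_t-x|^2$ is already $O(\Delta)$ by an identical estimate, and integrating in $t\in[0,\Delta]$ produces an $O(\Delta^2)$ contribution which is dominated by the required bound. Collecting everything gives the desired estimate $C_{M,T}\Delta(1+|x|^2)$. There is no real obstacle here: the argument is essentially a deterministic counterpart of the calculation in Lemma \ref{COX1}, with the martingale term $\mathscr{I}_4$ and the mean-field coupling terms $\mathscr{L}_{X^{\delta,\varepsilon}_s}(|\cdot|^2)$ both absent, so the only mild point to track is the correct use of Cauchy--Schwarz on the $\sigma \dot h$ contribution so that the factor $\int |\dot h|^2$ can be absorbed into the constant via $h \in S_M$.
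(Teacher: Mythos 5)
Your proposal is correct, and it is essentially the argument the paper has in mind: the paper omits the proof of Lemma \ref{l5} entirely, remarking only that it follows "similarly" to the time-increment estimate of Lemma \ref{COX1}, and your deterministic version of that computation (integral representation of the skeleton equation, Cauchy--Schwarz on the drift and on the $\sigma P_1\dot h$ term, absorption of $\int_0^T|\dot h_s|^2ds\le M$ into the constant, plus the bound \eref{19} together with $\sup_{t\in[0,T]}|\bar X^0_t|^2\le C_T(1+|x|^2)$) is exactly what is needed. No gaps.
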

\subsection{Construction of the auxiliary process}
In this subsection, we introduce an auxiliary process $\bar{Y}_{t}^{\delta,\varepsilon}\in\RR^m$ similar to (\ref{30}), with $\bar{Y}_{0}^{\delta,\varepsilon}=Y^{\delta,\varepsilon}_{0}=Y^{\delta,\varepsilon,h^\delta}_{0}=y$, and for any $k\in \mathbb{N}$ and $t\in[k\Delta,\min\{(k+1)\Delta,T\}]$,
\begin{eqnarray}
\bar{Y}_{t}^{\delta,\varepsilon}=\bar{Y}_{k\Delta}^{\delta,\varepsilon}+\frac{1}{\varepsilon}\int_{k\Delta}^{t}
f(X_{k\Delta}^{\delta,\varepsilon,h^\delta},\mathscr{L}_{X^{\delta,\varepsilon}_{k\Delta}},\bar{Y}_{s}^{\delta,\varepsilon})ds+\frac{1}{\sqrt{\varepsilon}}\int_{k\Delta}^{t}g(X_{k\Delta}^{\delta,\varepsilon,h^\delta},\mathscr{L}_{X^{\delta,\varepsilon}_{k\Delta}},\bar{Y}_{s}^{\delta,\varepsilon})dW_s^2,\label{4.6a}
\end{eqnarray}
which is equivalent to
$$
d\bar{Y}_{t}^{\delta,\varepsilon}=\frac{1}{\varepsilon}f\left(X^{\delta,\varepsilon,h^\delta}_{t(\Delta)},\mathscr{L}_{X^{\delta,\varepsilon}_{t(\Delta)}},\bar{Y}_{t}^{\delta,\varepsilon}\right)dt+\frac{1}{\sqrt{\varepsilon}}g\left(X^{\delta,\varepsilon,h^\delta}_{t(\Delta)},\mathscr{L}_{X^{\delta,\varepsilon}_{t(\Delta)}},\bar{Y}_{t}^{\delta,\varepsilon}\right)dW_t^2.
$$

We derive the following error estimate between the process $Y^{\delta,\varepsilon,h^\delta}$ and $\bar{Y}^{\delta,\varepsilon}$.
\begin{lemma} \label{MDYa}
Under the assumptions in Theorem \ref{t3}, for any $M,T>0$, there exist some constants
$C_{T},C_{M,T}>0$ such that
\begin{eqnarray}
\sup_{\varepsilon,\delta\in(0,1)}\sup_{t\in[0,T]}\mathbb{E}|\bar{Y}_{t}^{\delta,\varepsilon}|^2\leq
C_{T}\left(1+|y|^2+|x|^2\right) \label{3.13a}
\end{eqnarray}
and
\begin{eqnarray}
\mathbb{E}\left(\int_0^{T}|Y^{\delta,\varepsilon,h^\delta}_t-\bar{Y}_{t}^{\delta,\varepsilon}|^2dt\right)\leq C_{M,T}(1+|y|^2+|x|^2)\Big(\frac{\varepsilon}{\delta}+\Delta\Big). \label{3.14}
\end{eqnarray}
\end{lemma}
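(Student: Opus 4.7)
Both estimates follow from Itô's formula applied on each sub-interval $[k\Delta,(k+1)\Delta]$ combined with the strong dissipativity condition \eref{sm}. The moment bound \eref{3.13a} is a variant of the argument already used to derive \eref{25}, while \eref{3.14} is the more subtle statement, in which the $\varepsilon/\delta$ factor emerges from pairing the singular weight $1/\sqrt{\delta\varepsilon}$ of the control drift against the exponential relaxation rate $1/\varepsilon$.

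\textbf{Step 1: Bound \eref{3.13a}.} Apply Itô's formula to $|\bar Y_t^{\delta,\varepsilon}|^2$ using \eref{4.6a}, take expectation, and invoke the inequality
\[
2\langle f(x,\mu,y),y\rangle+3\|g(x,\mu,y)\|^2\leq -\beta|y|^2+C(1+|x|^2+\mu(|\cdot|^2)),
\]
which is a direct consequence of \eref{sm}, \eref{A21} and \eref{h6}. This yields
\[
\tfrac{d}{dt}\EE|\bar Y_t^{\delta,\varepsilon}|^2\leq -\tfrac{\beta}{\varepsilon}\EE|\bar Y_t^{\delta,\varepsilon}|^2+\tfrac{C}{\varepsilon}\bigl(1+\EE|X^{\delta,\varepsilon,h^\delta}_{t(\Delta)}|^2+\EE|X^{\delta,\varepsilon}_{t(\Delta)}|^2\bigr).
\]
The comparison theorem together with \eref{21} and \eref{23} gives \eref{3.13a}.

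\textbf{Step 2: Bound \eref{3.14}.} Set $Z_t:=Y_t^{\delta,\varepsilon,h^\delta}-\bar Y_t^{\delta,\varepsilon}$ and apply Itô's formula to $|Z_t|^2$. Write $X^h_t=X^{\delta,\varepsilon,h^\delta}_t$, $\mu^\delta_t=\mathscr{L}_{X^{\delta,\varepsilon}_t}$, and split the drift/diffusion differences as
\[
f(X^h_t,\mu^\delta_t,Y^h_t)-f(X^h_{t(\Delta)},\mu^\delta_{t(\Delta)},\bar Y_t)=\bigl[f(X^h_t,\mu^\delta_t,Y^h_t)-f(X^h_t,\mu^\delta_t,\bar Y_t)\bigr]+R^f_t,
\]
and analogously for $g$, where the remainders $R^f_t,R^g_t$ are controlled by \eref{A21} in terms of $|X^h_t-X^h_{t(\Delta)}|$ and $\mathbb{W}_2(\mu^\delta_t,\mu^\delta_{t(\Delta)})$. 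The diagonal pieces together with \eref{sm} contribute $-\tfrac{\gamma}{\varepsilon}|Z_t|^2$. Young's inequality absorbs the $Z_t$ factor in the $R$-terms into a small multiple of $|Z_t|^2/\varepsilon$, leaving a residual of order $\tfrac{1}{\varepsilon}(|X^h_t-X^h_{t(\Delta)}|^2+\mathbb{W}_2^2(\mu^\delta_t,\mu^\delta_{t(\Delta)}))$ whose time integral is $O(\Delta)(1+|x|^2+|y|^2)$ by Lemmas~\ref{130} and \ref{COX1}. For the control term, Cauchy-Schwarz with weight $\tilde\beta/\varepsilon$ (for some $\tilde\beta\in(0,\gamma)$) together with \eref{h6} bounds it by
\[
\tfrac{\tilde\beta}{\varepsilon}|Z_t|^2+\tfrac{C}{\delta}\bigl(1+|X^h_t|^2+\mu^\delta_t(|\cdot|^2)\bigr)|\dot h^\delta_t|^2.
\]
Collecting everything, with $\kappa:=\gamma-\tilde\beta>0$,
\[
\tfrac{d}{dt}\EE|Z_t|^2\leq -\tfrac{\kappa}{\varepsilon}\EE|Z_t|^2+\tfrac{1}{\varepsilon}A_t^\Delta+\tfrac{1}{\delta}B_t^{h^\delta},
\]
where $\int_0^T\EE A_t^\Delta\,dt\leq C_T\Delta(1+|x|^2+|y|^2)$ and $\EE B_t^{h^\delta}\leq C\,\EE[(1+|X^h_t|^2+\mu^\delta_t(|\cdot|^2))|\dot h^\delta_t|^2]$.

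\textbf{Step 3: Extracting the $\varepsilon/\delta$ scaling.} Apply the comparison theorem to the inequality above and then integrate in $t\in[0,T]$. By Fubini, the $\Delta$-term produces $\int_0^T\tfrac{1}{\varepsilon}\int_0^t e^{-\kappa(t-s)/\varepsilon}A_s^\Delta\,ds\,dt\lesssim \int_0^T A_s^\Delta\,ds=O(\Delta)$. The control term gives
\[
\tfrac{1}{\delta}\int_0^T\int_0^t e^{-\kappa(t-s)/\varepsilon}\EE B_s^{h^\delta}\,ds\,dt\leq\tfrac{\varepsilon}{\kappa\delta}\int_0^T\EE B_s^{h^\delta}\,ds\leq\tfrac{C_{M,T}\varepsilon}{\delta}(1+|x|^2+|y|^2),
\]
where the last step uses \eref{23} together with $h^\delta\in\mathcal A_M$. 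Combining yields \eref{3.14}.

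\textbf{Main obstacle.} The delicate point is producing the factor $\varepsilon/\delta$ instead of a naively singular $1/\delta$. This hinges on carefully pairing the $1/\delta$ coming from the controlled drift against the dissipation rate $1/\varepsilon$: after writing $\EE|Z_t|^2$ via the exponential kernel, the Fubini swap converts the inner kernel mass into $\varepsilon/\kappa$, which is then cancelled by the $1/\delta$ weight. In tandem, one must choose the Young constants so that the portion of $\|g(X^h_t,\mu^\delta_t,Y^h_t)-g(X^h_t,\mu^\delta_t,\bar Y_t)\|^2/\varepsilon$ absorbed into the diagonal $f$-term still leaves a strictly negative contribution after factoring in the $\tilde\beta/\varepsilon$ used for the control term, which is exactly why the coefficient $5$ in \eref{sm} is required.
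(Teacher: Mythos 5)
Your proposal is correct and follows essentially the same route as the paper: Itô's formula for $|Y^{\delta,\varepsilon,h^\delta}_t-\bar Y^{\delta,\varepsilon}_t|^2$, the dissipativity \eref{sm} for the diagonal piece, Young's inequality splitting the control drift into $\tilde\beta\,|Z_t|^2/\varepsilon$ plus a $C/\delta$ term, the increment estimates of Lemmas \ref{130} and \ref{COX1} for the $\Delta$ contribution, and the comparison theorem plus Fubini to trade the exponential kernel mass $\varepsilon/\kappa$ against the $1/\delta$ weight. The only cosmetic difference is that the paper omits the (equally routine) proof of \eref{3.13a}, which you spell out.
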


\begin{proof}
The proof of \eref{3.13a} is straightforward, we only prove \eref{3.14}.
For reader's convenience, we recall that the process $Y^{\delta,\varepsilon,h^\delta}_t-\bar{Y}_{t}^{\delta,\varepsilon}$ satisfies the following equation
\begin{eqnarray}\label{e77}
\left\{ \begin{aligned}
d(Y^{\delta,\varepsilon,h^\delta}_t-\bar{Y}_{t}^{\delta,\varepsilon})=&\frac{1}{\varepsilon}\left[f\left(X^{\delta,\varepsilon,h^\delta}_{t},\mathscr{L}_{X^{\delta,\varepsilon}_{t}},{Y}_{t}^{\delta,\varepsilon,h^\delta}\right)-f\left(X^{\delta,\varepsilon,h^\delta}_{t(\Delta)},\mathscr{L}_{X^{\delta,\varepsilon}_{t(\Delta)}},\bar{Y}_{t}^{\delta,\varepsilon}\right)\right]dt\\ &+\frac{1}{\sqrt{\varepsilon}}\left[g\left(X^{\delta,\varepsilon,h^\delta}_{t},\mathscr{L}_{X^{\delta,\varepsilon}_{t}},{Y}_{t}^{\delta,\varepsilon,h^\delta}\right)-g\left(X^{\delta,\varepsilon,h^\delta}_{t(\Delta)},\mathscr{L}_{X^{\delta,\varepsilon}_{t(\Delta)}},\bar{Y}_{t}^{\delta,\varepsilon}\right)\right]dW_t^2\\
&+\frac{1}{\sqrt{\delta\varepsilon}}g\left(X^{\delta,\varepsilon,h^\delta}_{t},\mathscr{L}_{X^{\delta,\varepsilon}_{t}},{Y}_{t}^{\delta,\varepsilon,h^\delta}\right)P_2\dot{h}^{\delta}_tdt,\\
Y_{0}^{\delta,\varepsilon,h^\delta}-\bar{Y}_{0}^{\delta,\varepsilon}=0.&
\end{aligned}\right.
\end{eqnarray}
Using It\^{o}'s formula and taking expectation, we have
\begin{eqnarray}\label{f1}
\!\!\!\!\!\!\!\!&&\frac{d}{dt}\EE|Y^{\delta,\varepsilon,h^\delta}_t-\bar{Y}_{t}^{\delta,\varepsilon}|^2
\nonumber\\=\!\!\!\!\!\!\!\!&&\frac{2}{\varepsilon}\EE\big\langle f(X^{\delta,\varepsilon,h^\delta}_{t},\mathscr{L}_{X^{\delta,\varepsilon}_{t}},{Y}_{t}^{\delta,\varepsilon,h^\delta})-f(X^{\delta,\varepsilon,h^\delta}_{t(\Delta)},\mathscr{L}_{X^{\delta,\varepsilon}_{t(\Delta)}},\bar{Y}_{t}^{\delta,\varepsilon}),Y^{\delta,\varepsilon,h^\delta}_t-\bar{Y}_{t}^{\delta,\varepsilon}\big\rangle \nonumber\\
\!\!\!\!\!\!\!\!&&+\frac{2}{\sqrt{\delta\varepsilon}}\EE\big\langle g(X^{\delta,\varepsilon,h^\delta}_{t},\mathscr{L}_{X^{\delta,\varepsilon}_{t}},{Y}_{t}^{\delta,\varepsilon,h^\delta})P_2\dot{h}^{\delta}_t,Y^{\delta,\varepsilon,h^\delta}_t-\bar{Y}_{t}^{\delta,\varepsilon}\big\rangle
\nonumber\\
\!\!\!\!\!\!\!\!&&+\frac{1}{\varepsilon}\EE\left\|g(X^{\delta,\varepsilon,h^\delta}_{t},\mathscr{L}_{X^{\delta,\varepsilon}_{t}},{Y}_{t}^{\delta,\varepsilon,h^\delta})-g(X^{\delta,\varepsilon,h^\delta}_{t(\Delta)},\mathscr{L}_{X^{\delta,\varepsilon}_{t(\Delta)}},\bar{Y}_{t}^{\delta,\varepsilon})\right\|^2\nonumber\\
\nonumber\\
=:\!\!\!\!\!\!\!\!&&\mathscr{U}_1(t)+\mathscr{U}_2(t)+\mathscr{U}_3(t).
\end{eqnarray}
For the terms $\mathscr{U}_1(t)$ and $\mathscr{U}_3(t)$,
\begin{eqnarray}
\mathscr{U}_1(t)+\mathscr{U}_3(t)\leq\!\!\!\!\!\!\!\!&&-\frac{\beta}{\varepsilon}\EE|Y^{\delta,\varepsilon,h^\delta}_t-\bar{Y}_{t}^{\delta,\varepsilon}|^2+\frac{C}{\varepsilon}\EE |X^{\delta,\varepsilon,h^\delta}_t-X_{t(\Delta)}^{\delta,\varepsilon,h^\delta}|^2
\nonumber\\
\!\!\!\!\!\!\!\!&&+\frac{C}{\varepsilon}\mathbb{W}_{2}(\mathscr{L}_{X^{\delta,\varepsilon}_{t}},\mathscr{L}_{X^{\delta,\varepsilon}_{t(\Delta)}})^2.
\end{eqnarray}
Note the fact that
\begin{eqnarray}\label{f2}
\mathbb{W}_{2}(\mathscr{L}_{X^{\delta,\varepsilon}_{t}},\mathscr{L}_{X^{\delta,\varepsilon}_{t(\Delta)}})^2\leq\EE|X_t^{\delta,\varepsilon}-X_{t(\Delta)}^{\delta,\varepsilon}|^2.
\end{eqnarray}
For the term $\mathscr{U}_2(t)$, by Young's inequality,
\begin{eqnarray}\label{f3}
\mathscr{U}_2(t)\leq\!\!\!\!\!\!\!\!&&\frac{\tilde{\beta}}{\varepsilon}\EE|Y^{\delta,\varepsilon,h^\delta}_t-\bar{Y}_{t}^{\delta,\varepsilon}|^2+\frac{C}{\delta}\EE\Big[\big(1+|X^{\delta,\varepsilon,h^\delta}_t|^2+\mathscr{L}_{X^{\delta,\varepsilon}_t}(|\cdot|^2)\big)\|P_2\||\dot{h}^{\delta}_t|^2\Big],
\end{eqnarray}
where $\tilde{\beta}\in(0,\beta)$.

Combining (\ref{f1})-(\ref{f3}), we have
\begin{eqnarray*}
\frac{d}{dt}\EE|Y^{\delta,\varepsilon,h^\delta}_t-\bar{Y}_{t}^{\delta,\varepsilon}|^2
\leq\!\!\!\!\!\!\!\!&&-\frac{\kappa}{\varepsilon}\EE|Y^{\delta,\varepsilon,h^\delta}_t-\bar{Y}_{t}^{\delta,\varepsilon}|^2
+\frac{C}{\varepsilon}\EE |X^{\delta,\varepsilon,h^\delta}_t-X_{t(\Delta)}^{\delta,\varepsilon,h^\delta}|^2
\nonumber\\
\!\!\!\!\!\!\!\!&&+\frac{C}{\varepsilon}\EE|X_t^{\delta,\varepsilon}-X_{t(\Delta)}^{\delta,\varepsilon}|^2
+\frac{C}{\delta}\EE\Big[\big(1+|X^{\delta,\varepsilon,h^\delta}_t|^2+\mathscr{L}_{X^{\delta,\varepsilon}_t}(|\cdot|^2)\big)|\dot{h}^{\delta}_t|^2\Big],
\end{eqnarray*}
where we denote $\kappa=\beta-\tilde{\beta}>0$. By the Gronwall's lemma we derive
\begin{eqnarray*}
\EE|Y^{\delta,\varepsilon,h^\delta}_t-\bar{Y}_{t}^{\delta,\varepsilon}|^2
\leq\!\!\!\!\!\!\!\!&&\frac{C}{\varepsilon}\int_0^te^{-\frac{\kappa(t-s)}{\varepsilon}}\EE|X_s^{\delta,\varepsilon,h^\delta}-X_{s(\Delta)}^{\delta,\varepsilon,h^\delta}|^2ds
+\frac{C}{\varepsilon}\int_0^te^{-\frac{\kappa(t-s)}{\varepsilon}}\EE|X_s^{\delta,\varepsilon}-X_{s(\Delta)}^{\delta,\varepsilon}|^2ds
\nonumber\\
\!\!\!\!\!\!\!\!&&+\frac{C}{\delta}\int_0^te^{-\frac{\kappa(t-s)}{\varepsilon}}\EE\Big[\big(1+|X^{\delta,\varepsilon,h^\delta}_s|^2+\mathscr{L}_{X^{\delta,\varepsilon}_s}(|\cdot|^2)\big)|\dot{h}^{\delta}_s|^2\Big]ds.
\end{eqnarray*}
Due to the Fubini's theorem we have
\begin{eqnarray*}
\EE\left(\int_0^T|Y^{\delta,\varepsilon,h^\delta}_t-\bar{Y}_{t}^{\delta,\varepsilon}|^2dt\right)
\leq\!\!\!\!\!\!\!\!&& \frac{C}{\varepsilon}\EE\Big[\int_0^T|X_s^{\delta,\varepsilon,h^\delta}-X_{s(\Delta)}^{\delta,\varepsilon,h^\delta}|^2\Big(\int^T_se^{-\frac{\kappa(t-s)}{\varepsilon}}dt\Big)ds\Big]
\nonumber\\
\!\!\!\!\!\!\!\!&&
+\frac{C}{\varepsilon}\EE\Big[\int_0^T|X_s^{\delta,\varepsilon}-X_{s(\Delta)}^{\delta,\varepsilon}|^2\Big(\int^T_se^{-\frac{\kappa(t-s)}{\varepsilon}}dt\Big)ds\Big]
\nonumber\\
\!\!\!\!\!\!\!\!&&
+\frac{C}{\delta}\EE\Big[\int_0^T\big(1+|X^{\delta,\varepsilon,h^\delta}_s|^2+|X^{\delta,\varepsilon}_s|^2\big)|\dot{h}^{\delta}_s|^2\Big(\int^T_se^{-\frac{\kappa(t-s)}{\varepsilon}}dt\Big)ds\Big]
\nonumber\\
\leq\!\!\!\!\!\!\!\!&&  \frac{C}{\kappa}\EE\left[\int_0^T|X_s^{\delta,\varepsilon,h^\delta}-X_{s(\Delta)}^{\delta,\varepsilon,h^\delta}|^2ds\right]+\frac{C}{\kappa}\EE\left[\int_0^T|X_s^{\delta,\varepsilon}-X_{s(\Delta)}^{\delta,\varepsilon}|^2ds\right]
\nonumber\\
\!\!\!\!\!\!\!\!&&
+\frac{C}{\kappa}\big(\frac{\varepsilon}{\delta}\big)\EE\left[\sup_{s\in[0,T]}\left(1+|X^{\delta,\varepsilon,h^\delta}_s|^2+|X^{\delta,\varepsilon}_s|^2\right)\int_0^T|\dot{h}^{\delta}_s|^2ds\right]
\nonumber\\
\leq\!\!\!\!\!\!\!\!&&C_{M,T}(1+|x|^{2}+|y|^{2})\Big(\frac{\varepsilon}{\delta}+\Delta\Big).
\end{eqnarray*}

The proof is complete. \hspace{\fill}$\Box$
\end{proof}
\subsection{Proof of Theorem \ref{t3}}
In order to prove our second main result, we also need to use the following averaging principle result. Since we here only consider the convergence of $X^{\delta,\varepsilon}_t$ to $\bar{X}^0_t$ in $L^2(\Omega)$-sense but without the convergence rate, the condition ${\mathbf{A\ref{A1}}}$ is enough to conclude the following result.
\begin{lemma}\label{p3}
Under the assumptions ${\mathbf{A\ref{A1}}}$ and (\ref{h5}), for any $T>0$, $t\in[0,T]$, we have
\begin{align}\label{31}
\lim_{\delta\to 0}\EE|X^{\delta,\varepsilon}_t-\bar{X}^0_t|^2=0.
\end{align}
\end{lemma}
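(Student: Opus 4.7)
The plan is to subtract the two equations and use an $L^2$ estimate combined with Khasminskii-type time discretization. Writing
\begin{equation*}
X^{\delta,\varepsilon}_t-\bar{X}^0_t=\int_0^t\bigl[b(X^{\delta,\varepsilon}_s,\mathscr{L}_{X^{\delta,\varepsilon}_s},Y^{\delta,\varepsilon}_s)-\bar{b}(\bar{X}^0_s,\delta_{\bar{X}^0_s})\bigr]ds+\sqrt{\delta}\int_0^t\sigma(X^{\delta,\varepsilon}_s,\mathscr{L}_{X^{\delta,\varepsilon}_s})dW^1_s,
\end{equation*}
I would first apply the Cauchy-Schwarz inequality on the drift term and It\^o's isometry on the noise term. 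The stochastic integral contributes $O(\delta)\to 0$ using the linear growth of $\sigma$ and estimate \eref{21}. For the drift term I split
\begin{equation*}
b(X^{\delta,\varepsilon}_s,\mathscr{L}_{X^{\delta,\varepsilon}_s},Y^{\delta,\varepsilon}_s)-\bar{b}(\bar{X}^0_s,\delta_{\bar{X}^0_s})=\underbrace{\bigl[b(X^{\delta,\varepsilon}_s,\mathscr{L}_{X^{\delta,\varepsilon}_s},Y^{\delta,\varepsilon}_s)-\bar{b}(X^{\delta,\varepsilon}_s,\mathscr{L}_{X^{\delta,\varepsilon}_s})\bigr]}_{\text{averaging error}}+\underbrace{\bigl[\bar{b}(X^{\delta,\varepsilon}_s,\mathscr{L}_{X^{\delta,\varepsilon}_s})-\bar{b}(\bar{X}^0_s,\delta_{\bar{X}^0_s})\bigr]}_{\text{Lipschitz perturbation}}.
\end{equation*}

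The Lipschitz part is controlled using the global Lipschitz property of $\bar{b}$ (noted in the proof of Lemma \ref{L4.5}) together with the bound $\mathbb{W}_2(\mathscr{L}_{X^{\delta,\varepsilon}_s},\delta_{\bar{X}^0_s})^2\leq\mathbb{E}|X^{\delta,\varepsilon}_s-\bar{X}^0_s|^2$. This will generate a Gronwall-type term of the form $C\int_0^t\mathbb{E}|X^{\delta,\varepsilon}_s-\bar{X}^0_s|^2ds$ to which I can later apply Gronwall's lemma.

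The bulk of the work is to show that the averaging error satisfies
\begin{equation*}
\lim_{\delta\to 0}\mathbb{E}\Bigl|\int_0^t\bigl[b(X^{\delta,\varepsilon}_s,\mathscr{L}_{X^{\delta,\varepsilon}_s},Y^{\delta,\varepsilon}_s)-\bar{b}(X^{\delta,\varepsilon}_s,\mathscr{L}_{X^{\delta,\varepsilon}_s})\bigr]ds\Bigr|^2=0.
\end{equation*}
For this I will mimic Steps 2--3 of Lemma \ref{L4.5}: introduce the auxiliary process
\begin{equation*}
d\bar{Y}^{\delta,\varepsilon}_t=\tfrac{1}{\varepsilon}f(X^{\delta,\varepsilon}_{t(\Delta)},\mathscr{L}_{X^{\delta,\varepsilon}_{t(\Delta)}},\bar{Y}^{\delta,\varepsilon}_t)dt+\tfrac{1}{\sqrt{\varepsilon}}g(X^{\delta,\varepsilon}_{t(\Delta)},\mathscr{L}_{X^{\delta,\varepsilon}_{t(\Delta)}},\bar{Y}^{\delta,\varepsilon}_t)dW^2_t
\end{equation*}
with frozen slow input on intervals of length $\Delta$, control $\mathbb{E}\int_0^T|Y^{\delta,\varepsilon}_s-\bar{Y}^{\delta,\varepsilon}_s|^2ds$ via an It\^o-formula argument analogous to Lemma \ref{MDYa} (the drift terms from $\|P_1\|$ and $\|P_2\|$ are absent here since no control is present), and exploit the exponential ergodicity of the frozen equation together with the time-shift/Markov trick in \eref{p20}--\eref{w2} to replace the frozen fast variable by its invariant measure. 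The moment bounds \eref{21}--\eref{22} and \eref{F3.7} supply the needed uniform controls on the slow variable.

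The main obstacle is ensuring the time-discretization parameter $\Delta$ can be chosen so that both $\Delta\to 0$ and $\varepsilon/\Delta\to 0$ as $\delta\to 0$; this is possible because \eref{h5} gives $\varepsilon\to 0$, so choosing e.g.~$\Delta=\sqrt{\varepsilon}$ works. Note that no control is present, so I do \emph{not} need to use the full strength of $\varepsilon/\delta\to 0$ (only $\varepsilon\to 0$), and hence the proof is genuinely simpler than Lemma \ref{L4.5}; in particular, I do not need the Poisson equation machinery from Proposition \ref{P3.6}, only the classical exponential ergodicity of the frozen equation. After all three pieces are assembled, one final application of Gronwall's lemma yields $\mathbb{E}|X^{\delta,\varepsilon}_t-\bar{X}^0_t|^2\to 0$ for each $t\in[0,T]$.
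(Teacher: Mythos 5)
Your proposal is correct in outline, but it is worth noting that the paper does not actually prove Lemma \ref{p3} at all: its ``proof'' is a one-line citation to \cite[Theorem 5.3]{HLL4} and \cite[Theorem 2.3]{RSX1}, which contain precisely the Khasminskii-type argument you reconstruct. So you are supplying the content that the paper outsources. Your plan — It\^o isometry for the $\sqrt{\delta}$-noise, splitting the drift into an averaging error plus a Lipschitz perturbation of $\bar b$, bounding $\mathbb{W}_2(\mathscr{L}_{X^{\delta,\varepsilon}_s},\delta_{\bar X^0_s})^2\le \EE|X^{\delta,\varepsilon}_s-\bar X^0_s|^2$, and treating the averaging error via the frozen auxiliary process, the error bound of Lemma \ref{MDYa} without the control terms, the time-shift/Markov identification of \eref{p20}--\eref{w2}, and exponential ergodicity — is exactly the machinery the paper itself deploys in Lemma \ref{L4.5} and in Step 2 of Proposition \ref{t5} (where the quantity $\Psi_k$ built from $b-\bar b$ appears verbatim), specialized to $h^\delta\equiv 0$. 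Your observations that only $\varepsilon\to 0$ (which follows from \eref{h5} since $\varepsilon=(\varepsilon/\delta)\cdot\delta$) is needed, and that the Poisson-equation regularity of Proposition \ref{P3.6} is not required here, are both accurate; the choice $\Delta=\sqrt{\varepsilon}$ indeed makes all the discretization errors ($O(\Delta)$ from Lemmas \ref{130} and \ref{MDYa}, $O(\varepsilon/\Delta)$ or $O(\varepsilon)$ from the ergodicity step) vanish. Two small points to keep straight when writing it out: apply Cauchy--Schwarz only to the Lipschitz-perturbation part of the drift, not to the averaging error (whose cancellation you must preserve inside $\EE|\int_0^t\cdots ds|^2$); and make sure the averaging-error bound is uniform over $t\in[0,T]$ before invoking Gronwall.
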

\begin{proof}
 The result follows by \cite[Theorem 5.3]{HLL4} (see also \cite[Theorem 2.3]{RSX1}) with slight modification. \hspace{\fill}$\Box$
\end{proof}

The proof of the criterions (i) and (ii) in Hypothesis \ref{h2} will be presented in Propositions \ref{t4} and \ref{t5} respectively.
\begin{proposition}\label{t4}
Under the assumptions ${\mathbf{A\ref{A1}}}$ and (\ref{h5}), let $\{h^\delta\}_{\delta>0}\subset S_M$ for any $M<\infty$ such that $h^\delta$ converges to element $h$ in $S_M$ as $\delta\to0$, then
$\mathcal{G}^0\big(\int_0^{\cdot}\dot{h}^\delta_s ds\big)$ converges to $\mathcal{G}^0\big(\int_0^{\cdot}\dot{h}_s ds\big)$ in  $C([0,T];\RR^n)$.
\end{proposition}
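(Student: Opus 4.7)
The plan is to reduce the claim to a Gronwall argument by writing the difference $\bar{X}^{h^\delta}_t-\bar{X}^{h}_t$ as an integral equation and isolating one ``forcing'' term that must be shown to vanish uniformly in $t$. Concretely, since both $\bar{X}^{h^\delta}$ and $\bar{X}^{h}$ satisfy the skeleton equation \eref{skeleton} with the same measure component $\mathscr{L}_{\bar{X}^0_s}$ (this is the key virtue of choosing $\mathscr{L}_{\bar{X}^0_s}$ rather than $\mathscr{L}_{\bar{X}^h_s}$), we can write
\begin{eqnarray*}
\bar{X}^{h^\delta}_t-\bar{X}^{h}_t
=\!\!\!\!\!\!\!\!&&\int_0^t\!\big[\bar b(\bar X^{h^\delta}_s,\mathscr L_{\bar X^0_s})-\bar b(\bar X^{h}_s,\mathscr L_{\bar X^0_s})\big]\,ds\\
\!\!\!\!\!\!\!\!&&+\int_0^t\!\big[\sigma(\bar X^{h^\delta}_s,\mathscr L_{\bar X^0_s})-\sigma(\bar X^{h}_s,\mathscr L_{\bar X^0_s})\big]P_1\dot h^\delta_s\,ds+R^\delta_t,
\end{eqnarray*}
where $R^\delta_t:=\int_0^t\sigma(\bar X^{h}_s,\mathscr L_{\bar X^0_s})P_1(\dot h^\delta_s-\dot h_s)\,ds$.

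The Lipschitz continuity of $\bar b(\cdot,\mu)$ and $\sigma(\cdot,\mu)$ (which holds uniformly in $\mu$, as recalled in Step~1 of the proof of Proposition~\ref{th3}) bounds the first two integrals by $C\int_0^t|\bar X^{h^\delta}_s-\bar X^{h}_s|(1+|\dot h^\delta_s|)\,ds$. Gronwall's inequality then yields
\begin{equation*}
\sup_{t\in[0,T]}|\bar X^{h^\delta}_t-\bar X^{h}_t|\leq \Big(\sup_{t\in[0,T]}|R^\delta_t|\Big)\exp\Big\{CT+C\sqrt{T}\Big(\int_0^T|\dot h^\delta_s|^2\,ds\Big)^{1/2}\Big\}\leq C_{M,T}\sup_{t\in[0,T]}|R^\delta_t|,
\end{equation*}
since $h^\delta\in S_M$. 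Hence the whole proof reduces to showing that $\sup_{t\in[0,T]}|R^\delta_t|\to 0$ as $\delta\to 0$.

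The main (and only nontrivial) obstacle is this uniform convergence of $R^\delta$. I will handle it by combining pointwise convergence with equicontinuity. For fixed $t$, the integrand $s\mapsto\sigma(\bar X^{h}_s,\mathscr L_{\bar X^0_s})P_1\mathbf 1_{[0,t]}(s)$ is a deterministic element of $L^2([0,T];\RR^{n\times(d_1+d_2)})$ (bounded uniformly thanks to \eref{19} applied to $\bar X^h$ and the Lipschitz growth of $\sigma$), so the weak convergence $\dot h^\delta\rightharpoonup\dot h$ in $L^2([0,T];\RR^{d_1+d_2})$ (which is the definition of convergence in $S_M$) immediately gives $R^\delta_t\to 0$ for each $t\in[0,T]$. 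Moreover, for $0\le s\le t\le T$, Cauchy--Schwarz and the uniform $L^2$-bound $\int_0^T|\dot h^\delta_r-\dot h_r|^2dr\le 4M$ give
\begin{equation*}
|R^\delta_t-R^\delta_s|\leq \Big(\int_s^t\|\sigma(\bar X^{h}_r,\mathscr L_{\bar X^0_r})P_1\|^2\,dr\Big)^{1/2}\Big(\int_0^T|\dot h^\delta_r-\dot h_r|^2\,dr\Big)^{1/2}\leq C\sqrt{M}\sqrt{t-s},
\end{equation*}
so $\{R^\delta\}_{\delta>0}$ is equicontinuous on $[0,T]$. By the Arzel\`a--Ascoli theorem, pointwise convergence to $0$ of the continuous family $\{R^\delta\}$ upgrades to uniform convergence, which completes the proof.
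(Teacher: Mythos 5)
Your argument is correct and supplies exactly the ``standard'' details that the paper omits (the paper only observes that $\bar b$ and $\sigma$ are globally Lipschitz in both variables and refers to \cite[Proposition 5.4]{LSZZ}): the decomposition with the same measure $\mathscr{L}_{\bar X^0_s}$ in both equations, the Gronwall reduction to the forcing term $R^\delta$, and the upgrade from pointwise convergence (via weak convergence of $\dot h^\delta$ against the fixed $L^2$ integrand) to uniform convergence through the $C\sqrt{M}\sqrt{t-s}$ equicontinuity bound and Arzel\`a--Ascoli are precisely the intended steps. No gaps.
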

\begin{proof}
Let $\bar{X}^{h^\delta}=\mathcal{G}^0\big(\int_0^{\cdot}\dot{ h}^\delta_sds\big)$, then it is clear that $\bar{X}^{h^\delta}$ solves the following equation
\begin{equation*}
\frac{d \bar{X}^{h^\delta}_t}{dt}=\bar{b}(\bar{X}^{h^\delta}_t,\mathscr{L}_{\bar{X}^{0}_t})+\sigma(\bar{X}^{h^\delta}_t,\mathscr{L}_{\bar{X}^{0}_t})P_1\dot{h}_t^\delta,~\bar{X}^{h^\delta}_0=x.
\end{equation*}
If $h^\delta$ converges to an element $h$ in $S_M$ as $\delta\to0$, it suffices to show that $\bar{X}^{h^\delta}$ strongly converges to $\bar{X}^{h}$ in $C([0,T];\RR^n)$, as $\delta\to0$, which solves
\begin{equation*}
\frac{d \bar{X}^{h}_t}{dt}=\bar{b}(\bar{X}^{h}_t,\mathscr{L}_{\bar{X}^{0}_t})+\sigma(\bar{X}^{h}_t,\mathscr{L}_{\bar{X}^{0}_t})P_1\dot{h}_t,~\bar{X}^{h}_0=x.
\end{equation*}
Note that  $\bar{b}$ and $\sigma$ are globally Lipschitz w.r.t.~both two variables. Thus the next proof is standard and we omit the details to keep the length of this paper (see \cite[Proposition 5.4]{LSZZ}, where the locally Lipschitz case has been considered). \hspace{\fill}$\Box$
\end{proof}

\begin{proposition}\label{t5}
Under the assumptions in Theorem \ref{t3}, let $\{ h^\delta\}_{\delta>0}\subset \mathcal{A}_M$ for
any $M<\infty$. Then for any $\varepsilon_0>0$, we have
$$\lim_{\delta\to 0}\mathbb{P}\Big(d\Big(\mathcal{G}^\delta\big(W_{\cdot}
+\frac{1}{\sqrt{\delta}}\int_0^{\cdot}\dot{ h}^\delta_sd s\big),\mathcal{G}^0\big(\int_0^{\cdot}\dot{ h}^\delta_sd s\big)\Big)>\varepsilon_0\Big)=0, $$
where $d(\cdot,\cdot)$ denotes the metric in the space $C([0,T];\RR^n)$, i.e.,
$$d(u_1,u_2):=\sup_{t\in [0,T]}|u_1(t)-u_2(t)|,\quad u_1,u_2\in C([0,T];\RR^n).$$
\end{proposition}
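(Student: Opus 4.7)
The plan is to show that $X^{\delta,\varepsilon,h^\delta} := \mathcal{G}^\delta(\sqrt{\delta}W_\cdot + \int_0^\cdot \dot{h}^\delta_s ds)$ converges in probability in $C([0,T];\RR^n)$ to $\bar{X}^{h^\delta} := \mathcal{G}^0(\int_0^\cdot \dot{h}^\delta_s ds)$, where the latter satisfies the skeleton equation \eref{skeleton} driven by $\dot{h}^\delta$. Since the uniform bounds in \eref{23} and Lemma \ref{existence of skeleton} already furnish tightness-type estimates, the essential task is to control $\EE[\sup_{t\in[0,T]}|X^{\delta,\varepsilon,h^\delta}_t - \bar{X}^{h^\delta}_t|^2]$.

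First I would apply It\^o's formula to $|X^{\delta,\varepsilon,h^\delta}_t - \bar{X}^{h^\delta}_t|^2$ using \eref{e9} and \eref{skeleton} and decompose the drift difference as
\begin{align*}
b(X^{\delta,\varepsilon,h^\delta}_t,\mathscr{L}_{X^{\delta,\varepsilon}_t},Y^{\delta,\varepsilon,h^\delta}_t) - \bar{b}(\bar{X}^{h^\delta}_t,\mathscr{L}_{\bar{X}^0_t}) =\;& \bigl[b(X^{\delta,\varepsilon,h^\delta}_t,\mathscr{L}_{X^{\delta,\varepsilon}_t},Y^{\delta,\varepsilon,h^\delta}_t) - b(X^{\delta,\varepsilon,h^\delta}_{t(\Delta)},\mathscr{L}_{X^{\delta,\varepsilon}_{t(\Delta)}},\bar{Y}^{\delta,\varepsilon}_t)\bigr] \\
&+ \bigl[b(X^{\delta,\varepsilon,h^\delta}_{t(\Delta)},\mathscr{L}_{X^{\delta,\varepsilon}_{t(\Delta)}},\bar{Y}^{\delta,\varepsilon}_t) - \bar{b}(X^{\delta,\varepsilon,h^\delta}_{t(\Delta)},\mathscr{L}_{X^{\delta,\varepsilon}_{t(\Delta)}})\bigr]\\
&+ \bigl[\bar{b}(X^{\delta,\varepsilon,h^\delta}_{t(\Delta)},\mathscr{L}_{X^{\delta,\varepsilon}_{t(\Delta)}}) - \bar{b}(\bar{X}^{h^\delta}_t,\mathscr{L}_{\bar{X}^0_t})\bigr],
\end{align*}
with $\bar{Y}^{\delta,\varepsilon}$ the auxiliary process from \eref{4.6a}. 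A parallel splitting is used for $\sigma(X^{\delta,\varepsilon,h^\delta}_t,\mathscr{L}_{X^{\delta,\varepsilon}_t}) P_1 \dot{h}^\delta_t$ minus $\sigma(\bar{X}^{h^\delta}_t,\mathscr{L}_{\bar{X}^0_t}) P_1 \dot{h}^\delta_t$, where the Cauchy--Schwarz bound against $\|\dot{h}^\delta\|_{L^2}\leq\sqrt{M}$ together with ${\mathbf{A\ref{A1}}}$ turns this into a Lipschitz-in-$x$ and Wasserstein-in-$\mu$ estimate. The vanishing Brownian noise $\sqrt{\delta}\sigma\,dW^1$ is handled by Burkholder--Davis--Gundy and contributes a term of order $\sqrt{\delta}$.

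The Lipschitz terms are absorbed into $\int_0^t |X^{\delta,\varepsilon,h^\delta}_s - \bar{X}^{h^\delta}_s|^2 ds$ (via Gronwall) after bounding the ``freezing error'' contributions by $\EE\int_0^T|X^{\delta,\varepsilon,h^\delta}_s - X^{\delta,\varepsilon,h^\delta}_{s(\Delta)}|^2 ds$, $\EE\int_0^T|X^{\delta,\varepsilon}_s - X^{\delta,\varepsilon}_{s(\Delta)}|^2 ds$ and $\EE\int_0^T|Y^{\delta,\varepsilon,h^\delta}_s - \bar{Y}^{\delta,\varepsilon}_s|^2 ds$, which are controlled by $C_{M,T}(\Delta + \varepsilon/\delta)$ through Lemmas \ref{130}, \ref{COX1} and \ref{MDYa}. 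The third bracket above is controlled by Lipschitz continuity of $\bar{b}$ together with Lemma \ref{p3}, which provides $\EE|X^{\delta,\varepsilon}_t - \bar{X}^0_t|^2 \to 0$ and hence $\mathbb{W}_2(\mathscr{L}_{X^{\delta,\varepsilon}_t},\mathscr{L}_{\bar{X}^0_t})\to 0$; combined with Lemma \ref{l5} this accounts for the distribution-dependence convergence.

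The hard part is the averaging bracket, namely showing
\[
\EE\left[\sup_{t\in[0,T]} \left|\int_0^t \bigl[b(X^{\delta,\varepsilon,h^\delta}_{s(\Delta)},\mathscr{L}_{X^{\delta,\varepsilon}_{s(\Delta)}},\bar{Y}^{\delta,\varepsilon}_s) - \bar{b}(X^{\delta,\varepsilon,h^\delta}_{s(\Delta)},\mathscr{L}_{X^{\delta,\varepsilon}_{s(\Delta)}})\bigr] ds\right|^2\right] \longrightarrow 0.
\]
Following the strategy of Step 3 in the proof of Lemma \ref{L4.5}, I would partition $[0,T]$ into subintervals of length $\Delta$, on each of which the coefficients are frozen, exploit the Markov property of the frozen equation \eref{FEQ2} driven by $W^2$ alone (the control term in the fast equation, $\frac{1}{\sqrt{\delta\varepsilon}} g P_2\dot{h}^\delta$, can be absorbed after a Cauchy--Schwarz estimate against the exponential ergodic bound \eref{Ergodicity}, costing a factor $\varepsilon/\delta$), and exploit the exponential mixing from ${\mathbf{A\ref{A1}}}$ and \eref{h6} to reduce the contribution on each block to size $O(\varepsilon/\Delta)$ times ergodic fluctuations. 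Summing over $T/\Delta$ blocks produces an estimate of the form $C_{M,T}(\Delta + \varepsilon/\Delta + \varepsilon/\delta)$; choosing $\Delta = \sqrt{\varepsilon}$ and invoking \eref{h5} gives $o(1)$. A final application of Gronwall's inequality yields $\EE[\sup_{t\in[0,T]}|X^{\delta,\varepsilon,h^\delta}_t - \bar{X}^{h^\delta}_t|^2] \to 0$, which implies the claimed convergence in probability and completes the verification of Hypothesis \ref{h2}(ii). The critical assumption $\varepsilon/\delta\to 0$ enters precisely when estimating the cross term coming from the control $P_2\dot{h}^\delta$ in the fast equation, which is the main technical subtlety distinguishing this multi-scale LDP from the classical single-scale case.
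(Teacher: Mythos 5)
Your proposal follows essentially the same route as the paper's proof: It\^o's formula for $|X^{\delta,\varepsilon,h^\delta}_t-\bar{X}^{h^\delta}_t|^2$, the same splitting of the drift via the time-discretized slow variable and the auxiliary process $\bar{Y}^{\delta,\varepsilon}$ of \eref{4.6a}, Lemmas \ref{130}, \ref{COX1} and \ref{MDYa} for the freezing errors (with $\varepsilon/\delta\to0$ entering through Lemma \ref{MDYa}), the block-discretization/Markov-property/ergodicity argument of Lemma \ref{L4.5} for the averaging bracket, and Lemma \ref{p3} plus Gronwall to close. One minor imprecision: in the Gronwall step the averaging difference $G_s:=b(X^{\delta,\varepsilon,h^\delta}_{s(\Delta)},\mathscr{L}_{X^{\delta,\varepsilon}_{s(\Delta)}},\bar{Y}^{\delta,\varepsilon}_s)-\bar{b}(X^{\delta,\varepsilon,h^\delta}_{s(\Delta)},\mathscr{L}_{X^{\delta,\varepsilon}_{s(\Delta)}})$ appears paired as $\int_0^t\langle G_s,\,X^{\delta,\varepsilon,h^\delta}_s-\bar{X}^{h^\delta}_s\rangle ds$ rather than as the free integral $\int_0^t G_s\,ds$ whose supremum you propose to bound, so one must first replace the second factor by its block-constant value $X^{\delta,\varepsilon,h^\delta}_{s(\Delta)}-\bar{X}^{h^\delta}_{s(\Delta)}$ (the paper's terms $\mathscr{V}_1,\mathscr{V}_3$ absorbing the replacement error) and apply Young's inequality blockwise before the per-block ergodic estimate applies — your block partition supplies exactly this, but the reduction as literally stated is not the quantity that closes the Gronwall loop.
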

\begin{proof}
The proof of Proposition \ref{t5} will be separated into the following two steps.

\vspace{2mm}
\textbf{Step 1}.
We recall that the process $X^{\delta,\varepsilon,h^\delta}_t-\bar{X}^{h^\delta}_{t}$ satisfies the equation
\begin{eqnarray*}
\left\{ \begin{aligned}
d(X^{\delta,\varepsilon,h^\delta}_t-\bar{X}_{t}^{h^\delta})=&\Big[b\Big(X^{\delta,\varepsilon,h^\delta}_t,\mathscr{L}_{X^{\delta,\varepsilon}_t},Y^{\delta,\varepsilon,h^\delta}_t\Big)-\bar{b}\Big(\bar{X}_{t}^{h^\delta},\mathscr{L}_{\bar{X}^0_{t}}\Big)\Big]dt\\ &+\Big[\sigma\Big(X^{\delta,\varepsilon,h^\delta}_{t},\mathscr{L}_{X^{\delta,\varepsilon}_{t}}\Big)-\sigma\Big(\bar{X}^{h^\delta}_{t},\mathscr{L}_{\bar{X}^{0}_{t}}\Big)\Big]P_1\dot{h}_t^\delta dt\\
&+\sqrt{\delta}\sigma\Big(X^{\delta,\varepsilon,h^\delta}_{t},\mathscr{L}_{X^{\delta,\varepsilon}_{t}}\Big)dW_t^1,\\
X_{0}^{\delta,\varepsilon,h^\delta}-\bar{X}^{h^\delta}_0=0.&
\end{aligned}\right.
\end{eqnarray*}
Using It\^{o}'s formula we have
\begin{eqnarray}\label{I11}
|X^{\delta,\varepsilon,h^\delta}_t-\bar{X}^{h^\delta}_{t}|^{2}=\!\!\!\!\!\!\!\!&& 2\int_{0} ^{t}\langle b\big(X^{\delta,\varepsilon,h^\delta}_s,\mathscr{L}_{X^{\delta,\varepsilon}_s},Y^{\delta,\varepsilon,h^\delta}_s\big)-\bar{b}\big(\bar{X}_{s}^{h^\delta},\mathscr{L}_{\bar{X}_{s}^0}\big), X^{\delta,\varepsilon,h^\delta}_s-\bar{X}_{s}^{h^\delta}\rangle ds\nonumber \\
 \nonumber \\
 \!\!\!\!\!\!\!\!&&+2\int_{0} ^{t}\Big\langle\Big[\sigma\big(X^{\delta,\varepsilon,h^\delta}_{s},\mathscr{L}_{X^{\delta,\varepsilon}_{s}}\big)-\sigma\big(\bar{X}^{h^\delta}_{s},\mathscr{L}_{\bar{X}^{0}_{s}}\big)\Big]P_1\dot{h}_s^\delta,X^{\delta,\varepsilon,h^\delta}_s-\bar{X}_{s}^{h^\delta}\Big\rangle ds
 \nonumber \\
 \!\!\!\!\!\!\!\!&& +\delta\int_{0} ^{t}\left\|\sigma\big(X^{\delta,\varepsilon,h^\delta}_{s},\mathscr{L}_{X^{\delta,\varepsilon}_{s}}\big)\right\|^2ds
\nonumber \\
 \!\!\!\!\!\!\!\!&& +2\sqrt{\delta}\int_{0} ^{t}\left\langle  \sigma\big(X^{\delta,\varepsilon,h^\delta}_{s},\mathscr{L}_{X^{\delta,\varepsilon}_{s}}\big)dW_s^1,  X^{\delta,\varepsilon,h^\delta}_s-\bar{X}_{s}^{h^\delta} \right\rangle \nonumber\\
=:\!\!\!\!\!\!\!\!&&\mathscr{K}_{1}(t)+\mathscr{K}_{2}(t)+\mathscr{K}_{3}(t)+\mathscr{K}_{4}(t).
\end{eqnarray}
For the term  $\mathscr{K}_{2}(t)$,  it follows that
\begin{eqnarray}\label{170}
\EE\Big[\sup_{t\in[0,T]}\mathscr{K}_{2}(t)\Big]
\leq \!\!\!\!\!\!\!\!&&\EE\Big[\Big(\int_{0}^{T}\|\sigma(X^{\delta,\varepsilon,h^\delta}_{s},\mathscr{L}_{X^{\delta,\varepsilon}_{s}})-\sigma(\bar{X}^{h^\delta}_{s},\mathscr{L}_{\bar{X}^{0}_{s}})\|^2|X^{\delta,\varepsilon,h^\delta}_s-\bar{X}_{s}^{h^\delta}|^2ds\Big)^{\frac{1}{2}}
\nonumber\\
\!\!\!\!\!\!\!\!&&\cdot
\Big(\int_{0}^{T}\|P_1\|^2|\dot{h}^{\delta}_s|^2ds\Big)^{\frac{1}{2}}\Big]
\nonumber\\
\leq \!\!\!\!\!\!\!\!&&C_M\EE\Big(\sup_{s\in[0,T]}|X^{\delta,\varepsilon,h^\delta}_s-\bar{X}_{s}^{h^\delta}|^2\int_{0}^{T}\|\sigma(X^{\delta,\varepsilon,h^\delta}_{s},\mathscr{L}_{X^{\delta,\varepsilon}_{s}})-\sigma(\bar{X}^{h^\delta}_{s},\mathscr{L}_{\bar{X}^{0}_{s}})\|^2ds\Big)^{\frac{1}{2}}
\nonumber\\
\leq \!\!\!\!\!\!\!\!&&
C_{M}\EE\Big[\int_{0}^{T}\Big(\mathbb{W}_{2}(\mathscr{L}_{X^{\delta,\varepsilon}_{s}},\mathscr{L}_{\bar{X}_{s}^0})^2+|X^{\delta,\varepsilon,h^\delta}_s-\bar{X}_{s}^{h^\delta}|^2\Big)ds\Big]
\nonumber\\
\!\!\!\!\!\!\!\!&&+
\frac{1}{8}\EE\Big(\sup_{s\in[0,T]}|X^{\delta,\varepsilon,h^\delta}_s-\bar{X}_{s}^{h^\delta}|^2\Big),
\end{eqnarray}
where we used the fact $h^\delta\in \mathcal{A}_M$ in the second inequality and Young's inequality in the last step.

For the term $\mathscr{K}_{3}(t)$,  a priori estimates (\ref{X}) and (\ref{23}) imply that
\begin{eqnarray}\label{150}
\EE\Big[\sup_{t\in[0,T]}\mathscr{K}_{3}(t)\Big]\leq\!\!\!\!\!\!\!\!&&C\delta\EE\int_{0} ^{T}\big(1+|X^{\delta,\varepsilon,h^\delta}_{s}|^2+\mathscr{L}_{X^{\delta,\varepsilon}_{s}}(|\cdot|^2)\big)ds
\nonumber \\
\leq\!\!\!\!\!\!\!\!&&C_{T}\delta\big(1+|x|^{2}+|y|^{2}\big).
\end{eqnarray}

For the term $\mathscr{K}_{4}(t)$, by Burkholder-Davis-Gundy's inequality, it holds that
\begin{eqnarray}
\!\!\!\!\!\!\!\!&&\EE\Big[\sup_{t\in[0,T]}\mathscr{K}_{4}(t)\Big]
\nonumber \\
\leq \!\!\!\!\!\!\!\!&&
C\sqrt{\delta}\EE\left[\int_0^T\|\sigma\big(X^{\delta,\varepsilon,h^\delta}_{t},\mathscr{L}_{X^{\delta,\varepsilon}_{t}}\big)\|^2
|X^{\delta,\varepsilon,h^\delta}_t-\bar{X}_{t}^{h^\delta}|^2dt\right]^{\frac{1}{2}}
\nonumber \\
\leq\!\!\!\!\!\!\!\!&&\frac{1}{8}\EE\Big[\sup_{t\in[0,T]}|X^{\delta,\varepsilon,h^\delta}_t-\bar{X}_{t}^{h^\delta}|^2\Big]
+C\delta\EE\int_0^T\big(|X^{\delta,\varepsilon,h^\delta}_t|^2+\mathscr{L}_{X^{\delta,\varepsilon}_{t}}(|\cdot|^2)\big)dt
\nonumber\\ \leq\!\!\!\!\!\!\!\!&&\frac{1}{8}\EE\Big[\sup_{t\in[0,T]}|X^{\delta,\varepsilon,h^\delta}_t-\bar{X}_{t}^{h^\delta}|^2\Big]
+C_{T}\delta\big(1+|x|^{2}+|y|^{2}\big),\label{p4}
\end{eqnarray}
where we used Young's inequality in the second step.

For the term $\mathscr{K}_1(t)$, we introduce the following decomposition
\begin{eqnarray}
\mathscr{K}_1(t)=\!\!\!\!\!\!\!\!&&2\int_{0} ^{t}\big\langle b\big(X^{\delta,\varepsilon,h^\delta}_s,\mathscr{L}_{X^{\delta,\varepsilon}_s},Y^{\delta,\varepsilon,h^\delta}_s\big)
-b\big(X^{\delta,\varepsilon,h^\delta}_{s(\Delta)},\mathscr{L}_{X^{\delta,\varepsilon}_{s(\Delta)}},\bar{Y}^{\delta,\varepsilon}_s\big), X^{\delta,\varepsilon,h^\delta}_s-\bar{X}_{s}^{h^\delta}\big\rangle ds\nonumber \\
\nonumber \\
 \!\!\!\!\!\!\!\!&&+ 2\int_{0} ^{t}\big\langle b\big(X^{\delta,\varepsilon,h^\delta}_{s(\Delta)},\mathscr{L}_{X^{\delta,\varepsilon}_{s(\Delta)}},\bar{Y}^{\delta,\varepsilon}_s\big)-\bar{b}\big(X^{\delta,\varepsilon,h^\delta}_{s(\Delta)},\mathscr{L}_{X^{\delta,\varepsilon}_{s(\Delta)}}\big), X^{\delta,\varepsilon,h^\delta}_s-\bar{X}_{s}^{h^\delta}\big\rangle ds\nonumber \\
 \!\!\!\!\!\!\!\!&& + 2\int_{0} ^{t}\big\langle \bar{b}\big(X^{\delta,\varepsilon,h^\delta}_{s(\Delta)},\mathscr{L}_{X^{\delta,\varepsilon}_{s(\Delta)}}\big)-
 \bar{b}\big(X^{\delta,\varepsilon,h^\delta}_{s},\mathscr{L}_{X^{\delta,\varepsilon}_s}\big), X^{\delta,\varepsilon,h^\delta}_s-\bar{X}_{s}^{h^\delta}\big\rangle ds\nonumber \\
 \!\!\!\!\!\!\!\!&& + 2\int_{0} ^{t}{}\big\langle
 \bar{b}\big(X^{\delta,\varepsilon,h^\delta}_{s},\mathscr{L}_{X_s^{\delta,\varepsilon}}\big)
 -\bar{b}\big(\bar{X}_{s}^{h^\delta},\mathscr{L}_{\bar{X}^0_{s}}\big), X^{\delta,\varepsilon,h^\delta}_s-\bar{X}_{s}^{h^\delta}\big\rangle ds\nonumber \\
=:\!\!\!\!\!\!\!\!&&\mathscr{K}_{11}(t)+\mathscr{K}_{12}(t)+\mathscr{K}_{13}(t)+\mathscr{K}_{14}(t).  \label{p5}
\end{eqnarray}
By \eref{COX}, \eref{F3.10} and \eref{3.14}, one can derive that
\begin{eqnarray}
\!\!\!\!\!\!\!\!&&\EE\Big[\sup_{t\in[0,T]}(\mathscr{K}_{11}(t)+\mathscr{K}_{13}(t))\Big]
\nonumber \\
\leq\!\!\!\!\!\!\!\!&&
C\EE\int_0^T|X^{\delta,\varepsilon,h^\delta}_t-\bar{X}_{t}^{h^\delta}|^2dt
+C\EE\int_0^T|X^{\delta,\varepsilon,h^\delta}_t-X^{\delta,\varepsilon,h^\delta}_{t(\Delta)}|^2dt
\nonumber \\
\!\!\!\!\!\!\!\!&&
+C\EE\int_0^T|Y^{\delta,\varepsilon,h^\delta}_t-\bar{Y}_{t}^{\delta,\varepsilon}|^2dt
+C\int_0^T\mathbb{W}_{2}(\mathscr{L}_{X^{\delta,\varepsilon}_t},\mathscr{L}_{X^{\delta,\varepsilon}_{t(\Delta)}})^2dt
\nonumber\\\leq\!\!\!\!\!\!\!\!&&
C_{T}(1+|x|^{2}+|y|^{2})\Big(\frac{\varepsilon}{\delta}+\Delta\Big)+C\EE\int_0^T|X^{\delta,\varepsilon,h^\delta}_t-\bar{X}_{t}^{h^\delta}|^2dt
\label{p15}
\end{eqnarray}
and
\begin{eqnarray}
\!\!\!\!\!\!\!\!&&\EE\Big[\sup_{t\in[0,T]}\mathscr{K}_{14}(t)\Big]
\nonumber \\
\leq\!\!\!\!\!\!\!\!&&
C\EE\int_0^T|X^{\delta,\varepsilon,h^\delta}_t-\bar{X}_{t}^{h^\delta}|^2dt+C\int_0^T\mathbb{W}_{2}(\mathscr{L}_{X^{\delta,\varepsilon}_{t}},\mathscr{L}_{\bar{X}_{t}^0})^2dt.
\label{p16}
\end{eqnarray}
For the term $\mathscr{K}_{12}(t)$, we again rewrite it as
\begin{eqnarray}
\mathscr{K}_{12}(t)=\!\!\!\!\!\!\!\!&& 2\int_{0} ^{t}\big\langle b\big(X^{\delta,\varepsilon,h^\delta}_{s(\Delta)},\mathscr{L}_{X^{\delta,\varepsilon}_{s(\Delta)}},\bar{Y}^{\delta,\varepsilon}_s\big)-\bar{b}\big(X^{\delta,\varepsilon,h^\delta}_{s(\Delta)},\mathscr{L}_{X^{\delta,\varepsilon}_{s(\Delta)}}\big), X^{\delta,\varepsilon,h^\delta}_s-{X}^{\delta,\varepsilon,h^\delta}_{s(\Delta)}\big\rangle ds\nonumber \\
 \!\!\!\!\!\!\!\!&& + 2\int_{0} ^{t}\big\langle b\big(X^{\delta,\varepsilon,h^\delta}_{s(\Delta)},\mathscr{L}_{X^{\delta,\varepsilon}_{s(\Delta)}},\bar{Y}^{\delta,\varepsilon}_s\big)-\bar{b}\big(X^{\delta,\varepsilon,h^\delta}_{s(\Delta)},\mathscr{L}_{X^{\delta,\varepsilon}_{s(\Delta)}}\big), X_{s(\Delta)}^{\delta,\varepsilon,h^\delta}-{\bar{X}}_{s(\Delta)}^{h^\delta}\big\rangle ds\nonumber \\
 \!\!\!\!\!\!\!\!&& + 2\int_{0} ^{t}\big\langle b\big(X^{\delta,\varepsilon,h^\delta}_{s(\Delta)},\mathscr{L}_{X^{\delta,\varepsilon}_{s(\Delta)}},\bar{Y}^{\delta,\varepsilon}_s\big)-\bar{b}\big(X^{\delta,\varepsilon,h^\delta}_{s(\Delta)},\mathscr{L}_{X^{\delta,\varepsilon}_{s(\Delta)}}\big), \bar{X}_{s(\Delta)}^{h^\delta}-{\bar{X}}_{s}^{h^\delta}\big\rangle ds\nonumber \\
=:\!\!\!\!\!\!\!\!&&\mathscr{V}_{1}(t)+\mathscr{V}_{2}(t)+\mathscr{V}_{3}(t).  \label{p17}
\end{eqnarray}
Due to (\ref{X}), (\ref{23}), (\ref{F3.10}) and (\ref{3.13a}), we can get that
\begin{eqnarray}
\!\!\!\!\!\!\!\!&&\EE\Big[\sup_{t\in[0,T]}\mathscr{V}_1(t)\Big]\nonumber\\\leq\!\!\!\!\!\!\!\!&&
C\left[\EE\int_0^T(1+|X^{\delta,\varepsilon,h^\delta}_{t(\Delta)}|^2+\mathscr{L}_{X^{\delta,\varepsilon}_{t(\Delta)}}(|\cdot|^2)+|\bar{Y}^{\delta,\varepsilon}_{t}|^2)dt\right]^{1/2}
\nonumber\\\!\!\!\!\!\!\!\!&&
\cdot\left[\EE\int_0^T|X^{\delta,\varepsilon,h^\delta}_t-{X}_{t(\Delta)}^{\delta,\varepsilon,h^\delta}|^2dt\right]^{1/2}
\nonumber\\ \leq \!\!\!\!\!\!\!\!&&
C_{T}\Delta^{\frac{1}{2}}(1+|x|^{2}+|y|^{2}).\label{p9}
\end{eqnarray}
Similarly, in view of Lemma \ref{l5} we also have
\begin{eqnarray}
\EE\Big[\sup_{t\in[0,T]}\mathscr{V}_3(t)\Big]\leq
C_{T}\Delta^{\frac{1}{2}}(1+|x|^{2}+|y|^{2}).\label{p10}
\end{eqnarray}
Consequently, combining \eref{I11}-\eref{p10}  implies
\begin{eqnarray}
\!\!\!\!\!\!\!\!&&\EE\Big[\sup_{t\in[0,T]}|X^{\delta,\varepsilon,h^\delta}_t-\bar{X}_{t}^{h^\delta}|^2\Big]
\nonumber\\
\leq \!\!\!\!\!\!\!\!&& \EE\Big[\sup_{t\in[0,T]}\mathscr{V}_2(t)\Big]+ C_{T}(\Delta^{\frac{1}{2}}+\delta+\frac{\varepsilon}{\delta})(1+{|x|^{2}}+{|y|^{2}})+
\frac{1}{4}\EE\Big[\sup_{s\in[0,T]}|X^{\delta,\varepsilon,h^\delta}_s-\bar{X}_{s}^{h^\delta}|^2\Big]
\nonumber\\
\!\!\!\!\!\!\!\!&&
+ C_{M}\EE\Big[\int_{0}^{T}\Big(\mathbb{W}_{2}(\mathscr{L}_{X^{\delta,\varepsilon}_{s}},\mathscr{L}_{\bar{X}_{s}^0})^2+|X^{\delta,\varepsilon,h^\delta}_s-\bar{X}_{s}^{h^\delta}|^2\Big)ds\Big].\label{p11}
\end{eqnarray}

\textbf{Step 2}. In this step, the main aim is to deal with the term $\mathscr{V}_2(t)$.  By dividing the time interval, we have
\begin{eqnarray}
 |\mathscr{V}_2(t)|\leq\!\!\!\!\!\!\!\!&&2\sum_{k=0}^{[t/\Delta]-1}\left|\int_{k\Delta} ^{(k+1)\Delta}\left\langle b\big(X^{\delta,\varepsilon,h^\delta}_{s(\Delta)},\mathscr{L}_{X^{\delta,\varepsilon}_{s(\Delta)}},\bar{Y}^{\delta,\varepsilon}_s\big)-\bar{b}\big(X^{\delta,\varepsilon,h^\delta}_{s(\Delta)},\mathscr{L}_{X^{\delta,\varepsilon}_{s(\Delta)}}\big), X_{s(\Delta)}^{\delta,\varepsilon,h^\delta}-{\bar{X}}_{s(\Delta)}^{h^\delta}\right\rangle ds\right|\nonumber \\
 \!\!\!\!\!\!\!\!&& +2\left|\int_{t(\Delta)} ^{t}\left\langle b\big(X^{\delta,\varepsilon,h^\delta}_{s(\Delta)},\mathscr{L}_{X^{\delta,\varepsilon}_{s(\Delta)}},\bar{Y}^{\delta,\varepsilon}_s\big)-\bar{b}\big(X^{\delta,\varepsilon,h^\delta}_{s(\Delta)},\mathscr{L}_{X^{\delta,\varepsilon}_{s(\Delta)}}\big), X_{s(\Delta)}^{\delta,\varepsilon,h^\delta}-{\bar{X}}_{s(\Delta)}^{h^\delta}\right\rangle ds\right|\nonumber \\
=:\!\!\!\!\!\!\!\!&&\mathscr{V}_{21}(t)+\mathscr{V}_{22}(t).  \label{p18}
\end{eqnarray}
Using (\ref{19}), (\ref{X}), (\ref{23}) and (\ref{3.13a}), similar to the estimate (\ref{p9}), we have
\begin{eqnarray}
\EE\big(\sup_{t\in[0,T]}\mathscr{V}_{22}(t)\big)\leq
C_T\Delta^{\frac{1}{2}}(1+|x|^{2}+|y|^{2}).\label{p13}
\end{eqnarray}
Finally, by Young's inequality, the term $\mathscr{V}_{21}(t)$ can be controlled as follows,
\begin{eqnarray}
\!\!\!\!\!\!\!\!&&\mathbb{E}\big(\sup_{t\in[0,T]}\mathscr{V}_{21}(t)\big)\nonumber\\
\leq\!\!\!\!\!\!\!\!&&2\mathbb{E}\sum_{k=0}^{[T/\Delta]-1}\Big|\int_{k\Delta} ^{(k+1)\Delta}\Big\langle b(X^{\delta,\varepsilon,h^\delta}_{k\Delta},\mathscr{L}_{X^{\delta,\varepsilon}_{k\Delta}},\bar{Y}^{\delta,\varepsilon}_s)-\bar{b}(X^{\delta,\varepsilon,h^\delta}_{k\Delta},\mathscr{L}_{X^{\delta,\varepsilon}_{k\Delta}}), X_{k\Delta}^{\delta,\varepsilon,h^\delta}-{\bar{X}}_{k\Delta}^{h^\delta}\Big\rangle ds\Big|
\nonumber \\
\leq\!\!\!\!\!\!\!\!&&\frac{C_T}{\Delta}\max_{0\leq k\leq[T/\Delta]-1}\mathbb{E}\Big|\Big\langle\int_{k\Delta} ^{(k+1)\Delta} \Big(b(X^{\delta,\varepsilon,h^\delta}_{k\Delta},\mathscr{L}_{X^{\delta,\varepsilon}_{k\Delta}},\bar{Y}^{\delta,\varepsilon}_s)-\bar{b}(X^{\delta,\varepsilon,h^\delta}_{k\Delta},\mathscr{L}_{X^{\delta,\varepsilon}_{k\Delta}})\Big)ds, \nonumber \\
&&\quad\quad\quad\quad\quad\quad\quad\quad\quad\quad X_{k\Delta}^{\delta,\varepsilon,h^\delta}-{\bar{X}}_{k\Delta}^{h^\delta}\Big\rangle \Big|
\nonumber \\
\leq\!\!\!\!\!\!\!\!&&\frac{C_T\varepsilon}{\Delta}\max_{0\leq k\leq[T/\Delta]-1}\mathbb{E}\Big[\Big|\int_{0} ^{\frac{\Delta}{\varepsilon}} \Big(b(X^{\delta,\varepsilon,h^\delta}_{k\Delta},\mathscr{L}_{X^{\delta,\varepsilon}_{k\Delta}},\bar{Y}^{\delta,\varepsilon}_{s\varepsilon+k\Delta})-\bar{b}(X^{\delta,\varepsilon,h^\delta}_{k\Delta},\mathscr{L}_{X^{\delta,\varepsilon}_{k\Delta}})\Big)ds\Big|  \nonumber \\
&&\quad\quad\quad\quad\quad\quad\quad\quad\quad\quad\cdot \Big|X_{k\Delta}^{\delta,\varepsilon,h^\delta}-{\bar{X}}_{k\Delta}^{h^\delta}\Big| \Big]
\nonumber \\
\leq\!\!\!\!\!\!\!\!&&\frac{C_T\varepsilon^2}{\Delta^2}\max_{0\leq k\leq[T/\Delta]-1}\mathbb{E}\Big|\int_{0} ^{\frac{\Delta}{\varepsilon}} \Big(b(X^{\delta,\varepsilon,h^\delta}_{k\Delta},\mathscr{L}_{X^{\delta,\varepsilon}_{k\Delta}},\bar{Y}^{\delta,\varepsilon}_{s\varepsilon+k\Delta})-\bar{b}(X^{\delta,\varepsilon,h^\delta}_{k\Delta},\mathscr{L}_{X^{\delta,\varepsilon}_{k\Delta}})\Big)ds\Big|^2 \nonumber \\
\!\!\!\!\!\!\!\!&&
+ \frac{1}{4}\mathbb{E}\Big[\sup_{t\in[0,T]}|X^{\delta,\varepsilon,h^\delta}_t-{\bar{X}}_{t}^{h^\delta}|^2\Big]
\nonumber \\
=\!\!\!\!\!\!\!\!&&
\frac{C_T\varepsilon^2}{\Delta^2}\max_{0\leq k\leq[T/\Delta]-1}\Big[\int_{0} ^{\frac{\Delta}{\varepsilon}} \int_{r} ^{\frac{\Delta}{\varepsilon}}\Psi_k(s,r)dsdr \Big]+\frac{1}{4}\mathbb{E}\Big[\sup_{t\in[0,T]}|X^{\delta,\varepsilon,h^\delta}_t-{\bar{X}}_{t}^{h^\delta}|^2\Big],\label{w6}
\end{eqnarray}
where for any $0\leq r\leq s\leq \frac{\Delta}{\varepsilon}$,
\begin{eqnarray*}
\Psi_k(s,r):=\!\!\!\!\!\!\!\!&&\EE\left[\left\langle b\big(X^{\delta,\varepsilon,h^\delta}_{k\Delta},\mathscr{L}_{X^{\delta,\varepsilon}_{k\Delta}},\bar{Y}^{\delta,\varepsilon}_{s\varepsilon+k\Delta}\big)-\bar{b}\big(X^{\delta,\varepsilon,h^\delta}_{k\Delta},\mathscr{L}_{X^{\delta,\varepsilon}_{k\Delta}}\big),
\right.\right.\nonumber \\
\!\!\!\!\!\!\!\!&&\left.\left.~~~~~~b\big(X^{\delta,\varepsilon,h^\delta}_{k\Delta},\mathscr{L}_{X^{\delta,\varepsilon}_{k\Delta}},\bar{Y}^{\delta,\varepsilon}_{r\varepsilon+k\Delta}\big)-\bar{b}\big(X^{\delta,\varepsilon,h^\delta}_{k\Delta},\mathscr{L}_{X^{\delta,\varepsilon}_{k\Delta}}\big)\right\rangle\right].
\end{eqnarray*}
Following the similar argument as in the proof of (\ref{w2}), one can easily deduce that
\begin{eqnarray}
\Psi_k(s,r)\leq\!\!\!\!\!\!\!\!&& C_T(1+|x|^{2}+|y|^{2})e^{-(s-r)\beta}.\label{w5}
\end{eqnarray}
Then, via \eref{w6} and \eref{w5}, we obtain
\begin{eqnarray}
\EE\big(\sup_{t\in[0,T]}\mathscr{V}_{21}(t)\big)
\leq\!\!\!\!\!\!\!\!&&
C_T(1+|x|^{2}+|y|^{2})
\big(\frac{\varepsilon^2}{\Delta^2}+\frac{\varepsilon}{\Delta}\big)
+\frac{1}{4}\EE\Big[\sup_{t\in[0,T]}|X^{\delta,\varepsilon,h^\delta}_t-{\bar{X}}_{t}^{h^\delta}|^2\Big].~~~\label{w7}
\end{eqnarray}
Inserting (\ref{p13}) and \eref{w7} into \eref{p11}, by Gronwall's lemma, it follows that
\begin{eqnarray}\label{w8}
\EE\Big[\sup_{t\in[0,T]}|X^{\delta,\varepsilon,h^\delta}_t-\bar{X}_{t}^{h^\delta}|^2\Big]
\leq \!\!\!\!\!\!\!\!&& C_{M,T}(1+|x|^{2}+|y|^{2})
\big(\frac{\varepsilon^2}{\Delta^2}+\frac{\varepsilon}{\Delta}+\Delta^{\frac{1}{2}}+\frac{\varepsilon}{\delta}+\delta\big)
\nonumber \\
\!\!\!\!\!\!\!\!&&
+ C_{M}\int_0^T\mathbb{W}_{2}(\mathscr{L}_{X^{\delta,\varepsilon}_{s}},\mathscr{L}_{\bar{X}_{s}^0})^2ds.
\end{eqnarray}
Therefore let $\Delta=\delta^{\frac{2}{3}}$ in \eref{w8}, we conclude
\begin{eqnarray}\label{w9}
\EE\Big[\sup_{t\in[0,T]}|X^{\delta,\varepsilon,h^\delta}_t-\bar{X}_{t}^{h^\delta}|^2\Big]\leq \!\!\!\!\!\!\!\!&& C_{M,T}(1+|x|^{2}+|y|^{2})\big(\frac{\varepsilon}{\delta}+\varepsilon^{\frac{1}{3}}+\delta\big)
\nonumber \\
\!\!\!\!\!\!\!\!&&+
C_{M}\int_0^T\EE|X^{\delta,\varepsilon}_{s}-\bar{X}_{s}^0|^2ds.
\end{eqnarray}
Applying the Chebyshev's inequality, (\ref{w9}) and (\ref{31}), for any $\varepsilon_0>0$ we have
\begin{eqnarray*}
\!\!\!\!\!\!\!\!&&\mathbb{P}\Big(d\Big(\mathcal{G}^\varepsilon\big(\sqrt{\varepsilon}W_{\cdot}
+\int_0^{\cdot}\dot{ h}^\delta_sds\big),\mathcal{G}^0\big(\int_0^{\cdot}\dot{ h}^\delta_sds\big)\Big)>\varepsilon_0\Big)
\nonumber \\
= \!\!\!\!\!\!\!\!&&\mathbb{P}\Big(\sup_{t\in[0,T]}|X^{\delta,\varepsilon,h^\delta}_t-\bar{X}_{t}^{h^\delta}|>\varepsilon_0\Big)
\nonumber \\
\leq \!\!\!\!\!\!\!\!&&\frac{\EE\left[\sup_{t\in[0,T]}|X^{\delta,\varepsilon,h^\delta}_t-\bar{X}_{t}^{h^\delta}|^2\right]}{\varepsilon_0^2}\rightarrow 0,~\text{as}~\delta\to0,
\end{eqnarray*}
where we used the Lemma \ref{p3} and the assumption (\ref{h5}) in Theorem \ref{t3} in the last step.

Hence the proof is complete. \hspace{\fill}$\Box$
\end{proof}

Now we are able to finish the proof of Theorem \ref{t3}.

\vspace{2mm}
\textbf{Proof of Theorem \ref{t3}}. Note that Propositions \ref{t4} and \ref{t5} imply conditions (i) and (ii) in Hypothesis \ref{h2} holds respectively, then by Lemma \ref{app1} we get  $\{X^{\delta,\varepsilon}\}_{\delta>0}$ satisfies the LDP in $C([0,T];\RR^n)$ with a good rate function $I$ defined in (\ref{rf}). \hspace{\fill}$\Box$

\section{Appendix}
\setcounter{equation}{0}
 \setcounter{definition}{0}

\subsection{Proof of \eref{supZvare}}\label{app2}
In this part, we give a detailed proof of \eref{supZvare}. It is sufficient to prove that
\begin{eqnarray*}
\EE\left[\sup_{t\in [0,T]}|X_{t}^{\varepsilon}-\bar{X}_{t}|^2\right]\leq C_T\big(1+\EE|\xi|^4+\EE|\zeta|^4\big)\varepsilon.
\end{eqnarray*}
In fact,  note that
\begin{eqnarray*}
X_{t}^{\varepsilon}-\bar{X}_{t}=\!\!\!\!\!\!\!\!&&\int_{0}^{t}\left[b(X_{s}^{\varepsilon},\mathscr{L}_{X_{s}^{\varepsilon}},Y_{s}^{\varepsilon})-\bar{b}(\bar{X}_{s},\mathscr{L}_{\bar X_{s}})\right]ds\\
&&+\int_{0}^{t}\left[\sigma(X^{\varepsilon}_{s},\mathscr{L}_{X_{s}^{\varepsilon}})-\sigma(\bar{X}_{s},\mathscr{L}_{\bar X_{s}})\right]dW^{1}_s\\
=\!\!\!\!\!\!\!\!&&\int_{0}^{t}\left[b(X_{s}^{\varepsilon},\mathscr{L}_{ X_{s}^{\varepsilon}},Y_{s}^{\varepsilon})-\bar{b}(X^{\varepsilon}_{s},\mathscr{L}_{X^{\varepsilon}_{s}})\right]ds\\
&&+\int_{0}^{t}\left[\bar{b}(X^{\varepsilon}_{s},\mathscr{L}_{X^{\varepsilon}_{s}})-\bar{b}(\bar{X}_{s},\mathscr{L}_{\bar{X}_{s}})\right]ds\\
&&+\int_{0}^{t}\left[\sigma(X^{\varepsilon}_{s}, \mathscr{L}_{X^{\varepsilon}_{s}})-\sigma( \bar{X}_{s},\mathscr{L}_{\bar X_{s}})\right]dW^{1}_s.
\end{eqnarray*}
By Burkholder-Davis-Gundy's inequality, for any $t\in [0, T]$ we have
\begin{eqnarray*}
\!\!\!\!\!\!\!\!&&\mathbb{E}\Big[\sup_{s\in[0,t]}\Big|\int_{0}^{s}\left[\sigma(X^{\varepsilon}_{r}, \mathscr{L}_{X^{\varepsilon}_{r}})-\sigma( \bar{X}_{r},\mathscr{L}_{\bar X_{r}})\right]dW^{1}_r\Big|^2\Big]
\nonumber\\
~\leq\!\!\!\!\!\!\!\!&&C\mathbb{E}\Big[\int_0^t\|\sigma(X^{\varepsilon}_{r}, \mathscr{L}_{X^{\varepsilon}_{r}})-\sigma( \bar{X}_{r},\mathscr{L}_{\bar X_{r}})\|^2dr\Big]
\nonumber\\
~\leq\!\!\!\!\!\!\!\!&&C_T\mathbb{E}\int_{0}^{t}\left|X_{r}^{\varepsilon}-\bar{X}_{r}\right |^2 dr.
\end{eqnarray*}
Then we know that for any $t\in [0, T]$,
\begin{eqnarray*}
\EE\left[\sup_{s\in [0, t]}\left|X_{s}^{\varepsilon}-\bar{X}_{s}\right |^2\right]\leq\!\!\!\!\!\!\!\!&&C\EE\left[\sup_{s\in[0,t]}\left|\int_{0}^{s}b(X_{r}^{\varepsilon},\mathscr{L}_{ X_{r}^{\varepsilon}},Y_{r}^{\varepsilon})-\bar{b}(X^{\varepsilon}_{r},\mathscr{L}_{X^{\varepsilon}_{r}})dr\right|^2\right]\nonumber\\
&&+C_T\EE\int_{0}^{t}\left|X_{r}^{\varepsilon}-\bar{X}_{r}\right |^2 dr.
\end{eqnarray*}
Then Grownall's inequality implies that
\begin{eqnarray*}
\EE\left[\sup_{t\in [0, T]}\left|X_{t}^{\varepsilon}-\bar{X}_{t}\right |^2\right]\leq\!\!\!\!\!\!\!\!&&C_T\EE\left[\sup_{t\in[0,T]}\left|\int_{0}^{t}b(X_{s}^{\varepsilon},\mathscr{L}_{ X_{s}^{\varepsilon}},Y_{s}^{\varepsilon})-\bar{b}( X^{\varepsilon}_{s},\mathscr{L}_{X^{\varepsilon}_{s}})ds\right|^2\right].
\end{eqnarray*}
Recall
\begin{eqnarray*}
\!\!\!\!\!\!\!\!&&\int^t_0b(X_{s}^{\varepsilon},\mathscr{L}_{ X_{s}^{\varepsilon}},Y_{s}^{\varepsilon})-\bar{b}( X^{\varepsilon}_{s},\mathscr{L}_{X^{\varepsilon}_{s}})ds
=-\int^t_0 \mathcal{L}_{2}(X_{s}^{\varepsilon},\mathscr{L}_{X^{\varepsilon}_{s}})\Phi(X_{s}^{\varepsilon},\mathscr{L}_{X^{\varepsilon}_{s}},Y^{\varepsilon}_{s})ds \\
=\!\!\!\!\!\!\!\!&&\varepsilon\Big[\Phi(\xi,\mathscr{L}_{\xi},\zeta)-\Phi(X_{t}^{\varepsilon},\mathscr{L}_{X^{\varepsilon}_{t}},Y^{\varepsilon}_{t})
+\int^t_0 \EE\left[b(X^{\varepsilon}_s,\mathscr{L}_{ X^{\varepsilon}_{s}}, Y^{\varepsilon}_s)\partial_{\mu}\Phi(x,\mathscr{L}_{X^{\varepsilon}_{s}},y)(X^{\varepsilon}_s)\right]\mid_{x=X_{s}^{\varepsilon},y=Y^{\varepsilon}_{s}}ds\\
&&+\int^t_0 \frac{1}{2}\EE \text{Tr}\left[\sigma\sigma^{*}(X^{\varepsilon}_s,\mathscr{L}_{ X^{\varepsilon}_{s}})\partial_z\partial_{\mu}\Phi(x,\mathscr{L}_{X^{\varepsilon}_{s}},y)(X^{\varepsilon}_s)\right]\mid_{x=X_{s}^{\varepsilon},y=Y^{\varepsilon}_{s}}ds\\
&&+\int^t_0 \mathcal{L}_{1}(\mathscr{L}_{X^{\varepsilon}_{s}},Y^{\varepsilon}_{s})\Phi(X_{s}^{\varepsilon},\mathscr{L}_{X^{\varepsilon}_{s}},Y^{\varepsilon}_{s})ds+M^{1,\varepsilon}_t\Big]+\sqrt{\varepsilon}M^{2,\varepsilon}_t,
\end{eqnarray*}
where $\mathcal{L}_{1}(\mathscr{L}_{X^{\varepsilon}_{s}},Y^{\varepsilon}_{s})$ is defined by (\ref{inf2}), and
\begin{eqnarray*}
&&M^{1,\varepsilon}_t=\int^t_0 \partial_x \Phi(X_{s}^{\varepsilon},\mathscr{L}_{X^{\varepsilon}_{s}},Y_{s}^{\varepsilon})\cdot \sigma(X^{\varepsilon}_s,\mathscr{L}_{ X^{\varepsilon}_{s}}) dW^1_s,\\
&&M^{2,\varepsilon}_t=\int^t_0 \partial_y \Phi(X_{s}^{\varepsilon},\mathscr{L}_{X^{\varepsilon}_{s}},Y_{s}^{\varepsilon})\cdot g(X^{\varepsilon}_s,\mathscr{L}_{ X^{\varepsilon}_{s}}, Y^{\varepsilon}_s) dW^2_s.
\end{eqnarray*}
Then by Lemma \ref{PMY} and Proposition \ref{P3.6}, we immediately obtain that
\begin{eqnarray*}
\EE\left[\sup_{t\in [0, T]}\left|X_{t}^{\varepsilon}-\bar{X}_{t}\right |^2\right]\leq C_T\big(1+\EE|\xi|^4+\EE|\zeta|^4\big)\varepsilon.
\end{eqnarray*}
We complete the proof. \hspace{\fill}$\Box$

\subsection{Continuity of $\partial_x \bar{b}$ and $\partial_\mu \bar{b}$}
In this subsection, we aim to prove the continuity of the coefficients $\partial_x \bar{b}(\cdot,\cdot)$ and $\partial_\mu \bar{b}(x,\cdot)(\cdot)$.

\begin{lemma} \label{C1}For any fixed $x\in\RR^n$, if sequence $\{(\mu_n,z_n)\}_{n\geq1}\subset \mathscr{P}_2\times \RR^n$ satisfy $\mu_n\to\mu$ in $\mathbb{W}_2$ and $|z_n- z|\to 0$ as $n\to \infty$, then we have
\begin{eqnarray}
&&\lim_{n\to\infty}\big\|\partial_\mu \bar{b}(x,\mu_n)(z_n)-\partial_\mu \bar{b}(x,\mu)(z)\big\|=0,\label{Conbarb1}\\
&&\lim_{n\to\infty}\big\|\partial_x \bar{b}(z_n,\mu_n)-\partial_x \bar{b}(z,\mu)\big\|=0.\label{Conbarb2}
\end{eqnarray}
\end{lemma}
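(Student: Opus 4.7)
The plan is to exploit the Poisson equation (\ref{PE}), which gives the pointwise identity
\begin{equation*}
\bar b(x,\mu) \;=\; b(x,\mu,y) + \mathscr{L}_2(x,\mu)\Phi(x,\mu,y), \qquad \forall\, y\in\RR^m.
\end{equation*}
Since the right-hand side is independent of $y$, I can evaluate it at any convenient point, say $y=0$, to obtain a representation of $\bar b(x,\mu)$ as a finite combination of $b$, $f$, $g$, $\Phi$ and their $x$/$y$-derivatives, all evaluated at $(x,\mu,0)$. This avoids having to differentiate the invariant measure $\nu^{x,\mu}$ directly.

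For (\ref{Conbarb2}), I would differentiate this identity in $x$ using the product/chain rules together with the explicit form of $\mathscr{L}_2$; the resulting expression for $\partial_x\bar b(x,\mu)$ involves only first- and second-order partial derivatives of $b,f,g,\Phi$ in the $x$ and $y$ variables. Conditions $\mathbf{A\ref{A1}}$ and $\mathbf{A\ref{A2}}$ yield joint continuity in $(x,\mu)$ of the relevant derivatives of $b,f,g$; joint continuity in $(x,\mu)$ of the relevant derivatives of $\Phi$ follows by differentiating under the integral in the representation (\ref{SPE}) and applying dominated convergence, using the exponential mixing estimate \eref{Ergodicity} and the smooth dependence of the frozen flow $(x,\mu,y)\mapsto Y_t^{x,\mu,y}$ on its parameters. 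Assembling these continuities gives (\ref{Conbarb2}).

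For (\ref{Conbarb1}), the same strategy applies with the Lions derivative $\partial_\mu(\cdot)(z)$ in place of $\partial_x$: one obtains a closed-form expression for $\partial_\mu \bar b(x,\mu)(z)$ in terms of $\partial_\mu b$, $\partial_\mu f$, $\partial_\mu g$, $\partial_\mu\Phi$ and $\partial_z\partial_\mu\Phi$ evaluated at $(x,\mu,0)(z)$, plus the already-controlled $x$/$y$-derivatives. The main technical obstacle is the joint continuity in $(\mu,z)$ of $\partial_\mu\Phi(x,\mu,0)(z)$ and $\partial_z\partial_\mu\Phi(x,\mu,0)(z)$, because the Lions derivative is sensitive to the way the underlying random variables representing $\mu_n$ and $\mu$ are coupled. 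The argument mirrors step (iii) in the proof of Proposition \ref{P3.6}: I would write the Lions derivative of $Y_t^{x,\mu,y}$ in $\mu$ as the solution of an auxiliary linear SDE, transport it through the representation (\ref{SPE}), and then use the uniform continuity of $\partial_\mu b,\partial_\mu f,\partial_\mu g$ granted by $\mathbf{A\ref{A2}}$ together with the exponential decay in $t$ of the associated variational processes to pass to the limit. Once joint continuity of $\partial_\mu\Phi(x,\mu,0)(z)$ in $(\mu,z)$ is in hand, combining it with the continuity of the remaining factors yields (\ref{Conbarb1}).
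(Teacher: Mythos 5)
Your route through the Poisson equation has a genuine regularity gap. Writing
\[
\bar b(x,\mu)\;=\;b(x,\mu,0)+\big\langle f(x,\mu,0),\partial_y\Phi(x,\mu,0)\big\rangle+\tfrac12\,\mathrm{Tr}\big[gg^{*}(x,\mu,0)\,\partial^2_{yy}\Phi(x,\mu,0)\big]
\]
and then applying $\partial_x$ or $\partial_\mu(\cdot)(z)$ forces you to differentiate $\partial^2_{yy}\Phi$ once more in $x$ (resp.\ in $\mu$): the terms $\mathrm{Tr}[gg^{*}\,\partial_x\partial^2_{yy}\Phi]$ and $\mathrm{Tr}[gg^{*}\,\partial_\mu\partial^2_{yy}\Phi(\cdot)(z)]$ are unavoidable. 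These are \emph{third-order} mixed derivatives of $\Phi$. Conditions ${\mathbf{A\ref{A1}}}$--${\mathbf{A\ref{A2}}}$ only give $C^2$-regularity of $b,f,g$ in $(x,y)$ and one Lions derivative in $\mu$, and Proposition \ref{P3.6} accordingly controls $\Phi$ only up to $\partial^2_{xx}\Phi$, $\partial^2_{xy}\Phi$, $\partial^2_{yy}\Phi$, $\partial_z\partial_\mu\Phi$ and $\partial_\mu\partial_y\Phi$; the existence and continuity of $\partial_x\partial^2_{yy}\Phi$ and $\partial_\mu\partial^2_{yy}\Phi$ are neither asserted nor obtainable from the stated hypotheses. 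So the decomposition you propose cannot even be differentiated under the available assumptions, and "assembling the continuities" never gets off the ground for the trace term.

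The paper avoids this entirely by differentiating the ergodic representation $\bar b(x,\mu)=\lim_{t\to\infty}\tilde\EE\, b(x,\mu,Y^{x,\mu,y}_t)$ directly, which yields
\[
\partial_\mu \bar{b}(x,\mu)(z)=\lim_{t\to\infty}\tilde{\EE}\Big[\partial_\mu b(x,\mu,Y^{x,\mu,y}_t)(z)+\partial_y b(x,\mu,Y^{x,\mu,y}_t)\,\partial_{\mu} Y^{x,\mu,y}_t(z)\Big],
\]
involving only \emph{first} derivatives of $b$ and the first variational process $\partial_\mu Y^{x,\mu,y}_t(z)$ (a linear SDE with uniformly bounded moments and exponential decay). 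Continuity in $(\mu_n,z_n)$ is then obtained uniformly in $t$ from the H\"older condition \eref{A40}, the stability estimate for $Y^{x,\mu,y}_t$ in $\mu$, and the uniform continuity of $\partial_\mu b$, $\partial_y b$, $\partial_\mu f$, $\partial_y f$, $\partial_\mu g$, $\partial_y g$. If you want to salvage your argument, you should switch to this representation (or otherwise strengthen the standing assumptions to give $b,f,g$ a third derivative in $y$), since the Poisson-equation identity, while pointwise correct, demands one more derivative of $\Phi$ than the framework supplies.
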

\begin{proof}
We only prove \eref{Conbarb1} here, and the proof of \eref{Conbarb2} is very similar.

Using the definition of $\bar{b}(x,\mu)$ and the ergodicity of the frozen equation (see \eref{Ergodicity}), we have for any $(x,\mu,y)\in\RR^n\times\mathscr{P}_2\times \RR^m$,
\begin{eqnarray*}
\bar{b}(x,\mu)
=\lim_{t\to\infty}\tilde{\EE}b(x,\mu,Y^{x,\mu,y}_t),
\end{eqnarray*}
then under the condition ${\mathbf{A\ref{A2}}}$, we get
%under the assumptions in Theorem \ref{main result 2},
\begin{eqnarray*}
\partial_\mu \bar{b}(x,\mu)(z)
=\lim_{t\to\infty}\tilde{\EE}\Big[\partial_\mu b(x,\mu,Y^{x,\mu,y}_t)(z)+\partial_y b(x,\mu,Y^{x,\mu,y}_t)\partial_{\mu} Y^{x,\mu,y}_t(z)\Big],
\end{eqnarray*}
where $Y^{x,\mu,y}$ is the solution of frozen equation (\ref{FEQ2}) and denote its Lions derivative by $\partial_\mu Y^{x,\mu,y}_t(z)$ which exists and satisfies
\begin{equation}\left\{\begin{array}{l}\label{37}
\displaystyle
d\partial_{\mu} Y^{x,\mu,y}_t(z)=\partial_{\mu} f(x,\mu,Y^{x,\mu,y}_{t})(z)dt+\partial_y f(x,\mu,Y^{x,\mu,y}_{t})\cdot\partial_{\mu} Y^{x,\mu,y}_t(z)dt\vspace{2mm}\\
\quad\quad\quad\quad\quad\quad\displaystyle~~+\left[\partial_{\mu} g( x,\mu,Y^{x,\mu,y}_t)(z)+\partial_{y} g(x,\mu,Y^{x,\mu,y}_t)\cdot\partial_{\mu} Y^{x,\mu,y}_t(z)\right]d\tilde{W}_{t}^{2},\vspace{2mm}\nonumber\\
\partial_{\mu} Y^{x,\mu,y}_0(z)=0.\nonumber\\
\end{array}\right.
\end{equation}
By a straightforward calculation, there exists a constant $C>0$ such that
\begin{equation}\label{38}
\sup_{t\geq 0,x,z\in \RR^n,\mu\in \mathscr{P}_2,y\in\RR^m,}\tilde{\EE}\|\partial_{\mu} Y^{x,\mu,y}_t(z)\|^4\leq C.
\end{equation}
Then by the uniform boundedness of $\partial_{\mu}b(x,\mu,y)(z)$ and $\partial_y b(x,\mu,y)$, we have
\begin{eqnarray}
\sup_{x\in\RR^n,\mu\in\mathscr{P}_2,z\in\RR^n}\|\partial_\mu \bar{b}(x,\mu)(z)\|<\infty.\label{Boundbarb}
\end{eqnarray}
In order to prove \eref{Conbarb1}, it is sufficient to show that for any sequence $\{(\mu_n,z_n)\}_{n\geq1}\subset \mathscr{P}_2\times \RR^n$ with $\mu_n\to\mu$ in $\mathbb{W}_2$ and $|z_n- z|\to 0$ as $n\to \infty$, the following two statements hold
\begin{eqnarray}
&&\lim_{n\to\infty}\sup_{t\geq 0}\tilde{\EE}\left\|\partial_\mu b(x,\mu_n,Y^{x,\mu_n,y}_t)(z_n)-\partial_\mu b(x,\mu,Y^{x,\mu,y}_t)(z)\right\|=0,\label{F6.6}\\
&&\lim_{n\to\infty}\sup_{t\geq 0}\tilde{\EE}\!\left\|\partial_y b(x,\mu_n,Y^{x,\mu_n,y}_t)\!\cdot\partial_{\mu} Y^{x,\mu_n,y}_t(z_n)\!-\partial_y b(x,\mu,Y^{x,\mu,y}_t)\!\cdot\partial_{\mu} Y^{x,\mu,y}_t(z)\right\|=0.\label{F6.7}
\end{eqnarray}
By \eref{A40} we have
\begin{eqnarray*}
&&\sup_{t\geq 0}\tilde{\EE}\left\|\partial_\mu b(x,\mu_n,Y^{x,\mu_n,y}_t)(z_n)-\partial_\mu b(x,\mu,Y^{x,\mu,y}_t)(z)\right\|\nonumber \\
\leq \!\!\!\!\!\!\!\!&&\sup_{t\geq 0}\tilde{\EE}\big\|\partial_\mu b(x,\mu_n,Y^{x,\mu_n,y}_t)(z_n)-\partial_\mu b(x,\mu_n,Y^{x,\mu,y}_t)(z_n)\big\|
\nonumber \\
\!\!\!\!\!\!\!\!&&+\sup_{t\geq 0}\tilde{\EE}\big\|\partial_\mu b(x,\mu_n,Y^{x,\mu,y}_t)(z_n)-\partial_\mu b(x,\mu,Y^{x,\mu,y}_t)(z)\big\|
\nonumber \\
\leq \!\!\!\!\!\!\!\!&&C\sup_{t\geq 0}\tilde{\EE}\big|Y^{x,\mu_n,y}_t-Y^{x,\mu,y}_t\big|^{\gamma_1}+\sup_{y\in\RR^m}\big\|\partial_\mu b(x,\mu_n,y)(z_n)-\partial_\mu b(x,\mu,y)(z)\big\|.
\end{eqnarray*}
Then using \eref{13} and the uniform continuity of $\partial_\mu b(\cdot,\cdot,\cdot)(\cdot)$, we obtain \eref{F6.6}.

Using the boundedness of $\|\partial_{y}b\|$ and $\|\partial^2_{yy}b\|$, we get
\begin{eqnarray*}
&&\sup_{t\geq 0}\tilde{\EE}\left\|\partial_y b(x,\mu_n,Y^{x,\mu_n,y}_t)\cdot\partial_{\mu} Y^{x,\mu_n,y}_t(z_n)-\partial_y b(x,\mu,Y^{x,\mu,y}_t)\cdot\partial_{\mu} Y^{x,\mu,y}_t(z)\right\|\\
\leq \!\!\!\!\!\!\!\!&&\sup_{t\geq 0}\tilde{\EE}\big\|\partial_y b(x,\mu_n,Y^{x,\mu_n,y}_t)\cdot\partial_\mu Y^{x,\mu_n,y}_t(z_n)-\partial_y b(x,\mu_n,Y^{x,\mu,y}_t)\cdot\partial_\mu Y^{x,\mu_n,y}_t(z_n)\big\|
\nonumber \\
\!\!\!\!\!\!\!\!&&+\sup_{t\geq 0}\tilde{\EE}\big\|\partial_y b(x,\mu_n,Y^{x,\mu,y}_t)\cdot\partial_\mu Y^{x,\mu_n,y}_t(z_n)-\partial_y b(x,\mu,Y^{x,\mu,y}_t)\cdot\partial_\mu Y^{x,\mu_n,y}_t(z_n)\big\|
\nonumber \\
\!\!\!\!\!\!\!\!&&+\sup_{t\geq 0}\tilde{\EE}\big\|\partial_y b(x,\mu,Y^{x,\mu,y}_t)\cdot\partial_\mu Y^{x,\mu_n,y}_t(z_n)-\partial_y b(x,\mu,Y^{x,\mu,y}_t)\cdot\partial_\mu Y^{x,\mu,y}_t(z)\big\|\\
\leq \!\!\!\!\!\!\!\!&&C\sup_{t\geq 0}\left[\tilde{\EE}\big|Y^{x,\mu_n,y}_t-Y^{x,\mu,y}_t\big|^{2}\right]^{1/2}\sup_{t\geq 0}\left[\tilde{\EE}\big\|\partial_\mu Y^{x,\mu_n,y}_t(z_n)\big\|^2\right]^{1/2}\nonumber \\
\!\!\!\!\!\!\!\!&&+C\sup_{y\in\RR^m}\left\|\partial_y b(x,\mu_n,y)-\partial_y b(x,\mu,y)\right\|\left[\tilde{\EE}\big\|\partial_\mu Y^{x,\mu_n,y}_t(z_n)\big\|^2\right]^{1/2}\nonumber \\
\!\!\!\!\!\!\!\!&&+\sup_{t\geq 0}\tilde{\EE}\big\|\partial_\mu Y^{x,\mu_n,y}_t(z_n)-\partial_\mu Y^{x,\mu,y}_t(z)\big\|\\
=:\!\!\!\!\!\!\!\!&&\Pi_1^n+\Pi_2^n+\Pi_3^n.
\end{eqnarray*}

For the terms $\Pi_1^n$ and $\Pi_2^n$, by \eref{13}, \eref{38} and the uniform continuity of $\partial_y b(\cdot,\cdot,\cdot)(\cdot)$, it follows that
\begin{eqnarray}
\Pi_1^n+\Pi_2^n\rightarrow 0, \quad \text{as}\quad n\rightarrow \infty.\label{Conv1}
\end{eqnarray}

For the term $\Pi_3^n$, by a straightforward computation there exists $\beta\in (0,\gamma)$ such that
\begin{eqnarray*}
&&\tilde{\EE}\|\partial_\mu Y^{x,\mu_n,y}_t(z_n)-\partial_\mu Y^{x,\mu,y}_t(z)\|^2\\
\leq\!\!\!\!\!\!\!\!&& C\int^t_0 e^{-\beta(t-s)}\tilde{\EE}\|\partial_\mu f(x,\mu_n,Y^{x,\mu_n,y}_s)(z_n)-\partial_\mu f(x,\mu,Y^{x,\mu,y}_s)(z)\|^2ds\\
&&+C\int^t_0 e^{-\beta(t-s)}\tilde{\EE}\left[\|\partial_y f(x,\mu_n,Y^{x,\mu_n,y}_s)-\partial_y f(x,\mu,Y^{x,\mu,y}_s)\|^2\|\partial_\mu Y^{x,\mu,y}_s(z)\|^2\right]ds\\
&&+C\int^t_0 e^{-\beta(t-s)}\tilde{\EE}\|\partial_\mu g(x,\mu_n,Y^{x,\mu_n,y}_s)(z_n)-\partial_\mu g(x,\mu,Y^{x,\mu,y}_s)(z)\|^2ds\\
&&+C\int^t_0 e^{-\beta(t-s)}\tilde{\EE}\left[\|\partial_y g(x,\mu_n,Y^{x,\mu_n,y}_s)-\partial_y g(x,\mu,Y^{x,\mu,y}_s)\|^2\|\partial_\mu Y^{x,\mu,y}_s(z)\|^2\right]ds.
\end{eqnarray*}
Then by assumption $\mathbf{A\ref{A2}}$ and a similar argument above, we can prove that
\begin{eqnarray}
\Pi_3^n\rightarrow 0, \quad \text{as}\quad n\rightarrow \infty.\label{Conv2}
\end{eqnarray}
Note that \eref{Conv1} and \eref{Conv2} imply that \eref{F6.7} holds. The proof is complete. \hspace{\fill}$\Box$
\end{proof}

\subsection{Weak uniqueness of \eref{e5}}\label{subsection 6.3}

For any $u=\left(
                                              \begin{array}{c}
                                                u_1 \\
                                                u_2 \\
                                              \end{array}
                                            \right)
$ with $u_1,u_2\in\RR^n$, $\tilde{\pi}\in \mathscr{P}_2(\RR^{2n})$ with $\tilde{\mu}_1(A)=\tilde{\pi}(A\times \RR^n)$ and $\tilde{\mu}_2(A)=\tilde{\pi}(\RR^n\times A)$, $A\in \mathscr{B}(\RR^n)$, we denote
$$B(u,\tilde{\pi})=\left(
               \begin{array}{c}
                 \bar{b}(u_1,\tilde{\mu}_1) \\
                 B_2(u,\tilde{\mu})\\
               \end{array}
             \right),
\quad \Sigma(u,\tilde{\pi})=\left(
                          \begin{array}{cc}
                            \sigma(u_1,\tilde{\mu}_1) &0 \\
                            \Sigma_{21}(u,\tilde{\pi})& \Theta(u_1,\tilde{\mu}_1) \\
                          \end{array}
                        \right),
$$
where
$$B_2(u,\tilde{\pi}):=\partial_x \bar{b}(u_1,\tilde{\mu}_1)\cdot u_2+\int_{\RR^n\times \RR^n} [\partial_{\mu}\bar{b}(u_1,\tilde{\mu}_1)(z_1)\cdot z_2]\tilde{\pi}(dz_1,dz_2) $$
and
$$\Sigma_{21}(u,\tilde{\pi}):=\partial_x \sigma(u_1,\tilde{\mu}_1)\cdot u_2 +\int_{\RR^n\times \RR^n} [\partial_{\mu}\sigma(u_1,\tilde{\mu}_1)(z_1)\cdot z_2]\tilde{\pi}(dz_1,dz_2).$$

Recall equations \eref{1.3} and \eref{e5},
denote $U_t=\left(
                                            \begin{array}{c}
                                              \bar{X}_t \\
                                              Z_t \\
                                            \end{array}
                                          \right)
$, $\bar{W}_t=\left(
                \begin{array}{c}
                  W^1_t \\
                  W_t \\
                \end{array}
              \right)
$, then we immediately get
\begin{eqnarray}\label{Coupled equation}
d U_t=B(U_t, \mathscr{L}_{U_t})dt+\Sigma(U_t, \mathscr{L}_{U_t})d\bar{W}_t,
\end{eqnarray}
which is a new McKean-Vlasov SDE. By the discussion in subsection \ref{S4.3}, $\tilde{U}_t=\left(
                                                                                                                                                                                                   \begin{array}{c}
                                                                                                                                                                                                     \hat{\bar{X}} \\
                                                                                                                                                                                                     \hat{\eta}\\
                                                                                                                                                                                                   \end{array}
                                                                                                                                                                                                 \right)
$ is a weak solution of \eref{Coupled equation} under probability space $(\hat{\Omega}\times \tilde{\Omega},\hat{\mathscr{F}}\times \tilde{\mathscr{F}},\hat{\PP}\times \tilde{\PP})$. Then by the modified Yamada-Watanabe principle (see \cite[Lemma 2.1]{HW2021}), the weak uniqueness of equation \eref{Coupled equation} can be proved if we can prove equation \eref{Coupled equation} and the following SDEs
\begin{eqnarray}
d \bar{U}_t=B(\bar{U}_t, \tilde{\mu}_t)dt+\Sigma(\bar{U}_t, \tilde{\mu}_t)d\bar{W}_t\label{U+}
\end{eqnarray}
both have strong uniqueness for initial value $U_0$ with $\mathscr{L}_{U_0}=\tilde{\mu}_0$ , where $\tilde{\mu}_t=\mathscr{L}_{\tilde{U}_t}$.

To do this,  we assume that there exist two strong solutions $U_t=\left(
                                            \begin{array}{c}
                                              \bar{X}_t \\
                                              Z_t \\
                                            \end{array}
                                          \right)
$ and $U'_t=\left(
                                            \begin{array}{c}
                                              \bar{X}'_t \\
                                              Z'_t \\
                                            \end{array}
                                          \right)
$ for equation \eref{Coupled equation} with the same initial value $U_0$. Note that $\bar{X}$ and $\bar{X}'$ are the solution of equation \eref{1.3} with the same initial value, then by \cite[Theorem 2.1]{WFY}, equation \eref{1.3} has strong uniqueness, i.e.,  $\bar{X}_t=\bar{X}'_t$, $\PP$-a.s.. for any $t\geq 0$. Hence, for any $t\geq 0$, $\PP$-a.s.,
\begin{eqnarray*}
Z_t-Z'_t=\!\!\!\!\!\!\!\!&& \int^t_0 \left\{\partial_x\bar{b}(\bar{X}_s,\mathscr{L}_{\bar{X}_s})\cdot (Z_s-Z'_s)
+\EE\Big[\partial_{\mu} \bar{b}(u,\mathscr{L}_{\bar{X}_t})(\bar{X}_t)\cdot (Z_s-Z'_s)\Big]\Big|_{u=\bar{X}_s}\right\}ds \nonumber\\
 \!\!\!\!\!\!\!\!&& +\int^t_0\left\{\partial_x\sigma(\bar{X}_s,\mathscr{L}_{\bar{X}_s})\cdot (Z_s-Z'_s)+\EE\Big[\partial_{\mu} \sigma(u,\mathscr{L}_{\bar{X}_s})(\bar{X}_s)\cdot (Z_s-Z'_s)\Big]\Big|_{u=\bar{X}_s}\right\}dW_s^1.
\end{eqnarray*}
By the uniform boundedness of $\partial_x \bar{b}(x,\mu)$, $\partial_{\mu} \bar{b}(x,\mu)$, $\partial_x\sigma(x,\mu)$ and $\partial_{\mu}\sigma(x,\mu)(z)$, we have
\begin{eqnarray*}
\EE|Z_t-Z'_t|^2\leq\!\!\!\!\!\!\!\!&& C\int^t_0 \EE|Z_s-Z'_s|^2 ds,
\end{eqnarray*}
Then by Gronwall's inequality, we get $\EE|Z_t-Z'_t|^2=0$. Hence $Z_t=Z'_t$, $\PP$-a.s., which implies $U_t=U'_t$, $\PP$-a.s.~for any $t\geq 0$. Hence equation \eref{Coupled equation} is strong uniqueness. The  strong uniqueness of equation \eref{U+} can be proved by the same argument. The proof is complete.

\vspace{5mm}
\noindent\textbf{Acknowledgements} { The authors would like to thank the referees for their very constructive suggestions
and valuable comments. W. Hong is supported by NSFC (No.~12171354);  S. Li is supported by NSFC (No.~12001247) and
 NSF of Jiangsu Province (No.~BK20201019); W. Liu is supported by NSFC (No.~12171208, 11831014, 12090011);  X. Sun is supported by NSFC (No.~12271219, 11931004), the QingLan Project of Jiangsu Province and the Priority Academic Program Development of Jiangsu Higher Education Institutions.}

\vspace{5mm}
\noindent\textbf{Statements and Declarations}

\noindent The authors have no relevant financial or non-financial interests to disclose.
 The authors have no competing interests to declare that are relevant to the content of this article.
All authors certify that they have no affiliations with or involvement in any organization or entity with any financial interest or non-financial interest in the subject matter or materials discussed in this manuscript.
 The authors have no financial or proprietary interests in any material discussed in this article.
 All data generated or analysed during this study are included in this article.

\end{document}